\theoremstyle{plain}
\theoremstyle{plain}
\newtheorem{thm}{Theorem}[section] 
\newtheorem{lemma}[thm]{Lemma}
\newtheorem{prop}[thm]{Prop}
\newtheorem{rmk}[thm]{Remark}
\theoremstyle{definition}
\newtheorem{defn}[thm]{Definition} 
\newcommand{\Q}{\mathbb{Q}}
\newcommand{\OO}{\mathcal{O}}
\newcommand{\R}{\mathbb{R}}
\newcommand{\N}{\mathbb{N}}
\newcommand{\simR}{\sim_{\R}}
\newcommand{\simQ}{\sim_{\Q}}
\newcommand{\proofsketch}{\vspace*{-1ex} \noindent {\bf Proof Sketch: }}
\newcommand{\ceiling}[1]{{\left\lceil{#1}\right\rceil}}
\newcommand{\floor}[1]{{\left\lfloor{#1}\right\rfloor}}
\begin{document}
 
\clearpage 
\pagenumbering{gobble}
\title{Summary on Proof of BAB}
\author{Yanning Xu}
\maketitle
\tableofcontents
\newpage
\pagenumbering{arabic}
\clearpage
\setcounter{page}{1}
\section{Introduction}

This write-up contains most of the technical proofs (some sketch but mostly in details) for the two BAB papers. For full details of all the proof, please see \cite{fa} and \cite{BAB}. Some proofs are reordered and regrouped in order to help to understand of the proof of main results and also reduce the length of the proof. Also, some motivation, such as why the lemma is formulated as it is, is given (mostly in step 0 of the proof). \\I would like to thank Professor Caucher Birkar for his help for answering countless questions from me and explaining the motivation behind things. \\
All varieties here are assumed to be quasi-projective over a fixed algebraically closed field of characteristic zero. Also $\N$ here denotes the set of positive integers. Here we first state all the main theorems in the two BAB papers. The definition will be given in the next section.
\begin{thm}[BAB]\label{BAB}
    Let $d\in \N$ and $\epsilon>0$. Then the set of projective varieties $X$ such that $(X,B)$ is $\epsilon$-lc of dimension d for some boundary $B$ and $-(K_X+B)$ is nef and big, form a bounded family.
\end{thm}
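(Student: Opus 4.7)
\noindent The plan is to reduce boundedness of the family of $X$ to a two-sided bound on the anticanonical volume $\mathrm{vol}(-K_X)$, and then invoke a Hacon--McKernan--Xu style boundedness result for $\epsilon$-lc Fano-type varieties of bounded volume. Concretely, if I can produce constants $m, V$ depending only on $d$ and $\epsilon$ such that $|-mK_X|$ defines a birational map and $\mathrm{vol}(-K_X) \leq V$, then embedding $X$ via $|-mK_X|$ into a projective space and bounding the degree of the image gives the desired bounded family.

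My first move is a reduction to the case $B = 0$. Running a suitable MMP on a terminalization of $(X,B)$ and using that $-(K_X+B)$ is nef and big, I would extract a projective $\epsilon$-lc $Y$ birational to $X$ with $-K_Y$ nef and big; once $Y$ lies in a bounded family, boundedness of $X$ follows by controlling the birational map via the $\epsilon$-lc hypothesis (coefficients of $B$ are forced into a DCC set). So from now on assume $(X,0)$ is $\epsilon$-lc and $-K_X$ is nef and big, i.e. $X$ is a weak $\epsilon$-Fano of dimension $d$.

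The two technical pillars I would then establish, interlocked by induction on dimension, are \emph{boundedness of complements} and \emph{effective birationality}. For the first, I aim to produce $n=n(d,\epsilon)\in\N$ and a boundary $\Lambda \geq 0$ with $n(K_X+\Lambda) \sim 0$ and $(X,\Lambda)$ lc, by lifting sections from an lc centre of a minimal complement on a dlt modification and using adjunction together with Kawamata--Viehweg vanishing to extend them to $X$. For the second, I want $m=m(d,\epsilon)$ such that $|-mK_X|$ is birational; the strategy is to use bounded complements to construct $\Q$-divisors whose non-klt locus contains isolated centres through any pair of general points, then to separate these points by the standard tiebreaking plus MMP plus vanishing routine. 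Effective birationality immediately gives the lower bound $\mathrm{vol}(-K_X) \geq 1/m^d$, while the upper bound is extracted from combining bounded complements with a covering-families argument through a general point: unbounded volume would allow one to slide coefficients of a bounded complement so as to violate the $\epsilon$-lc condition.

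The hard part, and the place where I expect the proof to concentrate almost all of its new ideas, is \textbf{effective birationality}. The complements and birationality statements are mutually recursive---complements in dimension $d-1$ feed into the birationality argument in dimension $d$, and effective birationality in small dimension is used to produce complements in higher dimension---so organizing the induction without circularity and controlling coefficients under adjunction and MMP is the real technical obstacle. Once both pillars are in place, the upper volume bound and the final boundedness conclusion follow from now-standard MMP, vanishing, and boundedness of volumes machinery.
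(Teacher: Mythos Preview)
Your proposal has a genuine gap: it conflates birational boundedness with boundedness. The system $|-mK_X|$ only defines a \emph{birational map}, not a morphism, so you cannot embed $X$; the image is a birational model of $X$, and bounding its degree only tells you that $X$ is \emph{birationally} bounded. Likewise, the complements you construct are only \emph{lc} $n$-complements, and the Hacon--McKernan--Xu boundedness criterion (Proposition~\ref{19.1.3} in the paper) requires a \emph{klt} complement with DCC coefficients. Effective birationality, bounded lc complements, and the upper volume bound are precisely the output of Birkar's first paper, and together they yield only birational boundedness of weak Fano varieties.

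The missing ingredient---and the entire new content of the second paper---is the lct bound of Theorems~\ref{1.4} and~\ref{1.6}: there exists $t=t(d,\epsilon)>0$ such that $(X,tL)$ is klt for every $0\le L\simR -K_X$. This is what upgrades lc complements to klt complements and what makes Proposition~\ref{7.13} applicable. The proof of Theorem~\ref{1.6} is where the genuinely new ideas live: one reduces to bounding $\mu_T\phi^*L$ for a carefully chosen divisor $T$ over $X$, shows $T$ is an lc place of an auxiliary boundary $\Lambda$ built from complements (Proposition~\ref{5.9}), and then bounds the length of a toroidal sequence of centred blowups obtaining $T$ by a finite \'etale map to $\mathbb{P}^d$ reducing to the toric case (Lemma~\ref{5.5}). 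Your identification of effective birationality as ``the hard part where almost all the new ideas concentrate'' is therefore misplaced; that step was essentially settled in the first paper, and the real difficulty you have omitted is the passage from birational boundedness to boundedness via the singularities-of-linear-systems theorem.
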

\begin{thm}[Effective Birationality]\label{eff_bir}
    Let $d\in\N$ and $\epsilon>0$, then there exists $m(d,\epsilon)\in \N$ such that if $X$ is any $\epsilon$-lc weak fano variety of dimension $d$, then $|-mK_X|$ defines a birational map.
\end{thm}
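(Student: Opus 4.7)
The plan is to proceed by induction on $d$, using two essential inputs established earlier: boundedness of $n$-complements (there exists $n = n(d,\epsilon)$ and $0 \le B \in |-nK_X|_\Q$ making $(X,B)$ lc with $n(K_X+B) \sim 0$), and a uniform lower bound $(-K_X)^d \ge v(d,\epsilon) > 0$ on the anticanonical volume. The base case $d = 1$ is immediate because $-K_X$ is an ample divisor of bounded degree on $\mathbb{P}^1$. For the inductive step, it suffices to find $m = m(d,\epsilon)$ such that $|-mK_X|$ separates two very general points of $X$, since birationality then follows from a standard argument.

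Given very general points $x, y \in X$, I would first use the volume lower bound to construct a $\Q$-divisor $D \simQ -\mu K_X$, for some $\mu = \mu(d,\epsilon)$, with $\operatorname{mult}_x D,\ \operatorname{mult}_y D > 2d$; the usual Riemann--Roch count produces such a $D$ once the volume is bounded below. Take $t > 0$ minimal so that $(X, tD)$ fails to be klt at $x$; $t$ is bounded in terms of $\mu$ and $\epsilon$. Perturbing by small multiples of the complement $B$ and by auxiliary divisors through $y$, apply tie-breaking to obtain a pair $(X, \Delta)$ with $\Delta \simQ -\lambda K_X$ (for some $\lambda = \lambda(d,\epsilon)$) whose minimal non-klt center $V$ passes through $x$, while the pair remains klt at $y$.

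The case $\dim V = 0$ is the clean one: $V = \{x\}$, and Kawamata--Viehweg vanishing applied to a twist of the multiplier ideal sheaf $\mathcal{J}(X, \Delta)$ lifts a section from $V$ to give an element of $|-mK_X|$ vanishing at $y$ but not at $x$, for some $m = m(d,\epsilon)$. In the case $\dim V > 0$, we invoke the inductive hypothesis: apply adjunction on the normalization of $V$ and use BCHM to pass to a birational model on which the induced pair (or its generalized-pair analogue) satisfies the inductive hypothesis in dimension $\dim V < d$; then lift the resulting separating section back up to $X$ using vanishing, adjusting $m$ accordingly.

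The main obstacle is the positive-dimensional case $\dim V > 0$. The variety $V$ is typically neither Fano nor $\epsilon$-lc in its own right, so applying the induction requires transferring the problem to a suitable birational model via adjunction, and this inevitably forces us to work with generalized pairs rather than honest pairs. Controlling the singularities that arise from adjunction, and in particular arranging that the resulting object on $V$ falls under an inductive hypothesis for some possibly weakened $\epsilon' = \epsilon'(d, \epsilon)$, is the most delicate point and is where the interplay between complements, volume bounds, and lc-threshold estimates is essential.
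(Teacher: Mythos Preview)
Your proposal has a genuine circularity: you assume a uniform lower bound $(-K_X)^d \geq v(d,\epsilon)>0$ as an input, but no such bound is available at this stage. A lower bound on the anticanonical volume of $\epsilon$-lc (weak) Fanos is essentially a consequence of BAB (Theorem~\ref{BAB}), and BAB in turn uses effective birationality as one of its key inputs (see the proof of BAB in the paper, which invokes \ref{7.13} together with \ref{eff_bir}). So you cannot assume it here. Without that lower bound, your Riemann--Roch step producing $D\simQ -\mu K_X$ with $\operatorname{mult}_x D>2d$ for a \emph{bounded} $\mu$ collapses, and the whole Angehrn--Siu/Takayama/Hacon--M\textsuperscript{c}Kernan cutting-down-centres machinery never gets off the ground.

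The paper's approach goes in the opposite direction: rather than bounding $\mathrm{vol}(-K_X)$ from below, it argues by contradiction and bounds $\mathrm{vol}(-m K_X)$ from \emph{above} for the minimal $m$. One lets $n$ be the least integer with $\mathrm{vol}(-nK_X)>(2d)^d$ and proves (Lemma~\ref{4.7}) that $m/n$ is bounded; this is where non-klt centres and adjunction (Definition~\ref{non_klt}) actually enter, but in a contradiction argument, not as a direct induction on the dimension of a centre. From the bound on $\mathrm{vol}(-mK_X)$ one builds a log bounded family of birational models (Lemma~\ref{4.4.1}, Lemma~\ref{4.4.3}) and then derives a contradiction to a bounded-family singularity statement (Proposition~\ref{4.2}) using the complement $B$. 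In short, the crucial new idea you are missing is that failure of effective birationality forces the relevant volumes to be \emph{small}, which is what places everything in a bounded family; your scheme tries to force volumes to be large, and that is exactly what one cannot arrange.
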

\begin{thm}\label{wea_bab}
    Let $d\in \N$ and $\epsilon,\delta>0$. Then the set of $X$ such that there is a boundary $B$ with $(X,B)$ $\epsilon$-lc of dimension d, $B$ big, $K_X+B\simR 0$ and $|B|\geq \delta$, forms a bounded family.
\end{thm}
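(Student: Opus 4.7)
The overall strategy is: reduce to the case where $X$ is a weak Fano, apply Effective Birationality (Theorem \ref{eff_bir}) to get a uniform $m$ with $|-mK_X|$ birational, bound the anti-canonical volume $(-K_X)^d$ from above using the hypothesis $|B| \geq \delta$, and then invoke standard boundedness results.

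For the reduction, I use $K_X+B \simR 0$ and bigness of $B$ to conclude $-K_X \simR B$ is big. Running an appropriate MMP (since $(X,B)$ is klt), I pass to a $\Q$-factorial birational model on which $-K_X$ is nef and big, i.e.\ $X$ is a weak Fano. The $\epsilon$-lc singularities of $(X,B)$ are preserved under this log-crepant MMP and $B$ remains big, so the hypothesis $|B|\geq \delta$ survives in a suitable sense. Then Theorem \ref{eff_bir} yields $m=m(d,\epsilon)\in\N$ such that $|-mK_X|$ defines a birational map, which already gives the lower bound $(-K_X)^d \geq 1/m^d$.

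The hardest step, and the main obstacle, is to bound $(-K_X)^d$ from above by a constant $V=V(d,\epsilon,\delta)$. The hypothesis $|B|\geq \delta$ enters essentially here. The plan is to argue by contradiction: suppose $(-K_{X_i})^d \to \infty$ along some sequence of examples. Birationality of $|-mK_{X_i}|$, together with asymptotic Riemann--Roch and Kawamata--Viehweg vanishing applied to the nef and big divisor $-K_{X_i}$, controls the growth of $h^0(X_i, -mK_{X_i})$ in terms of $(-K_{X_i})^d$, and a dimension count then produces divisors $M_i \in |-mK_{X_i}|$ with arbitrarily high multiplicity at a chosen general point $x_i \in X_i$. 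Since $M_i/m$ is $\Q$-linearly equivalent to $-K_{X_i}$, combining it with $B_i$ via a convex combination or a tie-breaking perturbation yields a new boundary $\Gamma_i$ with $K_{X_i}+\Gamma_i \simR 0$ whose log discrepancy at some divisor over $x_i$ is forced below $\epsilon$, contradicting the $\epsilon$-lc hypothesis. The bound $|B_i|\geq \delta$ is what prevents the combination from collapsing back to $B_i$ itself and is used to keep the coefficients under control.

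With the lower bound $(-K_X)^d \geq 1/m^d$, the upper bound $(-K_X)^d \leq V$, birationality of $|-mK_X|$, and $\epsilon$-lc singularities all in hand, standard boundedness arguments (Hilbert-scheme bounds in the style of Matsusaka, combined with boundedness of Cartier indices on $\epsilon$-lc Fano-type varieties) then place $X$ in a bounded family.
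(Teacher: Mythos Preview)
There are two genuine problems with your plan.

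\textbf{Circularity.} You invoke Theorem \ref{eff_bir} to obtain a uniform $m$ with $|-mK_X|$ birational. But in the logical architecture of this paper, Theorem \ref{eff_bir} is deduced from Proposition \ref{4.9} together with Theorem \ref{com_gen}; Theorem \ref{com_gen} in dimension $d$ requires Theorem \ref{bou_exc} (via Proposition \ref{6.15}); and Theorem \ref{bou_exc} uses Theorem \ref{wea_bab} inside the proof of Lemma \ref{7.5}. So Theorem \ref{eff_bir} depends on Theorem \ref{wea_bab}, and your argument is circular. The correct input here is Proposition \ref{4.9} itself, whose hypotheses are exactly those of Theorem \ref{wea_bab} (after reducing to $X$ Fano) and whose proof does not rely on \ref{wea_bab}.

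\textbf{The volume bound does not give a contradiction.} Your argument for $(-K_X)^d \leq V$ is: from large volume produce $M_i\in|-mK_{X_i}|$ with high multiplicity at a general point, form a convex combination $\Gamma_i$ of $\tfrac{1}{m}M_i$ and $B_i$ with $K_{X_i}+\Gamma_i\simR 0$, and observe that $(X_i,\Gamma_i)$ is not $\epsilon$-lc. But the hypothesis is only that $(X_i,B_i)$ is $\epsilon$-lc; nothing forces $(X_i,\Gamma_i)$ to be $\epsilon$-lc for $\Gamma_i\neq B_i$, so there is no contradiction. The role of $|B|\geq\delta$ is not ``to keep coefficients under control'' in a perturbation---it is used, as in Step 3 of the proof of Proposition \ref{4.9}, to produce a divisor $\tfrac{1}{\delta}B$ making the pair non-klt while being $\R$-linearly equivalent to something whose coefficients go to zero on a log smooth bounded model, so that Proposition \ref{4.2} applies.

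The paper's (omitted) proof follows the pattern of Lemma \ref{4.7} and Propositions \ref{4.9}, \ref{4.11}: assume unboundedness, use Lemma \ref{4.4.1}/\ref{4.4.3} to place everything in a fixed log smooth log bounded family $(\bar{X_i},\Sigma_{\bar{X_i}})$, and then derive a contradiction from Proposition \ref{4.2}. Neither the volume bound nor the final boundedness step is ``standard'' in the sense you suggest; both require precisely this machinery.
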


\begin{thm}\label{com_gen}
    Let $d,p\in \N$ and $\mathfrak{R}\subset [0,1]$ be a finite set of rationals, then there exist $n(d,p,\mathfrak{R})\in \N$ such that if $(X',B'+M')$ projective generalised lc of dimension d, $B'\in \Phi(\mathfrak{R})$, $pM$ b-Cartier, $X'$ fano type and $-(K_{X'}+B'+M')$ is nef, then $K_{X'}+B'+M'$ has a n-complement $K_{X'}+B'^++M'$ with $B'^+\geq B'$.
\end{thm}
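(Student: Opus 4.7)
The plan is to induct on the dimension $d$, following Birkar's scheme for boundedness of complements. First I would pass to a $\Q$-factorial generalised dlt model $\pi: Y \to X'$; by standard MMP, $Y$ remains Fano type, $-(K_Y + B_Y + M_Y)$ is nef, and any $n$-complement on $Y$ pushes forward via $\pi$ to an $n$-complement on $X'$. After relabelling we may assume $(X', B' + M')$ is already $\Q$-factorial generalised dlt, and the argument then splits into two cases depending on whether the pair is \emph{exceptional} in Shokurov's sense.

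\textbf{Non-exceptional case.} Suppose there is a prime divisor $S \subset X'$ with coefficient $1$ in $B'$, creating one by tiebreaking combined with MMP on a suitable $N \geq 0$ with $N \simR -(K_{X'}+B'+M') + \mbox{(small ample)}$ whenever some non-klt center is accessible. Apply generalised divisorial adjunction
\[
(K_{X'} + B' + M')|_S = K_S + B_S + M_S,
\]
giving a generalised lc pair of dimension $d-1$. The refined adjunction formula for hyperstandard coefficients produces a finite set $\mathfrak{S} \subset [0,1] \cap \Q$, depending only on $\mathfrak{R}$ and $p$, with $B_S \in \Phi(\mathfrak{S})$, and $p'M_S$ b-Cartier for a bounded multiple $p'$ of $p$. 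Moreover $S$ inherits Fano type and $-(K_S + B_S + M_S)$ is nef. By induction, one obtains an $n_{d-1}$-complement $K_S + B_S^+ + M_S$ with $B_S^+ \geq B_S$. To lift back, use the short exact sequence
\[
0 \to \OO_{X'}(L - S) \to \OO_{X'}(L) \to \OO_S(L|_S) \to 0
\]
with $L$ of the form $-n(K_{X'}+B'+M')$ (suitably b-Cartier adjusted); a Kawamata-Viehweg-type vanishing forces $H^1(\OO_{X'}(L-S)) = 0$, so the section on $S$ corresponding to $n(B_S^+ - B_S)$ lifts to $X'$ and supplies the required $B'^+ \geq B'$.

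\textbf{Exceptional case.} If no such $S$ can be created — equivalently, every effective $\Q$-divisor $\simQ -(K_{X'}+B'+M')$ keeps $(X', B' + M')$ generalised klt — invoke the companion theorem on boundedness of exceptional generalised Fano pairs, itself established via Theorem \ref{eff_bir} together with BAB-type arguments. This places $(X', B' + M')$ in a bounded family, on which the Cartier index of a fixed multiple of $-(K_{X'}+B'+M')$ is uniformly controlled; a general section of an appropriate anti-pluri-canonical system on the family then yields an $n$-complement for $n$ depending only on $d$, $p$, $\mathfrak{R}$.

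The \textbf{main obstacle} is the exceptional case: proving boundedness of exceptional generalised Fano pairs is a genuinely deep result, inseparable from Theorem \ref{eff_bir} and running in tandem with the BAB circle of ideas. A secondary technical point is verifying that adjunction closes up the hyperstandard hypothesis: the passage $B' \in \Phi(\mathfrak{R}) \leadsto B_S \in \Phi(\mathfrak{S})$ with $\mathfrak{S}$ finite is precisely what makes the induction viable, and its verification requires tracking how the different coefficients arising from the restriction of $B'$, from $S$ itself, and from the b-Cartier index of $M'$ interact through the generalised adjunction formula.
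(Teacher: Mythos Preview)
Your overall scheme is correct and matches the paper: induct on $d$, pass to a $\Q$-factorial dlt model, split into exceptional versus non-exceptional, and in the exceptional case invoke boundedness (Theorem~\ref{bou_exc}) to control the Cartier index. Your identification of the two main obstacles is also accurate.

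However, your non-exceptional case is too compressed and misses a genuine case analysis. The paper does not simply ``create an $S$ with coefficient $1$ and lift via Kawamata--Viehweg''; that argument (Proposition~\ref{6.7}) only applies once one has arranged a \emph{plt} structure with $-(K_{X'}+\Gamma'+\alpha M')$ ample for a suitable auxiliary boundary $\Gamma'$. Getting there requires several reductions you have not accounted for:
\begin{itemize}
\item After the tiebreaking and MMP you describe, one may end not with a divisorial $S$ but with a Mori fibre space $X'\to V'$ with $M'$ not big over $V'$. In that case the complement must be lifted from the \emph{base} via fibre-space adjunction (Lemma~\ref{6.5} and Proposition~\ref{6.3}), not from a divisor via divisorial adjunction. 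This requires the relative complement theorem (Theorem~\ref{com_rel}) in dimension $d$, which your induction statement omits.
\item Even when one has a non-klt divisor $S\subset\lfloor B'\rfloor$, the passage from ``generalised dlt with $\lfloor B'\rfloor\neq 0$'' to the plt hypothesis of Proposition~\ref{6.7} is nontrivial: the paper runs further MMPs and perturbs by ample contributions (see the proof of Proposition~\ref{6.8}) to separate out a single component.
\item There is a residual case where $K_{X'}+B'+M'\sim_{\Q}0$ and $M'\sim_{\Q}0$; here neither the fibration trick nor the strongly-non-exceptional argument applies, and the paper needs a separate lemma (Lemma~\ref{6.10}) using effective birationality on $X'$ itself.
\end{itemize}
Finally, you omit the preliminary reduction (via ACC for lc thresholds, \S\ref{2.50}) from the DCC set $\Phi(\mathfrak{R})$ to a genuinely \emph{finite} coefficient set; the plt lifting argument in Proposition~\ref{6.7} needs this to control the rounding terms $q\Delta-\lfloor(q+1)\Delta\rfloor$.
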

\begin{thm}\label{com_rel}
    Let $d\in \N$ and $\mathfrak{R}\subset [0,1]$ be a finite set of rationals, then there exist $n(d,\mathfrak{R})\in \N$ such that if $(X,B)$ a lc pair of dimension $d$, with a contraction $X\rightarrow Z$ with $\dim Z>0$, $B'\in \Phi(\mathfrak{R})$, $X$ Fano type over $Z$ and $-(K_X+B)$ is nef over $Z$. Then for any $z\in Z$, there is a n-complement $K_X+B^+$ for $K_X+B$ over z with $B^+\geq B$
\end{thm}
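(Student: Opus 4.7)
The plan is to reduce Theorem~\ref{com_rel} to Theorem~\ref{com_gen} by descending to a lower-dimensional object: either to an lc center of $(X,B)$ mapping to $z$ via divisorial adjunction, or to the base of a Mori fiber space via the canonical bundle formula. First I would localize so that $z$ is a closed point and $Z$ is affine, then replace $(X,B)$ with a $\Q$-factorial dlt modification, which is harmless because $n$-complements pull back and push forward across such small birational maps. The argument then splits according to whether some non-klt center of $(X,B)$ meets the fiber $X_z$.

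\emph{Case A (adjunction case).} Suppose some non-klt center $S$ of $(X,B)$ maps to $z$. By divisorial adjunction $(S,B_S)$ is lc with $B_S$ in $\Phi(\mathfrak{R}')$ for a finite set $\mathfrak{R}'$ depending only on $d$ and $\mathfrak{R}$, and $S$ inherits Fano-type structure (Prokhorov--Shokurov). Since $\dim S<d$, I can apply Theorem~\ref{com_gen} (if $S$ maps to the point $z$) or the inductive form of Theorem~\ref{com_rel} (if $S$ still has a contraction to a positive-dimensional base). I would then lift the resulting $n$-complement from $S$ to $X$ by a Shokurov-style extension argument: on a log resolution, use Kawamata--Viehweg vanishing together with surjectivity of a restriction map to realize the prescribed trace on $S$ as an actual effective section of the right twist of $\OO_X$.

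\emph{Case B (fibration case).} Suppose the pair is klt in a neighborhood of $X_z$. I would run a $(K_X+B)$-MMP over $Z$, which terminates since $X$ is Fano type over $Z$; by perturbing $B$ slightly and continuing, I can arrive at a Mori fiber space $X'\to V/Z$. The canonical bundle formula gives $K_{X'}+B' \simQ h^{*}(K_V+B_V+M_V)$ with $(V,B_V+M_V)$ generalised lc, $B_V\in \Phi(\mathfrak{R}'')$, and $pM_V$ b-Cartier for some $p$ bounded in terms of $(d,\mathfrak{R})$. If $V$ maps to the point $z$ I apply Theorem~\ref{com_gen} to $(V,B_V+M_V)$ directly; otherwise I apply the inductive hypothesis of Theorem~\ref{com_rel} to $V\to Z$, and then pull back the complement from $V$ to $X'$ and push it forward to $X$.

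The main obstacle is the bookkeeping in Case B, specifically controlling the output of the canonical bundle formula: establishing that $pM_V$ is b-Cartier with $p$ bounded only in terms of $(d,\mathfrak{R})$ (this rests on boundedness of the general fiber and effective base-point-freeness of the moduli part) and that $B_V$ really lands in some $\Phi(\mathfrak{R}'')$ with $\mathfrak{R}''$ depending only on $(d,\mathfrak{R})$, which is the heart of the Ambro--Kawamata analysis. A secondary but genuine difficulty in Case A is the vanishing-theorem input required to lift $B_S^{+}$ to $B^{+}$ without enlarging $n$, and checking that $B^{+}\geq B$ survives the birational modifications used throughout.
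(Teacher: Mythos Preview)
Your proposal is correct and follows essentially the same route as the paper: the paper states that the proof of Theorem~\ref{com_rel} in dimension $d$ is ``essentially the same'' as that of Theorem~\ref{com_gen} with minor adjustments, and the machinery it uses there---lifting complements from a plt non-klt centre via divisorial adjunction and Kawamata--Viehweg vanishing (Proposition~\ref{6.7}, \ref{6.8}) and descending to the base of a fibration via the canonical bundle formula (Lemma~\ref{6.5}, Proposition~\ref{6.3})---is exactly your Case~A and Case~B. The technical points you flag (controlling $p$ and $\mathfrak{R}''$ in the canonical bundle formula, and the vanishing input for the lifting step) are precisely the content of Proposition~\ref{6.3} and the proof of Proposition~\ref{6.7}.
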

\begin{thm}\label{bou_exc}
    Let $d\in \N$ and $\mathfrak{R}\subset [0,1]$ be a finite set of rationals. Consider $(X',B'+M')$ as in \ref{com_gen} that are exceptional, then all such $(X'+B')$ form a log bounded family.
\end{thm}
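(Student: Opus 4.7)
The plan is to combine Theorem~\ref{com_gen} (complements) with Theorem~\ref{wea_bab} (weak BAB), using exceptionality to feed the former into the latter. First, apply Theorem~\ref{com_gen} to $(X',B'+M')$ to produce an $n$-complement $(X',B'^{+}+M')$ with $B'^{+}\geq B'$, $n=n(d,p,\mathfrak{R})$, and $K_{X'}+B'^{+}+M'\simQ 0$. Exceptionality forces this complement to be generalised klt: no non-klt place can appear. Since $nB'^{+}$ is integral and $pM'$ is b-Cartier, the coefficients of $B'^{+}$ lie in a finite set depending only on $n$ and $p$. Combining this with global ACC for generalised Calabi--Yau pairs of dimension $d$, together with the strict klt-ness coming from exceptionality (a limit argument would otherwise produce a non-klt complement), one upgrades ``klt'' to ``$\epsilon$-lc'' uniformly, for some $\epsilon=\epsilon(d,p,\mathfrak{R})>0$.

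The second step is to descend from the generalised to the usual setting. Because $X'$ is Fano type and $pM'$ descends from a nef b-Cartier class with abundant sections on a high model, one can pick an effective $\Q$-divisor $M''\simQ M'$ in sufficiently general position so that $(X',B'^{+}+M'')$ remains $\tfrac{\epsilon}{2}$-lc. Setting $D:=B'^{+}+M''$ gives $K_{X'}+D\simQ 0$, and $D\simQ -K_{X'}$ is big because $X'$ is of Fano type, so $-K_{X'}$ is big.

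The third step is to invoke Theorem~\ref{wea_bab} applied to $(X',D)$, which is $\tfrac{\epsilon}{2}$-lc with $D$ big and $K_{X'}+D\simQ 0$. The remaining input is a uniform lower bound $|D|=\operatorname{vol}(-K_{X'})\geq\delta(d,p,\mathfrak{R})$, and this volume bound is the main obstacle. The plan is to extract $\delta$ by combining the $n$-complement with Theorem~\ref{eff_bir}: after running a $(-K_{X'})$-MMP on the Fano-type variety $X'$ to reach a small $\Q$-factorial modification on which $-K_{X'}$ is nef (the $\epsilon$-lc property being preserved through such a small birational map), effective birationality supplies a uniform $m=m(d,\epsilon)$ with $|-mK_{X'}|$ birational, and a standard volume estimate then yields $\operatorname{vol}(-K_{X'})\gtrsim m^{-d}$. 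With this lower bound in place, Theorem~\ref{wea_bab} applies to $(X',D)$ and produces a log bounded family; this upgrades to log boundedness of $(X',B')$ since $0\leq B'\leq B'^{+}\leq D$.
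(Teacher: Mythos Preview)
Your proposal has a fundamental circularity problem. In the paper's inductive scheme (Proposition~\ref{6.15}), Theorem~\ref{bou_exc} in dimension $d$ is one of the \emph{inputs} needed to deduce Theorem~\ref{com_gen} in dimension $d$; and Theorem~\ref{com_gen} in dimension $d$ is in turn needed for Theorem~\ref{eff_bir} in dimension $d$ (the latter is Proposition~\ref{4.9} plus Theorem~\ref{com_gen}). So when you ``apply Theorem~\ref{com_gen} to $(X',B'+M')$'' in your first step, and later invoke effective birationality on the $d$-dimensional $X'$, you are assuming in dimension $d$ exactly what you are trying to prove. The paper's proof is careful to work only with Theorem~\ref{com_gen} in dimension $d-1$ and Theorem~\ref{com_rel} in dimension $d$ (both available by induction): it builds the required complements and lc-threshold bounds by hand via Proposition~\ref{7.13} and the auxiliary estimates \ref{7.2}, \ref{7.7}, \ref{7.9}, \ref{7.11}, together with the effective-birationality substitutes Propositions~\ref{4.9} and \ref{4.11} and Lemma~\ref{7.5}, none of which require Theorem~\ref{com_gen} in the top dimension.

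There are also two secondary gaps. First, your descent step replacing $M'$ by an effective $M''\simQ M'$ in general position is not justified: on the high model $X$ the divisor $M$ is only nef (with $pM$ Cartier), not semi-ample, so an effective representative with the required singularity control need not exist. Second, you misread the hypothesis of Theorem~\ref{wea_bab}: the condition ``$|B|\geq\delta$'' there means the nonzero coefficients of $B$ are at least $\delta$ (compare the hypothesis $B\geq\delta$ in Proposition~\ref{4.9}), not that the volume of $B$ is at least $\delta$; your $D=B'^{+}+M''$ has no such coefficient lower bound once $M''$ is chosen general.
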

\begin{thm}\label{1.4}
    Let $d,r \in \N$ and $\epsilon>0$ a real number. Then there exists $t(d,\epsilon)>0$ such that if $(X,B)$ is a projective $\epsilon$-lc pair of dimension $d$ and $A:= -(K_X+B)$ is nef and big, then $lct(X,B,|A|_{\R})>t$ 
\end{thm}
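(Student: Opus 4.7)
The plan is to argue by contradiction, combining induction on dimension $d$ with the previously established Theorem~\ref{eff_bir}. Suppose the statement fails for some $(d,\epsilon)$: then there is a sequence of projective $\epsilon$-lc pairs $(X_i,B_i)$ of dimension $d$ with $A_i:=-(K_{X_i}+B_i)$ nef and big, and divisors $D_i\in|A_i|_{\R}$ such that $t_i:=\mathrm{lct}(X_i,B_i,D_i)\to 0$. After a small perturbation I may assume $(X_i,B_i+t_iD_i)$ is lc but fails to be klt along some closed subvariety $V_i\subset X_i$. Observe first that since $B_i\geq 0$ and $(X_i,B_i)$ is $\epsilon$-lc, $X_i$ itself is $\epsilon$-lc, and $-K_{X_i}=A_i+B_i$ is nef and big, so each $X_i$ is an $\epsilon$-lc weak Fano; in particular Theorem~\ref{eff_bir} gives a uniform $m=m(d,\epsilon)\in\N$ with $|-mK_{X_i}|$ defining a birational map, a fact I would use to produce effective divisors with controlled geometry on each $X_i$.

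The substance of the proof is a reduction to dimension $d-1$ via adjunction. I would take a divisorial extraction or dlt modification $f_i:Y_i\to X_i$ on which a log canonical place of $(X_i,B_i+t_iD_i)$ is realised as a prime divisor $S_i$ with coefficient $1$ in the transformed boundary. Divisorial adjunction then produces a pair $(S_i,B_{S_i})$ of dimension $d-1$ whose coefficients lie in a DCC set, and $-(K_{S_i}+B_{S_i})$ inherits pseudo-effectivity (indeed nef-and-bigness, possibly after passing to a generalised pair as in Theorem~\ref{com_gen}) from the restriction of the pullback of $(1-t_i)A_i$. The inductive hypothesis at dimension $d-1$ would then provide a uniform $t_{d-1}>0$ bounding $\mathrm{lct}(S_i,B_{S_i},|-(K_{S_i}+B_{S_i})|_{\R})$ from below; lifting (via Kawamata--Viehweg vanishing) a divisor on $S_i$ to $Y_i$ from the restriction of $D_i$ and tracking multiplicities would produce a concrete element of $|-(K_{S_i}+B_{S_i})|_{\R}$ of arbitrarily small lct as $t_i\to 0$, contradicting the inductive bound.

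The main obstacle is the adjunction and restriction step. The restricted pair need not remain $\epsilon$-lc for the same $\epsilon$, only for some $\epsilon'=\epsilon'(\epsilon,d)>0$, and one will likely have to work in the generalised-pair category so that the nef part of the adjoined pair can absorb the non-divisorial contribution of $f_i^*(K_{X_i}+B_i+t_iD_i)$. Equally delicate is producing an actual $\R$-divisor on $S_i$ (rather than just an abstract non-klt place of the ambient pair) witnessing the small lct; this typically requires the effective birationality above to ensure pluri-anticanonical divisors with controlled singularities exist on $Y_i$, together with extension-from-$S_i$ arguments via vanishing and the DCC/ACC machinery of Hacon--McKernan--Xu to keep all relevant coefficient sets under control.
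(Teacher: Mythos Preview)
Your plan diverges substantially from the paper's argument, and the adjunction-and-lift strategy you outline does not obviously close. The paper does \emph{not} induct on Theorem~\ref{1.4} itself via divisorial adjunction to a non-klt centre. Instead, Theorem~\ref{1.4} in dimension $d$ is deduced from two inputs: Theorem~\ref{1.6} in dimension $d$ (the bounded-family version of the lct bound, which is the deepest part of \cite{BAB}) and Theorem~\ref{BAB} in dimension $d-1$. Concretely: one fixes $\epsilon'<\epsilon$, takes $L\simR A$, lets $s$ be the $\epsilon'$-lc threshold of $L$, and extracts a divisor $T$ with $a(T,X,B+sL)=\epsilon'$. The problem reduces to bounding $\mu_T\phi^*L$ from above. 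One then runs an MMP on $-T$ ending in a Mori fibre space $Y'\to Z$; if $\dim Z>0$ one restricts to a general fibre and invokes BAB (hence boundedness of coefficients) in lower dimension, while if $\dim Z=0$ one is on a Picard-rank-one $\epsilon'$-lc Fano and appeals to Lemma~\ref{3.1}, whose proof is where Theorem~\ref{1.6} enters. Effective birationality plays no direct role in this step.

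The gap in your approach is the restriction step you yourself flag. After extracting $S_i$, the contribution of $t_iD_i$ is absorbed \emph{into} the adjoined boundary $B_{S_i}$; it does not sit outside as an element of $|-(K_{S_i}+B_{S_i})|_{\R}$ with visibly small lct. What you would need is a bound on a \emph{coefficient} of the restricted boundary, not an lct bound on an auxiliary divisor, and there is no mechanism in your sketch for turning the inductive lct bound on $S_i$ into such a coefficient bound. Moreover, $-(K_{S_i}+B_{S_i})$ is only the restriction of a nef and big class, hence nef but not big on $S_i$ in general, so the inductive hypothesis (which requires nef and big) does not apply as stated. The paper sidesteps all of this by reframing the question as a multiplicity bound and using the MMP/Mori-fibre-space dichotomy together with Theorem~\ref{1.6}; you should regard Theorem~\ref{1.6} as the essential missing ingredient in your plan.
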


\begin{thm}\label{1.6}
    Let $d,r \in \N$ and $\epsilon>0$ a real number. Then there exists $t(d,r,\epsilon)>0$ such that if $(X,B)$ projective $\epsilon$-lc of dimension $d$ and $A$ very ample with $A-B$ ample and $A^d\leq r$, then $lct(X,B,|A|_{\R})>t$.
\end{thm}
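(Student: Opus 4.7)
The plan is to place $(X, B, A)$ into a log bounded family and then extract a uniform lct lower bound by combining the $\epsilon$-lc hypothesis with uniform multiplicity control for members of $|A|_{\R}$. First, since $A$ is very ample with $A^d \leq r$, the embedding by $|A|$ realizes $X \subset \mathbb{P}^{h^0(A)-1}$ with bounded degree and bounded Hilbert polynomial, so $(X, A)$ lies in a bounded family. The hypothesis that $A - B$ is ample forces $B \cdot A^{d-1} \leq A^d \leq r$, and since the coefficients of $B$ lie in $[0,1]$, this places $B$ in a bounded family of effective Weil divisors on $X$. Thus the triples $(X, B, A)$ form a log bounded family $(\mathcal{X}, \mathcal{B}, \mathcal{A}) \to T$ with $T$ of finite type and $\mathcal{A}$ relatively very ample.

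Next, the $\epsilon$-lc hypothesis gives $a(E, X, B) \geq \epsilon$ for every divisorial valuation $E$ over $X$, whence
\[
\mathrm{lct}(X, B, |A|_{\R}) \;=\; \inf_{D \in |A|_{\R}}\,\inf_E\, \frac{a(E, X, B)}{\mathrm{mult}_E(D)} \;\geq\; \frac{\epsilon}{C},
\]
where $C$ is the supremum of $\mathrm{mult}_E(D)$ taken over $D \in |A|_{\R}$, divisorial valuations $E$ over $X$, and $(X, B, A)$ in the family. So it suffices to bound $C$ uniformly in $d, r, \epsilon$. On $X$ itself, for a prime divisor $P \subset X$ we have $\mathrm{mult}_P(D) \cdot (P \cdot A^{d-1}) \leq D \cdot A^{d-1} = A^d \leq r$, giving $\mathrm{mult}_P(D) \leq r$; and writing $D = \sum c_i D_i$ with $D_i \in |A|$ and $\sum c_i = 1$, point-multiplicities $\mathrm{mult}_p(D)$ are controlled by those of members of $|A|$, which are bounded in the family.

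The remaining and hardest step is to control exceptional divisorial valuations $E$ over $X$ uniformly. My plan is to stratify $T$ into finitely many locally closed subsets over which a simultaneous relative log resolution $\widetilde{\mathcal{X}} \to \mathcal{X}$ of $(\mathcal{X}, \mathcal{B})$ exists, then fix a relatively ample $H$ on each resolution and use an intersection bound of the form $\mathrm{mult}_E(D) \cdot (E \cdot H^{d-1}) \leq \pi^* D \cdot H^{d-1} = \pi^* A \cdot H^{d-1}$, which is bounded uniformly on the quasi-compact base. The technical difficulty is that as $D$ varies in $|A|_{\R}$, the relevant valuations $E$ could a priori live on ever-finer blowups, so the simultaneous log resolution must be refined enough to capture all of them; leveraging the finite-dimensionality of $|A|$ inside the bounded family to reduce to a uniform model is the technical heart of the argument, and exactly where the hypothesis $A^d\leq r$ (rather than just $(X,B)$ being $\epsilon$-lc) pays off.
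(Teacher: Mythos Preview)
Your argument has a genuine gap at its core. The quantity
\[
C \;=\; \sup_{D\in|A|_{\R}}\ \sup_{E}\ \mathrm{mult}_E(D)
\]
is \emph{infinite}, already for a single fixed $D$. Take $X=\mathbb{P}^2$, $B=0$, $A$ a line, $D\in|A|$. Blowing up a point on $D$ gives $E_1$ with $\mathrm{mult}_{E_1}(D)=1$; blowing up the intersection of the strict transform of $D$ with $E_1$ gives $E_2$ with $\mathrm{mult}_{E_2}(D)=2$; and so on. So your inequality $\mathrm{lct}\geq \epsilon/C$ is vacuous. The reason the lct is still positive is that $a(E,X,B)$ grows along such towers as well, and the \emph{ratio} stays bounded below; by separating numerator and denominator you have thrown this away. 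Your proposed fix via a simultaneous log resolution of $(\mathcal{X},\mathcal{B})$ cannot work either: the log canonical threshold of $(X,B;D)$ is computed on a log resolution of $(X,B+D)$, not of $(X,B)$, and there is no single resolution that works for all $D\in|A|_{\R}$ (which, incidentally, is not finite-dimensional: it consists of all effective $\R$-divisors $\R$-linearly equivalent to $A$, not just convex combinations of members of the linear system $|A|$).

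The paper's route is of a completely different nature and is one of the main technical innovations behind BAB. One fixes a specific bad divisor $L$ and a specific valuation $T$ computing the threshold, reduces by hyperplane sections to the case where the centre of $T$ is a closed point $x$, and then (Proposition~\ref{5.9}) uses the theory of complements to produce a boundary $\Lambda$ with bounded denominators such that $T$ is an lc place of $(X,\Lambda)$. This converts the problem into bounding $\mu_T\phi^*L$ for such $T$ (Proposition~\ref{5.7}). That bound in turn comes from showing that $T$ can be reached from a log smooth model by a \emph{bounded} number of toroidal blowups (Lemma~\ref{5.5}); the key trick there is a Noether-normalisation-type finite map to $\mathbb{P}^d$ that reduces the question to toric geometry, where boundedness of $\epsilon$-lc toric Fanos is elementary. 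None of this structure is visible from the bounded-family/multiplicity viewpoint you propose.
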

\newpage

\section{Preliminary}
\subsection{Some Definitions}
Here we list a few basic definitions for the minimal model program. For more detailed treatment please see \cite{fa} and \cite{km}. We assume the reader is familiar with standard definition involved in the minimal model program.
\begin{enumerate}[wide, labelwidth=!, labelindent=10pt]
\item A contraction is a projective morphism $f:X\rightarrow Y$ such that $f_* \mathcal{O}_X = \OO_Y$, in particular $f$ has connected fibres.
\item Hyperstandard sets: For $\mathfrak{R}\subset[0,1]$, we define $\Phi(\mathfrak{R}) := \{1-\frac{r}{n} | r\in \mathfrak{R},n\in \N\}$. If $\mathfrak{R}$ is a finite set of rationals then $\Phi(\mathfrak{R})$ is a DCC set with only accumulation point 1. 
\item Divisors: We use the usual definition of (Cartier) divisor, $\Q$ (Cartier) divisors, $\R$ (Cartier)-divisors. For a divisor $M=\sum_i a_i M_i$, $M_i$ prime divisors, we define $M^{\geq a} := \sum_{i, a_i\geq a} a_i M_i$. Also we write $M\geq a$ if $a_i\geq a$ $\forall i$. Similar definition can be made for $\leq,<,>$. Also we define $\sim, \simQ,\simR$ and their relative version in the usual sense. Also for $\R$-divisor M, we define linear system $|M| := \{N\geq 0 | N\sim M\}$ and $H^0(M):= \{f\in K(X)|(f)+M\geq 0\}$. Also define the $\R$ linear system of $M$ to be $|M|_{\R} := \{N\geq 0| N\simR M\}$. Given a morphism $Y\rightarrow X$, we usually denote $A_Y$ to mean the pull-back of $A$ to $Y$ where $A$ is a divisor on $X$. If $B$ is a divisor on $Y$, we usually denote $B_X$ to be its push forward to $X$. We have the following lemma due to Hironaka resolution of singularities which is used in the proof of Effective Birationality very often.
\begin{lemma}\label{2.6}(\cite{fa} 2.6) 
    X normal variety and $M$ a $\R$-Cartier $\R$ divisor. Let $f: Y\rightarrow X$ a projective birational morphism with $Y$ normal and let $M_Y := f^* M$. and let $F$ be fixed part of $|M|$ and $F_Y $ be fixed part of $|M_Y|$. Then clearly we have $f_* |M_Y|=|M|$ and $f_*F_Y =F$. Furthermore, we have if $f$ is a sufficiently high resolution, $|M_Y-F_Y|$ is base point free.
\end{lemma}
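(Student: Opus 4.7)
}

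The plan is to dispose of the two ``clearly'' statements by a direct compatibility argument between $f^{*}$ and $f_{*}$ on linear systems, and then reduce the base-point-freeness claim to principalization of the base ideal.

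\textbf{Step 1 (the bijection $|M|\leftrightarrow |M_Y|$).} I would first observe that since $f$ is birational, the function fields $K(X)$ and $K(Y)$ agree, so any principal divisor $(g)$ on $Y$ pushes forward to the principal divisor $(g)$ on $X$ and pulls back to itself. Given $N_Y\in |M_Y|$, writing $N_Y-f^{*}M=(g)$ shows $f_{*}N_Y=M+(g)\in |M|$, and applying $f^{*}$ returns $f^{*}M+(g)=N_Y$. Conversely $f_{*}f^{*}N=N$ is trivial. Hence $f^{*}$ and $f_{*}$ are mutually inverse bijections between $|M|$ and $|M_Y|$, which in particular gives $f_{*}|M_Y|=|M|$.

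\textbf{Step 2 (comparing the fixed parts).} For a prime divisor $D$ on $X$ with strict transform $D'$ on $Y$, $\operatorname{coeff}_{D'}(f^{*}N)=\operatorname{coeff}_D(N)$. Using the bijection from Step 1,
\[
\operatorname{coeff}_{D'}(F_Y)=\inf_{N_Y\in |M_Y|}\operatorname{coeff}_{D'}(N_Y)=\inf_{N\in |M|}\operatorname{coeff}_D(N)=\operatorname{coeff}_D(F),
\]
so $f_{*}F_Y=F$ as claimed. (The coefficients of $F_Y$ along exceptional divisors are captured by the resolution and do not affect the pushforward.)

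\textbf{Step 3 (base-point-freeness via principalization).} Write $M-F=L$, the moving part of $|M|$; by construction $|L|$ has no fixed component. Let $\mathfrak{b}\subseteq\OO_X$ be the base ideal of $|L|$. By Hironaka resolution and principalization, I can choose $f:Y\to X$ projective birational with $Y$ normal (smooth if desired) such that $\mathfrak{b}\cdot\OO_Y=\OO_Y(-E)$ for some effective Cartier divisor $E$ on $Y$. Then the base ideal of $f^{*}|L|$ equals $\OO_Y(-E)$, which means every element of $f^{*}|L|$ contains $E$ and the residual system $|f^{*}L-E|$ is base-point free. Using Step 1, $|M_Y|=f^{*}|M|=f^{*}F+f^{*}|L|=f^{*}F+E+|f^{*}L-E|$. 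Since $|f^{*}L-E|$ is base-point free and nonempty, it has no fixed component, so the fixed part of $|M_Y|$ is exactly $f^{*}F+E$, i.e.\ $F_Y=f^{*}F+E$, and therefore
\[
|M_Y-F_Y|=|f^{*}L-E|
\]
is base-point free, as required.

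\textbf{Main obstacle.} The substantive input is the existence of a principalization of $\mathfrak{b}$, i.e.\ Hironaka's theorem for resolving an ideal sheaf (or equivalently resolving the indeterminacy of the rational map defined by $|L|$); everything else is bookkeeping with $f^{*}$ and $f_{*}$. One mild subtlety to handle cleanly is verifying that the fixed part of the base-point-free piece $|f^{*}L-E|$ really is zero, which requires it to be nonempty; this follows because $|L|\neq\emptyset$ (otherwise $F$ is undefined and the lemma is vacuous) and pulling a section back to $Y$ and dividing by the local equation of $E$ produces a nonzero section of $f^{*}L-E$.
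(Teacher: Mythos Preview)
The paper does not supply its own proof of this lemma; it merely records the statement and cites \cite{fa}, Lemma 2.6. So there is nothing to compare against beyond noting that your argument is the standard one: identify $H^0(M)$ with $H^0(M_Y)$ via the equality of function fields, read off $f_*F_Y=F$, and then invoke Hironaka to principalize the base ideal of the mobile part so that $F_Y=f^*F+E$ and $|M_Y-F_Y|=|f^*L-E|$ is base-point free. This is exactly the argument that underlies the cited result, and your write-up is correct.

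One small remark on Step 3: you should phrase the choice of $f$ a bit more carefully. As written you first take an arbitrary $f$ in the statement, but then in Step 3 you \emph{choose} $f$ to principalize $\mathfrak{b}$. The lemma only asserts base-point-freeness for a sufficiently high resolution, so this is fine in substance, but it is cleaner to say: given any $f$ dominating such a principalizing resolution, the same computation goes through (pullback of a principal ideal stays principal), so the conclusion holds for all sufficiently high $f$. Likewise, since $M$ is an $\R$-divisor and $|M|$ is defined via integral linear equivalence in this paper, the base ideal $\mathfrak{b}$ should be read as the ideal cut out by the image of $H^0(\lfloor M\rfloor)\otimes\OO_X(-\lfloor L\rfloor)$ (or more directly as the scheme-theoretic base locus of the finite-dimensional space $H^0(M)$); your argument is unaffected, but it is worth being explicit that principalization applies to this coherent ideal sheaf.
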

\item $(X,B)$ is called a sub pair if X is normal variety, $B$ a $\R$ divisor such that $K_X+B$ is $\R$-Cartier. $B$ is called a boundary if $B\leq 1$. $(X,B)$ is called a pair if $B\geq 0$. We define the notion of lc klt plt dlt same as most places. (\cite{fa},2.8). We say $D$, a divisors over $X$, is a non-klt place if $a(D,X,B)\geq 0$ and centre of $D$ on $X$ is called a non-klt centre.
\item Let $(X,B)$ be a pair with a contraction $X\rightarrow Z$, we say $(X,B)$ is log fano over $Z$ if $-(K_X+B)$ ample over $Z$, we say it is weak log fano over $Z$ if $-(K_X+B)$ is nef and big over $Z$. If $Z$ is a point we omit over $Z$. When we don't mention $B$, we assume $B=0$. We also define $X$ fano type over $Z$ if $(X, B)$ is klt weak log fano over $Z$ for some choice of $B$. This is clearly equivalent to there exist $\Gamma$ big $/Z$ such that $(X,\Gamma)$ klt and $K_X+\Gamma \simR 0/Z$. \\ Furthermore, if $X$ is fano type over $Z$, we can run an MMP on any divisors $D$ and the MMP will terminate because it is the same as running MMP on $K_X+(\Gamma+\frac{1}{N}D)$ and $(\Gamma+\frac{1}{N}D)$ is big and the pair has klt singularities for sufficiently large $N$. Also note that outcome of MMP is also a fano type variety over $Z$ due to the following fact: (\cite{fa},2.12) If $X$ fano type and $X\rightarrow Y$ a contraction with $\dim Y>0$, then $Y$ is also fano type.
\item When $X$ is $\Q$-factorial fano type variety, we can firstly run a MMP on $-K_{X}$ which ends with $X'$ a weak fano variety. Now since abundance hold for fano type,  $-K_{X'}$ is semi-ample, hence defines a birational contraction $X'\rightarrow X''$. Now $X''$ is a fano variety since $-K_{X''}$ is ample because it is the pullback of ample divisor along a finite map (we construct the contraction defined by $-K_{X'}$ using stein factorization theorem).
\end{enumerate} 
\subsection{Generalised Pairs}
Here we define Generalised pairs and discuss some of its properties. 
\begin{defn}
    A generalised pair is given as $(X',B'+M')$ where $X'$  is a normal variety with a projective morphism $X'\rightarrow Z$, $B'\geq 0$ an $\R$ divisor on $X'$ and b-$\R$-Cartier b-divisor represented by some projective birational morphism $X \rightarrow X'$ and $\R$-Cartier $M$ on X such that $M$ is nef/$Z$ and $M|_{X'}=M'$, and $K_{X'}+B'+M'$ is $\R$-Cartier. When $Z$ is a point we say the pair is projective. Note since $M$ is defined birationally, we can always replace $X$ by a log resolution and replace $M$ by its pullback.
\end{defn}
Now we can define singularities by: Replace $X\xrightarrow{\phi}X'$ by a log resolution of $(X',B')$, then we can write $K_X+B+M=\phi^*(K_{X'}+B'+M')$. For a prime divisor $D$ on $X$, we define $a(D,X'B'+M') := 1-\mu_D(B)$. We say $(X',B'+M')$ is generalised klt (lc, $\epsilon$-lc)if $a(D,X'B'+M')>0$ ($\geq 0$, $\geq \epsilon$). We say $(X',B'+M')$ generalised dlt if it is generalised lc and $(X',B')$ is dlt and all non-klt centre of $(X',B'+M')$ is a non-klt centre of $(X',B')$, when $\floor{B'}$ is irreducible, we say it is generalised plt. We can define generalised lc threshold similarly. We also have a similar statement for connectedness principle when $-(K_{X'}+B'+M')$ is nef and big over $Z$. Also when $(X',B'+M')$ is generalised klt and $-(K_{X'}+B'+M')$ is nef and big, then $X'$ is fano type.
\subsection{$\mathbb{Q}$-Factorial Dlt Model, Extraction of Divisors and Plt Blowup of Pairs and Generalised Pairs}\label{Q_fac}
\begin{enumerate}[wide, labelwidth=!, labelindent=10pt]
    \item We construct $\Q$-factorial dlt model for a lc-pair $(X,B)$. Similar argument works for generalised pair. Let $(X,B)$ be a lc-pair. Let $W\xrightarrow{\phi} X$ be a log resolution of $(X,B)$, let $\Sigma := B^\sim +\sum_i E_i$ where $E_i$ are all exceptional prime divisors. Note by lc, we have $K_W+\Sigma = \phi^*(K_X+B)+G$, where $G:= \sum_i a(E_i,X,B)E_i\geq 0$. Now run a MMP on $K_W+\Sigma$ over $X$, we reach a model $Y\xrightarrow{\psi} X$ such that $K_Y+\Sigma_Y$ is a limit of movable divisors over $X$ (i.e. after we have done all divisorial contractions). We claim $G_Y=0$:  Indeed we have $G_Y \simR K_Y+\Sigma_Y /X$, which implies $G_Y\leq 0$ by generalised negativity. This means that $K_Y+\Sigma_Y = \psi^*(K_X+B)$ and in particular $Y$ is $\Q$-factorial dlt by construction and all exceptional divisors of $\psi$ appear as coefficient 1 in $\Sigma_Y$, we call $(Y,\Sigma_Y)$ is a $\Q$-factorial dlt model for $(X,B)$. 
    \item Small $\Q$-factorization and plt blow up. Note if $(X,B)$ is klt, running above argument gives a small klt $\Q$-factorisation of $(X,B)$. If in addition $X$ is $\Q$-factorial, then the above process will simply give $X$, since there are no non-isomorphic small contraction between $\Q$-factorial varieties). 
    \item Extraction of divisors: If $(X,B)$ is klt and we have $a(D,X,B)=\epsilon>0$, then we can by subtracting $a(D,X,B)D$ from $\Gamma$ above we can make sure $\Gamma$ is not contracted and hence get a birational morphism $Y\rightarrow X$ that only extract $D$, (i.e. the only exceptional divisor is $D$). We refer to this as a extraction of divisor $D$. 
    \item plt blowup: Assume $(X,B)$ lc but not klt and $(X,0)$ is $\Q$-factorial klt. First we can obtain $(Y,B_Y)$ a $\Q$-factorial dlt model of $(X,B)$ with $K_Y+B_Y = \phi^*(K_X+B)$, where $\phi: Y\rightarrow X$. Now let $\Gamma_Y $ to be exceptional divisor of $\phi$. Running an MMP on $K_Y+\Gamma_Y$ over $X$ end with $X$ by (2). By construction we have $\Gamma_Y \leq B_Y$. Now replacing $(Y,B_Y)$ with the domain of the last divisorial contraction of the MMP gives a birational morphism $Y\rightarrow X$ contracting only one divisor T, a component of $\floor{B_Y}$ where $K_Y+B_Y$ is pullback of $K_X+B$. Furthermore, $(Y,T)$ is plt and $-(K_Y+T)$ is ample over $X$ by construction, we call such $Y$ a plt blowup of $(X,B)$.
\end{enumerate}

\subsection{Bounded Families of Pairs and Subvarieties}\label{bnd_fam}
For the definition of a bounded family of varieties or log bounded family of couples, please see \cite{fa}. Here we introduce some important property that we will need. The general principle is that any numerical data of bounded family are bounded and most birational modification of bounded family are still bounded. We use the standard notation that we assume $V_i\rightarrow T_i$ gives a bounded family given by the fibers. 
    \begin{enumerate}[wide, labelwidth=!, labelindent=10pt]
    \item Criteria for boundedness: Let $\mathcal{P}$ be a set of couples of dimension d. $\mathcal{P}$ is log bounded if and only if there exist $r\in N$ such that for any $(X,D)\in \mathcal{P}$, $\exists A$ very ample on $X$ such that $A^d\leq r$ and $A^{d-1}D\leq r$. 
    \item Boundedness of log resolution and $\Q$-factorial dlt models: Let $\mathcal{P}$ be a bounded family of couples of dimension $d$, then we can choose log resolution $\phi: W\rightarrow X$ for $(X,D)\in \mathcal{P}$ such that the set of $(W,D_W)$ form a bounded family, where $D_W := D^\sim+exc/X$. The same hold for $\Q$-factorial dlt models. (Consider taking log resolution of the family and apply induction on dimension)
    \item Boundedness of numerical invariant of $K_X$: If $\mathcal{P}$ is a bounded family, then most numerical property of $K_X$ is bounded, e.g. Take a very ample divisor, $A$, as in (1), then $A^{d-1}K_X$ is bounded from above and below. (Consider very ample divisor on $V$ and $K_V$)
    \item Boundedness of Cartier index :Let $\mathcal{P}$ be a bounded set of couples of dimension $d$. Then there exist $I(\mathcal{P})\in \N$ such that if X has klt singularities and $M\geq 0$ a integral $\Q$-Cartier divisor such that $(X,Supp M)\in \mathcal{P}$, then $IK_X,IM$ are both Cartier. \\Now also let $r\in \N$, then there exist $J(d,r)\in \N$ such that: Assume further to above, we have $L$ nef integral divisor such $A^{d-1}L\leq r$ and $A$ is very ample $A^d\leq r$ (exist since $X$ is bounded), then $JL$ is Cartier.
    \end{enumerate}
\subsection{Exceptional and Non-Exceptional Pairs}\label{7}
Let $(X',B'+M')$ be a projective generalized pair such that $K_{X'}+B'+M'+P'\simR 0$ for some $P'\geq 0$. We say it is non-exceptional (strongly non-exceptional) if there exist such $P'$ such that $(X',B'+P'+M')$ is not generalised klt (lc). We have an obvious lemma that helps to keep track of exceptionality.
\begin{lemma}
    Let $X\rightarrow X',X''$ be 2 projective morphism $M$ a divisor on $X$. Assume we have $(X',B'+M')$ and $(X'',B''+M'')$ 2 generalised pair, and we have $(K_{X'}+B'+M')|_{X}\leq (K_{X''}+B''+M'')|_{X} $. If $(X',B'+M')$ is exceptional, then so is $(X'',B''+M'')$.
\end{lemma}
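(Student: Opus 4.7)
\proofsketch
The plan is to prove the contrapositive: assuming $(X'', B''+M'')$ is non-exceptional, I would construct $P' \geq 0$ on $X'$ with $K_{X'}+B'+M'+P' \simR 0$ such that $(X', B'+M'+P')$ fails to be generalised klt.

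After replacing $X$ by a sufficiently high common resolution dominating both $X'$ and $X''$ via $\phi': X \to X'$ and $\phi'': X \to X''$, set
\[K_X + B_1 + M := {\phi'}^{*}(K_{X'}+B'+M'),\qquad K_X + B_2 + M := {\phi''}^{*}(K_{X''}+B''+M''),\]
so that the hypothesis becomes $B_1 \leq B_2$. Fix $P'' \geq 0$ witnessing non-exceptionality of $(X'', B''+M'')$, together with a prime divisor $D$ on $X$ (on a sufficiently high model) satisfying $\mu_D(B_2 + {\phi''}^{*}P'') \geq 1$. Now define
\[Q := (B_2 - B_1) + {\phi''}^{*}P'' \geq 0 \text{ on } X,\qquad P' := {\phi'}_{*}\, Q \geq 0 \text{ on } X'.\]
Since $K_X + B_1 + M + Q = {\phi''}^{*}(K_{X''}+B''+M''+P'') \simR 0$, pushing forward by $\phi'$ yields $K_{X'}+B'+M'+P' \simR 0$.

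It remains to show $\mu_D(B_1 + {\phi'}^{*}P') \geq 1$. Consider $N := (B_1 + {\phi'}^{*}P') - (B_2 + {\phi''}^{*}P'')$. A direct pushforward computation using ${\phi'}_{*} B_1 = B'$ and the construction of $P'$ gives ${\phi'}_{*} N = 0$, so $N$ is $\phi'$-exceptional. Furthermore $N \simR 0$ on $X$, as both bracketed divisors are pullbacks of divisors $\simR 0$. Applying the negativity lemma of contractions separately to $N$ and to $-N$, each of which is $\phi'$-nef and $\phi'$-exceptional, forces $N = 0$. Hence $\mu_D(B_1 + {\phi'}^{*}P') = \mu_D(B_2 + {\phi''}^{*}P'') \geq 1$, witnessing that $(X', B'+M'+P')$ is not generalised klt.

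The main delicate point is that the inequality $B_1 \leq B_2$ by itself would worsen log discrepancies on the $X''$ side relative to $X'$, pushing things the wrong way for a direct transfer of non-klt places. The remedy is to let $P'$ absorb the excess $B_2 - B_1$ via the pushforward construction; the two-sided application of the negativity lemma is the clinching step, pinning $N$ to zero so that the non-klt place on $X''$ persists as a non-klt place on $X'$.
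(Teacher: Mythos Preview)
The paper does not actually prove this lemma; it introduces it as ``an obvious lemma that helps to keep track of exceptionality'' and states it without argument. Your contrapositive argument is correct and supplies the details: the key identity ${\phi'}^{*}P'=Q$ (equivalently $N=0$) follows exactly as you say, since $N$ is $\phi'$-exceptional and globally $\simR 0$, hence $\phi'$-numerically trivial, and the two-sided negativity lemma forces $N=0$. One small remark: you can shortcut the final step slightly by noting directly that $Q-{\phi'}^{*}P'$ is $\phi'$-exceptional and $\equiv 0/X'$, so $Q={\phi'}^{*}P'$; then $B_1+{\phi'}^{*}P'=B_1+Q=B_2+{\phi''}^{*}P''$ and the non-klt place transfers immediately.
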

\subsection{Complements}
Here we define complements for  pairs. Let $(X,B)$ be a pair where B is a boundary and let $X\rightarrow Z$ be a contraction, let $T := \floor{B},\Delta := B-T$. A n-complement of $K_X+B$ over $z\in Z$ is of the form $K_X+B^+$, such that over some neighbourhood of $z$, we have $(X,B^+)$ is lc, $n(K_X+B^+)\sim 0$ and $nB^+\geq nT+\floor{(n+1)\Delta}$. Clearly if all condition are satisfied except the last condition but we have $B^+\geq B$ then $K_X+B^+$ is also a n-complement. In this case we have $n(B^+-B)$ is an element in $|-n(K_X+B)|$ over $z$. We can also extend this definition to generalised pairs. Instead of requiring $n(K_X+B^+)\sim 0$, we require $n(K_{X'}+B'^+ +M')\sim 0$ and $nM$ is b-Cartier. We have the following useful lemma to keep track of complements.
\begin{lemma}\label{6.1.3}
    1. Let $(X',B'+M')$ with data $X\xrightarrow{\phi}X$ and M on $X$, and $(X'',B''+M'')$ be 2 generalised pair. Assume (by replacing X), $X\xrightarrow{\phi} X',X\xrightarrow{\psi}X''$ be a common resolution such that $\psi_* M=M''$.Suppose further that $\phi^*(K_{X'}+B'+M') + P = \psi^*(K_{X''}+B''+M'')$ for some $P\geq 0$, then if $(X'',B''+M'')$ has a n-complement, then so does $(X',B'+M')$.\\    
    2. Let $(X',B'+M')$ be a generalised pair. Assume $X'\dashrightarrow X''$ be a partial MMP on $-(K_{X'}+B'+M')$, and let $B'',M''$ be pushdown of $B',M'$ to $X''$. If $(X'',B''+M'')$ has a n-complement $B''^+$, then $(X'B'+M')$ has a n-complement.
\end{lemma}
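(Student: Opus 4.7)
For part (1), the plan is to lift the $n$-complement $K_{X''}+B''^++M''$ to the common resolution $X$, push it down to $X'$, and verify the output is an $n$-complement in the form $B'^+ \geq B'$. Set $G'' := B''^+ - B'' \geq 0$, so that $nG'' \sim -n(K_{X''}+B''+M'')$. Define $H := \psi^* G'' + P \geq 0$ on $X$ and $B'^+ := B' + \phi_* H \geq B'$. Combining the hypothesis $\phi^*(K_{X'}+B'+M') + P = \psi^*(K_{X''}+B''+M'')$ with the complement's linear equivalence gives $n(\phi^*(K_{X'}+B'+M') + H) \sim 0$ on $X$, and applying $\phi_*$ (using $\phi_* \phi^* = \mathrm{id}$ on $\Q$-Cartier divisors and the fact that $\phi_*$ preserves principal divisors) yields $n(K_{X'}+B'^++M') \sim 0$.

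The main obstacle is verifying that $(X', B'^++M')$ is generalised lc. Compare the two pullbacks on $X$: their difference is $\phi^*(K_{X'}+B'^++M') - \psi^*(K_{X''}+B''^++M'') = \phi^*(\phi_* H) - H$, which has zero $\phi_*$-pushforward (hence is $\phi$-exceptional) and is $\Q$-linearly trivial over $X'$ (both pullbacks being $\Q$-linearly equivalent to $0$). Two applications of the negativity lemma, to this $\phi$-exceptional, $\phi$-numerically trivial divisor and to its negative, force it to vanish. Thus the two pullbacks agree on $X$, so the generalised lc property of $(X'', B''^++M'')$ transfers directly to $(X', B'^++M')$.

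For part (2), the plan is to reduce to (1) by producing $P \geq 0$ satisfying $\phi^*(K_{X'}+B'+M') + P = \psi^*(K_{X''}+B''+M'')$ on a common resolution, via a single-step analysis followed by induction. For a divisorial contraction $f: X_i \to X_{i+1}$ contracting $E$, write $K_{X_i} + B_i + M_i = f^*(K_{X_{i+1}} + B_{i+1} + M_{i+1}) + aE$; the contracted curve $C$ satisfies $E \cdot C < 0$ and $(K_{X_i} + B_i + M_i) \cdot C > 0$ (as the step is $-(K_{X_i}+B_i+M_i)$-negative), forcing $a < 0$ and hence the desired inequality. For a flip $X_i \dashrightarrow X_{i+1}$ over $Z_i$ with common-resolution maps $p, q$, the divisor $\beta := p^*(K_{X_i}+B_i+M_i) - q^*(K_{X_{i+1}}+B_{i+1}+M_{i+1})$ is nef over $Z_i$ (since $K_{X_i}+B_i+M_i$ is $\pi_i$-ample and $-(K_{X_{i+1}}+B_{i+1}+M_{i+1})$ is $\pi_i^+$-ample, so $\beta$ is a sum of two divisors nef over $Z_i$), with zero $Z_i$-pushforward by smallness of both contractions; negativity then yields $-\beta \geq 0$. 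Chaining these step inequalities on a sufficiently high common resolution of all intermediate models produces the required $P \geq 0$, and part (1) concludes.
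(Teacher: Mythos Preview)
Your proposal is correct and follows essentially the same route as the paper. The paper defines $B'^+ := B' + \phi_*(P + \psi^*(B''^+ - B''))$, which is exactly your $B' + \phi_* H$, and then asserts $n\phi^*(K_{X'}+B'^++M') = n\psi^*(K_{X''}+B''^++M'')$ as an ``easy computation''; your double application of the negativity lemma to the $\phi$-exceptional, $\phi$-numerically trivial difference $\phi^*(\phi_* H) - H$ is precisely the content of that computation. For part (2), the paper says only ``clearly 1 implies 2 by basic property of MMP'', and your step-by-step verification (divisorial case via the sign of $a$, flip case via negativity over the flip base) is the standard unpacking of that phrase.
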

\begin{proof}
    Clearly 1 implies 2 by basic property of MMP. Let $B''^+\geq B''$ be a n-complement for $(X'',B''+M'')$, Consider $B'^+ := B'+\phi_*(P+\psi^*(B''^+-B'')$, then by easy computation, we get $n\phi^*(K_{X'}+B'^+ + M')= n\psi^*(K_{X''}+B''^+ +M'')\sim 0$. Hence we get $n(K_{X'}+B'^+ + M')\sim 0$ and $(K_{X'}+B'^++M')$ is also generalised lc since $(X'',B''^+ +M'' )$ is generalised lc.
\end{proof}
\subsection{Potentially Birational Divisors}\label{2.30}
Let $X$ be a normal projective variety and $D$ a big $\Q$-Cartier $\Q$-divisor on $X$, we say $D$ is potentially birational if for any $x,y$ general closed point in $X$, there is a $0\leq \Delta\simQ (1-\epsilon)D$ for some $\epsilon\in(0,1)$ such that $(X,\Delta)$ is lc at $x$ with non-klt centre $\{x\}$  and not klt at $y$. If $D$ is potentially birational, then $|K_X+\ceiling{D}|$ defines a birational map.
\subsection{Volume, Numerical Kodiara Dimension of Divisors}\label{2.38}
For $X$ be a normal projective variety of dimension $d$ and $D$ a $\R$ divisor on X. we define $vol(D) := \limsup_{m\rightarrow \infty} \frac{h^0(\floor{mD})}{m^d/d!}$, we note that $vol(D)>0$ if and only if $D$ is big. For $D$ a $\R$-Cartier $\R$ divisor we define the numerical Kodiara dimension, $\kappa_\sigma(D) $, to be $-\infty$ if $D$ not pseudoeffective, and to be the largest integer $r$ such that $\limsup_{m\rightarrow \infty}\frac{h^0(\floor{mD}+A)}{m^r}>0$. We have the following two lemmas which will be used in proof of effective birationality. The following two lemmas essentially follows from the fact that volume and $\kappa_\sigma$ is essentially invariant for families of varieties defined by fibres of some morphism.
\begin{lemma}
    Let $\mathcal{P}$ be a bounded set of smooth projective varieties $X$ with $\kappa_\sigma(K_X)=0$, then there exist $l\in \N$ such that $h^0(lK_X)\neq 0$ for all $X\in \mathcal{P}$.
\end{lemma}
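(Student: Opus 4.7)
The plan is to use boundedness to stratify $\mathcal{P}$ into finitely many smooth projective families, apply invariance of plurigenera on each, and then invoke the fact that $\kappa_\sigma(K_X)=0$ forces some pluricanonical system of a smooth projective $X$ to be non-empty. Concretely, by the boundedness of $\mathcal{P}$, I would fix a parameterizing morphism $V\to T$ with $T$ of finite type so that every $X\in\mathcal{P}$ appears as a (geometric) fiber. By Noetherian induction together with the stratification-and-resolution techniques of Section~\ref{bnd_fam}, reduce to the case where $T$ is a finite disjoint union of smooth irreducible varieties $T_1,\ldots,T_N$ and $V\to T$ restricts to a smooth projective morphism on each $T_i$; since the fibers we care about are themselves already smooth, generic smoothness makes this reduction essentially automatic after suitably shrinking the strata.

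Next, on each smooth family $V|_{T_i}\to T_i$, Siu's invariance of plurigenera implies that $t\mapsto h^0(lK_{V_t})$ is constant on each connected component of $T_i$ for every fixed $l\in\N$. Thus the property ``$h^0(lK_X)\neq 0$'' depends only on the connected component of $T$ in which $X$ lives, and there are only finitely many such components. For each component pick a representative $X_{t_i}$; since $X_{t_i}$ is smooth projective with $\kappa_\sigma(K_{X_{t_i}})=0$, Nakayama's theorem on the $\kappa_\sigma=0$ case of abundance provides some $l_i\in\N$ with $h^0(l_iK_{X_{t_i}})\neq 0$. Setting $l:=\mathrm{lcm}(l_1,\ldots,l_N)$ and using that non-vanishing of $h^0(l_iK_X)$ forces non-vanishing of $h^0(kl_iK_X)$ for every $k\in\N$, this uniform $l$ satisfies $h^0(lK_X)\neq 0$ for every $X\in\mathcal{P}$.

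The hardest step is the non-trivial input that $\kappa_\sigma(K_X)=0$ implies $\kappa(K_X)\geq 0$ for smooth projective $X$, which is Nakayama's contribution to the $\kappa_\sigma=0$ case of abundance and is needed even for a single $X$. The remainder is essentially bookkeeping via boundedness and invariance of plurigenera, together with a careful Noetherian-induction-plus-resolution setup to match each $X\in\mathcal{P}$ to a smooth fiber of one of the stratified families; a minor care-point is ensuring connectedness of the strata so that invariance of plurigenera actually propagates the non-vanishing from the chosen representative $X_{t_i}$ to the whole stratum.
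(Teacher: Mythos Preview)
Your proposal is correct and matches the paper's approach: the paper does not spell out a proof but only remarks that the lemma ``essentially follows from the fact that volume and $\kappa_\sigma$ is essentially invariant for families of varieties defined by fibres of some morphism,'' and your argument via stratification, Siu's invariance of plurigenera, and Nakayama's $\kappa_\sigma=0\Rightarrow\kappa\geq 0$ is exactly how one makes that remark precise. One small clarification: when you pick a representative $X_{t_i}$ from each connected stratum, you should explicitly choose it to lie in $\mathcal{P}$ (and ignore strata containing no member of $\mathcal{P}$), since otherwise $\kappa_\sigma(K_{X_{t_i}})=0$ is not guaranteed.
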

\begin{lemma}
    Let $X$ be a $\Q$-factorial projective variety with $D$ be such that $\kappa_\sigma(D)>0$, then for any $A$ ample $\Q$-divisor we have $\lim_{n\rightarrow \infty}vol(mD+A)=\infty$. Now further assume $X$ is dimension d and $A$ is very ample and $(X,A)$ belong to a bounded family $\mathcal{P}$, then for all $q>0$, there is a $p(\mathcal{P})\in \N$ such that $vol(pK_X+A)>q$.
\end{lemma}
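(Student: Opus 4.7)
The plan is to prove the two parts separately. Part 1 is a single-variety estimate that follows from the defining lower bound on $h^0$ for $\kappa_\sigma$ together with a rescaling identity for the volume. Part 2 then uses boundedness of $\mathcal{P}$ to make the integer $p$ from Part 1 uniform via Noetherian induction on the parameter space.

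For Part 1, since $\kappa_\sigma(D)\geq 1$, by the definition recalled in Section \ref{2.38} there exist $c>0$ and infinitely many $m\in\N$ with $h^0(\floor{mD}+A)\geq cm$. Passing to a sufficiently high log resolution $f\colon W\to X$ and applying Lemma \ref{2.6}, we decompose $f^*(\floor{mD}+A)=M_m+F_m$ with $M_m$ base-point free and $h^0(M_m)=h^0(\floor{mD}+A)\geq cm$. For $m$ large in our infinite set, the morphism $\phi_{|M_m|}$ has image of positive dimension. Using the homogeneity $vol(mD+A)=m^d\,vol\paren{D+\tfrac{1}{m}A}$, it suffices to establish the standard asymptotic $vol(D+tA)\gtrsim t^{\,d-r}$ as $t\to 0^+$ with $r=\kappa_\sigma(D)$, since then $vol(mD+A)\gtrsim m^r\to\infty$. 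This asymptotic is verified from the top intersection $(M_m+\mathrm{ample})^d$ on $W$, invoking Nakayama's structure theory of pseudoeffective divisors to control how $M_m$ grows in $m$.

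For Part 2, represent $\mathcal{P}$ by finitely many families $V_i\to T_i$ of finite type whose fibres realise the pairs $(X,A)$ (see Section \ref{bnd_fam}). For each fixed $p\in\N$, the function $t\mapsto vol(pK_{X_t}+A_t)$ is constructible in $t$: $h^0$ of a coherent sheaf is upper semicontinuous in flat families, and volume is a normalised asymptotic of such $h^0$'s. Apply Part 1 at the generic point $\eta_i$ of each $T_i$ to obtain $p_i$ with $vol(p_iK_{X_{\eta_i}}+A_{\eta_i})>q$; this inequality persists on an open dense subset of $T_i$, and Noetherian induction on the closed complement lets us (possibly) enlarge $p_i$ finitely many times. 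Taking the maximum of the finitely many resulting $p_i$'s yields the required $p(\mathcal{P})$.

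The main obstacle is the single-variety asymptotic $vol(D+tA)\gtrsim t^{\,d-\kappa_\sigma(D)}$. The lower bound on $h^0(\floor{mD}+A)$ from the definition of $\kappa_\sigma$ concerns the asymptotic in $m$, whereas volume is defined via a different asymptotic, namely $k\to\infty$ in $h^0(k(mD+A))/k^d$; bridging these two requires either Nakayama's structure theory of pseudoeffective divisors or a careful intersection-theoretic argument on the log resolution that exploits positive-dimensionality of the image of $\phi_{|M_m|}$.
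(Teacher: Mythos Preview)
The paper does not actually prove this lemma; it records it as a preliminary fact with only the one-line justification that ``volume and $\kappa_\sigma$ are essentially invariant for families of varieties defined by fibres of some morphism.'' So there is no detailed argument to compare against. Your Part~2 strategy (Noetherian induction on the parameter space, using that the volume inequality at the generic fibre spreads out) is exactly what that sentence is pointing at, and is correct in outline. One small correction: upper semicontinuity of $h^0$ gives that $h^0$ at a special fibre is \emph{at least} its value at the generic fibre, which is the direction you need; calling the volume ``constructible'' is imprecise but harmless here.

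There is, however, a genuine gap in Part~2 that the paper's hint covers but you do not. To invoke Part~1 at the generic point $\eta_i$ you need $\kappa_\sigma(K_{X_{\eta_i}})>0$, whereas the hypothesis only gives $\kappa_\sigma(K_X)>0$ for the particular member $X$ you started with. Bridging this requires precisely the other half of the paper's justification, namely that $\kappa_\sigma$ is deformation invariant (or at least constructible) in the family, so that one may first stratify $T_i$ by the value of $\kappa_\sigma(K_{X_t})$ and then run your Noetherian induction on each stratum. Without this you cannot guarantee Part~1 applies at $\eta_i$.

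For Part~1, you go further than the paper attempts and correctly isolate the real content: the asymptotic $\mathrm{vol}(D+tA)\gtrsim t^{\,d-\kappa_\sigma(D)}$ as $t\to 0^+$. This is a genuine theorem (Nakayama, \emph{Zariski-decomposition and abundance}, Chapter~V), not something one extracts directly from the bare definition of $\kappa_\sigma$ via $h^0(\lfloor mD\rfloor+A)$; your sketch of ``top intersection of $M_m+\text{ample}$ on $W$'' does not by itself control the $k\to\infty$ limit defining volume, as you yourself note. It is fine to cite Nakayama here, but you should do so explicitly rather than leave it as an obstacle.
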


\subsection{Pairs with Large Boundary}
The following lemma is extremely important to bounding volumes which are used in both papers.
\begin{lemma}\label{2.46}
    Let $(X,B)$ be a $\Q$-factorial dlt pair of dimension $d$ and let $M$ a nef Cartier divisor. Let $a>2d$. Then any MMP on $K_X+B+aM$ is $M$-trivial. If M is nef and big Cartier divisor. Then $K_X+B+aM$ is big.
\end{lemma}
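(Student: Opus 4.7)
For the first assertion the plan is to use the length-of-extremal-rays bound. Fix a $(K_X+B+aM)$-negative extremal ray $R$ contracted by one step of an MMP. Since $M$ is nef, $(K_X+B)\cdot R\leq(K_X+B+aM)\cdot R<0$, so $R$ is $(K_X+B)$-negative; the cone theorem applied to the dlt pair $(X,B)$ then produces a rational curve $C$ generating $R$ with $0<-(K_X+B)\cdot C\leq 2d$. Combining this with $(K_X+B+aM)\cdot C<0$ yields $aM\cdot C<2d$, i.e.\ $M\cdot C<2d/a<1$. Because $M$ is an integral Cartier divisor and $C$ a curve, $M\cdot C$ must be a non-negative integer, forcing $M\cdot C=0$; hence the step is $M$-trivial.

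For the second assertion I will induct on $d$, the case $d=0$ being vacuous. Run an MMP on $K_X+B+aM$ with scaling of an auxiliary ample divisor. Every step is $M$-trivial by the first part, so $M$ descends to each intermediate model as a nef Cartier divisor; moreover $M$-trivial divisorial contractions realise $M$ as a pullback, so the integer $M^d$ is preserved throughout. The MMP cannot terminate at a Mori fibre space $\phi\colon X''\to Z$ with $\dim Z<d$: the contracted ray spans the relative Mori cone, so $M''|_F\equiv 0$ on every fibre $F$ of $\phi$; but $M''$ is still nef with $M''^d=M^d>0$, so Kodaira's lemma gives $M''\simQ A_0+E_0$ with $A_0$ ample and $E_0\geq 0$, and restricting to a general $F$ (noting $F\not\subset\mathrm{supp}(E_0)$ and that $A_0|_F$ stays ample) would make $M''|_F$ big, a contradiction. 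Hence the MMP terminates at a minimal model $X'$ on which $D':=K_{X'}+B'+aM'$ is nef.

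I then apply the basepoint-free theorem to $D'$ to obtain a contraction $\phi\colon X'\to Y$ with $D'\simQ\phi^*A$ for some ample $A$ on $Y$. If $\dim Y=d$ then $\phi$ is birational, $D'$ is big, and since each MMP step preserves the volume of the divisor being minimised, $K_X+B+aM$ is big as well. If $\dim Y<d$, pick a general fibre $F$: adjunction realises $(F,B_F)$ as a dlt pair of dimension $d'<d$ with $M_F:=M'|_F$ nef and Cartier, and the same Kodaira-plus-restriction argument as above makes $M_F$ big on $F$. But $D'|_F\equiv 0$, so $K_F+B_F+aM_F\equiv 0$; the inductive hypothesis applied to $F$, valid since $a>2d>2d'$, then says this divisor is big, a contradiction.

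The main obstacle I anticipate is not a geometric one but the birational bookkeeping: because $aM$ has coefficient greater than $1$ it is not a boundary, so termination of the MMP on $K_X+B+aM$ cannot be quoted directly from BCHM and must be arranged by scaling against an auxiliary ample divisor while using the $M$-triviality from the first part to relate the steps to a $(K_X+B)$-MMP. Similarly, transferring bigness back from $X'$ to $X$ relies on the standard fact that flips preserve volume and that an $M$-trivial divisorial contraction writes $D$ as $\phi^*D'+cE$ with $E$ exceptional, so that $\mathrm{vol}(D)=\mathrm{vol}(D')$. Once these are set up, the inductive argument closes.
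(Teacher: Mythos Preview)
Your proof of the first assertion is correct and is precisely the paper's argument.

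For bigness you take a longer route than the paper. The paper first drops $B$ (since $B\geq 0$, it suffices that $K_X+aM$ be big) and then uses a one-line trick that dissolves exactly the termination problem you flag: if $K_X+aM$ is not big, then there is some $2d<a'<a$ with $K_X+a'M$ not pseudo-effective, because were $K_X+a'M$ pseudo-effective, adding the big divisor $(a-a')M$ would already make $K_X+aM$ big. Now run an MMP on $K_X+a'M$; since this divisor is not pseudo-effective, termination at a Mori fibre space $Y\to T$ is immediate from BCHM, and the length bound (applied with boundary $0$ and $a'>2d$) makes every step $M$-trivial, forcing $M_Y\equiv 0/T$ and contradicting bigness of $M_Y$. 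No induction on dimension, no basepoint-free theorem, no need ever to reach a minimal model.

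Your induction scheme is sound once termination is granted, but the workaround you propose --- relating the steps to a $(K_X+B)$-MMP --- does not help, since termination of an arbitrary dlt $(K_X+B)$-MMP is equally open. One can patch your argument by exploiting the bigness of $M$ to perturb to a klt pair with big boundary and then invoke BCHM, but that is extra work you have not carried out. The paper's passage to $a'<a$ makes all of this unnecessary: it forces you into the non-pseudo-effective regime where termination is automatic and the contradiction is immediate.
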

\begin{proof}
    The MMP is $M$ trivial followings clearly from $M$ is nef and Cartier, and boundedness of extremal rays. (Hence we can run MMP on $K_X+B+aM$). It suffices to show $K_X+aM$ is big.  If it is not, then there exists $2d<a'<a$ such that $K_X+a'M$ is not pseudo effective. Now running MMP on $K_X+a'M$ end with a mori fiber space $Y\rightarrow T$. Again by boundedness of extremal rays, we get $M_Y\equiv 0/T$, but this contradicts the bigness of $M_Y$.
\end{proof}
\subsection{Consequence of ACC of Log Canonical Threshold} \label{2.50}
Now we will state and prove 2 lemmas which will be crucial for our construction of complements in \ref{6.15}. 
\begin{lemma}
    Let $d\in \N$ and $\Phi\subset[0,1]$ be a DCC set. Then there is $\epsilon>0$ depending only on $d,\Phi$ such that if $(X,B)$ is a lc-projective pair of dimension d, $(X,0)$ klt,  $K_X+B\simR0 $ and $B\in \Phi$ and $D$ a divisor over $X$ with $a(D,X,B)<\epsilon$, then $a(D,X,B)=0$.
\end{lemma}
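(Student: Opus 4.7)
The plan is to reduce the statement, by contradiction, to a direct application of the Global ACC theorem of Hacon--McKernan--Xu (for fixed $d$ and any DCC $I\subset[0,1]$, the boundary coefficients appearing in projective lc pairs $(X,B)$ of dimension $d$ with $B\in I$ and $K_X+B\equiv 0$ form a finite subset of $I$). Assume no such $\epsilon$ exists; then one can pick a sequence of data $(X_i,B_i,D_i)$ satisfying the hypotheses with $a(D_i,X_i,B_i)=\epsilon_i$ for some strictly decreasing $0<\epsilon_i\to 0$, and the goal will be to derive a contradiction.

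The first step is to extract each $D_i$ onto a birational model. Since $(X_i,0)$ is klt, $a(D_i,X_i,0)>0$, so the extraction construction of Section~\ref{Q_fac}(3) produces a projective birational morphism $\phi_i:Y_i\to X_i$ whose only exceptional divisor is $D_i$. Letting $B_i^{\sim}$ denote the strict transform of $B_i$ on $Y_i$, the discrepancy formula together with $a(D_i,X_i,B_i)=\epsilon_i$ yields
$$K_{Y_i}+B_{Y_i}=\phi_i^*(K_{X_i}+B_i)\simR 0,\qquad B_{Y_i}:=B_i^{\sim}+(1-\epsilon_i)D_i.$$
Because $(X_i,B_i)$ is lc, so is $(Y_i,B_{Y_i})$; because $X_i$ is projective and $\phi_i$ is projective, so is $Y_i$. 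The coefficients of $B_{Y_i}$ lie in $\Phi\cup\{1-\epsilon_i\}$.

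The second step is a packaging trick. I enlarge the coefficient set to $\tilde\Phi:=\Phi\cup\{1-\epsilon_j:j\geq 1\}$; since $\Phi$ is DCC and the adjoined values $1-\epsilon_j$ form a strictly \emph{increasing} sequence, $\tilde\Phi\subset[0,1]$ is still DCC. Each $(Y_i,B_{Y_i})$ now satisfies the hypotheses of Global ACC for this DCC set, so there exists a finite subset $\tilde\Phi_0\subset\tilde\Phi$ containing every coefficient of every $B_{Y_i}$. But $1-\epsilon_i\in\tilde\Phi_0$ for every $i$ and these values are pairwise distinct, contradicting the finiteness of $\tilde\Phi_0$. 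This contradiction produces the desired $\epsilon$.

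The only genuinely hard input is Global ACC itself, a deep theorem which is a standard black box in this literature. Everything else is bookkeeping: extract $D$ so that its log discrepancy becomes visible as a boundary coefficient on $Y$, then adjoin the ascending sequence $\{1-\epsilon_i\}$ to $\Phi$ (which preserves DCC precisely because the sequence is increasing rather than decreasing), so that Global ACC both applies and is violated.
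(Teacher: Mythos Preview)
Your proof is correct and follows essentially the same approach as the paper: assume the statement fails, extract each $D_i$ using that $(X_i,0)$ is klt to make the log discrepancy visible as a boundary coefficient $1-\epsilon_i$, and then contradict the Hacon--McKernan--Xu ACC results applied to the enlarged DCC set. The only cosmetic difference is that the paper phrases the final contradiction as ``ACC of log canonical threshold'' while you invoke it under the name Global ACC; these are the same HMX theorem, and your formulation is in fact the cleaner one.
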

\begin{proof}
    Assume the theorem is false and we have $\epsilon_i:= a(D_i,X,B) \rightarrow 0$ and $(X_i,B_i)$ as in the claim. Since $(X_i,0)$ klt and we can apply \ref{Q_fac}, we get an birational morphism (maybe ideneity map) $X_i'\rightarrow X_i$ extracting only $D$. Hence let $K_{X_i'}+B_i'$ be the pull back of $K_{X_i}+B_i$ where $B_i' := B_i^\sim +(1-\epsilon_i)D_i$. Hence replacing $X_i$ by $X_i'$ and $\Phi$ accordingly we can assume $D_i$ are divisors on $X$. This contracts ACC of log canonical threshold since $lct(D_i,X_i,B_i) =\epsilon_i\rightarrow0$.
\end{proof}
The next lemma is extremely important in the sense that it allows to replace a DCC set with a finite set when dealing with complements. We will only state and proof the absolute version, for relative version, see \cite{fa}.
\begin{lemma}
    Let $d,p\in \N$ and $\Phi\subset[0,1]$ be a DCC set. then there is $\epsilon(d,p,\Phi)>0$ such that if $(X',B'+M')$ is a generalised pair of dimension d, $B'\in \Phi\cup[1-\epsilon,1]$, pM is b-Cartier, -$(K_{X'}+B'+M')$ is a limit of movable divisor and there is $0\leq P'\simR -(K_{X'}+B'+M')$ such that $(X',B'+P'+M')$ is generalised lc and $X'$ is fano type then if we let $\theta' := B'^{\leq 1-\epsilon}+\floor{B'^{>1-\epsilon}} $,  then run MMP on $-(K_{X'}+\theta'+M')$ and we end with $X''$, then $(X',\theta'+M')$ is generalised lc, the MMP doesn't contract and component of $\floor{\theta'}$,$ -(K_{X''}+\theta''+M'')$ is nef and $(X'',\theta''+M'')$ is generalised lc. In particular, if $K_{X''}+\theta''+M''$ has a n-complement, then so does $K_{X'}+B'+M'$ and coefficient of $\theta''$ lie in some finite set depending only on $\epsilon,\Phi$.    \end{lemma}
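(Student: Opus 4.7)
\proofsketch
The strategy is to zero out the coefficients of $B'$ in $(1-\epsilon,1)$ (passing from $B'$ to $\theta'$) and then run an MMP to restore the nef anti-log-canonical condition. The essential input is the generalised version of the first lemma of Subsection~\ref{2.50}: there is $\epsilon=\epsilon(d,p,\Phi)>0$ such that for any generalised pair $(X',B'+M')$ satisfying the hypotheses, every prime divisor $D$ over $X'$ with $a(D,X',B'+P'+M')\in(0,\epsilon)$ in fact has $a(D,X',B'+P'+M')=0$. For each prime component $S$ of $B'-\theta'$, its coefficient in $B'$ lies in $(1-\epsilon,1)$; generalised lc of $(X',B'+P'+M')$ then forces the coefficient of $S$ in $P'$ to be less than $\epsilon$, so $a(S,X',B'+P'+M')\in[0,\epsilon)$. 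By the ACC input, $S$ is therefore a generalised non-klt centre of $(X',B'+P'+M')$.

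With $\epsilon$ so chosen, since $\theta'\le B'\le B'+P'$ and $(X',B'+P'+M')$ is generalised lc, so is $(X',\theta'+M')$. Using $K_{X'}+B'+P'+M'\simR 0$ we rewrite $-(K_{X'}+\theta'+M')\simR P'+(B'-\theta')\ge 0$, so our MMP is on an effective $\R$-divisor; since $X'$ is fano type it terminates in some $X''$ with $-(K_{X''}+\theta''+M'')$ nef. The coefficients of $\theta'$ lie in $(\Phi\cap[0,1-\epsilon])\cup\{1\}$, finite in the intended applications (e.g., $\Phi$ hyperstandard), and $\theta''$ inherits the same coefficient set as a pushdown.

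The crux is to show (a) no component of $\floor{\theta'}$ is contracted along the MMP and (b) generalised lc is preserved. If a divisorial step contracted a component $S$ of $\floor{\theta'}$ via an extremal ray $R$, then $S\cdot R<0$ and $(P'+(B'-\theta'))\cdot R<0$. Generalised adjunction to $S$ (a component of $\floor{B'}$, hence an lc centre of $(X',B'+P'+M')$) together with $K_{X'}+B'+P'+M'\simR 0$ yields a generalised Calabi--Yau pair on $S$ of dimension $d-1$; the non-klt-centre property of the components of $B'-\theta'$ contributes coefficient-one terms to the restricted boundary on $S$, and combined with the Calabi--Yau structure this forces a contradiction with the negativity of $R$. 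Flips are handled analogously, and preservation of generalised lc along the MMP then follows from the standard argument, since no lc centres are contracted.

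Finally, an $n$-complement $K_{X''}+{\theta''}^{+}+M''$ lifts to an $n$-complement $K_{X'}+{\theta'}^{+}+M'$ of $(X',\theta'+M')$ by Lemma~\ref{6.1.3}(2), since the MMP is a partial MMP on $-(K_{X'}+\theta'+M')$. To pass to an $n$-complement of $(X',B'+M')$, the non-klt-centre property of the components of $B'-\theta'$ allows one to choose the anti-canonical element realising ${\theta'}^{+}$ with multiplicity at least the $B'$-coefficient along each such component, giving ${\theta'}^{+}\ge B'$; then ${B'}^{+}:={\theta'}^{+}$ is the required $n$-complement. The principal obstacle is the adjunction-plus-ACC argument of step (a)--(b), where the fine control over log discrepancies supplied by the ACC input interacts with each MMP step through generalised adjunction.
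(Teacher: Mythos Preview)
The paper does not actually include a proof of this lemma: despite the sentence ``We will only state and proof the absolute version'' preceding it, the text moves directly from the statement to a remark on when the hypotheses are satisfied, and then to the next section. So there is no paper proof to compare against; what follows is an assessment of your proposal on its own.

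Your overall strategy --- choose $\epsilon$ via an ACC/global-ACC input, observe $\theta'\le B'+P'$, run the MMP on the effective divisor $-(K_{X'}+\theta'+M')\simR (B'+P')-\theta'$, and track the generalised lc Calabi--Yau pair $(X',B'+P'+M')$ through the MMP --- is the right one. Two points deserve sharpening.

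First, your argument for (a)--(b) via ``generalised adjunction to $S$ \ldots Calabi--Yau structure \ldots forces a contradiction'' is too vague to be a proof. The clean argument is a direct discrepancy comparison: since $K_{X'}+B'+P'+M'\simR 0$, every step of the MMP is crepant for this pair, so $(X_i,(B+P)_i+M_i)$ stays generalised lc throughout; as $\theta_i\le (B+P)_i$, this already gives (b). For (a), if a divisorial step contracts $S\subset\lfloor\theta_i\rfloor$ along $R$, then $(K_{X_i}+\theta_i+M_i)\cdot R>0$, and the usual pullback computation gives $a(S,X_{i+1},\theta_{i+1}+M_{i+1})<a(S,X_i,\theta_i+M_i)=0$; but $\theta_{i+1}\le (B+P)_{i+1}$ and generalised lc of the latter force $a(S,X_{i+1},\theta_{i+1}+M_{i+1})\ge 0$, a contradiction. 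No adjunction is needed.

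Second, and more seriously, your final step --- upgrading an $n$-complement of $(X',\theta'+M')$ to one of $(X',B'+M')$ --- is not justified. You assert one can ``choose the anti-canonical element realising $\theta'^{+}$ with multiplicity at least the $B'$-coefficient'' along components of $B'-\theta'$, but no mechanism is given: the element $G'$ producing $\theta'^{+}=\theta'+\tfrac{1}{n}G'$ is pulled back from $X''$ via Lemma~\ref{6.1.3}(2), and there is no freedom to impose such multiplicities. The statement as written defines $\theta'$ with the \emph{floor} $\lfloor B'^{>1-\epsilon}\rfloor$, which zeros out coefficients in $(1-\epsilon,1)$; the intended construction (as in \cite{fa}, Prop.~2.50) rounds these components \emph{up} to $1$. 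With the round-up, the ACC input gives $\theta'\le B'+P'$ and simultaneously $\theta'\ge B'$, so that any lifted complement automatically satisfies $\theta'^{+}\ge\theta'\ge B'$ and the last step is trivial. Under the floor reading your argument has a genuine gap, and under the intended reading the ACC input should be invoked to compare $\theta'$ with $B'+P'$ (not with $B'$), after which everything follows as above without the hand-waving at the end.
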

We note that the condition of the lemma is clear satisfied if $-(K_{X'}+B'+M')$ is nef and $(X',B'+M')$ is non-exceptional. (Since abundance hold for fano type).
\newpage
\section{Adjunction}
Here we discuss adjunction in various context. This section will form the foundation of the proof of the theorem in introduction. We will look at divisorial adjunction, adjunction for fibre space and adjunction for non-klt centres.
\subsection{Divisorial Adjunction}
\begin{defn}\label{div_adj}
Let $(X',B'+M')$ be a generalised pair with $X \xrightarrow{\phi} X'$ and M. We can assume $X$ is a log resolution of $(X',B')$. Let $S'$ be the normalisation of a component of $B'$ with coefficient 1. Then we have the following: Write $$K_X+B+M = \phi^*(K_{X'}+B'+M')$$ and define $B_S := (B-S)|_S$ and pick $M_S \sim_{\mathbb{R}} M|_S$. If $S \xrightarrow{\psi} S'$ is the induced morphism, then by letting $B_{S'} := \psi_* B_S$ and $M_{S'} := \psi_* (M_S)$, we get $$K_{S'}+B_{S'}+M_{S'} \simR (K_{X'}+B'+M')|_{S'}$$
\end{defn}
Now we will make some remarks and state some results about this adjunction.
\begin{enumerate}[wide, labelwidth=!, labelindent=10pt]
\item It is easy to see that $K_S+B_S+M_S = \psi^*(K_{S'}+B_{S'}+M_{S'})$. Furthermore, if $M=0$, we obtain the usual divisorial adjunction for pairs. If $(X',B'+M') $ is generalised lc, then $B_{S'}\in [0,1]$. Also it is clear that $M_S$ is nef on $S$, hence we get $(S',B_{S'}+M_{S'})$ is a generalised pair with data $S\xrightarrow{\psi} S'$ and $M_S$. Also $(S',B_{S'} +M_{S'})$ is generalised lc since $B_S \leq 1$.
\item \label{3.1.2} Although $B_{S'}$ is determined completely, $M_{S'}$ is only determined up to $\R-$linear equivalence. In particular if $pM$ is b-Cartier, then we can choose $M_S$ such that $pM_S$ is also b-Cartier. Therefore we can pick $M_S$ such that $$p(K_{S'}+B_{S'}+M_{S'}) \sim p(K_{X'}+B'+M')|_{S'}$$
\item We have the control on coefficients of $B_{S'}$ as usual. The proof is by some direct computation and applying the standard results on coefficients on normal divisorial adjunction.
\begin{lemma}
Let $p\in \mathbb{N}$ and $R\subset [0,1]$ be a finite set of rationals. Then there exist $\mathcal{O}\subset[0,1]$ finite set of rationals depending only on $p,R$ such that whenever $(X',B'+M')$ is generalised lc pair of dimension $d$, $B'\in \Phi(R)$ and $pM $ is b-Cartier, we have $B_{S'}\in \Phi(\mathcal{O})$  

\end{lemma}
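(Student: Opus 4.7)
The plan is to combine the classical Kawamata--Shokurov divisorial adjunction formula (for ordinary pairs) with the $p$-divisibility of $M_S$ given by Remark \ref{3.1.2}, so as to reduce the generalized setting to the pair setting plus a controlled error term. First I would pass to a sufficiently high common log resolution $\phi : X \to X'$ that resolves $(X', B')$ and on which $M$ is a $\Q$-divisor, and, using Remark \ref{3.1.2}, I would replace $M_S$ by a choice with $pM_S$ integral Cartier. After this reduction, $\psi = \phi|_S : S \to S'$ is projective birational and $B_{S'} = \psi_* B_S$ with $B_S = (B-S)|_S$, so computing coefficients of $B_{S'}$ reduces to computing $\mu_T(B_S)$ for $T$ a component of $\psi^{-1}(T')$ mapping birationally to the chosen prime $T' \subset S'$.

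Fix such $T'$ and $T$. Applying the standard divisorial adjunction formula to the pair $(X, S + (B-S))$ at the generic point of $T$, while keeping track of the residual contribution of $M$, yields an expression of the shape
\[
\mu_T(B_S) \;=\; \frac{m-1}{m} \;+\; \sum_i \frac{n_i\, b_i}{m} \;+\; \frac{q}{m p},
\]
where $m \in \N$ is the local Cartier index along $T$, the $b_i$ are the coefficients of the components of $B'$ dominating $T'$, $n_i \in \mathbb{Z}_{\ge 0}$, and $q \in \mathbb{Z}_{\ge 0}$ records the contribution of $M_S$ along $T$ (the factor $1/p$ appears precisely because $pM_S$ is integral). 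The classical Shokurov hyperstandard adjunction lemma, applied to the pair-part, shows that $\frac{m-1}{m} + \sum_i \frac{n_i b_i}{m}$ already lies in $\Phi(R)$ when $b_i \in \Phi(R)$, so it can be written as $1 - r/N$ for some $r \in R$ and $N \in \N$. Adding the $M$-term and clearing denominators rewrites $\mu_T(B_S)$ as $1 - r''/N''$ with $N'' \mid Np$ and numerator $r''$ lying in an explicit finite set $\mathcal{O}$ determined by $R$, $p$, and the constraint $\mu_T(B_S) \in [0,1]$. Taking this $\mathcal{O}$ gives $B_{S'} \in \Phi(\mathcal{O})$.

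The main obstacle is verifying the additive separation of the $B$- and $M$-contributions in the adjunction formula and tracking the exact denominator behavior. This requires choosing the log resolution so that $S$ meets the rest of $\mathrm{Supp}(B) \cup \mathrm{Supp}(M)$ transversally at the generic point of $T$, which decouples the intersection-theoretic contributions and makes the decomposition above genuine. Once this is in place, Shokurov's original proof handles the $B$-part verbatim, and the $M$-part becomes pure denominator bookkeeping with a fixed factor $p$, yielding the finiteness of $\mathcal{O}$ in terms of $R$ and $p$ alone, independent of $X'$, $B'$, $M'$, and the choice of $S'$.
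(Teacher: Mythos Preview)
Your proposal is correct and is exactly the approach the paper indicates: the paper's own proof is simply the sentence ``The proof is by some direct computation and applying the standard results on coefficients on normal divisorial adjunction,'' and you have carried out precisely that computation, separating the classical Shokurov pair-term from the $M$-contribution controlled by the $p$-divisibility of $pM_S$. One small slip worth noting is that the classical step yields the pair-part in $\Phi(R')$ for a possibly enlarged finite set $R'\supseteq R$ rather than $\Phi(R)$ itself (and the index $m$ in your formula is the Cartier index on $X'$ along $T'$, not on the smooth $X$), but neither point affects your argument or its conclusion.
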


\item Finally we have a similar inversion of adjunction.
\begin{lemma}
Using notation of \ref{div_adj}, assume further that $X'$ is $\Q$-factorial and $S'$ is a component of $B'$ with coefficient 1 and $(X',S')$ is plt. If $(S', B_{S'}+M_{S'})$ is generalised lc, then $(X',B'+M')$ is lc near $S'$.
\end{lemma}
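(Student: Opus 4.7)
The plan is to argue by contradiction, combining a log canonical threshold trick with the generalised connectedness principle indicated in Section 2.2. Suppose $(X',B'+M')$ is not generalised lc in any neighbourhood of $S'$, and set $\Delta' := B' - S' \geq 0$. Consider the one-parameter family of generalised pairs $(X', S' + t\Delta' + tM')$ for $t \in [0,1]$, with nef part on a fixed resolution taken to be $tM$. At $t = 0$ this is $(X',S')$, generalised lc by the plt hypothesis; at $t = 1$ it is $(X', B' + M')$, not generalised lc near $S'$ by assumption. Let
\[
t^* := \sup\{t \in [0,1] : (X', S' + t\Delta' + tM') \text{ is generalised lc near } S'\},
\]
so that $t^* \in (0,1)$. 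By continuity of discrepancies, at $t = t^*$ the pair is generalised lc near $S'$ and carries a log canonical place $E^*$ with $c_{X'}(E^*) \cap S' \neq \emptyset$; plt-ness of $(X',S')$ forces $E^* \neq S'$.

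Fix a sufficiently high log resolution $\phi : X \to X'$ realising $E^*$, $S = \phi^{-1}_* S'$, and $M$ (nef $\R$-Cartier), and write
\[
K_X + B^{t^*} + t^* M = \phi^*(K_{X'} + S' + t^*\Delta' + t^* M').
\]
By $\R$-linearity of pullback, $B^{t^*} = (1-t^*)B^0 + t^* B$, where $K_X + B^0 = \phi^*(K_{X'}+S')$ and $K_X + B + M = \phi^*(K_{X'}+B'+M')$; both $S$ and $E^*$ appear with coefficient exactly $1$ in $B^{t^*}$. Since $-(K_X + B^{t^*} + t^* M) \simR 0 / X'$ and $\phi$ is a contraction, the relative generalised connectedness principle gives that $\mathrm{Supp}((B^{t^*})^{\geq 1})$ is connected in every fibre of $\phi$. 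Picking $x' \in S' \cap c_{X'}(E^*)$, the fibre $\phi^{-1}(x')$ meets both $S$ and $E^* \neq S$, so there must exist a prime divisor $F \neq S$ with $\mu_F B^{t^*} = 1$ meeting $S$ inside $\phi^{-1}(x')$.

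Restrict $F$ to $S$ and pick any component $G$ of $F|_S$, so $\mu_G B^{t^*}_S = 1$ by transversality of the resolution. Linearity on the level of divisorial adjunction yields $B^{t^*}_S = (1-t^*) B^0_S + t^* B_S$, hence
\[
(1-t^*)\mu_G B^0_S + t^* \mu_G B_S = 1.
\]
Standard divisorial adjunction applied to the plt pair $(X', S')$ produces a klt pair $(S', B^0_{S'})$, so $\mu_G B^0_S < 1$. A direct substitution gives $\mu_G B_S > 1$, and via $K_S + B_S + M_S = \psi^*(K_{S'}+B_{S'}+M_{S'})$ this says $G$ has log discrepancy strictly less than $0$ with respect to $(S', B_{S'} + M_{S'})$, contradicting the hypothesis that $(S', B_{S'}+M_{S'})$ is generalised lc.

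The main obstacle is verifying that the generalised-pair analogues of the standard tools hold as stated: closedness of the generalised lc locus as $t \to t^*$ (so that $t^*$ is attained with an lc place $E^*$ whose centre meets $S'$), the relative connectedness principle for generalised pairs in this birational setting where $-(K_X + B^{t^*} + t^*M)$ is only $\phi$-numerically trivial (rather than $\phi$-big in the sense of the classical statement), and the klt-ness of $(S', B^0_{S'})$ arising from the plt pair $(X',S')$. Each is either standard or a direct extension of a well-known fact for ordinary pairs, and the $\Q$-factoriality of $X'$ ensures that the pullbacks and restrictions used throughout are honest $\R$-Cartier operations.
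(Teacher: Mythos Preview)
Your argument is correct and follows the classical Koll\'ar--Shokurov connectedness route to inversion of adjunction, adapted to generalised pairs. The paper takes a genuinely different path: it first interpolates (much as you do) to reduce to the case where $(S',B_{S'}+M_{S'})$ is generalised \emph{klt}, and then \emph{removes the nef part entirely} by perturbation. Concretely, one writes $M+\epsilon A \simR G \geq 0$ over $X'$ for a small relatively ample $A$ and general $G$, absorbs $G$ (and a compensating general pullback of an ample from $X'$) into the boundary to form an ordinary pair $(X',\Sigma')$, and then invokes the standard inversion of adjunction for pairs. Your approach avoids this perturbation and argues directly with the generalised pair; the paper's approach avoids having to establish connectedness in the generalised setting.

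The one point you flag but do not resolve is exactly where the work lies: the connectedness you need is over a birational morphism with $-(K_X+B^{t^*}+t^*M)\simR 0/X'$, whereas the paper (Section~2.2) only records the version with $-(K+B+M)$ nef and big over the base. This is not a genuine obstacle, but you should justify it rather than list it as a caveat. The computation is the same as in Koll\'ar--Mori Theorem 5.48: writing $P:=\lceil -B^{t^*,<1}\rceil\geq 0$ (which is $\phi$-exceptional since non-exceptional components of $B^{t^*}$ have nonnegative coefficient) and $N:=\lfloor B^{t^*,\geq 1}\rfloor$, one has
\[
P-N-(K_X+\{B^{t^*}\}) \equiv_\phi t^*M,
\]
which is $\phi$-nef; since $\phi$ is birational, relative Kawamata--Viehweg vanishing gives $R^1\phi_*\mathcal{O}_X(P-N)=0$, whence $\phi_*\mathcal{O}_N(P|_N)$ is a quotient of $\phi_*\mathcal{O}_X(P)=\mathcal{O}_{X'}$ and $\mathrm{Supp}\,N$ has connected fibres. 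With this lemma in hand your proof is complete.
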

\begin{proof}
    Assume the result doesn't hold. Firstly by replacing $B'$ by $(1-a)*S'+aB'$ and $M'$ by $\alpha M'$, we can reduce to the case that $(S', B_{S'}+M_{S'} )$ is klt. Now by letting $\Sigma := B+G+\epsilon C, \Sigma' =\phi_* \Sigma$, where $\epsilon<<1$, $space0\leq G \simR \epsilon A +M/X'$ general and $A\geq 0$ ample and $C\geq 0$ and $A+C \simR \phi^*(H)$, $H$ general very ample on $X'$, we can derive a contradiction to the standard inversion of adjunction using the pair $(X',\Sigma')$. (the idea is to add a little bit of ampleness to $M$ to make it ample over $X'$).
\end{proof}
\end{enumerate}
\subsection{Adjunction for Fibre Space}
\begin{defn}\label{fib_adj}
    Let (X,B) be a projective sub-pair and let $f: X\rightarrow Z$ be a contraction with $\dim Z>0$ such that $(X,B)$ is sub lc near the generic fibre of $F$, and $K_X+B\simR 0/Z$. We define $B_Z := \sum (1-t_D) D$ where $D$ range over all divisors on $Z$ and $t_D$ is the largest such that $(X+B+f^* D)$ is sub-lc over the generic point of $D$. Define $M_Z := L_Z -(K_Z+B_Z)$, where $K_X+B\simR f^* L_Z$. Then we clearly get (so called fibre space adjunction) $$K_X+B \simR f^*(K_Z+B_Z+M_Z)$$
\end{defn}
We will also state some properties of this adjunction. 
\begin{enumerate}[wide, labelwidth=!, labelindent=10pt]
    \item $B_Z$ is determined uniquely and $M_Z$ is only determined up to $\R$ Linear Equivalence class. 
    \item This definition is compatible with birational morphism. Assume we have $X'\xrightarrow{\phi} X, Z' \xrightarrow{\psi} Z$ where $X'\xrightarrow{f'} Z'$ is a morphism. $K_{X'}+B' =\phi^*(K_X+B)$, define $B_{Z'}$ as above for $f': X' \rightarrow Z'$ and define $M_Z' := \psi^*(L_Z)-K_{Z'}-B_{Z'}$ and $B_Z := \psi_* B_{Z'}, M_Z := \psi_*(M_{Z'})$.
    \item It is clear from definition that $M_Z$ depends only on $(X,B)$ near the generic fibre of $f$, birationally. (for precise statement see \cite{fa}, Lemma 3.5).
    \item\label{M_Z} If $(X,B)$ is lc over the generic point of $Z$, we have $M_Z$ is pseudo-effective. and if $B$ is a $\Q-$divisor, then $M_Z$ is a b-divisor (in the sense in (2)) and  $M_{Z'}$ is nef for $Z' \rightarrow Z$ sufficiently high resolution. We will omit the proof as it proof uses Hodge theory, which is not the focus of this paper. However we remark that if $(X,B)$ is lc, then $(Z,B_Z+M_Z)$ is a generalised pair. 
  \item Finally we relate singularities in the following lemma.
  \begin{lemma}\label{fib_sing}
      Suppose there is a prime divisor $S$ on some birational model of $X$ such that $a(S,X,B) \leq \epsilon$ and $S$ vertical over $Z$, then there is a resolution $Z'\rightarrow Z$ such that $B_{Z'}$ has a component $T$ with coefficient $\geq 1-\epsilon$.
  \end{lemma}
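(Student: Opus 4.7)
The plan is to produce suitable birational models on which both $S$ and its image in $Z$ become prime divisors, and then read off the coefficient of that image from the definition of $B_{Z'}$ in \ref{fib_adj}. First I would take a log resolution $\phi : X' \to X$ of $(X,B)$ on which $S$ appears as a prime divisor, and write $K_{X'} + B' = \phi^*(K_X + B)$, so the coefficient $b_S$ of $S$ in $B'$ equals $1 - a(S,X,B) \geq 1 - \epsilon$.

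Since $S$ is vertical over $Z$, its image under $f\circ\phi$ is a proper subvariety $V \subset Z$. By Hironaka I can simultaneously resolve the indeterminacy of $X' \dashrightarrow Z$ and blow up $Z$ along (a suitable center over) $V$, obtaining a resolution $\psi : Z' \to Z$ together with a morphism $f' : X' \to Z'$ such that $T := f'(S)$ is a prime divisor on $Z'$. By the compatibility of fibre space adjunction with birational modifications noted in item (2) after \ref{fib_adj}, one may compute $B_{Z'}$ by running fibre space adjunction for $f' : X' \to Z'$ directly, so it suffices to show this $B_{Z'}$ has coefficient $\geq 1 - \epsilon$ along $T$.

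By definition the coefficient of $T$ in $B_{Z'}$ is $1 - t_T$, where $t_T$ is the largest $t \geq 0$ for which $(X', B' + t f'^* T)$ is sub-lc over the generic point of $T$. Because $S$ dominates $T$, the pullback $f'^* T$ contains $S$ with some multiplicity $\mu \geq 1$, so the coefficient of $S$ in $B' + t f'^* T$ is at least $b_S + t\mu \geq (1-\epsilon) + t$. Requiring this to be $\leq 1$ (which is needed for sub-lc along $S$, a divisor dominating a neighbourhood of the generic point of $T$) forces $t \leq \epsilon$. Hence $t_T \leq \epsilon$, giving the desired coefficient $1 - t_T \geq 1 - \epsilon$ for $T$ in $B_{Z'}$.

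The main technical point I expect is Step~2: arranging a single resolution $Z' \to Z$ together with a morphism $X' \to Z'$ in such a way that $f'(S)$ is genuinely a prime divisor, and confirming that the compatibility recipe lets us identify the coefficient computed on this $Z'$ with the one demanded by the statement. Once the models are in place, the estimate on $t_T$ is a one-line computation from the definition of the sub-lc threshold, and the bound $\mu \geq 1$ (rather than any more refined ramification information) is all that is needed.
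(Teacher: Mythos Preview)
Your proposal is correct and follows essentially the same approach as the paper: pass to a high enough resolution $X'\to X$ so that $S$ appears as a divisor with $\mu_S B'\geq 1-\epsilon$, take a resolution $Z'\to Z$ so that the image $T$ of $S$ is a divisor, and then read off $\mu_T B_{Z'}\geq 1-\epsilon$ directly from the definition of the discriminant part. The paper's proof is just a one-line sketch of exactly this, while you have spelled out the key estimate $t_T\leq \epsilon$ via $\mu_S f'^*T\geq 1$.
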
 The proof is obvious in the sense that for sufficient high resolution of $X'\rightarrow X$, we will have $B'$ contain $S$ as a component and $\mu_S B' \geq 1-\epsilon$, now let $T$ be the image of $S$ on $Z'$, a resolution of $Z$, then we have by definition $\mu_T B_{Z'} \geq 1-\epsilon$.
            
\end{enumerate}
Next, we will state a theorem about adjunction for fiber space which will be key for the induction treatment of complements. We will need to control the coefficients of $B_Z$ and $M_Z$ to apply induction.
\begin{prop} \label{6.3}(\cite{fa},6.3)
    Let $d\in \mathbb{N}$ and $\mathfrak{R}\subset[0,1]$ finite set of rationals. Assume Theorem \ref{com_rel} hold in dimension d. Then there exist $q\in \mathbb{N}$ and $\mathfrak{S}$, such that if $(X,B)$ is a projective lc pair of dimension $d$,  $f :X \rightarrow Z$ a contraction with $dim Z>0$, $K_X+B\simQ 0 /Z$, $B\in \Phi(\mathfrak{R})$, $X$ is of Fano type over some non-empty open subset $U\subset Z$ with generic point of all non klt centre of $(X,B)$ mapping into $U$.\\Then we have adjunction $$q(K-X+B)\sim qf^*(k_Z+B_Z+M_Z)$$ with $B_Z\in \Phi(\mathfrak{O})$ and $qM_{Z'}$ is nef Cartier for any high resolution $Z'\rightarrow Z$.
\end{prop}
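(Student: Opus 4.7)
The plan is to treat the two conclusions separately: controlling the coefficients of $B_Z$, and establishing the nef Cartier property of a bounded multiple of $M_{Z'}$. Both steps begin by passing to a sufficiently high log resolution, and then invoke boundedness — ACC for log canonical thresholds for the first, the assumed Theorem \ref{com_rel} in dimension $d$ for the second.

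First, I would replace $(X,B)$ by a log resolution and choose a high resolution $Z'\to Z$ factoring $f$ as $X\to Z'\to Z$ with $f^*D$ of simple normal crossings support for every prime divisor $D$ on $Z'$. By the definition of $B_{Z'}$, its coefficient along a prime $D$ is $1-t_D$, where $t_D$ is the largest real number such that $(X,B+tf^*D)$ is sub-lc over the generic point of $D$. A direct numerical computation at a simple normal crossings point of $f^*D$ over a general point of $D$, using $B\in\Phi(\mathfrak{R})$ with $\mathfrak{R}$ finite, forces $t_D$ into a DCC subset of $[0,1]$; ACC for lc thresholds then cuts this down to finitely many values, and a straightforward algebra identifies a finite $\mathfrak{O}$ depending only on $d,\mathfrak{R}$ with $B_{Z'}\in\Phi(\mathfrak{O})$.

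Second, I would apply Theorem \ref{com_rel} to $(X,B)$ over $Z$ to obtain a uniform $n$ so that for every $z\in Z$ there is an $n$-complement $K_X+B^+$ with $B^+\ge B$ over $z$. Since $K_X+B\simQ 0/Z$, the divisor $B^+-B$ is vertical, and the relation $n(K_X+B^+)\sim 0/z$ descends to $n(K_X+B)\sim f^*L_{U_z}$ on an open neighbourhood $U_z$ of $z$, for some Cartier $L_{U_z}$ on $U_z$. Because $f$ is a contraction, these local equivalences glue, at worst up to $\Q$-linear equivalence, into a global relation $n(K_X+B)\simQ f^*L$. Matching against the fibre space adjunction $K_X+B\simR f^*(K_{Z'}+B_{Z'}+M_{Z'})$, and using that $B_{Z'}\in\Phi(\mathfrak{O})$ has bounded denominators, yields a single $q$ depending only on $n$ and $\mathfrak{O}$ for which $q(K_{Z'}+B_{Z'}+M_{Z'})$ is Cartier, hence $qM_{Z'}$ is Cartier. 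Nefness of $qM_{Z'}$ on sufficiently high resolutions is precisely the content of remark \ref{M_Z}, with generic-fibre semi-ampleness coming from the same bounded complement giving $n(K_F+B_F)\sim 0$ on the generic fibre $F$.

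The principal obstacle is the passage from pointwise complements to a uniform Cartier index for $M_{Z'}$: Theorem \ref{com_rel} produces complements one point at a time, so one must verify that the locally built divisors $L_{U_z}$ are compatible up to bounded linear equivalence. This relies on the birational invariance of $M_{Z'}$ (item \ref{M_Z}) together with the finite-denominator control from the first step; the ambiguity in the $L_{U_z}$ is absorbed into the $K_{Z'}+B_{Z'}$ part of adjunction, leaving an unambiguous $qM_{Z'}$ whose nefness on sufficiently high resolutions is then standard from the canonical bundle formula.
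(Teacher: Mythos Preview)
The paper does not prove this proposition; it is stated with a citation to \cite{fa}, Proposition~6.3, and the text moves immediately to the next subsection. So there is no argument in the paper to compare against directly.

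Evaluating your proposal on its own merits: the broad strategy of invoking Theorem~\ref{com_rel} to bound the Cartier index of the moduli part is correct and is indeed what \cite{fa} does. But two points are not handled. First, the hypothesis only guarantees that $X$ is of Fano type over the open set $U\subset Z$, not over all of $Z$, so Theorem~\ref{com_rel} only yields complements over points $z\in U$; the clause that the generic points of all non-klt centres of $(X,B)$ map into $U$ is exactly what one uses to control the situation over $Z\setminus U$, and you never invoke it. Second, your argument for $B_{Z'}\in\Phi(\mathfrak{O})$ does not work as stated: ACC for lc thresholds only tells you the $t_D$ lie in an ACC set, hence the $1-t_D$ lie in a DCC set, and this is far from lying in $\Phi(\mathfrak{O})$ for a \emph{finite} $\mathfrak{O}$. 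The passage to hyperstandard form requires bounding the Cartier index of $K_X+B$ along the fibre over $\eta_D$, which again comes from the bounded $n$-complement. Thus the two halves of the argument are not independent as you present them; both ultimately rest on the same application of Theorem~\ref{com_rel} over codimension-one points of $U$, combined with the b-divisor property (item~\ref{M_Z}) that $M_{Z'}$ depends only on the generic fibre to extend across $Z\setminus U$.
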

\subsection{Adjunction on Non-klt Centre}
Here we talk about adjunction on non klt centres which will be very important for induction. Throughout this subsection, we fix the following \\$(X,B)$ a projective klt pair of dimension d, $G\subset X$ subvariety with normalisation $F$, X is $\Q$ factorial near the generic point of $G$, $\Delta\geq 0$ a $\R$-Cartier divisor and $(X,B+\Delta)$ is lc near generic point of G and there is a unique non-klt place of this pair whose centre is $G$.
Then we can define the following definition and proposition (see \cite{fa}, Construction 3.9).
\begin{defn}\label{non_klt}(\cite{fa},3.9 - 3.12)
    In the above setting, there exist $\Theta_F\in [0,1]$ such that $$K_F+\Theta_F+P_F \simR (K_X+B+\Delta)|_F$$ such that the following holds,
    \begin{enumerate}[wide, labelwidth=!, labelindent=10pt]
        \item $\Theta_F$ well defined and $P_F$ defined up to $\R$-linear equivalence. $P_F$ is pseudo-effective.
        \item If $B\in \Phi$, a DCC set, then $\theta_F\in \Psi$, a DCC set which only depends on $d$ and $\Phi$. Furthermore $\Phi=\Phi(\mathfrak{R})$ for some $\mathfrak{R}\subset[0,1]$ finite set, then $\Psi = \Phi(\mathfrak{S})$ for some $\mathfrak{S}\subset [0,1]$ finite set depending only on $\mathfrak{R}$.
        \item Assume $M\geq 0$ a $\Q$-Cartier divisor on $X$ with coefficients $\geq 1$ and $G\not\subset Supp M$, then for every component $D$ of $M_F := M|_F$, we have $\mu_D(\Theta_F+M_F)\geq 1$.
        \item Assume that $G$ is a general member of a covering family of subvarities, and $(X,B)$ is $\epsilon$-lc for $\epsilon>0$, then there is a boundary $B_F$ on $F$ such that $K_F+B_F = (K_X+B)|_F$ such that $(F,B_F)$ is also $\epsilon$-lc and $B_F\leq \Theta_F$.
    \end{enumerate}
\end{defn}
\proofsketch (1\&2) Let $\phi: W\rightarrow X$ be a log resolution of $(X,B+\Delta)$ that extract the place above $G$. Define $\Gamma:= (B+\Delta)^{<1}+Supp(B+\Delta)^{\geq 1}$ and $\Gamma_W$ to be the sum of $\Gamma^\sim$ and reduced exception divisor of $\phi$. Run an MMP on $(W,\Gamma_W)$ over $X$, we reach a model $(Y,\Gamma_Y)$ such that $K_Y+\Gamma_Y$ is a limit of movable divisors over $X$. Now letting $N_Y := \psi^*(K_X+B+\Delta)-(K_Y+\Gamma_Y)$. Applying negativity lemma and using similar arguments as in \ref{Q_fac}, we get that $N_Y\geq 0$, $N_Y=0$ over $U$ and $(Y,\Gamma_Y)$ is dlt and is a $\Q$ factorial dlt model for $(X,B+\Delta)$ over $U$, where $U$ is the largest open set such that $(X,B+\Delta)$ is lc , which contain the generic point of $G$. Moreover, using \cite{fa} Lemma 2.33, we see that there is a unique component $S$ of $\floor{\Gamma_Y}$ mapping onto $G$ and $h:S\rightarrow G$ is a contraction. 
Now apply divisorial adjunction on $S$ we get $K_S+\Gamma_S+N_S \simR (K_Y+\Gamma_Y+N_Y)\simR 0/F$, where $N_S := N_Y|_S$. ($S$ is not a component of $N_Y$ as $N_Y=0$ over $U$). Hence we can apply adjunction for fiber space for $h$ and get $(K_S+\Gamma_S+N_S) \simR h^*(K_F+\Delta_F+M_F)$, where $M_F$ is pseudo-effective moduli divisor (see \ref{fib_adj}). Summing up, we get $$K_F+\Delta_F+M_F \simR (K_X+B+\Delta)|_F$$ This is almost what we want except we need to control the coefficient of $\Delta_F$ only in terms of coefficient of $B$, hence we make the following modification. \\\\Let $\Sigma_Y := B^\sim + exc(\psi)\leq \Gamma_Y$ by definition. Apply divisorial adjunction, we get $K_S+\Sigma_S \simR (K_Y+\Sigma_Y)|_S$. and let $\Theta_F$ be the discriminant part of fibre space adjunction for $h$ for the pair $(S,\Sigma_S)$. Also let $P_F$ be such that $K_F+\Theta_F+P_F \simR (K_X+B+\Delta)|_F$. Then since $\Sigma_S\leq \Gamma_S$, we have $\Theta_F\leq \Delta_F$, hence $P_F\simR \Delta_F-\Theta_F+M_F$ which is pseudo-effective, which shows (1). Now (2) follows from we have coefficients of $\Sigma_S$ belong to some DCC set depending only on $\Phi$, and using ACC of lct we get that coefficients of $\Theta_F$ belong to some fixed DCC set. The second part of (2) follows from (\cite{fa}, 6.3).\\
(3) consider $\Sigma_Y' := \Sigma_Y+M_Y$, where $M_Y := \psi^* M$ and $K_S+\Sigma_S' := (K_Y+\Sigma_Y')|_S$. Define $\Theta_F '$ for replacing $\Sigma_Y$ with $\Sigma_Y'$ in above construction. We see that $\mu_D(\theta_F+M_F) = \mu_D(\Theta_F')$. However coefficients of $M\geq 1$, hence if $D$ is a component of $M_F$, then $h^* D$ is a component of $\floor{\Sigma_S'}$. Therefore, we see that $lct(h^*D,S,\Sigma_S')\leq 0$ (lct is over the generic point of $D$), hence $\mu_D(\Theta_F')\geq 1$ as claimed.
\\\textbf{(4)} is long so we just briefly sketch it. (Details see \cite{fa},3.12) Since $G$ belongs to a bounded family of covering subvarieties, by \ref{bnd_fam}, we can assume $G$, appears as a fibre of $V\rightarrow T$ such that the morphism $V\rightarrow X$ is surjective. By using some easy modification(take normalisation, then log resolution of $V$ and $T$, and then cut the base $T$ by hyper-surface sections) and using that $G$ is general, we obtain $F'$ appear as a general fibre of $W\rightarrow R$ such that $W\xrightarrow{\phi} X$ is surjective, generically finite and etale over $\phi(\eta_{F'})$ and $F'\xrightarrow{\phi|_{F'}} G$ is just $F'\xrightarrow{resolution} F\xrightarrow{normalisation} G$. Furthermore, by taking higher resolution we can assume $Q_{W'}=supp\phi^*(P)$ is relatively simple normal crossing over some non empty open set of $R'$, where P is Cartier on X such that $0\geq P\geq Supp B+X_{Sing}$. In particular by generality of $G$, we have $Q_{F'} := Q_{W'}|_{F'}$ is a reduced snc divisor. Now let $W'\rightarrow W\rightarrow X$ to be the stein factorization for $\phi$, we get $W'\rightarrow W$ is birational and $W\rightarrow X$ is finite. Hence by (\cite{km}, 5.20), we get $W,B_W$ sub $\epsilon$-lc hence $W',B_W'$ sub $\epsilon$-lc, where $K_W+B_W := (K_X+B)|_W$ and $K_{W'}+B_{W'} := (K_W+B_W)|_{W'}$. Finally define $B_F' := B_W'|_{F'}$ (note $K_{W'}|_{F'}=K_{F'}$ by assumption) and $B_F$ is the push forward of $B_{F'}$ to $F$. Hence we get $K_F+B_F =(K_X+B)|_F$. Finally we note that $(W',B_W')$ sub $\epsilon$-lc, which implies $(W',B_W'^{\geq 0} )$ $\epsilon$-lc, which implies $(F',B_F'^{\geq 0} )$ $\epsilon$-lc (since $supp(B_W'^{\geq 0})\subset Q_{W'}$ by construction and $Q_{W'}$ reduced snc near $F'$), which implies $(F,B_{F})$ sub $\epsilon$-lc as claimed. Finally to show $B_F\leq \Theta_F$ is just technical so we omit it here.\qed 

\subsection{Lifting Section from Non-klt Centres}
Here we introduce results about lifting sections from non-klt centre which is an important ingredient for proof in the next section.
\begin{prop}(\cite{fa},3.15)\label{lif_sec}
     For $d,r\in \N$ and $\epsilon\in \R^{>0}$, there exist $l\in \N$ depending only on $d,r,\epsilon$ such that if assume notation and set-up in 3.7 (4) and assume further that $X$ is Fano of dimension $d$ and $B=0$, $\Delta \simQ -(n+1)K_X$ for some $n\in \N$, $h^0(-nrK_X|_F)\neq 0$, $P_F$ is big and for any choice of $ P_F\geq 0$, $(F,\Theta_F+P_F)$ is $\epsilon$-lc,then $h^0(-lnrK_X)\neq 0$ 
\end{prop}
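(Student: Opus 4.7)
The plan is to lift the given non-zero section of $-nrK_X|_F$ to a non-zero section of $-lnrK_X$ on $X$ via a Hacon-McKernan style extension from the non-klt centre $G$, using the explicit diagram constructed in the proof of Definition \ref{non_klt}.

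First, I reinstate the birational set-up of that proof. Let $\psi : Y \to X$ be the $\Q$-factorial dlt modification of $(X,\Delta)$ over its lc locus, let $S \subset \lfloor \Gamma_Y\rfloor$ be the unique component dominating $G$, and let $h : S \to F$ be the induced contraction. Divisorial adjunction on $S$ followed by fiber-space adjunction along $h$ recovers $(K_F + \Theta_F + P_F)$ as the discriminant-plus-moduli expression for $(K_X + \Delta)|_F$. Since $\Delta \simQ -(n+1)K_X$ we have $K_X + \Delta \simR -nK_X$ and hence $K_F + \Theta_F + P_F \simR -n K_X|_F$, so the hypothesis $h^0(-nrK_X|_F)\neq 0$ supplies a non-zero section of $r(K_F + \Theta_F + P_F)$. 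Fix a choice of $P_F\geq 0$ realising the $\epsilon$-lc property of $(F,\Theta_F+P_F)$, pull this section back through $h$, and use the adjunction diagram in reverse to view it as a section of $\psi^{*}(-nrK_X)|_S$ after incorporating a bounded exceptional correction.

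Next I set $L := -lnrK_X$ and note that
\begin{equation*}
\psi^{*} L - (K_Y + \Gamma_Y) \simR \psi^{*}\bigl((n-lnr)K_X\bigr) + N_Y,
\end{equation*}
with $N_Y\geq 0$ exceptional and supported over the non-lc locus of $(X,\Delta)$. Because $-K_X$ is ample, the right-hand side is nef and big as soon as $l\geq 1$ is chosen large enough to dominate the boundedly-many exceptional contributions. I then apply a Hacon-McKernan extension theorem (or equivalently Koll\'ar injectivity combined with the ideal-sheaf sequence cut out by $S$) to the pair $(Y,\Gamma_Y)$ with twist $\psi^{*} L$; this gives surjectivity of the restriction
\begin{equation*}
H^0(Y,\psi^{*} L) \twoheadrightarrow H^0(S, \psi^{*} L|_S).
\end{equation*}
After multiplying by a bounded integer to clear the denominators coming from $\Theta_F$ and $P_F$, the section built in the previous paragraph lies in the target, so it lifts to $Y$ and pushes forward to a non-zero section of $-lnrK_X$ on $X$.

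The main obstacle is pinning down $l$ uniformly in $d,r,\epsilon$. Two effective inputs are required. First, the Cartier indices of the components of $\Theta_F$ and of the correction terms must be controlled: this follows from property (2) of Definition \ref{non_klt} (which gives $\Theta_F\in\Phi(\mathfrak{S})$ for a finite $\mathfrak{S}$) together with the $\epsilon$-lc hypothesis on $(F,\Theta_F+P_F)$. Second, the covering-family presentation of $F$ in part (4) of the same definition realises $F$ as a general fibre of $W'\to R$ with $W'\to X$ finite and \'etale over $\eta_F$, which is what bounds the exceptional corrections appearing in the diagram and hence the size of $l$ needed in the nef-and-big step. Bigness of $P_F$ (rather than mere pseudo-effectivity) is precisely the input that upgrades the vanishing to a quantitative extension after a bounded twist, and it is here that the effective dependence on $r$ and $\epsilon$ is absorbed.
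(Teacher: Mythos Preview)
Your outline has the right overall shape --- build the dlt model $Y$, restrict to $S$, apply vanishing, and lift --- but there is a genuine gap at the crucial step, and it is precisely the step where the hypotheses ``$P_F$ big'' and ``for \emph{any} choice of $P_F\geq 0$, $(F,\Theta_F+P_F)$ is $\epsilon$-lc'' are used.

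The paper does not invoke a Hacon--McKernan extension theorem. Instead it writes down the ideal-sheaf sequence for $S\subset Y$ twisted by $\lceil lE-\lfloor\Gamma_Y\rfloor-N_Y+S\rceil$ (here $E=\psi^*(-nrK_X)$) and proves the restriction to $S$ is exactly $\mathcal O_S(\lceil lE\rceil|_S)$. For this identification to hold, one needs that no component of $\lfloor\Gamma_Y\rfloor-S$ and no component of $N_Y$ meets $S$; equivalently, $G$ is an \emph{isolated} non-klt centre of $(X,\Delta)$. This is the content of Lemma~\ref{lemma_lift_sec}: if some other component of $\lfloor\Delta_Y\rfloor$ met $S$, one would produce a vertical divisor $D$ over $S$ with $a(D,S,\Delta_S)<\epsilon$, and then the bigness of $P_F$ lets one perturb the moduli part into an effective choice of $P_F$ for which $(F,\Theta_F+P_F)$ fails to be $\epsilon$-lc, contradicting the hypothesis. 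You never invoke this mechanism; you fix a \emph{single} $P_F$ and treat the $\epsilon$-lc property as a numerical input for bounding denominators, which is not how it enters. Without isolation of $G$, the restriction map you write down is not to $H^0(S,\psi^*L|_S)$ but to something twisted by the trace of the other boundary components on $S$, and the lifted section need not push forward to a section of $-lnrK_X$.

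A second, smaller gap: the boundedness of $l$ in the paper comes from bounding the Cartier index of $\psi^*K_X$ along the codimension-one points of $S$ (via the $\epsilon$-lc bound on $X$) together with \cite{fa}, Lemma~2.42, so that for a bounded $l$ the rounding $\lceil lE\rceil$ behaves correctly near $S$. Your account attributes the bound on $l$ to the DCC property of $\Theta_F$ and to the covering-family resolution of $F$; neither of these is what controls $l$ here.
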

We first prove a lemma, which will make the proof a lot cleaner.
\begin{lemma}\label{lemma_lift_sec}
    Assume notation in 3.7 (4), and $P_F$ is big, then if there exist $D$ divisors on birational model of S such that $a(D,S,\Delta_S := \Gamma_S+N_S)<\epsilon$ and centre of $D$ on S is vertical over $F$, then we can choose $P_F$ such that $(F,\Theta_F+P_F)$ is not $\epsilon$-lc.
\end{lemma}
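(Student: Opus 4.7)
The plan is to use Lemma \ref{fib_sing} to convert the low log discrepancy at the vertical divisor $D$ over $S$ into a low log discrepancy at some prime divisor $T$ over $F$, and then to exploit the bigness of $P_F$ to choose a representative of $P_F$ in its $\R$-linear equivalence class for which $(F,\Theta_F+P_F)$ inherits this bad place.

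First, I would carry out fibre space adjunction for $h:S\to F$ using the pair $(S,\Delta_S)$, obtaining $K_S+\Delta_S\simR h^*(K_F+\Delta_F+M_F)$ with discriminant $\Delta_F$ and moduli $M_F$. From the construction in \ref{non_klt} one has $\Sigma_S\leq \Gamma_S+N_S=\Delta_S$, hence $\Theta_F\leq \Delta_F$ and $P_F\simR (\Delta_F-\Theta_F)+M_F$. Applying Lemma \ref{fib_sing} to $(S,\Delta_S)\to F$ with the bad divisor $D$ produces a resolution $\pi:F'\to F$ on which $\Delta_{F'}$ has a component $T$ with $\mu_T\Delta_{F'}>1-\epsilon$; equivalently, the generalised pair $(F,\Delta_F+M_F)$ has log discrepancy strictly less than $\epsilon$ at $T$.

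Next I would transfer this bad place to the ordinary pair $(F,\Theta_F+P_F)$. Writing $K_{F'}+B_{F'}^P=\pi^*(K_F+\Theta_F+P_F)$, one has $B_{F'}^P\simR \Delta_{F'}+M_{F'}$ on $F'$, with $M_{F'}$ nef on a sufficiently high resolution. The goal is to pick $P_F\geq 0$ in its class with $\mu_T B_{F'}^P>1-\epsilon$, equivalently $a(T,F,\Theta_F+P_F)<\epsilon$. Since $\Delta_F-\Theta_F\geq 0$ is fixed and $[P_F]$ is big, bigness affords the freedom to choose a representative $P_F$ (equivalently $M_F$) that is effective and has coefficient at least zero along the image of $T$ on $F$, possibly after a small ample perturbation. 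This makes the corresponding $M_{F'}$ nonnegative at $T$ for a compatible choice of representative, so that $\mu_T B_{F'}^P\geq \mu_T\Delta_{F'}>1-\epsilon$ and the pair fails to be $\epsilon$-lc at $T$.

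The main obstacle is the technical coordination of representatives: $P_F$ and $M_F$ are each defined only up to $\R$-linear equivalence, and $M_{F'}$ on the resolution is only nef (not automatically effective). Bigness of $P_F$ is essential here, since it provides the extra positivity needed to pick an effective $P_F$ with the prescribed coefficient behaviour at the image of $T$; without it, pseudo-effectivity of $M_F$ alone would not suffice to guarantee an effective representative with the right positivity. The remaining step is verifying that the principal-divisor discrepancy between $B_{F'}^P$ and $\Delta_{F'}+M_{F'}$ on $F'$ can be absorbed into the perturbation without spoiling the coefficient estimate at $T$.
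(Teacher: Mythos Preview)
Your overall strategy matches the paper's: apply Lemma \ref{fib_sing} to produce a prime divisor $T$ on a resolution $\pi:F'\to F$ with $\mu_T\Delta_{F'}>1-\epsilon$, then use bigness of $P_F$ to realise this as a bad place of $(F,\Theta_F+P_F)$ for a suitable effective representative. The gap is in the mechanism you propose for the second step. Controlling $P_F$ (or $M_F$) on $F$ to be ``effective with coefficient at least zero along the image of $T$ on $F$'' does not give what you need: if $T$ is exceptional over $F$ its image has higher codimension, and positivity of $M_F$ on $F$ says nothing about $\mu_T$ of its pullback to $F'$. The quantity you must bound is $\mu_T B_{F'}^P$, which lives on $F'$, and nefness of $M_{F'}$ alone is not enough to force $\mu_T M_{F'}\geq 0$ for any representative.

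The paper fixes this by doing the construction on $F'$ and pushing down. Write $P_F\simR A+C$ with $A$ ample on $F$ and $C\geq 0$, and set $A':=\pi^*A$ (nef and big on $F'$). Then for any $t\in(0,1)$ the class $tA'+(1-t)M_{F'}$ is big on $F'$, so one may choose $0\leq L\simR tA'+(1-t)M_{F'}$. Putting $K_{F'}+D:=\pi^*(K_F+\Theta_F+C)$ and $\Omega':=tD+L+(1-t)\Delta_{F'}$, one checks $K_{F'}+\Omega'\simR \pi^*(K_F+\Theta_F+P_F)\simR 0/F$, so by negativity $K_{F'}+\Omega'=\pi^*(K_F+\Omega)$ with $\Omega:=\pi_*\Omega'$. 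Since $\pi_*D=\Theta_F+C\geq\Theta_F$ and $\Delta_F\geq\Theta_F$, one gets $\Omega\geq\Theta_F$, so $P_F:=\Omega-\Theta_F\geq 0$ is a legitimate choice. Finally $\mu_T\Omega'\geq (1-t)\mu_T\Delta_{F'}+t\mu_T D$, which exceeds $1-\epsilon$ once $t$ is small enough, so $(F,\Omega)=(F,\Theta_F+P_F)$ is not $\epsilon$-lc. Your phrase ``small ample perturbation'' is pointing at exactly this convex combination; the missing ingredient is that it must be carried out on $F'$ to make the moduli part effective there, and then brought back to $F$ via the negativity lemma.
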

\begin{proof}
    Recall we had $(K_S+\Gamma_S+N_S) \simR h^*(K_F+\Delta_F+M_F)$ where $\Delta_F$ is the discriminant part and $M_F$ is the moduli part, and its clear from construction that $\Delta_F\geq \Theta_F)$. Applying Lemma \ref{fib_sing}, we see that there is a resolution $F'\rightarrow F$ such that $\Delta_{F'}$ will have a component $T$ with coefficient $\geq (1-\epsilon)$. We can see from definition that $P_F\simR \Delta_F+M_F-\Theta_F$ and $P_F\simR A+C$, where $A$ is ample and $C\geq 0$ as $P_F$ is big. The idea of the proof is that if $\Delta_F+M_F-\Theta_F$ is effective then we are clearly done. If it is not effective, we can use the ampleness of $A$ to make it effective. The rigours proof is as following. \\We denote $K_{F'}+D := (K_F+\Theta_F+C)|_{F'} $ and let $A' := A|_{F'}$ which is nef and big where $F'\rightarrow F$ is a high resolution. Now consider $t>0$ sufficiently small and pick $0\geq L \simR t A'+(1-t)M_{F'}$, then we get $K_{F'}+\Omega':=K_{F'}+tD +L+(1-t)(\Delta_{F'}) \simR t(K_F+\Theta_F+P_F)|_{F'}+ (1-t)(K_F+\Delta_F+M_F)|_{F'}\simR (K_F+\Theta_F+P_F)|_{F'}$, which is trivial over $F$. Therefore it is the pullback of its push down to $F$, say $K_F+\Omega$, by negativity lemma. Hence we get $\Omega\simR \Theta_F+P_F$.Also note that $(F',\Omega')$ is not sub $\epsilon$-lc, hence $(F,\Omega)$ is not $\epsilon$-lc. Its clear that $\Omega\geq \Theta_F$ since $\Delta_F\geq \Theta_F$. Therefore we are done by replacing $P_F $ by $\Omega-\Theta_F$.
\end{proof}

\begin{proof}[Proof of Prop \ref{lif_sec}]
    Assuming notation in Definition \ref{non_klt}. We have $\psi: Y\rightarrow X$ and $S\in\floor{\Gamma_Y}$ the unique component mapping to $G$. Also let $K_Y+\Delta_Y := \psi^*(K_X+\Delta)$ and $k_S+\Delta_S := (K_Y+\Delta_Y)|_S$. Note that we have $\Delta_Y =\Gamma_Y+N_Y$ and $Supp(\Delta_Y) = Supp(\Gamma_Y)$\\Firstly, $G$ is an isolated non-klt centre and no components of $\floor{\Delta_Y-S}$ intersect $S$: Indeed if not,say $T$ is a component of $\floor{\Delta_Y-S}$ intersecting $S$,(hence centre of $T$ on $X$ is non klt centre intersecting $G$), then there will be a component $T_S$ of $\floor{\Delta_S}$ which is contradicts the Lemma \ref{lemma_lift_sec}. Also we can assume that $E := \psi^*(-nrK_X)$ is an integral divisor near $S$ since $G$ is general and we can assume $K_X$ and $E$ has bounded Cartier index depending only on $\epsilon$ near codimensional one points of S.\\Claim: $h^1(\ceiling{lE-\floor{\Gamma_Y}-N_Y})=0$ for any $l\geq 2$. Define $L := \ceiling{lE-\floor{\Gamma_Y}-N_Y} -(lE-\floor{\Gamma_Y}-N_Y) \geq 0$, then we see that by above assumption, $S$ is not a component of L and every components of $L$ is either exceptional over $X$ or supported in $N_Y$, which means $L$ is supported in $\floor{\Gamma_Y}$ with coefficients $\leq 1$. Therefore we see that $(Y,\Gamma_Y-\floor{\Gamma_Y}+L)$ is klt since $(Y,\Gamma_Y)$ is dlt. Finally observe we have $$\ceiling{lE-\floor{\Gamma_Y}-N_Y}\simQ K_Y+\Gamma_Y-\floor{\Gamma_Y}+L+ (-lnr+n)\psi^* K_X$$ and $\psi^*(K_X)$ is nef and big, therefore Kawamata Viehweg vanishing, we get the claim.\\Using Lemma 2.42 in \cite{fa}, the fact that Cartier Index of $E$ is bounded near on codimensional one points of $S$ and $\floor{\Gamma_Y}+N_Y-S$ doesn't intersect $S$, we can pick a bounded $l$ such that we following sequence is exact. $$0\rightarrow \OO_Y(\ceiling{lE-\floor{\Gamma_Y}-N_Y})\rightarrow \OO_Y(\ceiling{lE-\floor{\Gamma_Y}-N_Y+S})\rightarrow \OO_Y(\ceiling{lE}|_S)\rightarrow 0$$ Combining with the claim above, we get $$h^0(\ceiling{lE-\floor{\Gamma_Y}-N_Y+S})\twoheadrightarrow h^0(\ceiling{lE}|_S)$$Since $\ceiling{lE}|_S = lE|_S = h^*(-lnrK_X|_F)$ where $h:S\rightarrow F$, hence we get $h^0(\ceiling{lE-\floor{\Gamma_Y}-N_Y+S})\neq 0$, which implies $h^0(\ceiling{lE})\neq 0$ (as $-\floor{\Gamma_Y}-N_Y+S\leq 0$), therefore we get $h^0(-lnrK_X)\neq 0$. 
\end{proof}

\newpage
\section{Effective Birationality}
Now we will look at the first key results of the paper. The next proposition is the main tool where we will use it to derive contradictions. Although we will prove a much stronger result in theorem \ref{1.6}, but we still need this version because the proof of \ref{1.6} is based on the following lemma.
\subsection{Boundedness of Singularities in Bounded Family}
\begin{prop}\label{4.2}(\cite{fa} 4.2)
    Let $\epsilon\in \R^{\geq 0}$ and $\mathcal{P}$ be a bounded set of couples. Then there is $\delta\geq 0$ depending only on $\epsilon,\mathcal{P}$ such that if $(X,B)$ is  $\epsilon$-lc and $(X,Supp(B)+T)\in \mathcal{P}$ for some reduced divisor T, $L\geq 0$ is an $\R$-Cartier $\R$ divisor, where $L\simR N$ for some $N$ supported in $T$, and $N\in [-\delta,\delta]$, then we have $(X,B+L)$ is klt.
    
\end{prop}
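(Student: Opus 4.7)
The plan is to pull the situation back to a uniformly bounded log resolution of $X$ and estimate the perturbation of all discrepancies caused by $L$ in terms of $\delta$, using a very ample divisor supplied by boundedness.

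First, by \ref{bnd_fam}(2), I can choose a log resolution $\phi\colon W\to X$ of $(X, Supp(B)+T)$ such that the couples $(W, B_W^\sim+T_W^\sim+E_\phi)$, with $E_\phi$ the reduced $\phi$-exceptional divisor, form a log bounded family; by criterion \ref{bnd_fam}(1), I fix a very ample $A$ on $W$ with $A^d$ and $A^{d-1}\cdot D$ uniformly bounded for every prime divisor $D$ appearing in $B_W^\sim+T_W^\sim+E_\phi$. Since $(X,B)$ is $\epsilon$-lc, $(X,0)$ is klt, so $X$ admits a small $\Q$-factorialization $\tilde X\to X$, and log boundedness is preserved along this small modification. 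As small modifications neither extract nor contract divisors, they do not affect discrepancies, coefficients of $B$, or the relation $L\simR N$, so I may replace $X$ by $\tilde X$ and assume every component $T_i$ of $T$ is $\Q$-Cartier, whence $\phi^*T_i$ is a well-defined effective $\Q$-Cartier divisor on $W$, supported in $T_W^\sim+E_\phi$ with uniformly bounded coefficients.

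Write $K_W+B_W^\dagger=\phi^*(K_X+B)$, so that the coefficient of every prime divisor $D$ on $W$ in $B_W^\dagger$ equals $1-a(D,X,B)\le 1-\epsilon$. Since $W$ is smooth, to conclude that $(X,B+L)$ is klt it suffices to show every coefficient of $B_W^\dagger+L_W$ is strictly less than $1$, where $L_W:=\phi^*L$. Now $L_W\simR N_W:=\sum n_i\phi^*T_i$ with $|n_i|\le\delta$, and by boundedness $A^{d-1}\phi^*T_i\le C_0$ for some $C_0=C_0(\mathcal P)$; since the number of components of $T$ is also uniformly bounded, and $L_W,N_W$ are numerically equivalent,
\[
0\le A^{d-1}L_W=A^{d-1}N_W\le C\delta
\]
for some $C=C(\mathcal P)$. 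Because $A$ is very ample and $L_W\ge 0$, every prime divisor appearing in $L_W$ has intersection $\ge 1$ with $A^{d-1}$, so each coefficient of $L_W$ is at most $A^{d-1}L_W\le C\delta$. Consequently every coefficient of $B_W^\dagger+L_W$ is at most $(1-\epsilon)+C\delta$, which is strictly less than $1$ as soon as $\delta<\epsilon/C$, giving a $\delta$ depending only on $\epsilon,\mathcal P$.

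The main technical obstacle is the bound $A^{d-1}L_W\le C\delta$: it combines log boundedness of the resolution, which supplies the uniform upper bound on the individual intersection numbers $A^{d-1}\phi^*T_i$, with the $\Q$-Cartier property of each $T_i$ (secured by the small $\Q$-factorialization) so that $\phi^*N$ splits termwise into the $\phi^*T_i$. Everything else is routine discrepancy bookkeeping on a smooth model, so the heart of the argument is building a uniformly bounded resolution on which this intersection-theoretic estimate applies.
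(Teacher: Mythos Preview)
Your overall strategy matches the paper's: pass to a bounded log resolution and control the effect of $L$ via a very ample divisor and the relation $L\simR N$. However, the sentence ``Since $W$ is smooth, to conclude that $(X,B+L)$ is klt it suffices to show every coefficient of $B_W^\dagger+L_W$ is strictly less than $1$'' is not correct as stated. This would follow if $(W,\operatorname{Supp}(B_W^\dagger+L_W))$ were log smooth, but $L$ is not assumed to be supported in $\operatorname{Supp}(B)\cup T$; hence $L_W=\phi^*L$ may have many components with no transversality to each other or to $\operatorname{Supp}(B_W^\dagger)$. A large number of components of $L_W$ through a single point $w\in W$, each with small coefficient, can make $(W,B_W^\dagger+L_W)$ fail to be sub-klt at $w$ even though every individual coefficient is below $1$.

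The repair is already implicit in your estimate: the inequality $A^{d-1}L_W\le C\delta$ bounds not only the coefficients of $L_W$ but also $\operatorname{mult}_w L_W$ at every closed point $w$ (intersect with $d-1$ general members of $|A|$ through $w$). This is precisely the quantity the paper controls after reducing to the log smooth case. With $\operatorname{mult}_w L_W<\epsilon$ one gets $(W,\epsilon^{-1}L_W)$ klt by the standard fact that an effective divisor of multiplicity $<1$ everywhere on a smooth variety is klt; since $(W,(B_W^\dagger)^{\ge 0})$ is log smooth with coefficients $\le 1-\epsilon$, convexity of log discrepancies then yields $(W,(B_W^\dagger)^{\ge 0}+L_W)$ klt and hence $(X,B+L)$ klt. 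So your argument goes through once ``coefficient'' is replaced by ``multiplicity'' at this step.
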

\begin{proof}
    We will sketch the proof here. By induction on $d$, we may assume all varieties in $\mathcal{P}$ have dimension $d$. (note the proposition is clear if $d=1$, we wlog $d\geq 2$). Assume the claim is false. Let $\phi: W \rightarrow X$ be a log resolution of $(X,Supp(B)+T)$ and let $K_W+B' := \phi^*(K_X+B)$, let $B_W := B'^{\geq 0}$ and $N_W := \phi^*N$, $L_W := \phi^*L$ and $T_W := \phi^* T$. Since $X$ is bounded, we can assume that $N_W$ has coefficients with absolute value less than $n\delta$, where $n$ depends only on $\mathcal{P}$. Using \ref{bnd_fam}, we can replace $X,B,N,L,T$ with $W,B_W,N_W,L_W,T_W$ and from now on we will assume $(X,Supp(B0+T))$ is log smooth. Furthermore, by using \ref{bnd_fam} again, we can add a large multiple (but only depending on $\mathcal{P}$) of a very ample divisor on $X$ to $T$ and hence we can assume $T$ is very ample  (we need to modify $\mathcal{P}$ accordingly).\\Using boundedness of $T$, we see that $T^d$ is bounded above depending only on $\mathcal{P}$, say $T^d\leq M$, then we claim we can take any $\delta < \epsilon/M$: Assume not,  then we have $(X,B+L)$ is not klt, since $(X,B)$ is log smooth and $\epsilon$-lc, there exist $x\in X$ such that $mult_x(L)\geq \epsilon$, but $mult_x(L)\leq L T^{d-1}\leq N*T^{d-1}\leq \delta T^{d}< \epsilon$, which is a contradiction.
\end{proof}
\subsection{From Boundedness of Volume to Bounded Family}
In order to apply the above lemma, we need to have a method to create some bounded family. The following is a standard lemma to create bounded families. We omit its proof here.
\begin{lemma}(\cite{BB},2.4.2, 3.2)\label{4.4.1}
    Let $V_1,V_2$ be 2 fixed positive real number. Let $(X,D)$ be a couple where $X$ is a normal projective variety of dimension $d$. Suppose we have $A$, a base point free Cartier divisor such that $|A|$ define a birational morphism $\phi_A : X\rightarrow X'$ and $vol(A)\leq V_1$. Assume further that $vol(K_X+D+2(d+1)A)\leq V_2$ is bounded above. Then all such $(X,D)$ form a log birationally bounded family and $(X',D')$, where $D' := \phi_* D$, form a log bounded family. Also by \ref{bnd_fam}, the set of $(\bar{X},\bar{D})$ form a log smooth log bounded family, where $\bar{X}$ is a log resolution of $(X',D')$ and $\bar{D} := D'^{\sim}$+ reduced exceptional divisor over $X'$. 
\end{lemma}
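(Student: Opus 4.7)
\proofsketch
The plan is to apply the boundedness criterion \ref{bnd_fam}~(1): produce, uniformly in $(X,D)$, a very ample divisor $A'$ on $X'$ with both $(A')^d$ and $(A')^{d-1}\cdot D'$ bounded. Then log boundedness of the $(X',D')$ follows immediately, and everything else is formal.

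Bounding the variety $X'$ is almost immediate. Since $|A|$ is base point free and defines $\phi_A$, there is a very ample $A'$ on $X'$ with $A = \phi_A^{*}A'$. Because $A$ is nef, $vol(A) = A^d = (A')^d \leq V_1$, so $X'$ embeds via $|A'|$ with degree at most $V_1$ in a projective space of fixed dimension $h^0(A')-1$, and hence lies in a bounded family of polarized projective varieties by the classical Hilbert scheme / Chow variety argument.

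Bounding $D'$ on $X'$ is the essential content. One compares volumes across $\phi_A$: using $A = \phi_A^{*}A'$ and the fact that pushing forward along a birational morphism between normal varieties does not increase volume,
$$vol(K_{X'} + D' + 2(d+1)A') \leq vol(K_X + D + 2(d+1)A) \leq V_2.$$
Now $X'$ lies in a bounded family polarized by $A'$, so by \ref{bnd_fam}~(3)--(4) there are uniform constants $C_1, C_2$ such that $C_1 A' - K_{X'}$ is pseudo-effective and $K_{X'}\cdot (A')^{d-1} \le C_2$. Plugging these into the volume bound above and expanding $vol(D' + \text{nef})$ asymptotically in the nef parameter gives an upper bound on $(A')^{d-1} \cdot D'$ depending only on $V_1$, $V_2$, $d$.

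Granting both bounds, \ref{bnd_fam}~(1) yields log boundedness of $(X', D')$; applying \ref{bnd_fam}~(2) to log resolve then produces the log smooth log bounded family $(\bar X, \bar D)$, and $(X,D)$ itself is log birationally bounded via $(\bar X, \bar D)$. The main technical obstacle is the volume comparison across $\phi_A$: since $X'$ need not be $\Q$-Gorenstein, $K_{X'}+D'$ is only defined as a pushforward and the inequality above is not tautological — one must work on a common log resolution of $(X,D)$ and of $(X',D')$, bound the discrepancy divisors there (which are exceptional over $X'$ and controlled by $A=\phi_A^{*}A'$), and only then extract the volume estimate. Everything else is an application of the tools already in \ref{bnd_fam}.
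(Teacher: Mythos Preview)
The paper does not prove this lemma at all: it is stated as a standard result, cited from \cite{BB} (2.4.2, 3.2), and the text explicitly says ``We omit its proof here.'' So there is nothing in the paper to compare your argument against.

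That said, a brief remark on your sketch versus the argument in the cited source. Your overall strategy --- bound $(A')^d$ directly, then bound $(A')^{d-1}\cdot D'$, then invoke \ref{bnd_fam}(1) --- is exactly the right shape. The point where your sketch is vague is the extraction of the degree bound $(A')^{d-1}\cdot D'$ from the volume bound $vol(K_X+D+2(d+1)A)\leq V_2$. You propose to push the volume inequality forward to $X'$, then use boundedness of $X'$ to control $K_{X'}$ and ``expand the volume asymptotically in the nef parameter''; but the multiple $2(d+1)$ is fixed, not asymptotic, and $K_{X'}$ need not be $\Q$-Cartier, so this step as written is not a proof. The standard route (in the cited reference, and in Hacon--McKernan--Xu) bypasses $X'$ entirely at this stage: one cuts $X$ by $d-1$ general members $H_1,\dots,H_{d-1}\in |A|$ down to a curve $C$, on which $A^{d-1}\cdot D = \deg(D|_C)$, and then uses a direct Riemann--Roch/adjunction estimate on $C$ to show $\deg(D|_C)\leq c\cdot vol(K_X+D+2(d+1)A)$ for an explicit constant $c$ depending only on $d$. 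This avoids any issue with singularities of $X'$ or with interpreting $K_{X'}$. Once $(A')^d$ and $(A')^{d-1}\cdot D'=A^{d-1}\cdot D$ are both bounded, the rest of your write-up (invoking \ref{bnd_fam}(1) and (2)) is correct and is exactly how one finishes.
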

    \begin{rmk}\label{4.4.2}
    Also in order to apply \ref{4.2}, we need to bound the coefficients in bounded family. Assume that $(X,D)\in \mathcal{P}$, a bounded family and assume that we have $A$, a big divisor, with $Supp A\leq D$ and we have $M\geq A$ a divisor with bounded volume, say $c$. Then coefficients of $M$ are bounded above depending only on $\mathcal{P}$ and $c$: Indeed, since $A\leq D$, we can choose a $l\in \N$ and $H$ very ample on $X$ depending only on $\mathcal{P}$, such that $lA-H$ is big. Then $$M\cdot H^{d-1}\leq vol(M+H)\leq vol(M+lA)\leq vol((l+1)M)$$which implies coefficients of M are bounded above.
\end{rmk}
\subsection{Main Result}
The main result of this section is the following proposition. Observe this statement together with theorem \ref{com_gen} trivially implies effective birationality (Theorem \ref{eff_bir}).
\begin{prop}\label{4.9}(\cite{fa},4.9)
    Let $d\in \N$ and $\epsilon,\delta\in\R^{>0}$, then there exist $m\in \N$ depending only on $d,\epsilon,\delta$ such that if $X$ is $\epsilon$-lc Fano of dimension d and there exist $B\geq \delta$, $\Q$-divisor such that $K_X+B\simQ 0$, then $|-mK_X|$ defines a birational map.
\end{prop}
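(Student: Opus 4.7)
The plan is to argue by induction on $d$, combining the potentially-birational criterion of section \ref{2.30} with the adjunction and section-lifting machinery of section 3. The base case $d=1$ is trivial. For the inductive step, it suffices to show that $-\mu K_X$ is potentially birational for some $\mu=\mu(d,\epsilon,\delta)$: once this is done, the $\epsilon$-lc Fano hypothesis (together with eventual control on the Cartier index of $K_X$) translates potentially birational into birationality of $|-mK_X|$ for bounded $m$.

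To produce such $\mu$, I would fix two general closed points $x,y\in X$ and construct a $\Q$-boundary $\Delta\simQ -\mu K_X$ such that $(X,\Delta)$ has a unique non-klt centre $G\ni x$ and is not klt at $y$. The initial non-klt input is $B\geq\delta$: the divisor $\tfrac{1}{\delta}B\simQ -\tfrac{1}{\delta}K_X$ has every component of coefficient $\geq 1$, so $(X,\tfrac{1}{\delta}B)$ is non-klt along every component of $B$. Since $-K_X$ is ample, adding a general effective $\Q$-divisor proportional to $-K_X$ and then tie-breaking (using the $\epsilon$-lc property of $X$) lets one isolate a minimal non-klt centre passing through $x$. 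To keep $\mu$ bounded, I would place the resulting auxiliary couple in a log-birationally bounded family via Lemma \ref{4.4.1} and invoke Proposition \ref{4.2} to prevent uncontrolled accumulation of singularities.

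If $\dim G=0$, then $\Delta$ directly witnesses $-\mu K_X$ as potentially birational at $x$, and a symmetric argument at $y$ finishes the proof. If $\dim G>0$, I restrict to $G$ via the adjunction in \ref{non_klt}(4): the normalization $F$ inherits an $\epsilon$-lc pair $(F,B_F)$ with $B_F\leq\Theta_F$, and by the inductive hypothesis (applied to $F$ after converting the adjoint generalized pair back to a usable form) we obtain non-vanishing of $h^0(-m'K_X|_F)$ for a bounded $m'$. Proposition \ref{lif_sec} then lifts this to $h^0(-lm'K_X)\neq 0$ on $X$ for bounded $l$, and iterating/refining the construction at the second general point $y$ builds the required potentially-birational structure on $-mK_X$.

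The main obstacle is uniformly bounding $\mu$ in the construction of $\Delta$. Without a volume lower bound on $-K_X$, the standard high-multiplicity method for creating non-klt singularities is unavailable, and $B\geq\delta$ is the essential replacement. The delicate part is coupling the non-klt construction to a bounded-family setup (Lemma \ref{4.4.1} plus Proposition \ref{4.2}) while ensuring the adjoint pair on $F$ satisfies the hypotheses of Proposition \ref{lif_sec} (in particular $\epsilon$-lc and $P_F$ big), so that the inductive hypothesis on $F$ actually yields a section liftable to $X$. In effect the induction must be carried out on a slightly stronger statement, tracking the required boundedness data, rather than on the proposition literally as stated.
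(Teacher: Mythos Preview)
Your proposal diverges substantially from the paper's argument, and the inductive step as written has a genuine gap.

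The paper does not argue by induction on $d$ with section-lifting. Its proof is a contradiction argument: assuming $m_i\to\infty$ for a sequence $X_i$, Lemma~\ref{4.7} bounds the ratio $m_i/n_i$ (where $n_i$ is the volume threshold $\mathrm{vol}(-n_iK_{X_i})>(2d)^d$), so $\mathrm{vol}(-m_iK_{X_i})$ is bounded. One then decomposes $|-m_iK_{X_i}|$ into movable and fixed parts, builds a log-smooth log-bounded model $(\bar W_i,\Sigma_{\bar W_i})$ via Lemma~\ref{4.4.1}, and derives a contradiction from Proposition~\ref{4.2} by testing against the divisor $\tfrac{1}{\delta}B_i\simQ \tfrac{1}{\delta m_i}M_i$. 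The role of $B\geq\delta$ is purely in this last step, to manufacture a divisor with shrinking $\R$-class but forcing non-klt singularities. The non-klt centres in Lemma~\ref{4.7} are created by volume (via the covering-family construction of \cite{fa}, 2.32), not from $B$. Proposition~\ref{lif_sec} is never invoked; it is used only in the separate Proposition~\ref{4.11}.

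The concrete gap in your approach is the appeal to the inductive hypothesis on $F$. The normalisation $F$ of a non-klt centre is not in general Fano, so the statement of Proposition~\ref{4.9} in dimension $<d$ does not apply to it. Moreover, even granting something like it, you would obtain information about $|-mK_F|$, whereas Proposition~\ref{lif_sec} requires $h^0(-nrK_X|_F)\neq 0$, and $K_X|_F=K_F+B_F$ is a different divisor. Finally, the hypothesis of Proposition~\ref{lif_sec} that $(F,\Theta_F+P_F)$ be $\epsilon$-lc for \emph{every} effective $P_F$ in its class is extremely restrictive; in the paper this is only arranged in Proposition~\ref{4.11} under the special assumption $\epsilon_i\to 1$, which eventually forces $\Theta_F=0$. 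There is no mechanism in your outline to secure it here. You also claim potentially birational plus bounded Cartier index of $K_X$ yields birationality of $|-mK_X|$, but no such Cartier bound is available at this stage of the argument.
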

    Here we say a few works about the proof. Firstly, the idea is to create some bounded family using \ref{4.4.1} and then apply \ref{4.2}. However, this in term requires some boundedness statement of the volume $vol(-mK_X)$. The next lemma gives us the tool to bound the anticanonical volume.
    
\begin{lemma}\label{4.7}(\cite{fa},4.7)
    Let $d\in \N$ and $\epsilon,\delta\in\R^{>0}$, then there exist $p\in \N$ depending only on $d,\epsilon,\delta$ such that if $X$ is $\epsilon$-lc Fano of dimension d, m is smallest integer such that $|-mK_X|$ defines a birational map, $n$ smallest integer such that $vol(-nK_X)>(2d)^d$ and $nK_X+N\simQ 0$ for some $\Q$-divisors $N\geq \delta$, then $\frac{m}{n}<p$
\end{lemma}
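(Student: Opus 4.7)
The plan is induction on the dimension $d$, with the case $d=1$ immediate. For the inductive step I argue by contradiction: suppose for fixed $d,\epsilon,\delta$ there exist $X_i$ satisfying the hypotheses with $m_i/n_i\to\infty$. The goal is to show that for some $p=p(d,\epsilon,\delta)$ the linear system $|-m'K_X|$ defines a birational map for some $m'\leq p\cdot n$, which contradicts the minimality of $m$ once $m/n>p$. Fix general closed points $x,y\in X$. Using $\mathrm{vol}(-nK_X)>(2d)^d$, the standard multiplicity estimates underlying the potentially birational divisor machinery of \ref{2.30} produce an effective $\Q$-divisor $D\simQ -\alpha nK_X$ with $\alpha$ uniformly bounded (roughly $\alpha<2$) such that $(X,D)$ is not klt at $x$ and not klt at $y$. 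A tie-breaking argument, using the divisor $N\geq\delta$ from $nK_X+N\simQ 0$ to give room to perturb coefficients, then replaces $D$ by a divisor with a unique non-klt place over $x$ whose centre $G\ni x$, while keeping $(X,D)$ non-klt at $y$.

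I split into cases on $\dim G$. If $G=\{x\}$, then $x$ is an isolated non-klt centre of $(X,D)$, and Nadel vanishing lifts the evaluation-at-$x$ map to a global section of $|-\lceil\alpha n\rceil K_X|$ that also vanishes at $y$; varying $x,y$ gives birationality with $m'\leq\lceil\alpha n\rceil$, so $m/n$ is bounded. If $\dim G>0$, let $F$ be the normalization of $G$; because $x$ is general, $G$ is a general member of a covering family of subvarieties, so Definition \ref{non_klt} (4) supplies a boundary $B_F\leq\Theta_F$ with $(F,B_F)$ still $\epsilon$-lc and $K_F+B_F=K_X|_F$. Non-klt adjunction writes
$$K_F+\Theta_F+P_F\simQ -(\alpha n+1)K_X|_F,$$
with $P_F$ pseudo-effective. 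By perturbing $D$ at the outset with a small multiple of an ample divisor, I arrange that $P_F$ is big, while an application of Proposition \ref{4.2} inside a bounded family of log resolutions of the auxiliary data keeps $(F,\Theta_F+P_F)$ $\epsilon'$-lc for some $\epsilon'>0$ depending only on $d,\epsilon,\delta$.

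For $F$ of dimension $<d$, the inductive hypothesis combined with analogous volume estimates in lower dimension (the volume of $-K_X|_F$ is controlled from below by the bigness of $P_F$ and the factor $\alpha n+1$) yields a bounded $r$ with $h^0(-nrK_X|_F)\neq 0$. Proposition \ref{lif_sec}, applied to $X$ with $G,F$ as above, then lifts this to $h^0(-lnrK_X)\neq 0$ for an $l$ depending only on $d,r,\epsilon'$. Letting $(x,y)$ vary and combining with the isolated-centre case, $|-lnrK_X|$ separates general pairs of points and therefore defines a birational map, so $m\leq lnr$ with $l,r,\alpha$ all bounded in terms of $d,\epsilon,\delta$, as required.

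The main obstacle is the balancing act in the case $\dim G>0$: Proposition \ref{lif_sec} requires simultaneously that $P_F$ be big and that $(F,\Theta_F+P_F)$ be $\epsilon'$-lc, but the adjunction only guarantees $P_F$ pseudo-effective and any perturbation that boosts $P_F$ to big tends to worsen singularities. The delicate point is choosing the initial perturbation of $D$ by an ample divisor small enough (uniformly in the family) that Proposition \ref{4.2} still applies inside a bounded family of log resolutions, yet large enough to produce bigness of $P_F$. A secondary subtlety is that $F$ is not itself Fano, so the inductive hypothesis must be reformulated as a uniform lower bound on $h^0(-nrK_X|_F)$ rather than a statement about $K_F$; this is accomplished by running the construction intrinsically on $F$ using the boundary $B_F$ and the volume lower bound transported from $X$.
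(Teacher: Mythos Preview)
Your proposal has genuine gaps that prevent it from going through. The most serious is the role you assign to the divisor $N\geq\delta$: you use it only for tie-breaking when isolating a unique non-klt place, but in the actual argument $N$ is the engine of the contradiction. Restricting $\frac{1}{\delta}N$ to $F$ gives $N_F$ with every coefficient $\geq 1$, so $(F,\Lambda_F+I_F+N_F)$ is automatically non-klt; transporting this to a bounded log smooth model $\bar F$ and observing that $I_{\bar F}+N_{\bar F}\simQ \frac{\delta(n+1)+n}{\delta m}M_{\bar F}$ has coefficients $\to 0$ (because $m/n\to\infty$ and $M_{\bar F}$ has bounded coefficients) contradicts Proposition~\ref{4.2}. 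Without this use of $N$ you have no contradiction at all. Relatedly, your claim that Proposition~\ref{4.2} keeps $(F,\Theta_F+P_F)$ $\epsilon'$-lc is circular: Proposition~\ref{4.2} needs a bounded family containing $(F,\text{support data})$ as input, and producing that family (via Lemma~\ref{4.4.3}) requires first bounding $\mathrm{vol}(-mK_X|_G)$, which is the whole content of Step~1 of the paper's proof (the iterative reduction of $\dim G$ using the auxiliary integers $l_i$). You never bound this volume. Moreover, Proposition~\ref{lif_sec} demands $(F,\Theta_F+P_F)$ be $\epsilon$-lc for \emph{every} effective representative $P_F$, which is essentially Theorem~\ref{1.4} for $F$ and is certainly not available here.

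Two further structural problems: Proposition~\ref{lif_sec} outputs only $h^0(-lnrK_X)\neq 0$, a single nonzero section, and your assertion that ``letting $(x,y)$ vary\ldots separates general pairs of points'' does not follow from non-vanishing; the paper does not use \ref{lif_sec} in this proof at all (it appears only in \ref{4.11}, in a very special $\kappa_\sigma=0$ situation). And induction on $d$ is the wrong organising principle: $F$ is not Fano and does not satisfy the hypotheses of the lemma, so the inductive hypothesis says nothing about it---a difficulty you note but do not resolve. The paper works entirely in fixed dimension $d$ and instead iteratively reduces the dimension of the non-klt centre $G$ inside the same $X$, replacing $n$ by bounded multiples until $\mathrm{vol}(-mK_X|_G)$ is bounded, then obtains the contradiction on $\bar F$ via Proposition~\ref{4.2} and the divisor $N$ as described above.
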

We will prove the proposition now given Lemma \ref{4.7}.
\begin{proof}[Proof of Prop \ref{4.9}] 
    Step 1: Assume the claim is false, then we get a sequence of $X_i,B_i,m_i$ as in the proposition such that $m_i$ is the smallest integer such that $|-m_i K_{X_i}|$ defines a birational map and $m_i\rightarrow \infty$. Also choose $n_i$ smallest integer such that $vol(-n_i K_{X_i})>(2d)^d\geq vol(-(n_i-1) K_{X_i})$, then we have by Lemma \ref{4.7}, there is a fixed p, such that $\frac{m_i}{n_i}<p$ for all $i$. Hence $n_i\rightarrow \infty$ and we can assume $\frac{m_i}{n_i-1}\leq 2p$. Therefore $vol(-m_i K_{X_i})$ is bounded uniformly from above. Choose $0\leq M_i\in |-m_i\cdot K_{X_i}|$ and let $\phi : W_i\rightarrow X_i$ be log resolution such that $M_i' := \phi^*(M_i) :=  A_i'+R_i'$, where $A_i'$ is a general element in the movable part which is  base point free and it defines a birational map and $R_i\geq 0$ is the fixed part (by \ref{2.6}). Let $A_i$ and $R_i$ be their  pushdown to $X_i$, note that $A_i$ and $R_i$ are both integral divisors.\\
    Step 2: (Construct log bounded family) Now let $\Omega_{W_i}$ to be the sum of birational transform of $M_i$ and reduced exception divisor of $\phi$, and let $\Sigma_{W_i}:= Supp(\Omega_{W_i})$, then we have $\Sigma_i := \phi_*(\Sigma_{W_i}) \leq M_i$. Then we have $$Vol(K_{W_i}+\Sigma_{W_i}+2(2d+1)A_i')\leq vol(K_{X_i}+(4d+2)A_i+M_i)\leq vol((4d+3)M_i)$$ which is bounded uniformly. Also $vol(A_i')\leq vol(M_i')$ is bounded. Therefore using \ref{4.4.1}, we see that if $\psi_i : W_i\rightarrow \hat{W_i}$ is the birational contraction defined by $|A_i'|$, and $\Sigma_{\hat{W_i}}$ is the pushforward of $\Sigma_{W_i}$, then $(\hat{W_i},\Sigma_{\hat{W_i}})$ is log bounded. Taking log resolution $\bar{W_i}\rightarrow \hat{W_i}$ and letting $\Sigma_{\bar{W_i}}$ to be sum of the birational transform of $\Sigma_{\hat{W_i}}$ and the reduced exceptional divisors, we see that $(\bar{W_i},\Sigma_{\bar{W_i}})$ is log smooth log bounded. Replacing $W_i$ we can assume $h_i :W_i\rightarrow \bar{W_i}$ is a morphism and let $A_{\bar{W_i}},M_{\bar{W_i}}$ be the pushforward of $A_i',M_i'$ to $\bar{W_i}$. Applying \ref{4.4.2}, we see that coefficients of $M_{\bar{W_i}}$ are bounded above since $Supp(M_{\bar{W_i}})\subset \Sigma_{\bar{W_i}}$.Also note that by construction $\Sigma_{\bar{W_i}}$ contains the support of $M_{\bar{W_i}}$ and all exceptional divisors of $\bar{W_i}\dashrightarrow X_i$.We note here that we can also use a much weaker version of \ref{4.4.3} here.\\
    Step 3: Now we are ready to derive contradiction using \ref{4.2}. Let $K_{W_i}+\Lambda_{W_i} := K_{X_i}|_{W_i}$ where $\Lambda_{W_i}\leq 1-\epsilon$ as $X_i$ is $\epsilon$-lc. Now since $K_{X_i}+\frac{1}{m_i}M_i\simQ 0$, Let $K_{\bar{W_i}}+\Lambda_{W_i}+\frac{1}{m_i}M_{\bar{W_i}}\simQ 0$ be its crepant pullback to $\bar{W_i}$. Now since $0\leq \frac{1}{m_i}M_{\bar{W_i}}\rightarrow 0$ and supported in $\Sigma_{\bar{W_i}}$, we have $(\bar{W_i},\Lambda_{W_i}^{\geq 0}+\frac{1}{m_i}M_{\bar{W_i}} ) $ is $\epsilon/2$-lc for all $i>>0$. However $K_{X_i}+\frac{1}{m_i}M_i+\frac{1}{\delta}B_i$ is ample and $(X_i,\frac{1}{m_i}M_i+\frac{1}{\delta}B_i)$ not klt, hence by negativity, $(\bar{W_i},\Lambda_{W_i}+\frac{1}{m_i}M_{\bar{W_i}}+\frac{1}{\delta}B_{\bar{W_i}})$ is not sub-klt, where $B_{\bar{W_i}}$ is pushforward of $B_i|_{W_i}$, which implies $(\bar{W_i},\Lambda_{W_i}^{\geq 0}+\frac{1}{m_i}M_{\bar{W_i}}+\frac{1}{\delta}B_{\bar{W_i}}) $ is not klt, this contradicts \ref{4.2} since $\frac{1}{\delta}B_{\bar{W_i}}\simQ \frac{1}{\delta m_i} M_{\bar{W_i}}$.
\end{proof}
  Now we will give the proof of lemma \ref{4.7}. The idea is to create and non-klt centres and derive a contradiction. First, we need a subtle lemma in order to get a bounded family.
  \begin{lemma}\label{4.4.3}
      Let $d\in\N$ and $\epsilon,v>0$. Then there exist log bound family $\mathcal{P}$ and $c>0$ depending only on $d,\epsilon,v$ such that: Suppose $X$ is normal projective of dimension $d$ and $B\geq \epsilon$, $M\geq 0$ nef $\Q$-divisor such that $|M|$ defines a birational map, $M-(K_X+B)$is pseudoeffective, $vol(M)<v$ and $\mu_D(B+M)>1 $ for all $D$ component of $M$. \\Then there exist $(\bar{X},\Sigma_{\bar{X}})\in \mathcal{P}$ such that 
      \begin{enumerate}
          \item $\bar{X}\rightarrow X$ is birational and $\Sigma_{\bar{X}}$ contain all the exception divisors and support of birational transform of $(B+M)$
          \item there is $X'\rightarrow X,X'\rightarrow \bar{X}$ is a common resolution and coefficients of $M_{\bar{X}}$ are less than $c$, where $M_{\bar{X}}$ is pushdown of $M':= M|_{X'}$. Also $M'\simQ A'+R'$, where $|A'|$ is the fixed part of $M'$ and is base point free, and $A'\simQ 0/\bar{X}$
      \end{enumerate}  
      
  \end{lemma}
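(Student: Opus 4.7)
The plan is to apply the birational boundedness criterion of Lemma \ref{4.4.1} to a log resolution of $X$ equipped with the movable part of $|M|$, and then invoke Remark \ref{4.4.2} to bound the coefficients of $M_{\bar{X}}$.

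First I would take $f\colon X'\to X$ a sufficiently high log resolution of $(X,Supp(B+M))$ and invoke Lemma \ref{2.6} to decompose $f^*M = A' + R'$, with $|A'|$ the base-point-free movable part and $R'\geq 0$ the fixed part. Since $|M|$ is birational, so is $|A'|$, and Stein factorization of the induced morphism yields a birational contraction $\phi_{A'}\colon X'\to \bar{X}$ with $A'\simQ 0/\bar{X}$. I then set $\Sigma_{X'}$ to be the reduced sum of the strict transform of $Supp(B+M)$ and all reduced exceptional divisors of $f$, and $\Sigma_{\bar{X}} := (\phi_{A'})_*\Sigma_{X'}$; by construction this $\Sigma_{\bar{X}}$ contains the birational transform of $Supp(B+M)$ together with all exceptional divisors of $\bar{X}\dashrightarrow X$, giving (1), and the decomposition in (2) is produced automatically via the common resolution $X'$.

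Next I would verify the two volume bounds required by Lemma \ref{4.4.1}. The bound $vol(A')\leq vol(f^*M) = vol(M)<v$ is immediate since $A'\leq f^*M$. For the harder bound on $vol(K_{X'}+\Sigma_{X'}+2(d+1)A')$, the hypothesis $B\geq \epsilon$ on nonzero components yields $Supp(B)\leq B/\epsilon$, while the hypothesis $\mu_D(B+M)>1$ on components of $M$ yields $Supp(M)\leq B+M$ at those places; together these give $Supp(B+M)\leq (B+M)/\min(\epsilon,1)$. Pushing $\Sigma_{X'}$ forward to $X$ and using $A'\leq f^*M$, the volume is dominated by the volume on $X$ of a fixed $\Q$-linear combination involving $K_X+B$ and $M$; the pseudo-effectivity of $M-(K_X+B)$ then reduces this to a uniform multiple of $vol(M)<v$, giving a bound $V_2$ depending only on $d,\epsilon,v$.

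Once these bounds are in hand, Lemma \ref{4.4.1} places $(\bar{X},\Sigma_{\bar{X}})$ in a log bounded family $\mathcal{P}$ depending only on $d,\epsilon,v$, which I would refine to a log smooth log bounded family via the log resolution boundedness in \ref{bnd_fam}. To bound the coefficients of $M_{\bar{X}}$, observe that $(\phi_{A'})_*A'$ is big and supported in $\Sigma_{\bar{X}}$ since $A'$ defines the birational contraction $\phi_{A'}$, and $M_{\bar{X}}\geq (\phi_{A'})_*A'$ with $vol(M_{\bar{X}})\leq v$; Remark \ref{4.4.2} then supplies a uniform coefficient bound $c=c(d,\epsilon,v)$. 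The main technical obstacle is the second volume bound: the exceptional contributions $\sum (a_i+1)E_i$ arising in $K_{X'}+\Sigma_{X'}=f^*K_X+\sum(a_i+1)E_i+\Sigma_X^{\sim}$ are a priori uncontrolled because they depend on the discrepancies of $X$, and leveraging the pseudo-effectivity of $M-(K_X+B)$ together with the two lower-bound conditions on $B$ and $B+M$ is essential to dominate them uniformly.
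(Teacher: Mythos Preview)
Your overall strategy---apply Lemma \ref{4.4.1} to a log resolution with the movable/fixed decomposition of $|M|$, then invoke Remark \ref{4.4.2}---is exactly the paper's, and the first volume bound $vol(A')\leq vol(M)<v$ is fine. The gap is in the second volume bound, and it is not quite the obstacle you name.

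The exceptional contribution $\sum(a_i+1)E_i$ is in fact harmless: since $X$ is normal, $h^0(X',mD')\leq h^0(X,mf_*D')$ for any divisor $D'$, so $vol(K_{X'}+\Sigma_{X'}+2(d+1)A')\leq vol(K_X+\Sigma_X+2(d+1)A)$ with $\Sigma_X=Supp(B+M)$; the exceptional part simply disappears under pushforward. The real problem comes \emph{after} pushforward. Your estimate $\Sigma_X\leq (1/\epsilon)(B+M)$ gives
\[
vol\bigl(K_X+\tfrac{1}{\epsilon}(B+M)+2(d+1)M\bigr)=vol\bigl(K_X+\tfrac{1}{\epsilon}B+c'M\bigr),
\]
and here the coefficient of $B$ is $1/\epsilon>1$ while the coefficient of $K_X$ is $1$. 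The single pseudoeffectivity relation $M\geq_{pe}K_X+B$ lets you trade one copy of $K_X+B$ for $M$, but it cannot absorb the remaining $(1/\epsilon-1)B$, and nothing in the hypotheses bounds $B$ alone. So the claim that ``the pseudo-effectivity of $M-(K_X+B)$ then reduces this to a uniform multiple of $vol(M)$'' fails as stated.

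The paper repairs this by introducing an auxiliary boundary $\Omega_{X'}$ whose coefficients are deliberately \emph{matched} to the hypotheses: coefficient $\epsilon$ on strict transforms of components of $B$ outside $Supp(M)$, coefficient $\tfrac12$ on exceptional divisors and on $Supp(M')$, plus $\tfrac12 H$ for a general $H\in|6dA'|$. Then the pushforward $\Omega_X$ satisfies $\Omega_X\leq B+M+\tfrac12 H$, so $vol(K_X+\Omega_X)\leq vol(K_X+B+5dM)\leq vol(6dM)$ using pseudoeffectivity exactly once. The extra ingredient is that $K_{X'}+\Omega_{X'}$ is \emph{big} by Lemma \ref{2.46} (this is why the $H$ is added), and since the coefficients of $\Omega_{X'}$ lie in $\{\epsilon,\tfrac12\}$ there is a fixed $\alpha\in(0,1)$ with $K_{X'}+\alpha\Omega_{X'}$ still big; one then amplifies, choosing bounded $p$ with $\Sigma_{X'}+2(2d+1)A'\leq p(1-\alpha)\Omega_{X'}$ and adding $p$ copies of the big divisor $K_{X'}+\alpha\Omega_{X'}$ to obtain $vol(K_{X'}+\Sigma_{X'}+2(2d+1)A')\leq vol((p+1)(K_{X'}+\Omega_{X'}))$, which is now bounded. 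This two-step manoeuvre---small coefficients to get a bounded volume, then bigness to pass back to the full reduced support---is the missing idea.
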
 
\begin{proof}[Proof of Lemma \ref{4.4.3}]
    Step 1: We first set up standard notation. we can wlog $\epsilon<\frac{1}{2}$. Since $|M|$ defines birational map, $M$ is big. Let $X'\rightarrow X$ be a log resolution of $(X,B+M)$, such that  $M' := M|_{X'}\simQ A'+R'$, where $|A'|$ is the movable part and $|A'|$ base point free defining a birational contraction and $R'$ is the fixed part. Let $A,R$ be their pushdown to $X$. Note $A'$ is nef and big. \\
    Step 2: Let $H\in |6dA'|$, be general, Now we define a boundary $\displaystyle \Omega_{X'}:= \frac{1}{2}\sum_{D\in I_1}D+\epsilon\sum_{D\in I2}D +\frac{1}{2}H$, where $I_1 =exc/X+supp(M')$ and $I_2$ contain all components in $B^\sim$ but not in $M'$. Then we have $(X',\Omega_{X'})$ is $\epsilon$-lc, log smooth, and $K_{X'}+\Omega_{X'} = (K_X+\frac{1}{2}\sum_{D\in I_1}D+\epsilon\sum_{D\in I2}D)+\frac{1}{2}H$ is big by \ref{2.46}. \\
    Step 3: We claim that $vol(K_{X'}+\Omega_{X'})$ is bounded above. Indeed let $\Omega_X$ be the pushdown of $\Omega_{X'}$, then $vol(K_X+\Omega_X)\leq vol(K_X+B+5dM)\leq vol(6dM)$ is bounded, where is the first inequality is because $5dM+B-\Omega_X = (B+M+\frac{1}{2}H-\Omega_X)+(4dM-\frac{1}{2}H)$ is big (since $B+M+\frac{1}{2}H-\Omega_X\geq 0$ by assumption and $4dM-\frac{1}{2}H\simQ 4dM-3dA$ is big). \\
    Step 4: We finish the proof now. Let $\Sigma_{X'}:= Supp(\Omega_{X'})$. Also note $K_{X'}+\Omega_{X'}$ is big and coefficient of $\Omega_{X'}\in \{\frac{1}{2},\epsilon\}$, so there exist $\alpha\in(0,1)$ depending only on $\epsilon$ such that $K_{X'}+\alpha\Omega_{X'}$ is big. Now letting $p$ be large but bounded above we see that  \begin{multline*}
    vol(K_{X'}+\Sigma_{X'}+2(2d+1)A')\leq vol(K_{X'}+p(1-\alpha)\Sigma_{X'})\leq vol(K_{X'}+p(1-\alpha)\Sigma_{X'}+p(K_{X'}+\alpha\Sigma_{X'}))\\\leq vol((1+p)(K_{X'}+\Omega_{X'}))
    \end{multline*}
    Now the remaining of the proof is clear by applying Lemma \ref{4.4.1} and Remark \ref{4.4.2}.
    
\end{proof}
  \begin{proof}[Proof of Lemma \ref{4.7}]
      We first assume the theorem is false. Then we have a sequence of $X_i,m_i,n_i,B_i$ as in the lemma such that $\frac{m_i}{n_i}\rightarrow \infty$. \\
      Step 1: We create some non-klt centre $G_i$ with positive dimension and $vol(-m_iK_{X_i}|_{G_i})$ is bounded. Fix i for now. Using [\cite{fa},2.32(2)], we see that there is a covering family of subvarieties $G_i$ such that, for any general 2 points $x,y\in X_i$, there exist $0\leq \Delta_i\simQ (-nK_{X_i})$ such that $(X_i,\Delta_i)$ is lc but not klt at ${x}$, with a unique non klt place above a non-klt centre $G_i$ containing ${x}$ and not lc at $y$. Now if $dim(G_i)=0$ for a covering family of $G_i$, then we have $-2n_i K_{X_i}$ is potentially birational, hence $m_i\leq 2n_i$ a contradiction. Hence by passing to a subsequence of $X_i$ if needed, we can assume general $G_i$ has dimension $>0$. Now let $l_i$ be the minimum integer such that $vol(-l_i K_{X_i}|_{G_i})>d^d$. If $\frac{l_i}{n_i}<a$ is bounded above, by [\cite{fa},2.32(2)], we can replace $n_i$ with $dan_i$ and hence reducing dimension of a general $G_i$. This can't happen indefinitely else we end up with potentially birational again and a contradiction. Hence we can assume $\frac{\l_i}{n_i}\rightarrow \infty$. Now if $\frac{m_i}{l_i}\rightarrow \infty$, then we can replace $n_i$ by $l_i$ and reduce dimension of $G_i$, again. Repeating the above argument, we can assume $\frac{m_i}{l_i}$ is bounded uniformly above, and hence $vol(-m_iK_{X_i}|_{G_i})\leq \frac{m_i}{l_i-1}vol(-(l_i-1)K_{X_i}|_{G_i})$ is uniformly bounded. From now on, for each $X_i$, we assume we have a such $G_i$ with positive dimension and $vol(-m_iK_{X_i}|_{G_i})$ is bounded. \\
      Step 2: Consider adjunction on non-klt centre (\ref{non_klt}) of the pair $(X_i,\Delta_i)$, we get $K_{X_i}+\theta_{F_i}+ P_{F_i} \simQ (K_{X_i}+\Delta_i)|_{F_i}$, where $F_i$ is normalisation of $G_i$ and $X_i$ is lc near generic point of $G_i$ as $G_i$ general, also by replacing $n_i$ with $2n_i$ and adding general $0\leq Q_i \simQ -n_iK_{X_i}$ to $\Delta_i$ (This doesn't change $\Delta_i$), and replacing $P_{F_i}$ with $P_{F_i}+Q_i|_{F_i}$ and make it effective in its $\Q$ linear class, we can assume $P_{F_i}$ is big and effective. Lets choose $0\leq M_i\in |-m_iK_{X_i}|$ and let $\phi : W_i\rightarrow X_i$ be log resolution such that $M_i' := \phi^*(M_i) :=  A_i'+R_i'$, where $A_i'$ is a general element in the movable part which is  base point free and it defines a birational map and $R_i\geq 0$ is the fixed part. Let $A_i$ and $R_i$ be their  pushdown to $X_i$, note that $A_i$ and $R_i$ are both integral divisors. Let $F_i' \rightarrow F_i$ be log resolution of $(F_i,M_{F_i}+\theta_{F_i})$. Also let $M_{F_i'} := M_i'|_{F_i'}, A_{F_i'} := A_i'|_{F_i'}$ and $A_{F_i}$ be the pushdown of $A_{F_i'}$. We see that $vol(M_{F_i})$ is bounded and $|A_{F_i}|$ defines a birational map since $G_i$ is general.  Now since $\theta_{F_i}$ has coefficient belong to some fixed DCC set $\Phi$ depending only on $d$ (\ref{non_klt}), pick $\epsilon'\leq min(\Phi)^{>0}$, then we have $\theta_{F_i}\geq \epsilon'$. Also by \ref{non_klt},we have $\mu_D(\theta_{F_i}+M_{F_i})>1$ for any component $D$ of $M_{F_i}$. Therefore we can apply lemma \ref{4.4.3}, and we see that $(F_i',\Sigma_{F_i'})$ is log birationally bounded, where $\Sigma_{F_i'} := Supp(\theta_{F_i}+M_{F_i})^\sim+exc/F_i$. Similarly we have a log smooth log bounded family $(\bar{F_i},\Sigma_{\bar{F_i}})$ where $\Sigma_{\bar{F_i}}$ contain the support of birational transform of $\theta_{F_i}+M_{F_i}$ and all exceptional divisor of $\bar{F_i}\dashrightarrow F_i$. Also similarly, we get coefficient of $M_{\bar{F_i}}$ is bounded above, where $M_{\bar{F_i}}$ is the pushforward of $M_{F_i'}$ to $\bar{F_i}$\\
      Step 3: Now we will finish the proof as in Step 3 for proof of \ref{4.9}. Let $K_{F_i}+\Lambda_{F_i} := K_{X_i}|_{F_i}$, by \ref{non_klt} (4), we see that $(F_i,\Lambda_{F_i})$ is sub $\epsilon$-lc and $\Lambda_{F_i}\leq \theta_{F_i}$. Define $\Gamma_{\bar{F_i}} := (1-\epsilon) \Sigma_{\bar{F_i}}$. Let $K_{F'}+\Lambda_{F_i'} := (K_{F_i}+\Lambda_{F_i})|_{F_i'}$ and $K_{\bar{F_i}}+\Lambda_{\bar{F_i}}$ be its pushdown to $\bar{F_i}$. Then it is clear that $\Lambda_{\bar{F_i}}\leq \Gamma_{\bar{F_i}}$. Let $I_{F_i}:= \theta_{F_i}+P_{F_i}-\Lambda_{F_i}\geq 0$. Let $I_{\bar{F_i}}$ be the pushdown of $I_{F_i}|_{F_i'}$. we see that $I_{\bar{F_i}}\geq 0$ and $I_{F_i}\simR \frac{n_i+1}{m_i}M_{F_i}$ by definition, hence we get $I_{\bar{F_i}}\simR \frac{n_i+1}{m_i}M_{\bar{F_i}}$.\\
      Step 4: we are ready to derive contradiction. Let $N_{F_i} := \frac{1}{\delta}N_i|_{F_i}$, which has coefficient $>1$. Define $N_{\bar{F_i}}$ in the usual way. Note $N_i\simQ \frac{n_i}{\delta m_i}M_i$. Then we have $(F_i,\Lambda_{F_i}+I_{F_i}+N_{F_i})$ not klt. Then since  $K_{F_i}+\Lambda_{F_i}+I_{F_i}+N_{F_i}=K_{F_i}+\theta_{F_i}+P_{F_i}+N_{F_i}$ is ample, hence by negativity lemma, $(\bar{F_i},\Lambda_{\bar{F_i}}+I_{\bar{F_i}}+N_{\bar{F_i}})$ is not sub-klt. Hence $(\bar{F_i},\Gamma_{\bar{F_i}}+I_{\bar{F_i}}+N_{\bar{F_i}})$ is not klt. But $(\bar{F_i},\Gamma_{\bar{F_i}})$ is $\epsilon$-lc. Also $I_{\bar{F_i}}+N_{\bar{F_i}}\simQ \frac{(\delta(n_i+1)+1}{\delta m_i}M_{\bar{F_i}}$, which tends towards $0$ since coefficient of $M_{\bar{F_i}}$ is bounded. This contradicts \ref{4.2}. 
  \end{proof}
Here we state one more result which will be important for induction treatment of complements. 
\begin{prop}\label{4.11}(\cite{fa}, 4.11)
    Let $d\in \N$, then there exist $m\in \N,\epsilon>0$ depending only on $d$ such that if $X$ is a $\epsilon$-lc fano variety of dimension $d$, then $|-mK_X|$ defines a birational map. 
\end{prop}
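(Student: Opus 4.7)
The plan is to reduce Proposition \ref{4.11} to Proposition \ref{4.9} by producing, for any $\epsilon$-lc Fano $X$ of dimension $d$, a $\Q$-divisor $B \geq \delta$ with $K_X + B \simQ 0$ where $\delta = \delta(d) > 0$. The only hypothesis of Proposition \ref{4.9} not already present in this setting is precisely such a uniformly lower-bounded boundary, and the theory of complements (Theorem \ref{com_gen}) is exactly what is designed to supply one.

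First I would fix any $\epsilon > 0$ (e.g.\ $\epsilon = 1/2$, which depends only on $d$ trivially) and consider the trivial generalised pair $(X, 0+0)$, applying Theorem \ref{com_gen} with $p = 1$ and $\mathfrak{R} = \{0,1\}$ so that $0 \in \Phi(\mathfrak{R})$. The hypotheses are easy to verify: $(X,0)$ is klt because $X$ is $\epsilon$-lc, $X$ is Fano type because $(X,0)$ is klt weak log Fano, and $-K_X$ is ample, hence nef. The theorem then produces $n = n(d) \in \N$ and an $n$-complement $K_X + B^+$ with $B^+ \geq 0$, $(X, B^+)$ lc, and $n(K_X + B^+) \sim 0$.

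Next I would extract a uniform lower bound on the coefficients of $B^+$. Since $n(K_X + B^+) \sim 0$, we have $n B^+ \sim -n K_X$, so $n B^+$ is an integral Weil divisor; combined with $B^+ \geq 0$, every nonzero component of $B^+$ has coefficient at least $1/n$. Moreover $B^+ \ne 0$, for otherwise $n K_X \sim 0$ would contradict the ampleness of $-n K_X$. Setting $\delta := 1/n$, we obtain $B^+ \geq \delta$ with $\delta$ depending only on $d$.

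Finally I would invoke Proposition \ref{4.9} with this $\epsilon$ and $\delta = 1/n$, taking $B := B^+$: $X$ is $\epsilon$-lc Fano of dimension $d$, $B$ is a $\Q$-divisor with $B \geq \delta$, and $K_X + B \simQ 0$. The proposition then yields $m = m(d, \epsilon, \delta) \in \N$ such that $|-m K_X|$ defines a birational map, and since $\epsilon$ and $\delta$ depend only on $d$, so does $m$. The substantive difficulty is absorbed into the two deep inputs, Theorem \ref{com_gen} and Proposition \ref{4.9}; the argument I have outlined is essentially a short bookkeeping step that converts the denominator of an $n$-complement into the uniform constant $\delta = 1/n$ that Proposition \ref{4.9} demands.
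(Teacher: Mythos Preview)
Your deduction is correct if Theorem \ref{com_gen} is taken as a black box, but it is circular within the paper's inductive architecture. The paper introduces Proposition \ref{4.11} with the remark that it ``will be important for induction treatment of complements,'' and indeed Step 0 of Lemma \ref{7.5} invokes \ref{4.11} to handle the case where the minimal log discrepancies approach $1$. Lemma \ref{7.5} feeds into the proof of Theorem \ref{bou_exc}, which is one of the standing hypotheses of Proposition \ref{6.15} establishing Theorem \ref{com_gen} in dimension $d$. Thus the dependency chain is
\[
\text{\ref{com_gen} in dim } d \ \Leftarrow\ \text{\ref{bou_exc} in dim } d \ \Leftarrow\ \text{Lemma \ref{7.5}} \ \Leftarrow\ \text{Prop.\ \ref{4.11} in dim } d,
\]
and your proposal closes this loop. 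The paper already observes, just before stating \ref{4.9}, that \ref{4.9} together with \ref{com_gen} ``trivially implies effective birationality''; your argument is precisely this trivial implication specialised to one value of $\epsilon$. The whole purpose of a separate \ref{4.11} is to have a version that does \emph{not} appeal to \ref{com_gen} in the same dimension.

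The paper's proof avoids the circularity by exploiting the freedom to let $\epsilon \to 1$ rather than fixing it. Assuming failure gives a sequence $X_i$ with $\epsilon_i \to 1$ and $m_i \to \infty$. One then runs the non-klt-centre machinery of Lemma \ref{4.7}; adjunction on the centre $F_i$ forces $\Theta_{F_i} = 0$ for $i \gg 0$ (the coefficients lie in a fixed DCC set but must be $\leq 1 - \epsilon_i$), so $K_{\bar{F_i}}$ becomes pseudo-effective on a bounded model. A case split on $\kappa_\sigma(K_{\bar{F_i}})$ then finishes via \ref{2.38} and the section-lifting Proposition \ref{lif_sec}, none of which invokes \ref{com_gen} in dimension $d$.
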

We will outline and sketch the proof. It is very similar to the proof of \ref{4.7} and \ref{4.9}. We will focus on the difference only and try to use the same notation.
\begin{proof}
    Assume the claim is false. then we have $X_i,\epsilon_i,m_i$ such that $\epsilon_i\rightarrow 1$ and $m_i\rightarrow \infty$. let $n_i$ be the smallest integer such that $vol(-n_i K_{X_i})>(2d)^d$. Also let $\Delta_i,M_i,M_{F_i}$ as in the proof of \ref{4.9}.
    Step 1: We firstly claim that it suffice to show $\frac{m_i}{n_i}$ is bounded from above: Indeed if  $\frac{m_i}{n_i}$ is bounded from above, then $vol(-m_iK_{X_i})$ is bounded, then we can just repeat the proof \ref{4.9}, and get (in the same notation) $K_{\bar{W_i}}+\Lambda_{\bar{W_i}}+\frac{1}{m_i}M_{\bar{W_i}}\simQ 0$, but now $\Lambda_{\bar{W_i}}\leq 1-\epsilon_i$ and $\frac{1}{m_i}M_{\bar{W_i}}\rightarrow 0$ as coefficients of $M_{\bar{W_i}}$ bounded from above. Hence we get $K_{\bar{W_i}}$ is pseudoeffective, which implies $K_{X_i}$ is pseudo effective, which is contradiction as $-K_{X_i}$ is ample. So it suffice to show $\frac{m_i}{n_i}$ is bounded. \\
    Step 2: We follow the proof of \ref{4.7}. Using the same notation, we see that $(F_i,\theta_{F_i})$ is $\epsilon-lc$ with  $\epsilon\rightarrow 1$ as $i \rightarrow \infty$. However coefficient of $\theta_{F_i}$ belongs to a fixed DCC set independent of $i$. Hence we have $\theta_{F_i}=0$ for all $i$ sufficiently large, and $K_{F_i}+P_{F_i}\simQ (K_{X_i}+\Delta_i)|_{F_i}$, where $0\leq \Delta_i\simQ (-n_i-1)K_{X_i}$ and $P_{F_i}$ big and effective by construction in the proof of \ref{4.7}. Now we apply Lemma \ref{4.4.3} on $F_i$ with $X=F_i,B=0,M=M_{F_i}$,(note $M_{F_i}-K_{F_i}$ is big by construction), we get a bounded family $\mathcal{P}$ and  $(\bar{F_i},\Sigma_{\bar{F_i}})\in \mathcal{P}$ that satisfies condition 1 and 2 as in Lemma \ref{4.4.3}. Now letting $K_{X_i}+\Lambda_{F_i} := K_{X_i}|_{F_i}$, by Lemma \ref{non_klt}, $\Lambda_{F_i}\leq \theta_{F_i}=0$ and $(F_i,\Lambda_{F_i})$ sub-$\epsilon_i$-lc. Now as in proof of \ref{4.9}, we have $K_{F_i}+\Lambda_{F_i}+\frac{1}{m_i}M_{F_i}\simQ 0$, taking crepant pullback to $\bar{F_i}$, we get $K_{\bar{F_i}}+\Lambda_{\bar{F_i}}+\frac{1}{m_i}M_{\bar{F_i}}\simQ 0$, and as in step 1, we see that $K_{\bar{F_i}}$ is pseudoeffective. \\
    Step 3: Now we finish the proof in the case of $k_{\sigma}(K_{\bar{F_i}})>0$. By adding some very ample divisor to $\Sigma_{\bar{F_i}}$, we can assume there is $0\leq H_i\leq \Sigma_{F_i'}$ very ample, and there is $l\in \N$ independent of $i$, such that $lA_{\bar{F_i}}-H_i$ is big. Then by \ref{2.38}, $vol(pK_{\bar{F_i}}+H_i)\rightarrow \infty$ uniformly as $p\rightarrow \infty$. This implies $vol(-m_i(1+l)K_{X_i}|_{F_i})\geq vol(\frac{m_i}{n_i}(K_{F_i}+P_{F_i})+lA_{F_i})\rightarrow \infty$, which is contradiction, since we assumed $vol(-m_iK_{X_i}|_{F_i})$ is bounded above, (see proof lemma \ref{4.7}, step 1).\\
    Step 4: Now we deal with $k_{\sigma}(K_{\bar{F_i}})=0$, by  \ref{2.38}, there eixst bounded $r\in \N $ such that $h^0(rK_{\bar{F_i}})\neq 0$, which implies $h^0(rK_{F_i})\neq 0$, which means there is  integral divisor $0\leq T_{F_i}\sim rK_{F_i}$. We can derive contradiction similar to proof of \ref{4.9} if $T_{F_i}\neq 0$ for all sufficient large $i$ by considering $\frac{m_i}{n_i}(P_{F_i}+\frac{1}{r}T_{F_i})\simQ M_{F_i}$, but coefficients is tending to $\infty$.  This means $T_{F_i}=0$ for all $i>>0$. In particular this means $h^0(-rK_{X_i}|_{F_i})=h^0(-r(K_{F_i}+\Lambda_{F_i}))=h^0(-r\Lambda_{F_i})\neq 0$, as $\Lambda_{F_i}\leq 0$. Now we can easily derive a contradiction using Proposition \ref{lif_sec}.
\end{proof}
We will omit the proof of weak BAB, i.e Theorem \ref{wea_bab}, because it is very similar to the proof given above.

\newpage

\section{Complements}
In order to prove theorem \ref{eff_bir}, we need to show we can also find the $B$ as in \ref{4.9}. Such $B$ clearly exists if we can show theorem \ref{com_gen}. Hence our goal is to give a inductive treatment of theorem \ref{com_gen}. We will apply induction and try to construct complements from fiber space or non-klt centres. However, there are cases when we can't create any of the above. Hence we need to deal with them separately. these pairs are called exceptional pairs as defined in \ref{7}, we will deal with them in the next section. 
\subsection{Main Result}
Our goal this section is to show the following. 
\begin{prop}\label{6.15}
    Assume theorem \ref{com_gen} in dimension d-1 and theorem \ref{com_rel} in dimension d and boundedness of exceptional pairs as in theorem \ref{bou_exc} in dimension d, then theorem \ref{com_gen} hold in dimension d.
\end{prop}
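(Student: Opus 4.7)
The plan is to reduce to finite-coefficient boundaries via a standard trick and then split into exceptional and non-exceptional cases. First I would pass to a $\Q$-factorial generalised dlt model of $(X',B'+M')$ via Section \ref{Q_fac}, so that $X'$ is $\Q$-factorial; Lemma \ref{6.1.3} guarantees complements pull back. Next, apply Lemma 2.50 (the ACC consequence) to $(X',B'+M')$: this is legal because $X'$ is fano type and $-(K_{X'}+B'+M')$ is nef (in particular a limit of movable divisors). The lemma produces a boundary $\theta'$ with coefficients in a finite set depending only on $d,p,\mathfrak{R}$, and a model $X''$ obtained by running an MMP on $-(K_{X'}+\theta'+M')$ on which $-(K_{X''}+\theta''+M'')$ is nef and which inherits an $n$-complement to $X'$. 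From this point on we may assume the coefficients of $B'$ lie in a finite set.

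\textbf{Exceptional case.} Apply Theorem \ref{bou_exc} to conclude that $(X',B')$ lies in a log bounded family. Section \ref{bnd_fam}(4) then provides a uniform $I$ such that $I(K_{X'}+B'+M')$ and $IM$ are Cartier. Since $-(K_{X'}+B'+M')$ is a nef integral divisor on a bounded family of fano type varieties, Kawamata--Viehweg and Serre vanishing on the family give a uniformly bounded $n$ with $h^0(-n(K_{X'}+B'+M'))\neq 0$, producing $0\leq N\sim -n(K_{X'}+B'+M')$. Then $B'^+:=B'+\tfrac{1}{n}N$ is the desired $n$-complement; exceptionality and sufficient divisibility of $n$ preserve the generalised lc condition.

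\textbf{Non-exceptional case.} By definition there is $P'\geq 0$ with $K_{X'}+B'+P'+M'\simR 0$ and $(X',B'+P'+M')$ not generalised klt. Using the extraction and plt-blow-up of Section \ref{Q_fac} applied to $(X',B'+P'+M')$, realise a non-klt place as a prime divisor $S'$ on a birational model, and assume by Lemma \ref{6.1.3} we work on that model. If $S'$ maps birationally to a component of the boundary on $X'$, apply divisorial adjunction (Definition \ref{div_adj}) to obtain a generalised pair $(S',B_{S'}+M_{S'})$ of dimension $d-1$; the coefficient lemma in Section 3.1 and Remark \ref{3.1.2} ensure its data satisfy the hypotheses of Theorem \ref{com_gen} with parameters depending only on $d,p,\mathfrak{R}$, and induction on dimension yields an $n_0$-complement on $S'$. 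If instead the non-klt centre has smaller dimension and $S'\to G'$ is a contraction to that centre, use fibre-space adjunction (Proposition \ref{6.3}) together with Theorem \ref{com_rel} in dimension $d$ to produce a complement over $G'$, which pulls back to $S'$.

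The principal obstacle is the lifting step: extending the $n_0$-complement on $S'$ to an $n$-complement on $X'$ with $n$ a universally bounded multiple of $n_0$. I would carry this out via the short exact sequence
\[
0\to \mathcal{O}_{X'}(L-S')\to \mathcal{O}_{X'}(L)\to \mathcal{O}_{S'}(L|_{S'})\to 0
\]
for $L$ a suitable round-up of $-n(K_{X'}+B'+M')$ on a log resolution, using a Kawamata--Viehweg vanishing argument parallel to the proof of Proposition \ref{lif_sec} to kill the obstruction in $H^1$. Beyond vanishing, one must verify that the lifted section actually produces a generalised lc $B'^+\geq B'$, which requires the b-Cartier and coefficient bookkeeping of Lemma \ref{6.1.3} together with the adjunction lemmas of Section 3. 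This bookkeeping, rather than any new geometric input, is the delicate part; once executed, the inductive complement on $S'$ (or on the base in the fibration subcase) assembles into the desired $n$-complement on $X'$, completing the inductive step.
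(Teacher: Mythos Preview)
Your overall architecture---pass to a $\Q$-factorial model, reduce to finitely many coefficients via Section~\ref{2.50}, then split into exceptional and non-exceptional---matches the paper exactly. The exceptional case is also essentially right; the paper uses the effective base point free theorem rather than bare non-vanishing, but since exceptionality forces any $P'\simR -(K_{X'}+B'+M')$ to give a generalised klt pair, a single section already yields a complement. One small point: you cannot bound the Cartier index of $M'$ directly from Section~\ref{bnd_fam}(4), since $M'$ is only the trace of a b-divisor; the paper instead sets $L=-q(K_{X'}+B'+M')$ with $q=pI(\mathfrak{R})$, observes $L$ is integral and nef with $A^{d-1}\cdot L$ bounded (using that $B'+M'$ is pseudoeffective), and bounds the Cartier index of $L$.

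The non-exceptional case is where your sketch has a genuine gap. The paper delegates this entirely to Proposition~\ref{6.11}, whose proof is substantially more intricate than your outline. Two specific problems:

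First, your dichotomy ``$S'$ maps birationally to a boundary component'' versus ``$S'\to G'$ is a contraction to a lower-dimensional centre'' is not how the argument runs. After a plt blowup, $S'$ is \emph{always} a prime divisor on the new ambient model, and one \emph{always} lifts via divisorial adjunction as in Proposition~\ref{6.7}. There is no fibre-space adjunction from $S'$ down to its centre. The fibration case (Lemma~\ref{6.5}) is a separate situation in which $X'$ itself admits a contraction $X'\to V'$ with $K_{X'}+B'+M'\simR 0/V'$; it is used inside Proposition~\ref{6.8} after MMP manipulations, not as an alternative to divisorial lifting. Moreover, the hypotheses of Proposition~\ref{6.7}---a plt pair $(X',\Gamma'+\alpha M')$ with $-(K_{X'}+\Gamma'+\alpha M')$ ample---do not come for free from a plt blowup; Proposition~\ref{6.8} spends real work running MMPs and perturbing boundaries to achieve them.

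Second, and more seriously, you miss the residual case $K_{X'}+B'+M'\simQ 0$ and $M'\simQ 0$. Here there is no positivity left to feed into a vanishing-theorem lift, and Proposition~\ref{6.8} does not apply. The paper handles this separately in Lemma~\ref{6.10}, which invokes effective birationality (Proposition~\ref{4.9}) to produce a birational $|-nK_{X'}|$ and then argues via Proposition~\ref{6.8} or bounded Cartier index that $n(K_{X'}+B')\sim 0$ for bounded $n$. This case is not covered by your extraction-and-lift strategy at all.
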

\begin{proof}
    By induction, we can assume theorem \ref{com_gen} hold in dimension $d-1$. Firstly by taking $\Q$ factorial dlt models and apply lemma \ref{6.1.3}, we can reduce the problem when $X'$ is $\Q$-factorial. \\
    Step 1: We reduce to the case when coefficient of $B$ belong in some finite set. Choose $\epsilon$ as in the statement of \ref{2.50}, we see that we can assume coefficient of $B$ are either equal to 1 or less than $1-\epsilon$, but coefficients of B belong to some fixed DCC set, hence we can assume coefficients of $B$ belong to some finite set $\mathfrak{R}$. \\
    Step 2: We end the proof in the exceptional case. Now assume $(X',B'+M')$ is exceptional. Then by theorem \ref{bou_exc}, such $X'$ are bounded. let $q= pI(\mathfrak{R})$, and let $L := -q(K_{X'}+B'+M')$ is an integral divisors. Since $X'$ bounded, we can find a very ample divisor $A$ on $X'$ such that $A^d$ and $A^{d-1}(-K_{X'})$ both bounded above uniformly. Hence we get $A^{d-1}L$ is bounded above as $B'+M'$ is pseudoeffective. Therefore by \ref{bnd_fam}, we see that Cartier index of $L$ is bounded, which means that there exist a fixed $n$, such that $-n(K_{X'}+B'+M')$ is nef and Cartier. Now since $X'$ is fano type, there exist $C'$ such that $(X',C')$ is klt and $-K_{X'}-C'$ is nef and big. We can use effective base point free theorem here since $-n(K_{X'}+B'+M')-K_{X'}-C'$ is nef and big. Replacing $n$, we can assume that there is a a fixed $n$ such that $|-n(K_{X'}+B'+M')|$ is base point free. Picking $G'\in |-n(K_{X'}+B'+M')|$ general, we see that $B'^+ := B'+\frac{1}{n}G'$ is a n-complement.
    Step 3. It remains to prove assuming theorem \ref{com_gen} in dimension d-1 and theorem \ref{com_rel} in dimension d, we can prove theorem \ref{com_gen} in dimension d for non-exceptional pairs while assuming coefficients of $B'$ lie in some finite set. This will be our next proposition.
\end{proof}
\begin{prop}\label{6.11}
    Assuming theorem \ref{com_gen} in dimension $d-1$ and theorem \ref{com_rel} in dimension d. Then theorem \ref{com_gen} hold in dimension $d$ for $B'\in \mathfrak{R}$ and $(X',B'+M')$ non-exceptional.
\end{prop}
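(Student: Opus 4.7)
The plan is to use non-exceptionality to produce a non-klt place over $X'$, extract it on a suitable birational model $\phi:Y\to X'$ so that the extracted divisor $S$ appears with coefficient one in the boundary, reduce via divisorial adjunction to a generalised pair $(S,B_S+M_S)$ of dimension $d-1$, invoke the inductive hypothesis Theorem \ref{com_gen} in dimension $d-1$ to obtain an $n_0$-complement on $S$, and lift it back to $X'$ via a Kawamata--Viehweg vanishing argument modelled on the proof of Proposition \ref{lif_sec}. Theorem \ref{com_rel} in dimension $d$ is invoked in the parallel case where an MMP on $Y$ contracts $S$ and yields a Mori fibre space over a lower-dimensional base.

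Since $(X',B'+M')$ is non-exceptional, choose $P'\geq 0$ with $K_{X'}+B'+P'+M'\simR 0$ so that $(X',B'+P'+M')$ is not generalised klt; replacing $P'$ by $tP'$ at the generalised lc threshold, we may assume it is generalised lc but not klt, so a non-klt place $D/X'$ exists. Apply the plt blow-up/extraction construction of \S\ref{Q_fac} (valid since $X'$ is $\Q$-factorial klt, being Fano type), followed by a short MMP over $X'$, to produce $\phi:Y\to X'$ with $(Y,S+B_Y+M_Y)$ generalised dlt, $S$ the unique coefficient-one component, and $K_Y+S+B_Y+M_Y=\phi^*(K_{X'}+B'+P'+M')\simR 0$. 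If a further MMP on $-(K_Y+B_Y+M_Y)$ over $X'$ contracts $S$, one reaches a relative Mori fibre space to which Theorem \ref{com_rel} applies, and Lemma \ref{6.1.3} promotes the resulting complement to one for $(X',B'+M')$; otherwise $S$ persists and we proceed with adjunction.

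Divisorial adjunction (Definition \ref{div_adj}) along $S$ yields a generalised lc pair $(S,B_S+M_S)$ of dimension $d-1$ satisfying $B_S\in\Phi(\mathfrak{O})$ for some finite $\mathfrak{O}=\mathfrak{O}(p,\mathfrak{R})$, with $pM_S$ b-Cartier (Remark \ref{3.1.2}), $S$ of Fano type, and $-(K_S+B_S+M_S)\simR 0$ nef. The inductive hypothesis produces an $n_0$-complement $B_S^+\geq B_S$ with $n_0(K_S+B_S^++M_S)\sim 0$. Regard $n_0(B_S^+-B_S)$ as a section of $\OO_S(-n_0(K_S+B_S+M_S))$ and lift it to $Y$ through the short exact sequence
\[
0\rightarrow \OO_Y(L-S)\rightarrow \OO_Y(L)\rightarrow \OO_S(L|_S)\rightarrow 0,
\]
with $L:=\lceil -n_0(K_Y+B_Y+M_Y)\rceil$, by killing $H^1(L-S)$ via Kawamata--Viehweg vanishing exactly as in the proof of Proposition \ref{lif_sec}. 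Pushing the lifted section forward by $\phi_*$ and applying Lemma \ref{6.1.3}(1) delivers the desired $n_0$-complement $B'^+\geq B'$ on $X'$.

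The main obstacle is the lifting/vanishing step: Kawamata--Viehweg requires that the fractional divisor $L-S-K_Y-\{B_Y+M_Y\}$ be nef and big, which forces a careful perturbation of $Y$ (for instance, absorbing a small ample divisor into $P'$, which is possible since $X'$ is Fano type) while simultaneously preserving that $(Y,S+B_Y+M_Y)$ is generalised dlt and that the restriction $(S,B_S+M_S)$ still meets the hypotheses of Theorem \ref{com_gen} in dimension $d-1$. Tracking the finite coefficient sets (coming from Lemma \ref{2.50} and from the adjunction coefficient lemma in \S\ref{div_adj}) and Cartier indices so that the resulting $n$ depends only on $d,p,\mathfrak{R}$ is routine bookkeeping once the lifting is secured.
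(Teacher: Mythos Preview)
Your outline captures the core mechanism of the paper's Proposition \ref{6.7} (extract a non-klt divisor $S$, do divisorial adjunction, apply induction on $S$, lift via Kawamata--Viehweg), but two genuine gaps remain, and they are precisely where the paper spends most of its effort.

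\textbf{(1) The vanishing hypothesis is not a routine perturbation.} On your model $Y$ you have $K_Y+S+B_Y+M_Y\simR 0$, so $-(K_Y+B_Y+M_Y)\simR S$, which is neither nef nor big. Absorbing a small ample into $P'$ does not fix this: it destroys $K_{X'}+B'+P'+M'\simR 0$ without producing the ampleness needed on $Y$. In the paper, the entire Proposition \ref{6.8} is devoted to manoeuvring into the setup of \ref{6.7}: one runs a global MMP on $M'$ (over the contraction defined by $-(K_{X'}+B'+M')$), then on $B'$, passes to a dlt model, and constructs an auxiliary boundary $\Gamma'$ with $\floor{\Gamma'}=S'$ and $-(K_{X'}+\Gamma'+\alpha M')$ \emph{ample}. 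Only then does the short exact sequence argument go through. This works only under the extra hypothesis that $K_{X'}+B'+M'\not\simQ 0$ or $M'\not\simQ 0$, which is exactly the hypothesis of \ref{6.8}.

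\textbf{(2) The degenerate case $K_{X'}+B'\simQ 0\simQ M'$ is missing.} Here $-(K_{X'}+B'+M')\simR 0$, so no ample perturbation is available and the plt--lifting strategy fails outright. The paper treats this separately in Lemma \ref{6.10}: one uses effective birationality (Proposition \ref{4.9}) to produce $|-mK_{X'}|$ birational, writes $-mK_{X'}\sim A'+R'$, forms the auxiliary generalised pair $(X',\tfrac{1}{2}B'+\tfrac{1}{2m}R'+\tfrac{1}{2m}A')$, and finishes either by boundedness (klt case) or by \ref{6.8} (non-klt case, where now $N'=\tfrac{1}{2m}A'$ is big so $N'\not\simQ 0$). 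There is no analogue of this in your proposal.

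A smaller point: your invocation of Theorem \ref{com_rel} via ``an MMP over $X'$ contracts $S$ and yields a Mori fibre space'' is off. Theorem \ref{com_rel} produces complements \emph{over a point} $z\in Z$, not global ones. In the paper, \ref{com_rel} in dimension $d$ enters only indirectly, through Proposition \ref{6.3}, to control coefficients in the fibre-space adjunction of Lemma \ref{6.5}.
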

The remaining of this section will be devoted to proving the above proposition.
\subsection{Lifting Complements from Fibration}
Firstly we will settle the case when we can create some fibration. The idea is that when we run MMP, we either end up with Mori fibre space or minimal model. Here we deal with the fibre space case.

\begin{lemma}\label{6.5}
    Assuming theorem \ref{com_gen} for d-1 and theorem \ref{com_rel} for dimension d, then theorem \ref{com_gen} hold for $(X',B'+M')$ such that there is a contraction $f' : X'\rightarrow V'$ where $\dim V'>0$, $K_{X'}+B'+M'\simR 0/V'$ and $M'$ not big over $V'$.
\end{lemma}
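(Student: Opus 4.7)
The plan is to apply fiber-space adjunction along $f'$ to descend the complement problem to a generalised pair of dimension $d-1$ on the base $V'$, invoke Theorem \ref{com_gen} by induction there, and pull the resulting complement back to $X'$. The assumption that Theorem \ref{com_rel} holds in dimension $d$ enters via Proposition \ref{6.3}, and the hypothesis that $M'$ is not big over $V'$ is what allows the reduction from a generalised pair to a standard pair situation to which Proposition \ref{6.3} applies.

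First I would replace the generalised pair by a standard pair. Since $X'$ is Fano type, hence Fano type over $V'$, the nef divisor $M$ is semi-ample over $V'$ by the relative base-point-free theorem. Using that $pM$ is b-Cartier, pick a bounded-denominator general effective $N'$ with $N'\simQ M'/V'$ chosen so that $(X',B'+N')$ stays lc, $K_{X'}+B'+N'\simR 0/V'$, and $B'+N'\in\Phi(\mathfrak{R}')$ for some finite $\mathfrak{R}'$ depending only on $p$ and $\mathfrak{R}$. Then Proposition \ref{6.3} applies to $(X',B'+N')$ with the contraction $f'$, giving $q\in\N$, a finite $\mathfrak{O}\subset[0,1]$, and an adjunction $q(K_{X'}+B'+N')\sim qf'^*(K_{V'}+B_{V'}+M_{V'})$ with $B_{V'}\in\Phi(\mathfrak{O})$ and $qM_{V''}$ nef Cartier on any sufficiently high resolution $V''\rightarrow V'$.

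Next I would apply Theorem \ref{com_gen} in dimension $d-1$ to $(V',B_{V'}+M_{V'})$. Its hypotheses hold: $V'$ is Fano type (as image of a Fano type variety under a contraction), $-(K_{V'}+B_{V'}+M_{V'})$ is nef (its $f'$-pullback is $\sim_\Q -(K_{X'}+B'+N')$, which stays nef since we pick $N'$ in Step 1 with $M'-N'\simR f'^*L$ for appropriate $L$, so nefness descends along the surjection $f'$), and the coefficient and b-Cartier conditions are verified. We get $n\in\N$ depending only on $d-1$, $q$, $\mathfrak{O}$, and a complement $B_{V'}^+\geq B_{V'}$ with $n(K_{V'}+B_{V'}^++M_{V'})\sim 0$. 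Setting $B'^+:=B'+N'+f'^*(B_{V'}^+-B_{V'})$ and pulling back the adjunction gives $nq(K_{X'}+B'^++M')\sim 0$; the generalised lc property of $(X',B'^++M')$ follows from the lc property of the base complement together with the lc nature of the fibres of $f'$.

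The main obstacle is the Step 1 reduction: producing an effective representative $N'$ of $M'/V'$ that is simultaneously bounded in denominator, lc-preserving, compatible with the hyperstandard structure of the boundary, and small enough that $-(K_{X'}+B'+N')$ remains nef so that nefness descends to the base in Step 3. The not-big hypothesis on $M'/V'$ is exactly what permits a small general member of the relative semi-ample system $|M'/V'|_{\R}$ meeting all these constraints simultaneously; if $M'$ were big over $V'$ none of these controls would be available and the reduction would fail. Once Step 1 is carried out, the remainder is a standard descent-and-lift argument modulo checking a few coefficient bounds.
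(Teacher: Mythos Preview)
Your overall strategy of descending to $V'$ via fibre-space adjunction and then applying Theorem \ref{com_gen} in dimension $d-1$ is correct, but your Step 1 is a genuine gap and your reading of the ``not big'' hypothesis is off.

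First, replacing $M'$ by an effective $N'\simQ M'/V'$ does not do what you want. Even granting that some such $N'$ exists with controlled denominators and with $(X',B'+N')$ lc, what you obtain after Steps 2--3 is an $n$-complement for the \emph{ordinary} pair $(X',B'+N')$, i.e.\ some $C'\geq B'+N'$ with $n(K_{X'}+C')\sim 0$. Your formula $B'^+:=B'+N'+f'^*(B_{V'}^+-B_{V'})$ then satisfies $nq(K_{X'}+B'^+)\sim 0$, not $nq(K_{X'}+B'^++M')\sim 0$; the extra $M'$ is unaccounted for. To pass from a complement of $(X',B'+N')$ to a generalised complement of $(X',B'+M')$ you would need $nN'\sim nM'$ \emph{globally}, not merely over $V'$, and nothing in your construction arranges that.

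Second, ``$M'$ not big over $V'$'' has nothing to do with finding small effective members of $|M'/V'|_\R$. Its actual role is the following, and this is how the paper proceeds: after running an MMP on $M'$ over $V'$ (permissible since $X'$ is Fano type over $V'$) one makes $M'$ semi-ample over $V'$; because $M'$ is not big over $V'$ the associated contraction $X'\to T'/V'$ is \emph{not birational}, so replacing $V'$ by $T'$ one reduces to $M'\simQ 0/V'$ with $\dim V'>0$. Now the nef divisor $M$ on the resolution genuinely descends, $qM\sim qf^*M_V$ with $qM_V$ nef Cartier on a high model of $V'$. One applies Proposition \ref{6.3} to $(X',B')$ alone to get $q(K_{X'}+B')\sim qf'^*(K_{V'}+B_{V'}+P_{V'})$, and then forms the generalised pair $(V',B_{V'}+P_{V'}+M_{V'})$ with nef part $P_V+M_V$; this is what one hits with induction. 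The nef part $M'$ is never absorbed into a boundary on $X'$; it is transported to the base as part of the nef data there. The verification that $(V',B_{V'}+P_{V'}+M_{V'})$ is generalised lc and that the pulled-back complement is again generalised lc on $X'$ takes some care (tracking the exceptional piece $E$ with $\phi^*M'=M+E$), but is routine once the reduction $M'\simQ 0/V'$ is in place.
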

\begin{proof}
    Assume $X'$ is $\Q$-factorial by taking $\Q$-factorial dlt models. Run MMP over $V'$ on $M'$ end with a minimal model (as $M'\geq 0$), replace $X'$ by it we can assume $M'$ is nef, hence semi-ample, over $V'$. Consider the non-birational contraction $f: X'\rightarrow T'/V'$ defined by $M'$ (as $M'$ not big over $V'$). Therefore by replacing $V'$ with $T'$, we can assume $M'\simQ 0/V'$. Also the MMP is $K_{X'}+B'+M'$ trivial, so all conditions are preserved.\\
    Step 1: We can't just pull back the complements on $V'$ since we lost control of $M'$. Firstly apply Proposition \ref{6.3} and adjunction on fiber space, we get $q(K_{X'}+B')\sim q(K_{V'}+B_{V'}+P_{V'})$ where $P_{V'}$ is the moduli part, coefficients of $B_{V'}$ belong to some $\Phi(\mathfrak{S})$ for some finite set of rationals $\mathfrak{S}$, where $q,\mathfrak{S}$ only depend on $p,\mathfrak{R}$. Furthermore if $X\xrightarrow{\phi} X'$ and $V\xrightarrow{\psi} V'$ are high resolution, we can assume $qP_V$ is nef Cartier, $f:X\rightarrow V$ is a morphism and $(V',B_{V'}+M_{V'})$ is a generalised pair with data $V\rightarrow V'$ and $P_V$. Now since $M'\simQ 0/V'$, and let $\phi^*(M')= M+E$, where $E\geq 0$. Apply (\cite{fa},2.44), we see that $qM\sim 0/V$, hence we can let $qM\sim qf^*(M_V)$ for some $\Q$-divisor on $M_T$ such that $qM_V$ is nef Cartier. Since $E$ is vertical over $V$ and $\simQ 0/V$, we have $E =f^*(E_V)$, where $E_V$ is effective. Define $M_{V'} := \psi_*(M_V)$. Since $M_V+E_V\simQ 0/T'$ and $E_V$ is exceptional over $V'$ (since $E$ is exceptional over $X'$). We see that $\psi^*(M_{V'})=M_V$, hence $qM'\sim qf'^*(M_{V'})$ by diagram chasing. \\
    Step 2: Letting $K_X+B=\phi^*(K_{X'}+B')$, and get fibre space adjunction $q(K_X+B )\sim qf^*(K_V+B_V+P_V)=qf^*\psi^*(K_{V'}+B_{V'}+P_{V'})$. (note coefficients of $B_V$ $\leq$ 1 since $(X,B)$-lc. Hence we get $\psi^*(K_{V'}+B_{V'}+P_{V'}+M_{V'})=K_V+B_V+E_V+P_V+M_V$. Therefore, $(V',B_{V'}+{P_V'}+M_{V'})$ is generalised lc pair with data $P_V+M_V$ and $q(P_V+M_V)$ is nef Cartier: Indeed, we can assume $X,V$ are sufficient high resolution such that $(V,B_V+E_V)$ is log smooth, hence it suffices to show $B_V+E_V\leq 1$. But $B+E\leq 1$ by $(X',B'+M')$ lc,$E=f^* E_V$, therefore $B_V+E_V\leq 1$ is clear from definition of $B_V$.\\
    Step 3: Hence by induction, $K_{V'}+B_{V'}+P_{V'}+M_{V'}$ has a n-complement, say $B_{V'}^+ := G_{V'}+B_{V'}$, $G_{V'}\geq 0$. Define $0\leq G' := f^{'*} G_{V'}$, $G_V := \psi^*(G)$ and $G := f^* G_V =\phi^*(G')$, which is vertical over $V$. Let $B'^+:= B'+G'$, we see that $n(K_{X'}+B'^+ +M')\sim nf'^*(K_{V'}+B_{V'}^+P_{V'}+M_{V'})\sim 0$. Also observe that $\phi^*(K_{X'}+B'+G'+M')= K_X+B+G+E+M$ and $ f^*(K_V+B_V+G_V+E_V+P_V+M_V) = f^*(\psi^*(K_{V'}+B_{V'}+G_{V'}+P_{V'}+M_{V'}))$. We can see that coefficients of $B+G+E$ are $\leq 1$: Indeed, we know  $B_V+G_V+E_V\leq 1$ (as $(V',B_{V'}^+ + P_{V'}+M_{V'})$ generalised lc), and $B+E\leq 1$ and $G = f^*(G_V)$. If there is a component $D$ of $B+E+G$ with coefficient $> 1$, then it must be a component of $G$. Hence say its image on $V$ is a divisor, $C$. Then since $\mu_C(B_V+E_V+G_V)\leq 1$, we see that $\mu_C(E_V+G_V)\leq t_C$, where $t_C$ is the lct of $f^*C$ w.r.t. $(X,B)$ over the generic point of $C$. Therefore $(X,B+E+G)$ is sub-lc over $\eta_C$, the generic point of $C$ (since $E+G := f^*(E_V+G_V)$ and over $\eta_C$, $E+G\leq t_C f^* C$), which is a contradiction. Therefore $B'^+$ is indeed an n-complement, hence we are done.
\end{proof}
\subsection{Lifting Complements from Divisorial adjunction with Plt Non-klt Centres}
Here we show that we can create lift complements if there are some plt non-klt centre around.
\begin{prop}\label{6.7}
    Assuming theorem \ref{com_gen} for d-1 and theorem \ref{com_rel} for dimension d, then theorem \ref{com_gen} hold for $(X',B'+M')$, such that $B'\in \mathfrak{R}$, $\exists (X',\Gamma'+\alpha M') \Q$-factorial plt, $\alpha\in(0,1)$, $-(K_{X'}+\Gamma'+\alpha M')$ ample and $S' =\floor{\Gamma'}$ with $S'\in \floor{B'}$. 
\end{prop}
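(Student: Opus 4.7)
The plan is to use divisorial adjunction along $S'$ to reduce the problem to a complement question of dimension $d-1$, apply the induction hypothesis, and then lift the resulting complement back to $X'$ via a Kawamata--Viehweg vanishing argument powered by the auxiliary plt pair $(X',\Gamma'+\alpha M')$ together with ampleness of $-(K_{X'}+\Gamma'+\alpha M')$.

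First I would apply divisorial adjunction (\ref{div_adj}) to $(X',B'+M')$ along $S'$ to obtain a generalized pair $(S',B_{S'}+M_{S'})$ of dimension $d-1$. By the coefficient control in Section 3.1, $B_{S'} \in \Phi(\mathcal{O})$ for a finite set $\mathcal{O}$ depending only on $p$ and $\mathfrak{R}$, and by remark \ref{3.1.2} we may choose $M_{S'}$ so that $pM_{S'}$ is b-Cartier and $p(K_{S'}+B_{S'}+M_{S'}) \sim p(K_{X'}+B'+M')|_{S'}$. The divisor $-(K_{S'}+B_{S'}+M_{S'})$ is nef by restriction. Moreover $S'$ is of fano type: divisorial adjunction applied to the plt pair $(X',\Gamma'+\alpha M')$ along $S'$, combined with ampleness of $-(K_{X'}+\Gamma'+\alpha M')$, produces a klt boundary on $S'$ whose log anticanonical is ample. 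Thus the hypotheses of theorem \ref{com_gen} in dimension $d-1$ apply, yielding an $n$-complement $B_{S'}^+ = B_{S'}+\frac{1}{n}G_{S'}$ with $n$ divisible by $p$ and depending only on $d,p,\mathfrak{R}$.

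The technical heart of the argument is lifting $nG_{S'}$ to a section of $L := -n(K_{X'}+B'+M')$ on $X'$. From the short exact sequence
\begin{equation*}
0 \to \OO_{X'}(L-S') \to \OO_{X'}(L) \to \OO_{S'}(L|_{S'}) \to 0,
\end{equation*}
it suffices to show $H^1(X',L-S')=0$. The plan is to exhibit $L-S' \simR K_{X'}+\Delta+H$ with $(X',\Delta)$ klt, $S' \not\subseteq \mathrm{Supp}(\Delta)$, and $H$ nef and big, so that Kawamata--Viehweg vanishing applies. Such a decomposition is obtained from a convex combination of the form $(n-t)(-(K_{X'}+B'+M')) + t(-(K_{X'}+\Gamma'+\alpha M'))$ for small $t>0$: nef plus ample is ample, and the condition $S'=\floor{\Gamma'}$ together with $\alpha<1$ keeps the coefficient of $S'$ in the residual boundary $\Delta$ strictly below $1$, so $(X',\Delta)$ is klt near $S'$ and a small ample perturbation upgrades this to klt globally.

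Finally, set $B'^+ := B' + \frac{1}{n}G'$ where $G' \in |L|$ lifts $nG_{S'}$. Then $n(K_{X'}+B'^++M') \sim 0$ by construction, and divisorial inversion of adjunction for generalized pairs (Section 3.1 item 4), applied via the plt structure, shows $(X',B'^++M')$ is generalized lc near $S'$. The main obstacle I anticipate is upgrading generalized lc near $S'$ to generalized lc globally, since in principle components of $G'$ disjoint from $S'$ could push $B'^+$ outside the boundary range. I would address this using the connectedness principle for generalized pairs in the fano-type setting, which forces any non-generalized-lc locus of $(X',B'^++M')$ to meet $S'$ and thereby contradicts generalized lc-ness near $S'$; alternatively one can run an auxiliary $(K_{X'}+B'^++M')$-trivial MMP to contract any problematic components before finalizing the complement.
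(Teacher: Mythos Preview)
Your overall strategy---adjunction to $S'$, induction to get a complement on $S'$, Kawamata--Viehweg to lift it, and connectedness to globalize lc-ness---matches the paper exactly. The gap is in \emph{where} you run the vanishing argument. You propose to work directly on $X'$ via the short exact sequence for $L=-n(K_{X'}+B'+M')$, writing $L-S'\simR K_{X'}+\Delta+H$ with $H$ nef and big. But $M'$ is not nef on $X'$: by definition of a generalized pair only $M$ on the higher model $X$ is nef, and $M'=\phi_*M$ need not be. Your convex combination $(n-t)N+tA$ produces a nef and big $H$, but the residual piece still carries a positive multiple of $M'$ inside $\Delta$, and there is no control on the singularities of $(X',\Delta)$ once an arbitrary representative of $M'$ enters the boundary. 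So Kawamata--Viehweg cannot be invoked on $X'$ as written.

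The paper fixes this by passing to a log resolution $\phi:X\to X'$ of $(X',B'+\Gamma')$ and running the entire vanishing argument upstairs, where $M$ is genuinely nef. This forces further technical work: the pullback $B$ acquires exceptional components with non-integral coefficients, so one cannot naively use $-n(K_X+B+M)-S$. Instead the paper sets $T=\lfloor B^{\ge0}\rfloor$, $\Delta=B-T$, defines the integral divisor $L=-qK_X-qT-\lfloor(q+1)\Delta\rfloor-qM$, and constructs an exceptional correction $P\ge0$ so that $(X,\Gamma+q\Delta-\lfloor(q+1)\Delta\rfloor+P)$ is plt with floor exactly $S$. For this it is essential that $\Gamma'-B'$ has small coefficients, arranged at the outset by replacing $\Gamma'$ with $\delta\Gamma'+(1-\delta)B'$. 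Kawamata--Viehweg then yields $H^0(L+P)\twoheadrightarrow H^0((L+P)|_S)$; one checks $(L+P)|_S\sim G_S\ge0$, lifts, and pushes down to $X'$, where $P$ and the rounding defect disappear because they are exceptional. A separate check that $R'|_{S'}=R_{S'}$ is needed before inversion of adjunction, after which connectedness finishes exactly as you said.
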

\begin{proof}
    We will see why these assumption are clearly needed for the proof. Since $(X',\Gamma'+\alpha M')$ is plt and $-(K_{X'}+\Gamma'+\alpha M')$, we see that $(S',\Sigma_{S'}+\alpha M_{S'})$, where $K_{S'}+\Gamma_{S'}+\alpha M_{S'} := (K_{X'}+\Gamma'+\alpha M)|_{S'}$ is divisorial adjunction, is generalised klt and $K_{S'}+\Gamma_{S'}+\alpha M_{S'}$ is anti ample, hence we have $S'$ is Fano type by \cite{fa},2.13(5). Also by replacing $\Gamma'$ by $\delta\Gamma'+(1-\delta)B'$ and $\alpha$ by $\delta \alpha+(1-\delta)$ for $0<\delta<<1$, we can assume $\Gamma'-B'$ has sufficiently small coefficients. (note all assumptions are preserved since $-(K_{X'}+B'+M')$ is nef). Now letting $q(K_{S'}+B_{S'}+M_{S'})\sim q(K_{X'}+B'+M')|_{S'}$ as in \ref{div_adj}, we can assume $q$ is fixed depending only on $d,n,p$ such that $qM,qM_S$ both nef Cartier, $qB'$ is integral. Also by induction (since $(S',B_{S'}+M_{S'})$ is generalised lc as $(X',S')$ plt), there is a n-complement $B_{S'}^+:= B_{S'}+R_{S'}$ for $(S', B_{S'}+M_{S'})$ with $R_{S'}\geq 0$.\\
    Step 1: We will try to apply kawamata Viehweg vanishing to lift complements. Consider $X\xrightarrow{\phi} X $ log resolution of $(X',B'+\Gamma')$, assume $S\xrightarrow{\psi} S'$ is a morphism, where $S:= S'^\sim \in X$. Define $A := -(K_X+\Gamma+\alpha M) := -\phi^*(K_{X'}+\Gamma'+\alpha M')$ is nef and big, $N := -(K_X+B+M):=-\phi^*(K_{X'}+B'+M')$ is nef, $L:=-qK_X-qT-\floor{(q+1)\Delta}-qM =qN+q\Delta-\floor{(q+1)\Delta}$ is an integral divisor, where $T := \floor{B^{\geq 0}}$ and $\Delta := B-T$. We see that $T,\Delta$ has no common components and $S\in T$, and since $qB'$ is integral, any component of $q\Delta-\floor{(q+1)\Delta}$ is exceptional/$X'$. Now define $P\geq 0$ unique integral divisor such that $(X,\Lambda:= \Gamma+q\Delta-\floor{(q+1)\Delta}+P)$ is plt and $\floor{\Lambda}=S$:  Indeed, we can choose such $P\geq 0$. Since $\Gamma+q\Delta-\floor{(q+1)\Delta}=\Gamma-B+T+\{(q+1)\Delta\}$,if $D$ is component of $T$, then $\mu_D(P)=0$ (in particular $\mu_S P=0$), and if $D$ is not a component of $T$, we see that $0\leq \mu_D P\leq 1$ since $\Gamma-B$ sufficiently small. Hence we conclude that $P\leq 1$. Also it is clear that $P$ is exceptional over $X'$, since all component of $P$ must be
    a component of $q\Delta-\floor{(q+1)\Delta}$, which is exceptional/$X'$.\\
    Step 2: By easy computation, we see that $L+P = K_X+\Lambda+A+\alpha M+qN$ where $(A+\alpha M+qN)$ is nef and big and therefore by Kawamata Viehweg vanishing $h^1(K_X+\Lambda-S+A+\alpha M+qN)=0$, which means $$h^0(L+P)\twoheadrightarrow h^0((L+P)|_S)$$ Now let $R_S := \psi^*(R_S')\geq 0$, we see that $-qG_S\sim q(K_S+B_S+M_S)\sim -qN|_S$. Therefore $(L+P)|_S \sim G_S := qR_S+q\Delta_S-\floor{(q+1)\Delta_S}+P_S$, where $\Delta_S := \Delta|_S,P_S := P|_S$. We see that $G_S\geq 0$: indeed $G_S$ is an integer divisor and $\mu_D(G_S)\geq \mu_D(q\Delta_S-\floor{(q+1)\Delta_S})>-1$. Therefore there is $0\leq G\sim L+P$ such that $G|_S =G_S$. Let $G' := \phi_* G$, and $B'^+ := B'+R'$, where $R' := \frac{1}{n}G'$, Hence $G'\sim \phi_*(L+P)=-n(K_{X'}+B'+M')$ (since $P$ and $q\Delta-\floor{(q+1)\Delta}$ exceptional over $X'$), which shows that $n(K_{X'}+B'+M')\sim 0$.\\
    Step 3: Finally we only need to show $(X',B'^+ +M')$ is generalised lc. Firstly we are done if we show $R'|_{S'}=R_{S'}$: Indeed, if so then we have $(K_{X'}+B'^+ +M')|_{S'}=K_{S'}+B_{S'}^+ +M_{S'}$, which is generalised lc, hence $(X',B'^+ +M')$ is generalised lc near $S'$. If $(X',B'^+ +M')$ is not generalised lc, then so  $(X',\Omega'+F')$ is not generalised lc but generalised lc near $S'$, where $\Omega' := \epsilon(B'^+)+(1-\epsilon)\Gamma'$ and $F' := \epsilon M'+(1-\epsilon)\alpha M$. Furthermore $-(K_{X'}+\Omega'+F')$ is ample, this contradicts the connectedness principle (\cite{fa},2.14). Now we show $R'|_S' =R_{S'}$. Define $qR := G-q\Delta+\floor{(q+1)\Delta}-P$. It is clear that $R|_S =R_S$. Also $qR\sim L-q\Delta+\floor{(q+1)\Delta}=nN\sim 0/X'$ and $\phi_*R=R'$, we get $\phi^*{R'}:= R$ and hence $R'|_{S'} =\psi^*{R|_S}=R_{S'}$ as desired.
\end{proof}

\subsection{Complements for Strongly Non-exceptional Pairs}
Now we prove the the case for strongly non-exceptional pairs. 
\begin{prop}\label{6.8}
        Assuming theorem \ref{com_gen} for d-1 and theorem \ref{com_rel} for dimension d, then theorem \ref{com_gen} hold for $(X',B'+M')$, such that $B'\in \mathfrak{R}$, $(X',B'+M')$ not generalised klt and either $K_{X'}+B'+M'\not\simQ 0$ or $M'\not\simQ 0$.
\end{prop}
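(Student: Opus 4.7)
The plan is to reduce, via birational modifications, to either Lemma~\ref{6.5} or Proposition~\ref{6.7}, which together should cover the situations arising under the hypothesis.

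First I would pass to a $\Q$-factorial dlt model of $(X',B'+M')$ and descend complements via Lemma~\ref{6.1.3}; the new boundary's coefficients remain in the finite set $\mathfrak{R}\cup\{1\}$. Since $(X',B'+M')$ is generalised lc but not generalised klt, it admits a non-klt place, and if this place is not already a divisor on $X'$ I apply the generalised plt blowup of \S\ref{Q_fac} to extract it, yielding a crepant morphism $Y\to X'$ whose unique exceptional divisor $S\subseteq\floor{B_Y}$ satisfies $(Y,S)$ plt, with $Y$ again $\Q$-factorial Fano type. By Lemma~\ref{6.1.3} it suffices to produce the complement on $Y$, so I may assume there is $S'\subseteq\floor{B'}$ with $(X',S')$ plt.

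I then split based on whether $M'\simQ 0$. If $M'\simQ 0$, the hypothesis forces $K_{X'}+B'\not\simQ 0$; since $-(K_{X'}+B')$ is nef on the Fano type $X'$, it is semi-ample, and its contraction $f:X'\to V'$ has $\dim V'>0$ and $K_{X'}+B'+M'\simR 0/V'$, with $M'$ certainly not big over $V'$, so Lemma~\ref{6.5} applies. If $M'\not\simQ 0$, then $M'$ itself is semi-ample with contraction $\pi:X'\to T'$ satisfying $\dim T'>0$ and $M'\simQ 0/T'$. When $K_{X'}+B'+M'\simR 0/T'$ I again invoke Lemma~\ref{6.5}; otherwise $M'$ must be big on $X'$, and I construct the auxiliary plt pair required by Proposition~\ref{6.7} as the convex combination $(X',\Gamma'+\alpha M'):=(1-\delta)(X',B'+M')+\delta(X',S')$, i.e.\ $\Gamma':=(1-\delta)B'+\delta S'$, $\alpha:=1-\delta$. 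Pltness with $\floor{\Gamma'}=S'$ follows from convexity of discrepancies: for $D\neq S'$ we have $a(D,X',\Gamma'+\alpha M')=(1-\delta)a(D,X',B'+M')+\delta a(D,X',S')>0$, using $(X',B'+M')$ generalised lc and $(X',S')$ plt. For anti-ampleness,
$$-(K_{X'}+\Gamma'+\alpha M')\;=\;(1-\delta)\paren{-(K_{X'}+B'+M')}+\delta\paren{-(K_{X'}+S')},$$
and using Kodaira's lemma to write the big divisor among $M'$ or $-(K_{X'}+B'+M')$ as ample plus effective, openness of the ample cone delivers ampleness for $\delta$ sufficiently small. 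Proposition~\ref{6.7} then supplies the $n$-complement.

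The main obstacle is the sub-case where $M'\not\simQ 0$ but $K_{X'}+B'+M'\not\simR 0/T'$ and $M'$ is not big on $X'$: neither a direct reduction to Lemma~\ref{6.5} nor the bigness-based construction for Proposition~\ref{6.7} goes through. I would expect to handle this either by an auxiliary MMP on a mixed divisor such as $-(K_{X'}+B'+(1-\epsilon)M')$ (which is a sum of nef divisors and is plausibly big when $M'$ and $-(K_{X'}+B'+M')$ are ``independent'' in the nef cone), or by exploiting the Fano type structure through a klt boundary $\Lambda'$ with $-(K_{X'}+\Lambda')$ ample and $\Lambda'$ avoiding $S'$, carefully combined into $\Gamma'$ so as to preserve $\floor{\Gamma'}=S'$ while injecting genuine ampleness. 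Verifying that one of these strategies always closes the gap is the delicate technical point.
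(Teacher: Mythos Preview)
Your overall plan---reduce to Lemma~\ref{6.5} or Proposition~\ref{6.7}---matches the paper's, and the case $M'\simQ 0$ goes through essentially as you describe. However, the reduction to Proposition~\ref{6.7} has a genuine gap, and not only in the sub-case you flag at the end.

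The ampleness of $-(K_{X'}+\Gamma'+\alpha M')$ does not follow from your construction. With $\Gamma'=(1-\delta)B'+\delta S'$ and $\alpha=1-\delta$ one has
\[
-(K_{X'}+\Gamma'+\alpha M') \;=\; -(K_{X'}+B'+M') \;+\; \delta(B'-S') \;+\; \delta M'.
\]
Even granting that $M'$ is nef and big (which itself requires an MMP you omit---$M'$ is only the pushforward of the nef $M$ and need not be nef on $X'$), writing $M'\simQ A+E$ via Kodaira's lemma gives $\bigl[-(K_{X'}+B'+M')+\delta A\bigr] + \delta(B'-S'+E)$. The bracket is indeed ample, but adding the effective divisor $\delta(B'-S'+E)$ can destroy ampleness; since both terms scale with $\delta$, shrinking $\delta$ does not help, and ``openness of the ample cone'' is inapplicable because the limiting divisor $-(K_{X'}+B'+M')$ is merely nef, not ample.

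The paper handles this by a considerably longer route. It takes the contraction $X'\to V'$ defined by $-(K_{X'}+B'+M')$ itself (rather than by $M'$), runs an MMP on $M'$ over $V'$ to make it nef, and then runs a further MMP on $B'$ over the resulting base so that $-(K_{X'}+\beta B'+\alpha M')$ becomes globally nef and big for suitable rational $\alpha<\beta<1$. After one more dlt model it writes $-(K_{X'}+B'+\alpha M')\simQ A'+G'$ with $A'$ ample and $G'\geq 0$, and splits on whether $\operatorname{Supp} G'$ contains a non-klt centre of $(X',B')$: if not, one obtains honest ampleness as $\delta A'+(1-\delta)(A'+G')$, using that $A'+G'$ is \emph{nef}; if so, an lc-threshold construction produces a further $\Q$-factorial model on which the plt and ampleness hypotheses of \ref{6.7} can be arranged simultaneously. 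Your outline bypasses all of this machinery; the sub-case you isolate as the ``main obstacle'' is real, but the ``handled'' big case is equally incomplete.
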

\begin{rmk}\label{6.9}We firstly remark that this implies the inductive statement hold stronly non-exceptional pairs. Indeed if stronly non-exceptional, then $\exists \Q$-divisor $ 0\leq P'\simQ -(K_{X'}+B'+M')$ such that $(X',B'+P'+M')$ is not generalised lc, in particular $P'\neq 0$. let $t<1$ be the generalised lct of $P'$ wrt $(X',B'+M')$ and $\Omega' := B'+tP'$. Let $(X'',\Omega''+M'')$ be $\Q$-factorial dlt model of $(X',\Omega'+M')$, and choose $B'^\sim \leq \theta''\leq \Omega''$ with $\theta''\in \mathfrak{R}$ and $\floor{\theta''}\neq 0$. Therefore it its enough to find n-complement for $(X'',\theta''+M'')$. Now running MMP on $-(K_{X''}+\theta''+M'')$ ends with minimal model $(X''',\theta'''+M''')$ such that $-(K_{X'''}+\theta'''+M''')$ is nef (since $-(K_{X''}+\theta''+M'')\simQ -(K_{X''}+\Omega''+M'')+(\Omega''-\theta'')\simQ (1-t)P''+(\Omega''-\theta'')$ where $P'' := P' |_{X''}$ is pseudoeffective). Finally let $P'''$ be pushdown of $P''$. Then by \ref{6.1.3}, it suffices to find n-complement for $(X''',\theta'''+M''')$. But this follows from \ref{6.8}: Indeed, its easy to see that $(X''',\theta'''+M''')$ is generalised lc but not generalised klt, and $P'''\neq 0$ since $P''\neq 0$ is nef, which means $-(K_{X'''}+\theta'''+M''')\simQ (1-t)P'''+\Omega'''-\theta'''\not\simQ 0$. Now we turn to proof of \ref{6.8}.
\end{rmk}
\begin{proof}[Proof for \ref{6.8}]
    Step 0: We firstly say why we need $K_{X'}+B'+M'\not\simQ 0$ or $M'\not\simQ 0$. First take $\Q$-factorial dlt model, we can assume $X'$ is $\Q$-factorial and $(X',B')$ dlt but not klt. Since $-(K_{X'}+B'+M')$ is nef hence semi-ample, let $X'\rightarrow V'$ be the contraction defined by it. Now run MMP on $M'$ over $V'$, we can replace $X'$ by the minimal model and assume $M'$ semi-ample /$V'$ defining contraction $X'\rightarrow Z'/V'$(this is a $-(K_{X'}+B'+M')$ trivial MMP on $(-K_{X'}-B')$,all assumption are preserved). Now if $\dim V'>0$, then assume $M'$ is big over $V'$, else we are done by \ref{6.5}. If $\dim V' =0$, then $K_{X'}+B'+M'\simQ 0$, hence $M'\not\simQ 0$, therefore $\dim Z'>0$ and we can assume $M'$ big over $Z'$. Either way we can reduce to the case that $M'$ is nef and big over $Z'$. We introduce $\alpha$ and $\beta$. for any rational $\alpha<1$ sufficiently close to 1, we have $-(K_{X'}+B'+\alpha M')=-(K_{X'}+B'+M')+(1-\alpha)M')$ is globally nef and big. The contraction defined by $-(K_{X'}+B'+\alpha M')$ is the same as the contraction defined by $M'$, call it $X'\rightarrow Z'$. Run MMP on $B'$ over $Z'$ sand replacing $X'$ we can assume $B'$ is nef over $Z'$, hence for all sufficiently close to 1 rational $\beta\in(\alpha,1)$, we have $-(K_{X'}+\beta B'+\alpha M')$ is globally nef and big. (note all these MMP preserve all of our assumptions and $(X',B')$ not klt.) \\
    Step 1: Now letting $(X'',B'')$ be $\Q$-factorial dlt model of $(X',B')$, Then it is easy to see that $(X'',B''+M'')$ is generalised dlt model for $(X',B'+M')$. Let $K_{X''}+\bar{B''}=(K_{X'}+\beta B')|_{X''}$, and replace $X'$ by $X''$. We get that $(X',B'+M')$ generalised dlt but not klt, and there exist $\alpha$ (we fix it here) close to 1 such that $-(K_{X'}+B'+\alpha M')$ nef and big and $\exists \bar{B'}$ arbitrarly close to $B'$ such that $-(K_{X'}+\bar{B'}+\alpha M')$ is nef and big and $(X',\bar{B'}+\alpha M')$ is generalised klt. Now assume $-(K_{X'}+B'+\alpha M')\simQ A'+G'$ where $G'\geq 0$ and $A'$ is ample.\\
    Step 2: We settle the case $supp G'$ doesn't contain any non-klt centre of $(X',B')$. If so, for $\delta>0$ sufficiently small, we can write $-(K_{X'}+B'+\alpha M'+\delta G')\simQ (1-\delta)(\frac{\delta}{1-\delta}A'+A'+G')$ is ample and all non-klt locus is of $(X',B'+\delta G'+\alpha M')$ is the exactly non-klt locus of $(X',B')$ and in particular $(X',B'+\delta G'+\alpha M')$ is generalised dlt. Therefore, if we let $\Gamma' := aS'+(1-a)(B'-S'+\delta G') $ for some $S'\in \floor{B'}$ and $0<a<<1$, then $(X',\Gamma'+\alpha M')$ satisfies the condition in \ref{6.7}. Now we focus on $Supp G'$ contains some non-klt centre of $(X',B')$.\\
    Step 3: We still like to add some $G'$ to $B'$, if we can't then we try to add to $\bar{B'}$. Let $t>0$ be the generalised lct of $(G'+B'-\bar{B'})$ wrt to $(X',\bar{B'}+\alpha M')$, we see that since $\bar{B'}$ sufficiently close to $B'$ and $Supp G'$ contain some non-klt centre of $(X',B')$, we see that $t>0$ but arbitrarily small depending on $\beta$. Let $\Omega' := (1-t)\bar{B'}+t B'+ tG'\geq \bar{B'}$. Again it is clear that $-(K_{X'}+\Omega'+\alpha M')$ is ample since we added a little bit of $G'$ to it and $A'+G'$ is nef and big. Also since $\bar{B'}$ arbitarily close to $B'$ and $t$ sufficiently small, we get $\floor{\Omega'}\leq \floor{B'}$. If $\floor{\Omega'}\neq 0$, then we are done by appling end of step 2. Hence we assume $\floor{\Omega'} =0$.\\
    Step 3: Finally we finish the proof in the case of $\floor{\Omega'}=0$.  Consider $(X'',\Omega''+\alpha M'')$ the $\Q$-factorial dlt model of $(X',\Omega'+\alpha M')$. Let $B'' := B'^\sim +exc/X'$, again we see that $\floor{\Omega''}\leq \floor{B''}$. Now we consider the construction of small $\Q$-factorial model as in \ref{Q_fac}. Hence we let $\Delta'' := \bar{B'}^\sim+exc/X'$ and run MMP on $(K_{X''}+\Delta''+\alpha M'')$,(essential we just keep doing extremal contraction contracting exceptional divisors from $X''\rightarrow X'$, which are components of $\floor{\Omega''}$) we know we end up with $X'$, replace $X''$ by the last divisorial contraction. Then we get $-(K_{X''}+\Delta''+\alpha M'')$ is ample over $X'$ and $-(K_{X''}+\Omega''+\alpha M'')$ is pull back of ample on $X'$. Also only exceptional prime divisor $S'$ of $X''\rightarrow X$ is a common component of $\Delta''$ and $\Omega''$ by construction. Therefore by letting $\Gamma'' := a\Delta''+(1-a)\Omega''$ for $a$ suffciently small, we see that $-(K_{X''}+\Gamma''+\alpha M'')$ ample and generalised plt. Hence we can apply \ref{6.7} to $(X'',B''+M'')$. Hence $(X'',B''+M'')$ has a n-complement, therefore so does $(X',B'+M')$.
\end{proof}
\subsection{Complements for Non-exceptional Pairs}
Finally we end this section by proving Prop \ref{6.11}
\begin{proof}[Proof of Prop \ref{6.11}]
    Since it is non-exceptional, we find $0\leq P'\simQ -(K_{X'}+B'+M')$ (note $P'$ may be 0) and we can assume $(X',B'+P'+M')$ is generalised lc but not generalised klt (if not generalised lc, then done by Remark \ref{6.9}). Now we do exactly the same as Remark, \ref{6.9}, except we don't have can't apply \ref{6.8} since it is possible $(K_{X'''}+\theta'''+M''')\simQ 0$ now. Also we can replace $(X',B'+M')$ by $(X''',\theta'''+M''')$. Hence we reduce the problem to the case when \ref{6.8} doesn't apply, i.e. when $K_{X'}+B'\simQ M'\simQ 0$. Note we also have $pM\simQ 0$ with $pM$ Cartier divisor, but $Pic(X)$ is torsion free since $X$ is fano type, hence we get $pM\sim 0$ hence $pM'\sim 0$. This means it suffices to find a bounded n such that $n(K_{X'}+B')\sim 0$. (then $B'$ itself is a n-complement). This is the following lemma.
\end{proof}
\begin{lemma}\label{6.10}
        Assuming theorem \ref{com_gen} for d-1 and theorem \ref{com_rel} for dimension d, there exist $n$ depending only on $d,\mathfrak{R}$ such that if $(X,B)$ lc of dimension d,$X$ Fano type, $K_X+B\simQ 0$ and $B\in \mathfrak{R}$, then $n(K_X+B)\sim 0$
\end{lemma}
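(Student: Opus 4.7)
The plan is to pass to a $\Q$-factorial dlt model and split into the klt and strictly lc cases. Replacing $(X,B)$ with a $\Q$-factorial dlt model preserves $K_X+B\simQ 0$, the Fano-type property of $X$, and keeps the coefficient set finite (we may need to add $1$ to $\mathfrak{R}$ to absorb newly extracted lc places). After enlarging $\mathfrak{R}$ we may also assume $\mathfrak{R}\subset\Phi(\mathfrak{R})$.

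If $\floor{B}=0$, i.e.\ $(X,B)$ is klt, then I claim the pair is exceptional in the sense of \ref{7}. Any effective $P\simQ -(K_X+B)\simQ 0$ must vanish on the projective variety $X$, so $(X,B+P)=(X,B)$ remains klt; hence the pair is exceptional. By the assumed boundedness of exceptional pairs \ref{bou_exc} (with $M'=0,\,p=1$), the couple $(X,\operatorname{Supp} B)$ lies in a log bounded family depending only on $d$ and $\mathfrak{R}$. Cartier-index boundedness \ref{bnd_fam}(4) then produces a uniform $I=I(d,\mathfrak{R})$ with both $IK_X$ and $IB$ Cartier, so $I(K_X+B)$ is an integral Cartier divisor that is $\Q$-linearly trivial. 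Since $X$ is of Fano type its Picard group is torsion-free, hence $I(K_X+B)\sim 0$, and we take $n=I$.

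If instead $(X,B)$ is strictly lc, the dlt reduction forces $\floor{B}\neq 0$. Pick a component $S$ of $\floor{B}$. Divisorial adjunction yields $K_S+B_S=(K_X+B)|_S\simQ 0$ with $B_S\in\Phi(\mathfrak{R})$, and the usual MMP argument (cf.\ \ref{6.7}) shows that $S$ is of Fano type. I apply \ref{com_gen} in dimension $d-1$ (with trivial moduli part) to produce a bounded $n$ and an $n$-complement $B_S^+\geq B_S$ with $n(K_S+B_S^+)\sim 0$. Subtracting $K_S+B_S\simQ 0$, the divisor $n(B_S^+-B_S)$ is effective and $\Q$-linearly trivial on projective $S$, hence zero; so in fact $n(K_S+B_S)\sim 0$.

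The main obstacle is then to lift this trivialization from $S$ back to $X$. I would do so by a Kawamata--Viehweg argument modeled on the proof of \ref{6.7}: on a log resolution $\phi:W\to X$ with $S_W=S^\sim$, set $L_W:=n\phi^*(K_X+B)$ and consider the exact sequence
\[
0\to\OO_W(L_W-S_W)\to\OO_W(L_W)\to\OO_{S_W}(L_W|_{S_W})\to 0.
\]
Choose an auxiliary plt boundary $\Lambda$ with $\floor{\Lambda}=S_W$ so that $L_W-S_W-K_W$ can be written as $\Lambda-S_W$ plus a nef-and-big contribution; the required bigness, which \ref{6.7} extracted from $-(K_{X'}+\Gamma'+\alpha M')$, must here be supplied by a small ample perturbation coming from the Fano-type structure of $X$, since $-(K_X+B)\simQ 0$ is not itself big. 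The resulting $H^1(W,L_W-S_W)=0$ lifts the trivializing section on $S_W$ (pulled back from $S$) to a global section of $L_W$, and pushing down to $X$ yields $n(K_X+B)\sim 0$, completing the proof.
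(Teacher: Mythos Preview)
Your klt case invokes Theorem \ref{bou_exc}, but this is circular in the paper's inductive scheme: the proof of \ref{bou_exc} (see its Step 1) needs complements for non-exceptional generalised pairs, i.e.\ Proposition \ref{6.11}, whose proof in turn rests on the very Lemma \ref{6.10} you are proving. Lemma \ref{6.10} is stated only under \ref{com_gen}$_{d-1}$ and \ref{com_rel}$_d$, so \ref{bou_exc}$_d$ is not available. The fix is easy: in the klt case $(X,B)$ is klt with $B\simQ -K_X$ big (as $X$ is Fano type) and $B\in\mathfrak{R}$, so Proposition \ref{19.1.3} (or the ACC result \cite{ACC} used in the paper) gives boundedness directly without passing through \ref{bou_exc}.

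The non-klt case has a genuine gap. You correctly obtain $n(K_S+B_S)\sim 0$ on $S$ and want to lift via $H^1(W,L_W-S_W)=0$ with $L_W=n\phi^*(K_X+B)$. But $L_W\simQ 0$, so $L_W-S_W-K_W\simQ B_W-S_W$, which carries no nef and big part; Kawamata--Viehweg simply does not apply. Your proposed ``small ample perturbation from the Fano-type structure'' does not help: perturbing $L_W$ changes the sheaf whose $H^0$ you are computing, so a lifted section no longer trivialises $n(K_X+B)$. The obstruction is genuinely a torsion question in $\operatorname{Pic}$, and to control it one needs some source of positivity or boundedness that your setup lacks.

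The paper's route is quite different and is designed precisely to manufacture this missing bigness. After reducing to $X$ an $\epsilon$-lc Fano (so that Proposition \ref{4.9} applies, since $B\geq\delta$ for some fixed $\delta>0$), one takes bounded $n$ with $|-nK_X|$ birational, decomposes a general member as $A+R$ (movable plus fixed), and forms the auxiliary generalised pair $(X,\Delta'+N')$ with $\Delta'=\tfrac12 B+\tfrac{1}{2n}R$ and nef part $N'=\tfrac{1}{2n}A$. Now $N'$ is \emph{big}. If $(X,\Delta'+N')$ is generalised klt, ACC-type boundedness plus torsion-freeness of $\operatorname{Pic}$ finishes. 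If not, then since $N'\not\simQ 0$ one is exactly in the situation of Proposition \ref{6.8} (already proved, independent of \ref{6.10}), which produces a bounded complement; the complement is forced to equal $\Delta'$ because their difference is effective and $\simQ 0$. Either way one gets $m(K_X+\Delta'+N')\sim 0$ for bounded $m$, hence $2m(K_X+B)\sim 0$. The key idea you are missing is this introduction of $N'$ via \ref{4.9}, which supplies the bigness your Kawamata--Viehweg step needed and simultaneously places the non-klt case inside the already-established \ref{6.8} rather than requiring a new lifting argument.
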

\begin{proof}
    Firstly by applying 2.1.6 and (\cite{fa},2.48), we can reduce to the case $X'$ is $\epsilon$-lc Fano variety. The idea is we are in the situation to apply Prop \ref{4.9}, hence we will use it to construct complements.\\
    Step 1: let $n$ be in Prop \ref{4.9} and \ref{6.9}. Hence we know $|-nK_{X'}|$ defines a birational map. As in the proof of \ref{4.9}, we let $\phi^*(-nK_{X'})\sim A+R$, where $|A|$ is base point free and is the movable part and $R$ is the fixed part, and let $A',R'$ be their pushdown. By assuming $A$ is general in $|A|$, we can assume $(X',\frac{1}{n}(A'+C'))$ is lc: Indeed, if not, then $X'$ is strongly non-exceptional, hence by \ref{6.9}, it has a n-complement, $C^+$, but then we have $(X',C^+)$ lc and $C^+\in \frac{1}{n}|-nK_{X'}|$ but this gives contradiction since $\frac{1}{n}(A'+C')$ is general in $\frac{1}{n}|-nK_{X'}|$. Now let $N':= \frac{1}{2n}A'$ and $N:= \frac{1}{2n}A$ is nef. Now we consider $(X',\Delta'+N':= \frac{1}{2}B'+\frac{1}{2n}R'+\frac{1}{2n}A')$.\\
    Step 2: If $(X',\Delta'+N')$ is klt, its clear that we suffice to show there exist bounded $m$ such that $m(K_{X'}+\Delta'+N')\sim 0$. Also by (\cite{fa},2.48), we get $\exists \epsilon'$ such that $(X',\Delta'+N')$ is $\epsilon'$-lc. (Note $\epsilon'$ only depend on $n,\mathfrak{R}$). Now by \cite{ACC} ACC, Cor 1.7, $(X',\Delta'+N')$ lies in some bounded family, which implies the Cartier index of $K_{X'}+\Delta'+N'$ is bounded. But $Pic(X)$ is torsion free since $X$ Fano, so we are done.\\
    Step 3: Assume it is not klt. Then it is also not generalised klt since $\phi^*(A')\geq A$ by negativity lemma. Hence clearly $N'\not \simQ 0$ since its big. Therefore by \ref{6.8}, there exist a bounded $m$ such that $(X', \Delta'+N')$ has a $m$-complement $\Delta^+\geq \Delta'$. However we have $\Delta^+-\Delta'\simQ 0$. Hence $\Delta^+=\Delta'$ and we are done again.
\end{proof}

\newpage

\section{Boundedness of Exceptional Pairs}
Our goal here is to show theorem \ref{bou_exc} using induction. We first develop a set of standard tools that are needed to for \ref{bou_exc}. The following is the main tool to show boundedness.

\begin{prop}\label{19.1.3}(\cite{BCL},19.1.3)
    Let $I\subset[0,1]$ be a DCC set and $n\in \N$. If $(X,B)$ is klt pair, $B$ big and $X$ dim n, $K_X+B\simQ 0$, and $B\in I$, then such $(X,B)$ belongs to a bounded family depending only on $I$ and $n$. In particular, if X is Fano type and has a klt n-complement for bounded n, then $X$ is bounded.    
\end{prop}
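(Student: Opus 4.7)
The strategy is to reduce the statement to weak BAB (Theorem \ref{wea_bab}) by verifying its two hypotheses: uniform $\epsilon$-lc singularities and a uniform lower bound on the nonzero coefficients of $B$. Note that since $(X,B)$ is klt with $B\ge 0$, the pair $(X,0)$ is klt; and since $-K_X\simQ B$ is big with $(X,B)$ klt, $X$ is of Fano type. First I verify the $\epsilon$-lc hypothesis by applying the first lemma of Section \ref{2.50}, whose hypotheses ($(X,B)$ lc, $(X,0)$ klt, $K_X+B\simR 0$, $B\in I$ DCC) are all met. It produces an $\epsilon=\epsilon(n,I)>0$ with the property that any divisor $D$ over $X$ with $a(D,X,B)<\epsilon$ already satisfies $a(D,X,B)=0$. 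Since $(X,B)$ is klt every $a(D,X,B)$ is strictly positive, so this in fact forces $a(D,X,B)\ge \epsilon$ for every $D$, i.e.\ $(X,B)$ is $\epsilon$-lc.

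Next I bound the nonzero coefficients of $B$ from below. Because $B$ is big, $B\ne 0$, so $B$ has at least one nonzero coefficient, which lies in $I\cap(0,1]$. This is a nonempty DCC subset of $[0,1]$, hence admits a positive minimum $\delta=\delta(I)>0$; every nonzero coefficient of $B$ is $\ge\delta$. With $\epsilon$ and $\delta$ in hand, Theorem \ref{wea_bab} yields that $X$ belongs to a bounded family depending only on $n$ and $I$.

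To strengthen ``$X$ bounded'' to ``$(X,B)$ bounded'' I apply the criterion from \ref{bnd_fam}(1). Boundedness of $X$ supplies a very ample $A$ on $X$ with $A^n\le r$ for a fixed $r$, and \ref{bnd_fam}(3) makes $A^{n-1}\cdot K_X$ bounded. Since $K_X+B\simQ 0$ we have $A^{n-1}\cdot B=-A^{n-1}\cdot K_X$ bounded, and in particular $A^{n-1}\cdot\operatorname{Supp}(B)\le \tfrac1\delta A^{n-1}\cdot B$ is bounded; hence $(X,\operatorname{Supp} B)$ is log bounded. Combined with the fact that the coefficients of $B$ all lie in the fixed set $I$, the pairs $(X,B)$ themselves form a bounded family depending only on $n$ and $I$.

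For the ``in particular'' clause, assume $X$ is of Fano type and $K_X+B^+$ is a klt $n$-complement of bounded index $n$. Then $(X,B^+)$ is klt, $n(K_X+B^+)\sim 0$, so in particular $K_X+B^+\simQ 0$, and the coefficients of $B^+$ lie in $\tfrac1n\Z\cap[0,1]$, a finite (hence DCC) set depending only on $n$. Since $X$ is Fano type, $-K_X$ is big, and therefore $B^+\simQ -K_X$ is big as well. Applying the main assertion to $(X,B^+)$ gives that $X$ is bounded. The main obstacle is the very first step — producing the uniform $\epsilon$-lc bound — which rests on the ACC-for-lct consequence packaged in Section \ref{2.50}; once that is in place everything else is a direct application of weak BAB and the standard bounded-family criteria.
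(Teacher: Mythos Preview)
The paper does not supply its own proof of this proposition: it is imported verbatim from \cite{BCL}, 19.1.3, and used as a black box in Section~6. So there is no in-paper argument to compare against.

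Your reduction to Theorem~\ref{wea_bab} is correct and is the natural route within this paper's logical framework. There is no circularity: the paper indicates (end of Section~4) that Theorem~\ref{wea_bab} is proved by the same methods as Propositions~\ref{4.9}--\ref{4.11}, none of which invoke \ref{19.1.3}. The two inputs you extract---uniform $\epsilon$-lc from the first lemma of \S\ref{2.50}, and a uniform positive lower bound $\delta$ on nonzero coefficients from the DCC hypothesis---are exactly what \ref{wea_bab} asks for, and the upgrade from ``$X$ bounded'' to ``$(X,B)$ log bounded'' via \ref{bnd_fam}(1),(3) is clean. The ``in particular'' clause is handled correctly.

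One genuine technical gap: you assert that $(X,B)$ klt with $B\ge 0$ implies $(X,0)$ klt, but this requires $K_X$ to be $\Q$-Cartier, which is not part of the hypothesis (only $K_X+B$ is). The first lemma of \S\ref{2.50} explicitly needs $(X,0)$ klt to extract the divisor $D$. The fix is to pass to a small $\Q$-factorialisation $X'\to X$ first (available since $(X,B)$ is klt, cf.\ \S\ref{Q_fac}); on $X'$ all your steps go through verbatim, and boundedness of $(X',B')$ gives boundedness of $(X,B)$ since the map is small. In fact every application of \ref{19.1.3} in this paper (Proposition~\ref{7.13}, Lemma~\ref{7.5}, proof of Theorem~\ref{bou_exc}) already takes place on a $\Q$-factorial $X$, so the issue is harmless in context, but it should be acknowledged.
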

\subsection{Method from Bounds on lct to Boundedness of Varieties}
Firstly we state and prove a result that is crucial in proving boundedness of the certain class of weak Fano varieties. Also, this is very important for the proof of BAB later.

\begin{prop}\label{7.13}
    Let $d,m,v\in \N$ and $t_l$ be sequence of positive real numbers. Assuming theorem \ref{com_gen} for d-1 and theorem \ref{com_rel} for dimension d. Let $\mathcal{P}$ be the set of projective varieties $X$ such that $X$ is a klt weak Fano variety of dimension d, $K_X$ has a m-complement, $|-mK_X|$ defines a birational map, $vol(-K_X)\leq v$ and $\forall l\in \N, L\in|-lK_X|$, the pair $(X,t_l L)$ is klt. Then $\mathcal{P}$ is a bounded family.
\end{prop}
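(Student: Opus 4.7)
The plan is to combine log birational boundedness of $X$ via Lemma \ref{4.4.1} with the construction of a klt Calabi--Yau boundary on $X$, then apply Proposition \ref{19.1.3} to conclude $X$ lies in a bounded family.

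First I would establish log birational boundedness. Pick $M \in |-mK_X|$ (for concreteness $M = mB^+$ where $B^+$ is a given $m$-complement), take a log resolution $\phi\colon W \to X$ of $(X,M)$, and use Lemma \ref{2.6} to decompose $\phi^* M \sim A + R$ with $|A|$ base point free defining a birational morphism (inheriting birationality from $|-mK_X|$). Setting $\Sigma_W = Supp(\phi^* M) + exc(\phi)$, the hypothesis $vol(-K_X) \leq v$ gives $vol(A) \leq m^d v$ and, by pushforward, $vol(K_W + \Sigma_W + 2(d+1)A) \leq vol(K_X + (2d+3)M) \leq ((2d+3)m - 1)^d v$, both bounded uniformly in $X \in \mathcal{P}$. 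Lemma \ref{4.4.1} then produces a log smooth log bounded family $\mathcal{Q}$ of couples $(\bar{W}, \Sigma_{\bar{W}})$ such that each $X \in \mathcal{P}$ admits a log resolution $\bar{W} \to X$ with $(\bar{W}, \Sigma_{\bar{W}}) \in \mathcal{Q}$; the argument mirrors Step 2 of the proof of Proposition \ref{4.9}.

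Next I would construct the klt Calabi--Yau boundary on $X$. For general $L \in |-mK_X|$ and a small rational $\epsilon \in (0,1)$, set $B_\epsilon := (1-\epsilon)B^+ + \tfrac{\epsilon}{m}L$. Since $B^+ \simQ -K_X$ and $\tfrac{L}{m} \simQ -K_X$, we have $B_\epsilon \simQ -K_X$ so $K_X + B_\epsilon \simQ 0$; moreover $B_\epsilon$ is big (as $B^+$ is) and its coefficients lie in a DCC set depending only on $m$ and $\epsilon$. To guarantee $(X, B_\epsilon)$ klt uniformly, I would transfer the analysis to $\bar{W}$: the pair $(\bar{W}, (1-\epsilon)\bar{B}^+)$ is $\epsilon$-lc by convex combination (since $(\bar{W}, 0)$ is canonical and $(\bar{W}, \bar{B}^+)$ is sub-lc crepant to the lc pair $(X, B^+)$, log discrepancies satisfy $a_E \geq (1-\epsilon)\cdot 0 + \epsilon \cdot 1 = \epsilon$), and the perturbation $\tfrac{\epsilon}{m}L_{\bar{W}}$ is supported in $\Sigma_{\bar{W}}$ with coefficients bounded by $\tfrac{\epsilon C}{m}$, where $C$ depends only on $\mathcal{Q}$. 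Applying Proposition \ref{4.2} with $\mathcal{P} = \mathcal{Q}$, for $\epsilon$ small in terms of $d, m, v, \{t_l\}$, the pair $(\bar{W}, (1-\epsilon)\bar{B}^+ + \tfrac{\epsilon}{m}L_{\bar{W}})$ is klt; crepant transfer then gives $(X, B_\epsilon)$ klt. Proposition \ref{19.1.3} applied to $(X, B_\epsilon)$ concludes that $X$ lies in a bounded family depending only on $d, m, v, \{t_l\}$.

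The main obstacle is guaranteeing klt-ness of $(X, B_\epsilon)$ uniformly across $\mathcal{P}$. The $m$-complement $B^+$ may possess genuine non-klt centers, and the $t_m$-bound on $lct(|-mK_X|)$ need not make $(X, \tfrac{L}{m})$ klt on its own, since $\tfrac{1}{m}$ can exceed $t_m$. Transferring the question to the bounded $\bar{W}$ and invoking Proposition \ref{4.2} is essential, yet verifying that all exceptional-divisor coefficients in the crepant pullback of $B_\epsilon$ remain below $1$ requires careful bookkeeping linking the lc structure of $B^+$, the $t_l$ hypothesis (applied across multiple $l$ to bound the lct of $B^+$, for instance via $L = lB^+ \in |-lK_X|$), and the boundedness of coefficients on $\bar{W}$. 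Additional care is needed to keep the relevant boundary on $\bar{W}$ effective, possibly by first replacing $\bar{B}^+$ with its effective part $(\bar{B}^+)^{\geq 0}$ before invoking Proposition \ref{4.2}.
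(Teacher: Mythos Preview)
Your plan has a genuine gap at the core step: the pair $(X,B_\epsilon)$ need not be klt, and your application of Proposition \ref{4.2} does not rescue it. Write $M=mB^+$ and take $L\in|-mK_X|$ general; on $\bar W$ the crepant pullback of $K_X+B_\epsilon$ has boundary $\bar B^{+}+\tfrac{\epsilon}{m}(L_{\bar W}-M_{\bar W})$. At any non-klt place $E$ of $(X,B^+)$ appearing as a divisor on $\bar W$, one has $\mu_E\bar B^+=1$, while for general $L$ and general $A$ in the movable part the multiplicities $\mu_E L_{\bar W}$ and $\mu_E M_{\bar W}$ both equal $\mu_E$ of the pullback of the fixed part $R$. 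Thus the coefficient at $E$ stays exactly $1$: a general perturbation does not improve the singularity along the fixed locus of $|-mK_X|$. Your claim that $\tfrac{\epsilon}{m}L_{\bar W}$ is ``supported in $\Sigma_{\bar W}$'' is also false (it is only $\R$-linearly equivalent to such a divisor); and once you pass to the effective part $(\bar B^+)^{\geq 0}$ to apply \ref{4.2}, the crepant link to $(X,B_\epsilon)$ is broken, so a klt conclusion on $\bar W$ no longer transfers. Finally, the $t_l$ hypothesis applied to $L=lm\,B^+\in|-lmK_X|$ only gives $(X,lm\,t_{lm}B^+)$ klt, which says nothing about $(X,B^+)$ unless $lm\,t_{lm}\geq 1$ for some $l$, which is not assumed.

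The paper's argument supplies exactly the missing mechanism. On the bounded model $(\bar X,\Sigma_{\bar X})$ one finds a bounded $l$ and $0\leq G_{\bar X}\sim lA_{\bar X}$ with $\Sigma_{\bar X}\leq G_{\bar X}$, then pulls back to get $G\sim lA$ on $X$; the divisor $\Theta:=B+\tfrac{t}{m}A-\tfrac{t}{ml}G$ is sub-$\epsilon$-lc because subtracting $G$ strictly lowers every coefficient in $\Sigma_{\bar X}$, in particular every coefficient-$1$ place of $B_{\bar X}$. Of course $\Theta$ may fail to be effective; this is repaired by using the $t_l$ hypothesis on $G+lR\in|-lmK_X|$ together with Remark \ref{6.9} (strongly non-exceptional complements) to produce an lc complement $\Omega\geq t(G+lR)$, and then averaging $\Delta=\tfrac12(\Omega+\Theta)$ yields an effective klt boundary with $K_X+\Delta\simQ 0$ and coefficients in a finite set, so \ref{19.1.3} applies. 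The essential ideas you are missing are the construction of $G$ dominating $\Sigma_{\bar X}$ and the use of the complement machinery (not a general element) to restore effectivity.
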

\begin{proof}
    Step 0: Note that we can take small $\Q$-factorization and assume $X$ is $\Q$-factorial. Let $0\geq M\in |-mK_X|$ be general. We use standard notation as in \ref{4.4.3} and write $M=A+R$. Hence we get a  $(\bar{X},\Sigma_{\bar{X}})\in \mathcal{Q}$, a bounded family with $\bar{X}\dashrightarrow X$ such that all of \ref{4.4.3} hold. Let $X'$ be common resolution of $\bar{X},X$ also as in \ref{4.4.3}. Now since we assume $K_X$ has a m-complement, but since $(A+R)$ is general in linear system of $|-mK_X|$, we can assume the m-complement is just $B := \frac{1}{m}(A+R)$. Hence $(X,\frac{1}{m}(A+R))$ is lc. If it is klt, we are done by \ref{19.1.3}. Hence we assume it is not klt. The goal now is to construct some other boundary to apply \ref{19.1.3}.\\
    Step 1: The idea is to create some klt complement from $B$. We observe that when crepant pulling back $K_X+B$ to $\bar{X}$, we get $\floor{B_{\bar{X}}}$ are contained inside, by construction, $\Sigma_{\bar{X}}$. Hence we can add on some $A_{\bar{X}}$ and remove some $\Sigma_{\bar{X}}$ and make it sub $\epsilon$ lc. Finally in order to correct the negative terms we just added, we use complements.The proof is completed now so it remains to make it formal\\ Step 2: The formal argument is as below. Replacing $m$ at the start we can assume $A$ is not a component of $\floor{B}$. Define $K_{\bar{X}}+B_{\bar{X}}$ be crepant pullback of $K_X+B$. Since $supp(B_{\bar{X}}), supp(A_{\bar{X}})\subset \Sigma_{\bar{X}}$ and $A_{\bar{X}}$ is big and $(\bar{X},\Sigma_{\bar{X}})$ is bounded, we can find a bounded $l$ depending only on $\mathcal{Q}$ such that $lA_{\bar{X}}\sim G_{\bar{X}}\geq 0$ such that $\Sigma_{\bar{X}}\leq G_{\bar{X}}$. Since $A_{X'}\sim 0/\bar{X}$, we get $lA'\sim G':= G_{\bar{X}}|_{X'}$ and hence $lA\sim G$ the pushdown to $X$. Now we make sure we can correct the negative terms of subtracting $G$ using complements. Notice $G+lR\in |-lmK_X|$, hence by assumption $(X,t(G+lR))$ is klt when $t := t_{lm}$, we can wlog $t<\frac{1}{lm}$. Now if $(X,\frac{1}{lm}(G+lR))$ is lc, then $\Omega := \frac{1}{lm}(G+lR)$ is a $lm$ complement, else we have $(X,\frac{1}{lm}(G+lR))$ not lc, hence $(X,t(G+lR))$ is strongly non-exceptional, therefore there exist a $n(d,t)$-complement $\Omega \geq t(G+lR)$. Now consider $\Theta := B+\frac{t}{m}A-\frac{t}{ml}G$ for $t$ sufficiently small. Since $K_X+\Theta\simQ 0$, we can see it is sub $\epsilon(t,l,m)$-lc since its crepant pullback is just $B_{\bar{X}}+\frac{t}{m}A_{\bar{X}}-\frac{t}{ml}G_{\bar{X}}$. 
    Now let $\Delta :=\frac{1}{2}(\Omega+\Theta)$, we see $K_X+\Delta\simQ 0$ and $(X,\Delta)$ is $\epsilon/2$-lc (since $\Delta\geq 0$ by construction). Also we see that coefficients of $\Delta$ belong to a finite set depending only on $t,m,l,n$. Hence we can apply \ref{19.1.3} again and we are done.
\end{proof}
Hence the remaining of the section is basically verify that all the criteria in \ref{7.13} is satisfied.

\subsection{Various Bounds on Exceptional Pairs}
Here we discuss various bounds that exist on exceptional pairs.
\begin{lemma}[Bounds on Singularities]\label{7.2}
    Let $d,p\in \N$, and $\Phi\subset [0,1]$ a fixed DCC set. There exists $\epsilon>0$ such that if $(X',B'+M')$ exceptional pair, $X$ fano type, $B'\in \Phi$ and $pM$ b-Cartier, then $\forall 0\leq P'\simR -(K_{X'}+B'+M')$, $(X',B'+P'+M')$ is $\epsilon$-lc.
\end{lemma}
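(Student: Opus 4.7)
The strategy is to adapt the proof of Lemma~\ref{2.50} Part~1 to the generalised setting, using exceptionality as the input that forces log discrepancies to stay strictly positive so that a ``gap" theorem can deliver a uniform lower bound. Suppose for contradiction the claim fails. Then there is a sequence of exceptional generalised pairs $(X_i', B_i'+M_i')$ (with $X_i'$ Fano type, $B_i' \in \Phi$, $pM_i$ b-Cartier), effective divisors $P_i' \simR -(K_{X_i'}+B_i'+M_i')$, and prime divisors $D_i$ over $X_i'$ such that $\epsilon_i := a(D_i, X_i', B_i' + P_i' + M_i') \to 0^+$. The exceptionality of $(X_i', B_i'+M_i')$ applied to $P_i'$ forces $(X_i', B_i'+P_i'+M_i')$ to be generalised klt, whence each $\epsilon_i > 0$; the task is to rule out $\epsilon_i \to 0$.

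Next, I would bring $D_i$ onto the variety itself. Since $X_i'$ is Fano type, $(X_i', 0)$ is klt, so $a(D_i, X_i', 0) > 0$; by the extraction construction recalled in Section~\ref{Q_fac}~(3), we obtain a birational morphism $\psi_i : Y_i \to X_i'$ that extracts only $D_i$. Replacing $X_i'$ by $Y_i$ and pulling back the generalised-pair data (the boundaries $B_i'$ and $P_i'$ and the trace on $Y_i$ of the nef Cartier datum representing $M$), I may assume that $D_i$ is a prime divisor on $X_i'$, appearing in the combined boundary $B_i' + P_i'$ with coefficient $1-\epsilon_i$ (by the very definition of generalised log discrepancy, which involves only the non-$M$ part).

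The contradiction should now come from a generalised-pair analogue of Lemma~\ref{2.50} Part~1: for fixed $d$, $p$ and a DCC set $\Phi'$, there is a uniform $\epsilon > 0$ such that any generalised lc pair $(X, B+M)$ of dimension $d$ with $(X, 0)$ klt, $K_X+B+M \simR 0$, $B \in \Phi'$, and $pM$ b-Cartier satisfies the dichotomy: every $a(D, X, B+M) < \epsilon$ is in fact $0$. Its proof goes exactly as Lemma~\ref{2.50} Part~1 (the extraction has already been performed, and then one appeals to ACC for generalised log canonical thresholds). Applied to our sequence after Step~2, this forces every $\epsilon_i$ to be either $0$ or at least $\epsilon$, contradicting $\epsilon_i \to 0^+$ with $\epsilon_i > 0$.

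The main obstacle is the last step: the effective divisor $P_i'$ is a priori only controlled up to $\R$-linear equivalence, and its coefficients sit in no DCC set, so the gap theorem cannot be applied directly to the naive boundary $B_i' + P_i'$. My plan is to replace $P_i'$ within its $\R$-linear class by a divisor whose non-zero coefficients lie in a uniform finite set: either by passing to a general member of the linear system of a suitable multiple of $-(K+B+M)$, or by invoking ACC for generalised log canonical thresholds of $(X', B'+M')$ against $P_i'$ (exceptionality guarantees the threshold strictly exceeds $1$, providing the needed slack). Either route reduces the data to the DCC setting required by the gap theorem, closing the argument.
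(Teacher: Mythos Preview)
Your overall strategy is natural, and you correctly pinpoint the central obstacle: after extracting $D_i$, the boundary $B_i'+P_i'$ has no DCC control because of $P_i'$. However, neither of your proposed workarounds closes this gap.

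For the first route, replacing $P_i'$ by a general member $\tilde{P}_i'$ of $|{-}(K+B+M)|_{\R}$ destroys exactly the information you need. After extraction, the coefficient of $D_i$ in $B_i'+P_i'$ is $1-\epsilon_i$, split as $c_i:=\mu_{D_i}B_i'$ and $e_i:=\mu_{D_i}P_i'$ with $c_i+e_i=1-\epsilon_i$. A general $\tilde{P}_i'$ has $\mu_{D_i}\tilde{P}_i'=0$, so the coefficient at $D_i$ collapses to $c_i$, which you have no control over; there is no reason $c_i\to 1$. Thus the ``small log discrepancy at $D_i$'' is lost. For the second route, ACC for generalised lct requires the coefficients of the test divisor $P_i'$ to lie in a fixed DCC set, which is precisely what you lack; and even if ACC applied, the thresholds could still accumulate to $1$ from above, which is compatible with ACC but gives no uniform $\epsilon$-lc bound.

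The paper's argument avoids DCC control on $P'$ altogether. After extracting $D$, it moves only the $D$-component of $P''$ into the boundary: set $\hat{B}'':=B''+eD$, so $\mu_D\hat{B}''=1-a$ and all other coefficients remain in $\Phi$, while $\hat{P}'':=P''-eD\geq 0$ still witnesses $-(K_{X''}+\hat{B}''+M'')\simR \hat{P}''$. One then runs an MMP to make $-(K+\hat{B}+M)$ nef and applies the second part of Lemma~\ref{2.50} (the $\theta$-rounding lemma), which is tailored to boundaries in $\Phi\cup[1-\epsilon,1]$ and preserves the component $D$ with coefficient $1$ through a further MMP. Finally, the exceptionality-transfer lemma of Section~\ref{7} (comparing pullbacks of $K+\text{boundary}+M$ along a common resolution) shows the resulting non-klt pair $(\bar{X},\bar{\theta}+\bar{M})$ is still exceptional, a contradiction. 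The key idea you are missing is this transfer step together with the use of Lemma~\ref{2.50} Part~2 rather than Part~1: Part~2 is what lets one handle a single ``rogue'' coefficient $1-a$ not in $\Phi$, without ever needing DCC control on the remainder of $P'$.
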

\begin{proof}
    We proceed by contradiction. Assume $\epsilon$ is arbitrarily close to 0. Firstly note $(X',B'+P'+M')$ is generalised klt. pick $D$, prime divisor on birational model of $X$ such that $a:=a(D,X',B'+P'+M')$ is minimal. Consider extraction of divisor $D$ via $X''\xrightarrow{\phi} X'$, (see \ref{Q_fac}). Write $K_{X''}+B''+P''+M''=\phi^*(K_{X'}+B'+P'+M')$, where $P'':=\phi^* P'$. Then say $\mu_D(B'')=c$ and $\mu_D(P'')=e$, we have $c+e=1-a$ by construction. We want to apply 2.50, hence we run MMP on $-(K_{X''}+B''+cD+M'')\simR P''-cD\geq 0$, we end with a minimal model say $(X''',B'''+cD'''+M''')$. Define $\theta''' := (B'''+cD''')^{\leq 1-\epsilon}+\ceiling{(B'''+CD''')^{>1-\epsilon}}\geq B'''+cD'''$, and we have $\mu_{D'''}\theta'''=1$, where $\epsilon$ is given in 2.50 (we assume $a< \epsilon$). Finally run MMP on $-(K_{X'''}-\theta'''+M''')$ we end with minimal model $(\bar{X},\bar{\theta}+\bar{M})$, where $\mu_{\bar{D}}\bar{\theta} =1$ by \ref{2.50}. However we can let $X$ be the common resolution of $X',X'',X''',\bar{X}$. Then we have $(K_{\bar{X}}+\bar{\theta}+\bar{M})|_X\geq (K_{X'''}+\theta'''+M''')|_X \geq (K_{X''}+B''+cD''+M'')|_X \geq (K_{X'}+B'+M')|_X $, which implies $(\bar{X},\bar{\theta}+\bar{M})$ is exceptional by \ref{7}, this is contradiction as $(\bar{X},\bar{\theta}+\bar{M})$ is not generalised klt and $-(K_{\bar{X}}+\bar{\theta}+\bar{M})$ is semi-ample, (which means we can define exceptional).
    \end{proof}
\begin{lemma}\label{7.7}[Bound on exceptional threshold]
    Let $d,p\in \N$, and $\Phi\subset [0,1]$ a fixed DCC set. There exists $\beta\in(0,1)$ such that if $(X',B'+M')$ exceptional pair of dimension d, $X$ fano type and $\Q$-factorial, $B'\in \Phi$,$pM$ b-Cartier and $-(K_{X'}+B'+M')$ is nef, then $(X',B'+\alpha M')$ is exceptional for all $\alpha\in(\beta,1)$
\end{lemma}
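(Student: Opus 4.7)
The plan is to argue by contradiction, following the MMP strategy used in the proof of Lemma \ref{7.2} and exploiting the fact that the moduli part $M$, being a b-$\R$-Cartier b-divisor, has an unambiguous trace on any common birational model. Suppose $\beta$ does not exist; pick a sequence $\alpha_i \to 1^-$ together with pairs $(X'_i, B'_i + M'_i)$ that are exceptional, whereas $(X'_i, B'_i + \alpha_i M'_i)$ is not. For each $i$ choose $0 \le P'_i \simR -(K_{X'_i} + B'_i + \alpha_i M'_i)$ such that $(X'_i, B'_i + P'_i + \alpha_i M'_i)$ is not generalised klt, and pick a prime divisor $D_i$ over $X'_i$ whose log discrepancy $a_i$ is minimal and $\le 0$. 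Since $X'_i$ is fano type (hence klt), Subsection \ref{Q_fac} lets us extract $D_i$ by a birational morphism $\phi_i\colon X''_i \to X'_i$, yielding $K_{X''_i} + B''_i + P''_i + \alpha_i M''_i \simR 0$ with $\mu_{D_i}(B''_i + P''_i) \ge 1$.

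Now mimic the MMP steps from the proof of Lemma \ref{7.2}: with $c_i := \mu_{D_i}(B''_i)$, run an MMP on $-(K_{X''_i} + B''_i + c_i D_i + \alpha_i M''_i) \simR P''_i - c_i D_i$, then invoke Lemma \ref{2.50} to saturate boundary coefficients close to one, and finally run a further MMP on the saturated pair. This produces a minimal model $(\bar{X}_i, \bar{\theta}_i + \alpha_i \bar{M}_i)$ with $\mu_{\bar{D}_i}(\bar{\theta}_i) = 1$ and with $-(K_{\bar{X}_i} + \bar{\theta}_i + \alpha_i \bar{M}_i)$ nef, hence semi-ample by abundance on fano type. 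On a common resolution $X$ of $X'_i$, $X''_i$, and $\bar{X}_i$, the construction gives the pullback inequality $(K_{X'_i} + B'_i + \alpha_i M'_i)|_X \le (K_{\bar{X}_i} + \bar{\theta}_i + \alpha_i \bar{M}_i)|_X$. The key observation is that the trace of $M$ on $X$ is the same whether it is computed via $X'_i$ or via $\bar{X}_i$, so the two perturbations $(1 - \alpha_i) M|_X$ on both sides coincide, and adding this common term yields
\[
(K_{X'_i} + B'_i + M'_i)|_X \;\le\; (K_{\bar{X}_i} + \bar{\theta}_i + \bar{M}_i)|_X.
\]

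By Lemma \ref{7}, exceptionality of $(X'_i, B'_i + M'_i)$ then forces $(\bar{X}_i, \bar{\theta}_i + \bar{M}_i)$ to be exceptional as well. But $\mu_{\bar{D}_i}(\bar{\theta}_i) = 1$ means $\bar{D}_i$ already has log discrepancy $\le 0$ in $(\bar{X}_i, \bar{\theta}_i + \bar{M}_i)$, and adding any $\bar{P} \ge 0$ with $K_{\bar{X}_i} + \bar{\theta}_i + \bar{M}_i + \bar{P} \simR 0$ only makes this worse, so no such $\bar{P}$ renders the pair generalised klt. This contradicts the exceptionality we just deduced. The main obstacle is precisely the trace comparison step: a naive substitution $P'_i + (1-\alpha_i) N$ fails because $-M$ admits no effective representative ($M$ being nef and nontrivial), and it is exactly the b-divisor consistency of $M$ that allows the $\alpha_i M'$ inequality to be converted into a usable inequality for $M'$ on which Lemma \ref{7} can be invoked.
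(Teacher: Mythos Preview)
There is a genuine gap in the final contradiction step. You deduce via Lemma~\ref{7} that $(\bar{X}_i,\bar{\theta}_i+\bar{M}_i)$ is exceptional, and then claim this contradicts $\mu_{\bar{D}_i}\bar{\theta}_i=1$. But non-exceptionality requires exhibiting \emph{some} $\bar{P}\geq 0$ with $K_{\bar{X}_i}+\bar{\theta}_i+\bar{M}_i+\bar{P}\simR 0$; if no such $\bar{P}$ exists the pair is vacuously exceptional and there is no contradiction. You have only arranged that $-(K_{\bar{X}_i}+\bar{\theta}_i+\alpha_i\bar{M}_i)$ is nef (hence $\simR$ effective), and your b-divisor upgrade of the pullback inequality from $\alpha_i M$ to $M$ says nothing about $-(K_{\bar{X}_i}+\bar{\theta}_i+\bar{M}_i)$ being pseudoeffective: indeed it equals a nef divisor minus $(1-\alpha_i)\bar{M}_i$, and both pieces may be small with no control on the sign of the difference.

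This is precisely the obstacle the paper's proof is built to overcome, and it does so by a different route: it runs the MMP on $-(K_{X''_i}+\Gamma''_i+M''_i)$ with the \emph{full} nef part $M''_i$ from the start. The cost is that this divisor is not a priori pseudoeffective and the pair is only known to be generalised lc with $\alpha_i M''_i$, so two applications of generalised ACC are needed: one (global ACC, \cite{GACC} 1.5) to pass from lc with $\alpha_i\bar{M}_i$ to lc with $\bar{M}_i$ for $i\gg 0$, and a second (ACC on general fibres) to rule out the Mori fibre space outcome of the MMP by showing the nef threshold $\lambda_i$ stays bounded away from $1$. These ACC steps are the essential content of the lemma and cannot be replaced by the trace-consistency observation alone. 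A secondary issue: your invocation of Lemma~\ref{2.50} with nef part $\alpha_i M$ is problematic since $p(\alpha_i M)$ is not b-Cartier, so the $\epsilon$ produced there is not uniform in $i$.
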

\begin{proof}
    Assume the lemma is false, then there is $(X_i',B_i'+M_i')$ and $\alpha_i$ such that $(X_i',B_i'+\alpha_i M_i')$ not exceptional and $\alpha_i\rightarrow 1$. We firstly note that it does make sense to say $(X_i',B_i'+\alpha_i M_i')$ is not exceptional since $-(K_{X_i'}+B_i'+\alpha_i M_i')$ is nef. Hence there exist $0\leq P_i' \simR -(K_{X_i'}+B_i'+M_i')$ such that $(X',P_i'+B_i'+\alpha_i M_i')$ is not generalised klt. then we apply the standard method of letting $\Omega_i' := B_i'+t_i P_i'$ where $t_i$ is the lct of $P_i'$. Consider $\Q$-factorial dlt model $(X'',\Omega''+\alpha_i M'')$ of $(X',\Omega'+\alpha_i M')$. Now choose $B_i^\sim\leq \Gamma_i''\leq \Omega_i''$ such that $\floor{\Gamma_i''}\neq 0$ and $\Gamma_i''\in \Phi$. The idea is to apply ACC theorem in some form to derive contradiction. We observe that $-(K_{X_i''}+\Gamma_i''+\alpha_i M_i'')=(1-t_i)P_i''+(\Omega_i''-\Gamma_i'')\geq 0$ where $P_i''$ is pull back of $P_i'$. Now we run MMP on $-(K_{X_i''}+\Gamma_i''+M_i'')$ and we end with $(\bar{X_i},\bar{\Gamma_i}+\bar{M_i})$. If we can show that $(\bar{X_i},\bar{\Gamma_i}+\bar{M_i})$ is a lc minimal model then we are done, since it is not exceptional by construction, hence we get $(X',B'+M')$ not exceptional, which is a contradiction.\\
    Now we show $(\bar{X_i},\bar{\Gamma_i}+\bar{M_i})$ is a lc minimal model. Notice that $-(K_{X_i''}+\Omega_i''+\alpha_i M_i'')$ is nef by construction and $(X'',\Omega_i''+\alpha_i M_i'')$ is generalised lc, we get by negativity $(\bar{X},\bar{\Omega_i}+\alpha_i \bar{M_i})$ is generalised lc. Now by ACC (\cite{GACC},[9],1.5) and $\alpha_i\rightarrow 1$, we must have $(\bar{X},\bar{\Omega_i}+\bar{M_i})$ is generalised lc for all $i>>0$. Now assume the MMP end with Mori Fibre space, we get $\bar{X_i}\rightarrow T_i$ extremal contraction with $\dim T_i <d$ and we have $(K_{\bar{X_i}}+\bar{\Gamma_i}+\bar{M_i})$ ample over $T_i$. If we let $\lambda_i$ be the maximum such that $(K_{\bar{X_i}}+\bar{\Gamma_i}+\lambda_i\bar{M_i})$ is nef over $T_i$, we get $K_{\bar{X_i}}+\bar{\Gamma_i}+\lambda_i\bar{M_i}\equiv 0/T_i$. Then by apply ACC on general fibres we see that $\lambda_i$ is bounded away from 1. Hence we get $K_{\bar{X_i}}+\bar{\Gamma_i}+\alpha_i\bar{M_i}$ is ample over $T_i$. This is a contradiction since $K_{\bar{X_i}}+\bar{\Gamma_i}+\alpha_i\bar{M_i}\simR -((1-t_i)\bar{P_i}+(\bar{\Omega_i}-\bar{\Gamma_i})\leq 0$ which is a contradiction.
\end{proof}
We need this in order to get a bounded $m$ such that $|-mK_X|$ defines a birational map using \ref{4.9}.
\begin{prop}\label{7.9}[Bounds on Volume]
    Let $d,p\in \N$ and $\Phi\subset[0,1]$ a fixed DCC set. Then there is $v(d,p)>0$ such that if $(X',B'+M')$ generalised klt of dimension d, $B'\in \Phi$, $pM$ is b-Cartier and big, and $K_{X'}+B'+M'\simR 0$, then $vol(-K_{X'})< v$
\end{prop}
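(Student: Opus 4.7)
I proceed by contradiction, mirroring the proofs of Lemma~\ref{4.7} and Proposition~\ref{4.11}. Suppose there is a sequence $(X_i',B_i'+M_i')$ of generalised klt pairs satisfying the hypotheses with $v_i:=\mathrm{vol}(-K_{X_i'})\to\infty$. After small $\Q$-factorialisations each $X_i'$ is $\Q$-factorial of Fano type (since $M_i'$ big gives $-K_{X_i'}\simR B_i'+M_i'$ big). The strategy is to create a covering family of positive-dimensional non-klt centres $G_i\subset X_i'$ on which the restricted anticanonical has bounded volume, descend via adjunction to the normalisation $F_i$ carrying a generalised pair whose discriminant lies in a DCC set, fit $F_i$ into a log bounded family via Lemma~\ref{4.4.3}, and obtain a contradiction through Proposition~\ref{4.2}.

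Let $n_i$ be minimal with $\mathrm{vol}(-n_iK_{X_i'})>(2d)^d$; since $v_i\to\infty$, $n_i$ is bounded (eventually $1$). By \cite{fa}, 2.32(2), for general points $x_i,y_i\in X_i'$ there exists $0\leq\Delta_i\simQ -n_iK_{X_i'}$ such that $(X_i',\Delta_i)$ has a unique non-klt place above a subvariety $G_i\ni x_i$ in a covering family and is not klt at $y_i$. If $\dim G_i=0$ for a covering family, $-2n_iK_{X_i'}$ is potentially birational and the argument of Lemma~\ref{4.7} Step~3 applies directly on $X_i'$. Otherwise, iterate the cutting-down of Lemma~\ref{4.7} Step~1 until $\dim G_i>0$ with $\mathrm{vol}(-n_iK_{X_i'}|_{G_i})$ uniformly bounded. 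Letting $F_i$ be the normalisation of $G_i$, the adjunction of Definition~\ref{non_klt} applied to $(X_i',B_i'+\Delta_i+M_i')$ on a log resolution where $M$ is nef Cartier gives
\[
K_{F_i}+\Theta_{F_i}+P_{F_i}\simR (K_{X_i'}+B_i'+\Delta_i+M_i')|_{F_i}\simQ -n_iK_{X_i'}|_{F_i},
\]
with $\Theta_{F_i}$ in a DCC set depending only on $d,p,\Phi$ by Definition~\ref{non_klt}(2), and $P_{F_i}$ big and effective after doubling $n_i$ and adding a general $\Q$-effective divisor linearly equivalent to $-n_iK_{X_i'}$.

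Pick a general $M_i'\in|-m_iK_{X_i'}|$ with $m_i$ bounded, using Lemma~\ref{2.50} to reduce to a finite coefficient set and then Proposition~\ref{4.11} after a uniform-$\epsilon$-lc reduction. By generality of $G_i$, the restriction $M_{F_i}:=M_i'|_{F_i}$ has $|M_{F_i}|$ birational and $\mu_D(\Theta_{F_i}+M_{F_i})>1$ for every component $D$ of $M_{F_i}$ by Definition~\ref{non_klt}(3). Lemma~\ref{4.4.3} then produces a log smooth log bounded family $(\bar F_i,\Sigma_{\bar F_i})$ on which the coefficients of the pushforward $M_{\bar F_i}$ are uniformly bounded. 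Crepant pullback of $K_{F_i}+\Lambda_{F_i}+\tfrac{n_i+1}{m_i}M_{F_i}\simQ 0$ to $\bar F_i$ (with $\Lambda_{F_i}=K_{X_i'}|_{F_i}-K_{F_i}$ sub-$\epsilon$-lc by Definition~\ref{non_klt}(4)), combined with the non-klt condition transferred from $y_i$ via negativity, contradicts Proposition~\ref{4.2}, exactly as in Step~3 of Lemma~\ref{4.7}.

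The main obstacle is handling the generalised-pair structure through the non-klt centre adjunction of Definition~\ref{non_klt}, which is stated for ordinary pairs: one must pass to a common log resolution where $M$ is nef Cartier, absorb $M$ into the boundary of the non-generalised adjunction construction, and verify that the resulting discriminant $\Theta_{F_i}$ still lies in a DCC set depending only on $d,p,\Phi$. A secondary subtlety is securing uniform $\epsilon$-lc singularities on $X_i'$ needed for the bounded-$m_i$ step; since the proposition is for generalised klt (not necessarily exceptional) this requires a separate reduction, with non-exceptional sub-cases handled by complements via Proposition~\ref{6.11} in lower dimension and the remaining exceptional sub-case falling under Lemma~\ref{7.2}. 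In the borderline case where the dimension-reduction forces $\kappa_\sigma(K_{\bar F_i})=0$, one concludes instead via Proposition~\ref{lif_sec} and the $\kappa_\sigma$-dichotomy of Proposition~\ref{4.11} Steps~3--4.
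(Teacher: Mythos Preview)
Your approach diverges substantially from the paper's, and as written it has a genuine gap. The contradiction mechanism borrowed from Lemma~\ref{4.7}/Proposition~\ref{4.11} works because one feeds Proposition~\ref{4.2} a divisor $\simR \tfrac{cn_i}{m_i}M_{\bar F_i}$ whose coefficients tend to zero, and this happens precisely because $m_i/n_i\to\infty$. In your setup the unbounded volume forces $n_i$ to be bounded (you yourself note $n_i=1$ eventually), and you also take $m_i$ bounded; hence $\tfrac{n_i+1}{m_i}$ stays bounded away from zero and there is no small parameter at all. Adding a divisor with coefficients bounded away from zero to an $\epsilon$-lc pair in a bounded family can certainly destroy kltness, so no contradiction with Proposition~\ref{4.2} arises. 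The ``non-klt condition transferred from $y_i$'' does not repair this. Separately, the non-klt-centre adjunction of Definition~\ref{non_klt} is developed only for ordinary pairs, so absorbing $M$ into it while keeping $\Theta_{F_i}$ in a controlled DCC set is a real piece of work you have not carried out.

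The paper's proof is far shorter and rests on two observations you did not use. First, since $K_{X'}+B'+M'\simR 0$, the only $P'\geq 0$ with $P'\simR -(K_{X'}+B'+M')$ is $P'=0$, and $(X',B'+M')$ is generalised klt by hypothesis; so the pair is automatically \emph{exceptional}, and no case-splitting via Proposition~\ref{6.11} is needed. Second, since $pM$ is b-Cartier, nef and big, Lemma~\ref{2.46} gives $K_{X'}+3dpM'$ big, hence $\mathrm{vol}(-K_{X'})\leq \mathrm{vol}(3dpM')$. If $\mathrm{vol}(-K_{X_i'})\to\infty$ then $\mathrm{vol}(M_i')\to\infty$, so one can choose rationals $\delta_i\to 0$ with $\mathrm{vol}(\delta_i M_i')>(2d)^d$. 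This produces, for general $x,y$, a divisor $0\leq \Delta_i\simR -(K_{X_i'}+B_i'+(1-\delta_i)M_i')$ with $(X_i',B_i'+\Delta_i+(1-\delta_i)M_i')$ not generalised klt; that is, $(X_i',B_i'+(1-\delta_i)M_i')$ is non-exceptional with $1-\delta_i\to 1$, contradicting the exceptional-threshold bound of Lemma~\ref{7.7}.
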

\begin{proof}
    We firstly note that it is clear that $X'$ is Fano type since $M'$ is big. Also it is clear that it is exceptional. By taking small $\Q$-factorization, we can assume $X$ is $\Q$-factorial. (note that $\Q$-factorization preserves $K_{X'}+B'+M'\simR 0$, but may not preserve exceptionalness). Now assume $vol(-K_{X_i'})\rightarrow \infty$. By boundedness of extremal ray, we get $K_{X_i}+3dpM_i$ is big, hence $vol(-K_{X_i'})\leq vol(3dpM_i')$, which means $vol(M_i)\rightarrow \infty$.  Then there exist $\delta_i\rightarrow 0$ rationals such that $vol(-\delta_i M_i)>2d^d$. Hence we get $vol(-(K_{X_i'}+B'+(1-\delta_i)M_i))>(2d)^d)$, hence for general $x,y$ there exist $0\leq \Delta_i \simR -(K_{X_i'}+B'+(1-\delta_i)M_i')$ such that $(X_i',\Delta_i)$ is klt but not lc at $x$ and not lc at $y$. In particular, $(X_i',B_i+(1-\delta_i)M_i')$ is not exceptional. However, $1-\delta_i\rightarrow 1$, this contradicts \ref{7.7}.
\end{proof}
Now we will prove the following lemma which is a requirement for \ref{7.13}
\begin{lemma}\label{7.11}[Boundedness on lc threshold]
    Let $d,p,l\in \N$ and $\Phi\subset [0,1]$ a fixed DCC set. There exist $t(d,p,l,\Phi)>0$ such that if $(X',B'+M')$ exceptional of dimension d, $B\in \Phi$, pM big and b-Cartier, $-(K_{X'}+B'+M')$ nef and $X'$ fano type and $\Q$-factorial. then $\forall L'\in |-lK_{X'}|$, $(X',tL')$ is klt.
\end{lemma}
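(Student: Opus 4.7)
The plan is by contradiction. Suppose there is a sequence of exceptional generalised pairs $(X_i', B_i' + M_i')$ satisfying the hypotheses and divisors $L_i' \in |-l K_{X_i'}|$ for which the log canonical threshold of $L_i'$ on $(X_i', 0)$, call it $s_i$, tends to $0$. Pick a prime divisor $D_i$ over $X_i'$ realising the threshold, so $a(D_i, X_i', s_i L_i') = 0$. Since $X_i'$ is fano type, the nef divisor $-(K_{X_i'} + B_i' + M_i')$ is semi-ample, so it admits an effective representative $P_i^{(0)}$, and Lemma \ref{7.2} gives $a(D_i, X_i', B_i' + P_i^{(0)} + M_i') \geq \epsilon$ uniformly. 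By monotonicity of log discrepancies this forces $a(D_i, X_i', 0) \geq \epsilon$, so $\mu_{D_i}(L_i') = a(D_i, X_i', 0)/s_i \to \infty$.

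The strategy is to find an effective $P_i^* \simR -(K_{X_i'} + B_i' + \alpha M_i')$ with $P_i^* \geq s_i L_i'$ for a fixed $\alpha \in (\beta, 1)$, and then to apply Lemma \ref{7.2} to $(X_i', B_i' + \alpha M_i')$: this pair is still exceptional by Lemma \ref{7.7}, and the monotonicity inequality $a(D_i, X_i', B_i' + P_i^* + \alpha M_i') \leq a(D_i, X_i', s_i L_i') = 0$ will contradict the uniform generalised $\epsilon$-lc bound. Working with $\alpha < 1$ is essential because $-(K_{X_i'} + B_i' + \alpha M_i') = -(K_{X_i'} + B_i' + M_i') + (1-\alpha) M_i'$ becomes nef and big (the bigness coming from $(1-\alpha) M_i'$), providing the room needed for the construction.

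To build $P_i^*$, write $P_i^* = s_i L_i' + R_i'$, so the residual class is
\[
R_i' \simR -(1-s_i l)(K_{X_i'} + B_i' + \alpha M_i') - s_i l (B_i' + \alpha M_i').
\]
The first summand is a positive multiple of the nef-and-big, hence semi-ample, class $-(K_{X_i'} + B_i' + \alpha M_i')$, and the second summand is effectively representable (the nef $\alpha M_i'$ is semi-ample on the fano-type $X_i'$, and $B_i'$ is already effective). For $s_i$ small the bigness of the first summand uniformly dominates the subtractive term, so an effective $R_i' \geq 0$ exists, and $P_i^* := s_i L_i' + R_i'$ then triggers the contradiction described above.

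The main technical obstacle is the construction of $R_i'$ in the third step: one must ensure the effective representative of $(1-s_i l)\bigl[-(K_{X_i'} + B_i' + \alpha M_i')\bigr]$ can be chosen uniformly in $i$ so as to dominate $s_i l (B_i' + \alpha M_i'^{(\mathrm{eff})})$. This should follow from uniform bounds on the numerical invariants of $-(K_{X_i'} + B_i' + \alpha M_i')$ (notably the volume bound available via Lemma \ref{7.9}) combined with bigness, together with uniform base-point-freeness coming from $p M$ being b-Cartier and the coefficients of $B_i'$ lying in a fixed DCC set; some care is needed to make the estimate uniform across the sequence.
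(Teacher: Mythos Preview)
Your overall strategy matches the paper's exactly: contradiction with $s_i\to 0$, use Lemma~\ref{7.7} to pass to $(X_i',B_i'+\alpha M_i')$ which is still exceptional, then produce an effective $P_i^*\simR -(K_{X_i'}+B_i'+\alpha M_i')$ with $P_i^*\geq s_iL_i'$ to contradict exceptionality. The detour through Lemma~\ref{7.2} in your first paragraph (showing $\mu_{D_i}L_i'\to\infty$) is never used, and invoking Lemma~\ref{7.2} at the end is harmless but overkill---bare exceptionality already gives the contradiction.

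The gap is in your final paragraph. You correctly identify that the existence of $R_i'\geq 0$ is the crux, but your proposed fix (volume bounds via Lemma~\ref{7.9}, uniform base-point-freeness) is the wrong toolkit and would not close the argument cleanly. Also note your claim that $-(K_{X_i'}+B_i'+\alpha M_i')$ is ``nef and big, hence semi-ample'' is not justified: $M_i'$ is only the pushdown of the nef $M_i$ and need not itself be nef on $X_i'$, so this class is big but not obviously nef. Fortunately only bigness is needed.

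The missing ingredient is Lemma~\ref{2.46}. Since $pM_i$ is nef, big, and Cartier on the model $X_i$, that lemma gives $K_{X_i}+3dpM_i$ big, hence its pushforward $K_{X_i'}+3dpM_i'$ is big. As $L_i'\sim -lK_{X_i'}$, this says $3dpM_i'-\tfrac{1}{l}L_i'$ is big. Then for $s_i\leq \tfrac{1-\alpha}{3dpl}$ one has
\[
(1-\alpha)M_i'-s_iL_i' \;=\; s_il\Bigl(3dpM_i'-\tfrac{1}{l}L_i'\Bigr) + \bigl((1-\alpha)-3dpl\,s_i\bigr)M_i',
\]
which is big plus pseudo-effective, hence big. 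Adding the nef $-(K_{X_i'}+B_i'+M_i')$ shows $-(K_{X_i'}+B_i'+\alpha M_i')-s_iL_i'$ is big, so your $R_i'$ exists. This is exactly how the paper argues, and the bound $s_i\leq \tfrac{1-\alpha}{3dpl}$ is manifestly uniform in $i$.
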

\begin{proof}
    Assume not, then there exist $t_i\rightarrow 0$ and $(X_i',B_i'+M_i'),L_i'$ as in the lemma with $(X,t_iL_i')$ not klt. By \ref{7.7}, there is a $\beta<1$ rational, such that $(X_i',B_i'+\beta M_i')$ exceptional. Let $s_i := lct(L_i',X_i',B_i'+\beta M_i')$, by assumption, $s_i\rightarrow 0$. Also again, we have $K_{X_i'}+3dp M_i'$ is big, hence $-\frac{1}{l}L_i' +3dp M_i$ is big. Now we get $-(K_{X_i'}+B_i'+s_i L_i+\beta M_i)$ is big by above. This means we can find $-(K_{X_i'}+B_i'+s_i L_i+(1-\beta)M_i)\simR P_i\geq 0$. This contradicts the exceptionalness of $(X_i',B_i'+\beta M_i')$.
\end{proof}
\subsection{Proof of Theorem \ref{bou_exc}}
Firstly we need to show that the set of exceptional weak fano variety is bounded.
\begin{lemma}\label{7.5}
	Assuming theorem \ref{com_gen} for d-1 and theorem \ref{com_rel} for dimension d, then the set of exceptional weak fano variety form a bounded family.
\end{lemma}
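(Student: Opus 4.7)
The plan is to apply Proposition \ref{7.13} to the family of exceptional weak Fano varieties of dimension $d$. Let $X$ be such a variety; I will verify each of the inputs required by Proposition \ref{7.13} and then invoke it.

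First, Lemma \ref{7.2} applied to the trivial generalised pair structure on $X$, with $B'=0$, $M'=0$, $\Phi=\{0\}$ and $p=1$, will provide a uniform $\epsilon=\epsilon(d)>0$ such that $(X,P)$ is $\epsilon$-lc for every $0\le P\simR -K_X$; in particular $X$ itself is $\epsilon$-lc. This handles the singularity input and confirms that $X$ is the klt weak Fano required by Proposition \ref{7.13}.

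Second, to bound $vol(-K_X)$ I will pass to the ample model $X\to X''$, a crepant contraction onto a Fano variety $X''$ that preserves exceptionality, $\epsilon$-lc-ness, and the anticanonical volume. I will then view $X''$ as the underlying variety of a generalised klt pair with $B'=0$ and nef-big moduli part representing $-K_{X''}$, realising on a sufficiently high log resolution the b-$\Q$-Cartier structure needed by Proposition \ref{7.9} via a klt complement obtained from a general section of $|{-nK_{X''}}|$ (klt by exceptionality). Proposition \ref{7.9} will then give $vol(-K_X)=vol(-K_{X''})\le v(d)$.

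Third, with the $\epsilon$-lc condition and the bounded volume in hand, Proposition \ref{4.9} applies on $X''$ once I exhibit a $\delta$-complement; such a complement is constructed from a general member of a suitable anticanonical linear system on $X''$, its klt-ness being automatic from exceptionality. This gives a bounded $m$ such that $|{-mK_{X''}}|$ defines a birational map; pulling back along $X\to X''$ yields the same bound on $X$, and a general element of $|{-mK_X}|$ is a klt $m$-complement of $X$ by exceptionality. Lemma \ref{7.11} supplies the sequence $t_l>0$ making every $(X,t_lL)$ klt for $L\in|{-lK_X}|$. Proposition \ref{7.13} now yields the desired boundedness.

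The main obstacle will be the volume step: arranging the generalised klt pair on $X''$ so that the b-Cartier hypothesis of Proposition \ref{7.9} is satisfied uniformly requires careful choice of resolution and is where the assumed complement theorems (\ref{com_gen} in dimension $d-1$ and \ref{com_rel} in dimension $d$) enter the argument, through producing and controlling the index of the auxiliary complement that in turn controls the b-Cartier index of the moduli divisor. Once the volume is bounded, the remaining verifications are essentially routine applications of the tools assembled earlier in the section.
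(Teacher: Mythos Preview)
Your plan has a genuine circularity that the paper's argument is designed to avoid. Each of the three key inputs to Proposition \ref{7.13} that you propose to verify---bounded volume via Proposition \ref{7.9}, effective birationality via Proposition \ref{4.9}, and the lct sequence via Lemma \ref{7.11}---requires, in your setup, a bounded $n$-complement of $K_X$ (equivalently, a uniform bound on the Cartier index of $K_{X''}$). Concretely: Proposition \ref{7.9} and Lemma \ref{7.11} both require $pM$ b-Cartier and \emph{big} for a fixed $p$; with $B'=0$ this forces you to take $M'\sim -K_{X''}$, and b-Cartier-ness of $pM$ then amounts to $pK_{X''}$ being Cartier, which is not known. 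Likewise Proposition \ref{4.9} requires a boundary $B\ge\delta$ with $K_X+B\sim_{\Q}0$, i.e.\ an $n$-complement with bounded $n$. But the only complement results at your disposal are Theorem \ref{com_gen} in dimension $d-1$ and Theorem \ref{com_rel} in dimension $d$; neither produces a complement for a $d$-dimensional Fano with no nontrivial fibration. The sentence in your last paragraph acknowledging that this is ``the main obstacle'' is correct, but the mechanism you suggest---that the assumed complement theorems control the index of an auxiliary complement---does not go through, because there is no fibration or lower-dimensional reduction in sight at that stage.

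The paper breaks this circularity by a dichotomy on the minimal log discrepancy $\epsilon_i$ of $X_i$. If $\limsup\epsilon_i=1$, Proposition \ref{4.11} (effective birationality for $\epsilon$-lc Fanos with $\epsilon$ close to $1$, which needs no complement input) gives a bounded $m$ with $|-mK_{X_i}|$ birational, hence a klt $m$-complement by exceptionality, and one finishes with \ref{19.1.3}. If $\limsup\epsilon_i<1$, one extracts a divisor $D_i$ with $a(D_i,X_i,0)=\epsilon_i$ bounded away from $1$, runs a directed $-D_i$ MMP to a Mori fibre space, and then uses Theorem \ref{wea_bab} on general fibres together with Lemma \ref{6.5} (which only needs the assumed lower-dimensional complement results) to manufacture the bounded complement. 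This extraction-plus-MFS step is the missing idea in your proposal.
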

\begin{proof}
	Step 0: Assume the lemma is false, i.e. there is $X_i$ are exceptional weak fano that no subsequence is bounded. By standard procedure,as in 2.1.6, we can reduce to the case when $X_i$ are fano varieties. Let $\epsilon_i$ to be the minimal log discrepancy of $X_i$ and let $\epsilon=limsup \epsilon_i$. If $\epsilon=1$, then by \ref{4.11} and potential passing to a subsequence, there exist a fixed $m\in \N$ such that $|-mK_{X_i}|$ defines a birational map. Hence picking $0\leq C_i \in |-mK_{X_i}|$, we have $(X_i,\frac{1}{m}C_i)$ klt as $X_i$ exceptional. Hence applying \ref{19.1.3}, we get $X_i$ bounded. Hence it suffice to prove the case when $\epsilon<1$.\\
	Step 1 (Direct MMP): Note by using $X_i$ exceptional, and \ref{19.1.3}, it suffices to show that $X_i$ has a n-complement for some bounded $n$. We wlog $\epsilon_i\rightarrow \epsilon$ by passing to subsequence. Extract $D_i$ via $X_i'\xrightarrow{\phi_i} X_i$ with $a(D_i,X_i,0)=\epsilon_i$, i.e. we have $K_{X_i'}+(1-\epsilon_i)D_i =\phi_i^*(K_{X_i})$. Now we claim that there is a $-D_i$ MMP and which ends in $X_i''\rightarrow T_i$ Mori fiber space such that  there is some $t_i\geq 0$ with $K_{X_i''}+(1-\epsilon_i)D_i''+t_i D_i''$ globally anti-nef and $\equiv 0/T_i$ and all extremal ray in the MMP intersects $K_{X_i''}+(1-\epsilon_i)D_i''+t_i D_i''$ non-negatively: Indeed let $s_i$ be maximum such that $-(K_{X_i'}+(1-\epsilon_i)D_i'+s_i D_i')$ is nef) and let $R$ be the extremal ray corresponding to it. hence we get from construction $(K_{X_i'}+(1-\epsilon_i)D_i'+s D_i')R=0$ and $D_i\cdot R<0$, which means it is a $_-D_i$ MMP. Now we contract $R$. We stop if we hit a MFS and set $t_i=s_i$, else we continue decreasing $s$ as above. Also note cleary from construction, we have $-(K_{X_i'}+(1-\epsilon_i)D_i'+t_i D_i')R\geq -(s_i-t)D_i' \cdot R\geq 0$. Also note by negativity we have $(X_i'', (1-\epsilon_i)D_i''+t_i D_i'')$ is also exceptional.\\
	Step 2: Define $e_i := (1-\epsilon_i)+t_i \geq (1-\epsilon_i)$. Hence we have $-(K_{X_i''}+e_i D_i'')$ is semi-ample, so there exist $0\leq P_i''\simQ -(K_{X_i''}+e_i D_i'')$. Let $K_{X_i}+P_i$ be the crepant pullback of $K_{X_i''}+e_i D_i''+P_i''$. Notice we have $P_i\geq 0$ by negativity and all extremal rays in the MMP are $K_{X_i''}+e_i D_i''$ non-negative. Now by \ref{7.2}, we have $\exists \delta>0$ independent of $i$ such that $(X_i,P_i)$ is $\delta$-lc. Hence $(X_i'',e_i D_i'')$ also $\delta$-lc. Now apply \ref{wea_bab} to the general fibers of $X_i''\rightarrow T_i$ (since we have $K_{X_i''}+e_i D_i''\simQ 0$ on general fibres and $e_i$ is bounded away from 0), which, in term by \cite{fa} 2.22, implies $e_i$ lies in some finite set. Now if $\dim T_i>0$, then by \ref{6.5}, $K_{X_i''}+e_i D_i''$ has a n-complement, say $B_i$. If $\dim T_i=0$, then $X_i''$ now form a bounded family, hence the Cartier index of $K_{X_i''}+e_i D_i''$ is bounded, which implies it also has a n-complement. Either way, by pulling complements back, we see that $X_i$ also has a n-complements $B_i$, say. This implies $(X_i,B_i)$ is klt as $X_i$, exceptional. Hence again we have $X_i$ bounded which is a contradiction.
\end{proof}
\begin{proof}[Proof of Theorem \ref{bou_exc}]
	The idea is the same as the proof of \ref{7.5}. We first construct compliments and we are done if it is klt. If it is not, we will try to modify the complement so eventually we get klt n-complements and we can apply \ref{19.1.3}. \\
	Step 0: Firstly note it is clear it suffices to prove $X'$ form a bounded family and apply \ref{bnd_fam}. Hence it suffices to show $X'$ has a klt a-complement for some bounded a. Firstly we show we can construct some lc m-complements. Consider applying 2.1.6, and applying \ref{4.9}, we can assume $|-mK_{X'}|$ (m bounded) defines a birational map and take the m-complement, $C':= \frac{1}{m}(A'+R')$ using the same notation as in the proof of \ref{4.9}. Hence we assume $(X',C')$ is lc but not klt. \\
	Step 1: We construct $\Delta':= \frac{B'}{2}+\frac{1}{2m}R'$ and $N := \frac{1}{2}M+\frac{1}{2m}A$ is nef. Hence we see that $(X',\Delta'+N')$ is generalised lc and $-(K_X'+\Delta'+N')$ is nef. If $(X',\Delta'+N')$ is not exceptional, then we can find a $l$-complement $\Delta'^+ := \Delta'+G'$(l bounded). By letting $B'^+ := B'+2G'$, we can show that $(X',B'^+ +M')$ is klt and exceptional and $(X'+\frac{1}{2}(B'^++\Delta'^+)+\frac{1}{2}(M'+N')$ is generalised klt and exceptional. Hence we can assume $K_{X'}+B'+M'\simQ 0$ and generalised klt and exceptional. Alternative method can be used to make sure $K_{X'}+B'+M'\simQ 0$ and generalised klt and exceptional when $(X',\Delta'+N')$ is exceptional. \\
	Step 2:  we are ready to apply \ref{7.13}. Now since $K_{X'}+B'+M'\simQ 0$, we can replace $X'$ by $\bar{X'}$ (we obtain $\bar{X'}$ by MMP on $-K_{X'}$ and note all assumption are preserved.) Hence we get there exist bounded $m$ such that $|-mK_{X'}|$ and $K_{X'}$ has a m-complement. Also we can apply \ref{7.9} and \ref{7.11}, and then apply \ref{7.13} to get all these $X'$ lie in a bounded family. Hence Cartier index of $K_{X'}$ is bounded, hence there exist a bounded $s$ such that $|-sK_{X'}|$ is base point free, which implies $K_{X'}$ has a klt s-complement, hence we are done.
\end{proof}
\section{Final Induction Step}
Finally we will show theorem \ref{com_rel} can be showed inductively.
\begin{proof}[Proof of : theorem \ref{com_gen} and \ref{com_rel} in dimension $d-1$ $\implies$ theorem \ref{com_rel} in dimension $d$]
	We omit the proof here since the proof are essentially the same, except some minor adjustment.
\end{proof}

\newpage
\section{Extra Preliminaries for Proof of BAB}
\subsection{Sequences of Blow Up}\label{blowup}
We will introduce some notation on blowups and basic properties that will be useful for the proof of theorem \ref{1.6}. Let $X$ be a smooth variety and let $X_p\rightarrow X_{p-1}\rightarrow ...\rightarrow X_1\rightarrow X_0 := X$ be a sequence of smooth, i.e. each $X_{i+1}\rightarrow X_i$ be a blowup along a smooth subvariety $C_i$ on $X_i$ of codimension $\geq 2$. We call $p$ the length of blowup and the exceptional divisor of $X_{i+1}\rightarrow X_i$ is called $E_i$.\\
Let $\Gamma$ be a reduced divisor where $(X,\Gamma)$ is log smooth. We say such a blow up sequence as above is toroidal with respect to $(X,\Gamma)$ if for each $i$, there centre $C_i$ is a stratum of $K_{X_i}+\Gamma_i$, the pullback of $K_X+\Gamma$. Note this is equivalent to saying all $E_i$ are lc place of $(X,\Gamma)$. \\
Let $T$ be a divisor over $X$. Assume for each $i$, $C_i$ is the centre of $T$ on $X$. Then we call it a sequence of centred blowup associated to $T$. By \cite{km}, 2.45, after finitely many steps $T$ is obtained on $X_p$, i.e. $T$ is the exceptional divisor $E_p$. We say $T$ is obtained by a centred blowup of length $p$. \\
Now suppose further we have we sequence of centred blowup for $T$ toroidal with respect to $(X,\Gamma)$ of length $p$ and $E_p =T$ (i.e. $T$ is a lc place of $(X,\Gamma)$. Then we have $\mu_T \phi^*(\Gamma) \geq p+1$, where $\phi :X_p\rightarrow X$: Indeed we clearly have, by construction, $C_i\subset E_i$ for each $i$. For $0\leq i< p$, we have $C_i$ is a codimension $\geq$ 2 lc centre of $(X_i,\Gamma_i)$. Also by construction, when $i>0$, we have $C_i\subset E_i$. Hence we get $\mu_{E_{i+1}}\phi_{i+1}^* \Gamma \geq \mu_{E_i}\phi_i^* \Gamma+1\geq i+2$ by induction.

\section{Preparation for the Proof of BAB}
Our goal in this section is to show theorem \ref{1.6} implies theorem \ref{1.4}. The basic idea is to run MMP and end with a Mori fiber space and either apply induction or we have to deal with the picard number 1 case separately, which is the following lemma.
\begin{lemma}\label{3.1}
	Let $d\in \N$ and $\epsilon>0$. Assuming Theorem \ref{1.6} in dimension $d$ and theorem \ref{BAB} in dimension $d-1$, there exists $v(d,\epsilon)>0$ such that if $X$ is $\epsilon$-lc Fano, $\rho(X)=1$ and $0\leq L\simR -K_X$, then $L\leq v$.
\end{lemma}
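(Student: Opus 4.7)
The plan is to argue by contradiction and use adjunction on an ample prime component of $L$ to reduce to BAB in dimension $d-1$, then to leverage Theorem \ref{1.6} to derive a contradiction. If the lemma fails, a subsequence yields $\epsilon$-lc Fano varieties $X_i$ of dimension $d$ with $\rho(X_i)=1$ and effective $\R$-divisors $L_i\simR -K_{X_i}$ whose largest coefficient $c_i\to\infty$. After small $\Q$-factorial modifications I assume each $X_i$ is $\Q$-factorial, and write $L_i=c_iS_i+R_i$ with $S_i$ the prime divisor realising $c_i$. Because $\rho(X_i)=1$ and $X_i$ is Fano, the effective and ample cones coincide, so $S_i$ is ample and $-K_{X_i}\equiv\lambda_iS_i$ with $\lambda_i\ge c_i\to\infty$; in particular $-(K_{X_i}+S_i)\equiv(\lambda_i-1)S_i$ is ample for $i$ large.

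The first main step is to apply divisorial adjunction along $S_i$: define the different $B_{S_i^\nu}$ on the normalisation $S_i^\nu$ by $K_{S_i^\nu}+B_{S_i^\nu}=(K_{X_i}+S_i)|_{S_i^\nu}$, so that $-(K_{S_i^\nu}+B_{S_i^\nu})\simR(\lambda_i-1)\,S_i|_{S_i^\nu}$ is ample on $S_i^\nu$ and $(S_i^\nu,B_{S_i^\nu})$ is a log Fano pair of dimension $d-1$. The key substep is to check that $(S_i^\nu,B_{S_i^\nu})$ is $\epsilon'$-lc for some fixed $\epsilon'>0$ depending only on $(d,\epsilon)$. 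Because $\lambda_i\gg 1$ provides room for a plt perturbation, one can pass from $(X_i,S_i)$ to a klt log Fano pair $(X_i,(1-\delta)S_i)$ for a fixed $\delta=\delta(\epsilon)>0$, and an inversion-of-adjunction argument then transfers the $\epsilon$-lc property of $X_i$ into the desired $\epsilon'$-lc bound on the different. By BAB in dimension $d-1$, the $S_i^\nu$ therefore lie in a bounded family, and $(S_i^\nu,\mathrm{Supp}(B_{S_i^\nu}))$ becomes a log bounded family of couples with uniformly bounded Cartier indices.

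The second main step is to use this boundedness together with Theorem \ref{1.6} to bound the coefficients of $L_i$. Either by applying Theorem \ref{1.6} in the bounded family of pairs $(S_i^\nu,B_{S_i^\nu})$ (available as a direct consequence of BAB in $d-1$ together with the boundedness arguments of Section \ref{bnd_fam}), or by extracting from BAB in $d-1$ an instance of Theorem \ref{1.4} in dimension $d-1$, I obtain a uniform $t>0$ such that $(S_i^\nu,B_{S_i^\nu}+tD_i)$ is lc for every effective $D_i\simR-(K_{S_i^\nu}+B_{S_i^\nu})$. Since $L_i|_{S_i^\nu}\simR\tfrac{\lambda_i}{\lambda_i-1}\bigl(-(K_{S_i^\nu}+B_{S_i^\nu})\bigr)$ and the scaling factor is bounded, a rescaling of $L_i|_{S_i^\nu}$ lies in this linear system and its coefficients are therefore uniformly bounded. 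Combined with a uniform positive lower bound on some coefficient of $S_i|_{S_i^\nu}$ (coming from the bounded Cartier index on the log bounded family), the identity $L_i|_{S_i^\nu}=c_iS_i|_{S_i^\nu}+R_i|_{S_i^\nu}$ forces $c_i$ to be bounded, a contradiction.

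The hardest part will be the uniform $\epsilon'$-lc control of $(S_i^\nu,B_{S_i^\nu})$, since $(X_i,S_i)$ need not be plt; the resolution should exploit that $\lambda_i\to\infty$ gives arbitrarily much room for a plt perturbation without violating the $\epsilon$-lc hypothesis, together with tight tracking of the different under perturbation. A secondary subtlety is producing the positive lower bound on a coefficient of $S_i|_{S_i^\nu}$: although the Cartier index of $S_i$ on $X_i$ is a priori unbounded, the pair $(S_i^\nu,B_{S_i^\nu})$ lies in a bounded family with bounded Cartier index, so an auxiliary argument enlarging the boundary to $(S_i^\nu,\mathrm{Supp}(B_{S_i^\nu}+S_i|_{S_i^\nu}))$ and invoking Cartier-index control in Section \ref{bnd_fam} should supply the needed lower bound.
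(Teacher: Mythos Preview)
Your adjunction-to-$S_i$ approach is genuinely different from the paper's, but it has gaps I do not see how to close.

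The critical one is the $\epsilon'$-lc control on $(S_i^\nu,B_{S_i^\nu})$. There is no reason $(X_i,S_i)$ is plt, or even lc, near $S_i$: the hypothesis is only that $(X_i,0)$ is $\epsilon$-lc, and the lct of the prime divisor $S_i$ could a priori be arbitrarily small. Your ``plt perturbation'' is circular --- turning ``$S_i$ has small numerical class because $\lambda_i\to\infty$'' into ``$(X_i,(1-\delta)S_i)$ is klt for a fixed $\delta$'' is exactly an lct bound of the type the lemma is meant to establish. Even granting plt, the coefficients of the different lie in a hyperstandard set accumulating to $1$, so without an independent bound on local indices along $S_i$ you cannot get a uniform $\epsilon'$.

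A second gap sits in your final step. In the essential case $L_i=c_iS_i$ (to which the paper also reduces), the restriction $L_i|_{S_i^\nu}$ is only a divisor \emph{class}, not an effective cycle; there is nothing whose coefficients you can bound and then compare to $c_i$. Passing to another member of the class erases the information about $c_i$ entirely, and $R_i=0$ gives you nothing to restrict.

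The paper's argument is completely different and never restricts to $S_i$. It uses complements, effective birationality, and the anticanonical volume bound (this last is where BAB in dimension $d-1$ enters, via Proposition~\ref{volumn_bnd}) to place $X$ birationally in a bounded log smooth family $(\bar X,\Sigma_{\bar X})$ through Lemma~\ref{4.4.3}; it then manufactures an $a\epsilon$-lc pair $(\bar X,\Delta)$ with $\Delta$ supported in $\Sigma_{\bar X}$, and finally invokes Theorem~\ref{1.6} \emph{in dimension $d$} on $(\bar X,\Delta)$ against the pushforward $M$ of the prime divisor $T$, yielding $\mathrm{lct}(\bar X,\Delta,M)\geq t$ and hence $u\leq 1/t$. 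Note that your proposal never uses the hypothesis Theorem~\ref{1.6} in dimension $d$, which is the actual engine of the argument here.
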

Given this we will show the main result here.
\begin{prop}
	Assuming Theorem \ref{1.6} in dimension $d$ and theorem \ref{BAB} in dimension $d-1$, then theorem \ref{1.4} hold in dimension d.
\end{prop}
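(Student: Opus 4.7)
The plan is to argue by contradiction using a Mori fibre space reduction, then split into two cases according to the dimension of the base.

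Suppose Theorem \ref{1.4} fails. We obtain a sequence of $\epsilon$-lc projective pairs $(X_i, B_i)$ of dimension $d$ with $A_i := -(K_{X_i}+B_i)$ nef and big, together with $0 \le N_i \simR A_i$ and $t_i \searrow 0$ such that $(X_i, B_i + t_i N_i)$ is not klt. Since $(X_i, B_i)$ is klt weak log Fano, each $X_i$ is of Fano type; passing to a small $\Q$-factorialization we may assume $X_i$ is $\Q$-factorial. Now run a $K_{X_i}$-MMP: termination is guaranteed by Fano-type, and since $-K_{X_i} = A_i + B_i$ is big, the outcome must be a Mori fibre space $\pi_i \colon X_i' \to Z_i$. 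Because every $K_{X_i}$-negative contracted ray is $A_i$-nonnegative (as $A_i$ is nef), the pushforward $A_i'$ remains nef and $(X_i', B_i')$ stays $\epsilon$-lc; one then verifies that any uniform lower bound on $\operatorname{lct}(X_i', B_i', |A_i'|_{\R})$ transfers back to the same bound for the original $(X_i, B_i, A_i)$ via pullback.

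If $\dim Z_i > 0$ for infinitely many $i$, a general fibre $F_i$ is an $\epsilon$-lc weak Fano of dimension $< d$. By Theorem \ref{BAB} in dimension $d-1$ the $F_i$ form a bounded family, so on this family we find a very ample $H_i$ of bounded degree with $H_i - B_i'|_{F_i}$ ample; Theorem \ref{1.6} (available in all dimensions $\le d$ via the overall inductive setup) then gives a uniform positive lower bound on $\operatorname{lct}$ fibrewise, and an adjunction/connectedness argument along $\pi_i$ promotes this to a uniform bound for $(X_i', B_i', |A_i'|_{\R})$, contradicting $t_i \searrow 0$. If $\dim Z_i = 0$, then $\rho(X_i') = 1$ and $X_i'$ is an $\epsilon$-lc Fano of Picard number one. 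Apply Lemma \ref{3.1} to a representative $L_i' \in |-K_{X_i'}|_{\R}$: we get a uniform bound $L_i' \le v$, hence bounded volume of $-K_{X_i'}$. Combined with Proposition \ref{4.11} (giving a uniform $m$ with $|-mK_{X_i'}|$ birational) and the boundedness criteria of \S\ref{bnd_fam}, the varieties $X_i'$ fall into a bounded family. A very ample $H_i'$ of bounded degree with $H_i' - B_i'$ ample then exists on $X_i'$, and Theorem \ref{1.6} supplies a uniform lct lower bound, once again contradicting $t_i \searrow 0$.

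The principal obstacle is the MMP step: a $K_X$-MMP does not automatically preserve the non-klt condition of the distinct pair $(X_i, B_i + t_i N_i)$, so one must check carefully---perhaps by a suitable perturbation of the MMP, or by tracking strict transforms/images of $N_i$ and exploiting that nef $A_i$ is numerically zero on every contracted ray---that the smallness of lct truly transfers to the Mori fibre space. A secondary difficulty in the positive-dimensional-base case is assembling a fibrewise lct bound into a global bound on $X_i'$; this requires a fibre-space adjunction together with control of the non-klt locus over $Z_i$. In the Picard-rank-one case, one also needs to compare $\operatorname{lct}(X_i', B_i', |A_i'|_{\R})$ with $\operatorname{lct}(X_i', B_i', |H_i'|_{\R})$, which is routine once $\rho = 1$ makes $A_i'$ and $H_i'$ proportional.
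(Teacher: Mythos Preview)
Your two self-identified obstacles are real gaps, and the paper's argument is designed precisely to avoid them. The key idea you are missing is: \emph{do not run a $K_X$-MMP}. Instead, first pick $\epsilon'\in(0,\epsilon)$, choose $0\le L\simR -(K_X+B)$, let $s$ be the $\epsilon'$-lc threshold of $L$ with respect to $(X,B)$, and fix a single prime divisor $T$ over $X$ with $a(T,X,B+sL)=\epsilon'$. Since $(X,B)$ is $\epsilon$-lc one has $\mu_T(\phi^*L)\ge (\epsilon-\epsilon')/s$, so bounding $s$ from below is equivalent to bounding $\mu_T(\phi^*L)$ from above. Now extract $T$ via $\phi:Y\to X$ and run an MMP on $-T$ (note $Y$ is Fano type). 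This MMP cannot contract $T$, and it terminates with a Mori fibre space $Y'\to Z$ in which $T$ is ample over $Z$, hence horizontal. With $T$ already realised as a horizontal divisor on the Mori fibre space, both of your difficulties evaporate: there is nothing to ``transfer back'' through the MMP because the only quantity you need, $\mu_T L_{Y'}$, equals $\mu_T L_Y$; and there is no need to promote a fibrewise lct bound to a global one, because restricting to a general fibre $F$ sees the horizontal divisor $T$ directly.

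Concretely, in the case $\dim Z=0$ one applies Lemma~\ref{3.1} to $(1-s)L_{Y'}\simR -(K_{Y'}+B_{Y'}+sL_{Y'})$ on the $\epsilon'$-lc Picard-rank-one Fano $Y'$ and reads off the bound on $\mu_T L_{Y'}$ immediately --- no need to pass through boundedness of $Y'$ or invoke Theorem~\ref{1.6} again. In the case $\dim Z>0$, the restriction of $(1-s)L_{Y'}$ to a general fibre is governed by the lower-dimensional inductive hypothesis (BAB in dimension $\le d-1$), and since $T$ is horizontal this bounds $\mu_T L_{Y'}$. By contrast, in your $K_X$-MMP the place computing the bad log discrepancy need not be horizontal over $Z$, so a general fibre may be perfectly well-behaved while the global lct is still tiny; the ``adjunction/connectedness'' patch you suggest does not repair this.
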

\begin{proof}
	Fix $\epsilon'\in(0,\epsilon)$ and pick $0\leq L \simR -(K_X+B)$, let $s$ be largest such that $(X,B+sL)$ is $\epsilon'$-lc, hence it suffices to show $s$ is bounded away from 0. Choose $T$ such that $a(T,X,B+sL)=\epsilon'$. Replacing $X$ with small $\Q$-factorization, we assume $X$ is $\Q$-factorial. Let $\phi : Y\rightarrow X$ be the extraction of $T$ as in\ref{Q_fac}. Let $K_Y+B_Y := \phi^*(K_X+B)$ and $L_Y := \phi^* L$. By construction and $\epsilon$-lc of $(X,B)$ we get $\mu_T L_Y\geq \frac{\epsilon-\epsilon'}{s}$. Therefore it suffice to show $\mu_T L_Y$ is bounded above.\\ Now as $(Y,B_Y+sL_Y)$ is klt weak log fano, we run MMP on $-T$ and end with a Mori fiber space $Y'\rightarrow Z$(Note $T$ is not contracted). Since $-(K_Y+B_Y+sL_Y)$ is nef and big, we get $(Y',B_{Y'}+sL_{Y'})$ is also $\epsilon'$-lc. Also we have $-(K_{Y'}+B_{Y'}+sL_{Y'})\simR (1-s)L_Y'\geq 0$. If $\dim Z=0$, then $Y'$ is fano and $\rho(Y')=1$, hence by \ref{3.1}, we have $\mu_T(1-s)L_{Y'}$ is bounded above, which implies $\mu_T L_{Y'}$ is bounded away (we may assume $s\leq \frac{1}{2}$). If $\dim Z>0$, then by restricting to general fiber of $Y'\rightarrow Z$, and apply induction, we see coefficients of components of $(1-s)L_{Y'}$ horizontal over $Z$ is bounded above. In particular, again we have $\mu_T (1-s)L_{Y'}$ is bounded from above.
\end{proof}
\begin{proof}[Proof of \ref{3.1}]
	Let $T$ be a component of $L$, since $\rho(X)=1$, we have $L\equiv uT$ for some $u>0$, Hence it suffice to show $u$ is bounded from above since $\mu_T L\leq u$. we will hence assume $L =uT$, (note we only have $L\equiv -K_X$), which means $L$ is still ample.  Also we may assume $u\geq 1$ else the claim is trivial. Now by theorem \ref{com_gen}, $K_X$ has a $n(d)$-complement $K_X+\Omega$, and by theorem \ref{eff_bir}, we can also assume $|-nK_X|$ defines a birational map and $vol(-K_X)$ is bounded above.\\
	Now we apply lemma \ref{4.4.3} and we will use the same notation. Hence we get a bounded log smooth family $(\bar{X},\Sigma{\bar{X}}) \in \mathcal{P}$. If $X'\rightarrow X,\bar{X}$ be a common resolution and define $A_{X'},A_{\bar{X}},A$ similarly. Note we have $A_{X'}$ is the pullback of $A_{\bar{X}}$ and $\Sigma_{\bar{X}}$ contain the expcetional divisor of $\bar{X}\dashrightarrow X$ and birational transform of $\Omega := \frac{1}{m}(A+R)$ (since $A+R$ is a general element in $|-mK_X|$) by construction. Now let $K_{\bar{X}}+B_{\bar{X}} $ and $K_{\bar{X}}+\Omega_{\bar{X}}$ be crepant pullback of $K_X+B$ and $K_X+\Omega$ and we have $(\bar{X},B_{\bar{X}})$ is sub $\epsilon$-lc and $(\bar{X},\Omega_{\bar{X}})$ is sub-lc. Also $\Omega_{\bar{X}}\leq \Sigma_{\bar{X}}$, hence we get $a(T,\bar{X},\Sigma_{\bar{X}})\leq a(T,X,\Omega)\leq 1$.\\
	Step 2:  Now we bound certain coefficients we just defined. Let $\Omega_{\bar{X}}$ be the pushdown of $\Omega|_{X'}$ to $\bar{X}$. Since $Supp(A_{\bar{X}})\subset\Sigma_{\bar{X}}$ and $A_{\bar{X}}$ is big, there exist $l$ and very ample $H$ very ample depending only on $\mathcal{P}$, such that $lA_{\bar{X}}-H$ is big. Therefore $\bar{\Omega} H^{d-1} = \Omega|_{X'} H|_{X'}^{d-1} \leq vol(H|_{X'}+\Omega|_{X'})\leq vol(lA_{X'}+\Omega|_{X'})\leq vol(lA+\Omega)\leq vol(-(lm+1)K_X)$ is bounded above, hence $\bar{\Omega}$ is bounded from above. Hence using $(\bar{X},\Omega_{\bar{X}})$ is sub-lc, we see that if $K_{\bar{X}}+\Gamma_{\bar{X}}$ be the pullback of $K_X$  to $\bar{X}$, then negative coefficients of $\Gamma_{\bar{X}}$ is bounded from below since $\Gamma_{\bar{X}}+\bar{\Omega}=\Omega_{\bar{X}}\leq 1$. Therefore we deduce negative coefficients of $B_{\bar{X}}$ is bounded from below, hence there exists $a\in(0,1)$ depending only $\mathcal{P}$ such that $\Delta := a B_{\bar{X}}+(1-a)\Sigma_{\bar{X}}\geq 0$ and $(\bar{X},\Delta)$ is $a\epsilon$-lc. Further more by replacing $H$ by a bounded multiple we can assume $H- \Omega_{\bar{X}}$ is ample (note we have showed by $\Omega_{\bar{X}}$ has coefficients bounded from above and below). Also since $B_{\bar{X}}\simR \Omega_{\bar{X}}$, we have $H-\Delta$ is ample and we can assume there is $r$ depending only on $\mathcal{P}$ such that $H^d\leq r$.\\
	Step 3: Let $M$ be the pushdown of $T|_{X'}$ to $\bar{X}$. If $T$ is a divisor on $\bar{X}$, then we are done since $uT\equiv \Omega_{\bar{X}}$, and we get $u$ is bounded from above. Hence we assume $T$ is exceptional over $\bar{X}$. Hence support of $M$ is in $\Sigma_{\bar{X}}$, therefore we wlog $H-M$ is ample. Now applying theorem \ref{1.6}, we get there exist $t>0$ depending only on $\epsilon,d,r$ such that $(\bar{X},\Delta+tM)$ is klt. Hence $t>\frac{1}{u}$ as $a(T,\bar{X},\Delta+\frac{1}{u}M)\leq a(T,\bar{X},\Delta )-1\leq 0$ by negativity since $T$ is ample.
\end{proof}
We also need a theorem for complements in the following form which has almost the same proof as \ref{6.7}. Hence we omit the proof.
\begin{prop}\label{1.7}
	Let $d\in \N$ and $\mathfrak{R}\subset[0,1]$ be a finite set of rational numbers. There exists $n$ divisible by $I(\mathfrak{R})$ depending only on $d,\mathfrak{R}$ such that if $(X,B)$ projective lc pair of dimension $d$ with $B\in \mathfrak{R}$, $(X,0)$ is $\Q$-factorial klt, $S$ is a component of $\floor{B}$, $M$ semi-ample Cartier divisor on $X$ with $M|_S\sim 0$ and $M-(K_X+B)$ is ample, then there is a divisor $0\leq G\sim (n+1)M-n(K_X+B)$ such that $(X,B^+:= B+\frac{1}{n}G)$ is lc near $S$.
\end{prop}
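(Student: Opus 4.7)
The plan is to mirror the proof of Proposition~\ref{6.7} essentially step for step, with the Cartier divisor $M$ playing the role of the generalized-pair moduli $M'$ and $N := M - (K_X+B)$ (ample) replacing the nef divisor $-(K_{X'}+B'+M')$. The target is slightly twisted: instead of $n(K_X+B^+)\sim 0$ we want $n(K_X+B^+)\sim (n+1)M$, but the hypothesis $M|_S\sim 0$ is precisely what makes the restriction to $S$ recover an honest complement of $K_S + B_S$.

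First I would take a sufficiently high log resolution $\phi\colon W\to X$ of $(X,B)$, let $S_W$ be the proper transform of $S$, and set $K_W+B_W:=\phi^{*}(K_X+B)$, $M_W:=\phi^{*}M$, $N_W:=\phi^{*}N$. Since $M|_S\sim 0$ we get $M_W|_{S_W}\sim 0$, so divisorial adjunction produces $K_{S_W}+B_{S_W}:=(K_W+B_W)|_{S_W}$ with $-(K_{S_W}+B_{S_W})\sim N_W|_{S_W}$ nef and big; hence $S_W$ is of Fano type. By the coefficient-control version of divisorial adjunction in Section~3.1, $B_{S_W}$ has coefficients in $\Phi(\mathfrak{R}')$ for some finite rational $\mathfrak{R}'$ depending only on $\mathfrak{R}$. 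Applying Theorem~\ref{com_gen} in dimension $d-1$ to the (ordinary) pair $(S_W,B_{S_W})$ gives a bounded $n_0$ and an $n_0$-complement $B_{S_W}^{+}=B_{S_W}+R_{S_W}$ with $R_{S_W}\geq 0$. Fix $n$ to be a common multiple of $n_0$ and of $I(\mathfrak{R})$, large enough that $nM_W|_{S_W}\sim 0$ holds integrally.

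The cohomological heart is the clean identity
\[
  (n+1)M_W - n(K_W+B_W)\;=\;K_W+B_W+(n+1)N_W,
\]
exhibiting the divisor we want as $K_W+(\text{boundary})+(\text{nef and big})$. Following the choice of $P$ in Proposition~\ref{6.7}, I would construct an effective integer divisor $P_W$, supported on $\phi$-exceptional divisors and on the rounding corrections $n\Delta-\floor{(n+1)\Delta}$ (with $\Delta$ the fractional/negative part of $B_W$), so that the modified integer divisor $L_W$, differing from $(n+1)M_W-n(K_W+B_W)$ only by $\phi$-exceptional components, satisfies $L_W+P_W=K_W+\Lambda_W+(n+1)N_W$, where $\Lambda_W$ is plt with $\floor{\Lambda_W}=S_W$. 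Kawamata--Viehweg vanishing then yields $h^{1}(L_W+P_W-S_W)=0$, and the standard restriction exact sequence gives a surjection $H^{0}(L_W+P_W)\twoheadrightarrow H^{0}((L_W+P_W)|_{S_W})$. Viewing $nR_{S_W}$, via $M_W|_{S_W}\sim 0$, as a section of $((n+1)M_W-n(K_W+B_W))|_{S_W}$, it lies in the image; so we obtain $G_W\geq 0$ with $G_W|_{S_W}\sim nR_{S_W}$, and $G:=\phi_{*}G_W$ is an effective divisor with $G\sim (n+1)M-n(K_X+B)$.

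For the lc conclusion, set $B^{+}:=B+\tfrac{1}{n}G$. The crepant pullback of $K_X+B^{+}$ restricts on $S_W$ to $K_{S_W}+B_{S_W}^{+}$, which is lc by construction. Inversion of adjunction along the plt pair $(W,S_W)$, or equivalently the connectedness-principle argument used in Proposition~\ref{6.7} after a small perturbation of $B$ by a klt boundary coming from the klt hypothesis on $(X,0)$, upgrades this to $(X,B^{+})$ being lc in a neighbourhood of $S$. The main obstacle, exactly as in \ref{6.7}, is the combinatorial choice of $P_W$: it must simultaneously (a) absorb the integrality/rounding correction so that $L_W$ is Cartier and $\phi_{*}L_W\sim (n+1)M-n(K_X+B)$, (b) be effective and $\phi$-exceptional so that the pushforward is unaffected, and (c) make $\Lambda_W$ plt with $\floor{\Lambda_W}=S_W$ so that Kawamata--Viehweg applies. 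The role played in \ref{6.7} by the moduli vanishing $qM_{S'}\sim 0$ is here played by the input hypothesis $M|_S\sim 0$, which is what lets us identify $R_{S_W}$ as a section of the expected restricted line bundle on $S_W$.
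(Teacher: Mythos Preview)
Your approach coincides with the paper's: the paper omits the proof, saying it is ``almost the same'' as Proposition~\ref{6.7}, and you have identified the correct dictionary --- $N:=M-(K_X+B)$ plays the role of the ample piece, the identity $(n+1)M-n(K_X+B)=K_X+B+(n+1)N$ sets up the Kawamata--Viehweg step, and the hypothesis $M|_S\sim 0$ replaces the relation $qM_{S'}\sim 0$ of \ref{3.1.2}.

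There is one genuine ordering slip. You run divisorial adjunction on the strict transform $S_W$ and then invoke the coefficient-control lemma of Section~3.1 to put $B_{S_W}$ in some $\Phi(\mathfrak{R}')$. That lemma requires the ambient boundary to lie in $\Phi(\mathfrak{R})$, in particular to be effective; but the crepant boundary $B_W$ on the log resolution carries exceptional components with coefficients $1-a(E,X,B)$ that can be arbitrarily negative, so neither the coefficient control nor Theorem~\ref{com_gen} applies to $(S_W,B_{S_W})$ as written. The fix is exactly what \ref{6.7} does: perform adjunction on (the normalisation of) $S\subset X$, where $B\in\mathfrak{R}$ genuinely yields $B_S\in\Phi(\mathfrak{O})$; obtain the $n$-complement $R_S$ there; and only afterwards pass to the resolution via $R_{S_W}:=\psi^{*}R_S$ for the induced $\psi\colon S_W\to S$. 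A second, smaller point: in \ref{6.7} the auxiliary plt boundary $\Gamma'$ (used to show $S'$ is of Fano type, to build $\Lambda$, and in the connectedness argument) is part of the hypothesis, whereas here it is not --- you must first manufacture such a $\Gamma$ from the assumption that $(X,0)$ is $\Q$-factorial klt together with the ampleness of $N$. You allude to this only at the very end, but it is needed already for the Fano-type claim and for the construction of $P_W,\Lambda_W$. With these two adjustments the proof goes through as you outline.
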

Also we need a variant of \ref{4.2}.
\begin{lemma}\label{5.1}
	Let $X$ be a smooth projective variety of dimension $d$ with $A$ very ample divisor on $X$ with $A^d\leq r$. Let $L\geq0$ be a $\R$-divisor such that $deg_A L\leq r$. Then there exists $t(d,r)>0$ such that $(X,tL)$ is klt.
\end{lemma}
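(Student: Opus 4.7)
The plan is a two-step multiplicity argument: first bound $\mathrm{mult}_x L$ at every point using the degree constraint, then convert this bound into klt-ness via a standard lower bound on log canonical thresholds on smooth varieties.

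\emph{Step 1 (multiplicity bound).} I would first show $\mathrm{mult}_x L \le r$ at every closed point $x \in X$. Since $A$ is very ample on the smooth $X$, iterated Bertini applied to the sub-linear systems of $|A|$ vanishing at $x$ produces general $A_1, \ldots, A_{d-1} \in |A|$ through $x$ whose intersection $C := A_1 \cap \cdots \cap A_{d-1}$ is a curve smooth at $x$, not contained in $\mathrm{Supp}\,L$. The usual intersection-multiplicity inequality then gives
\[
\mathrm{mult}_x L \;\le\; i_x(L, C) \;\le\; L \cdot C \;=\; L \cdot A^{d-1} \;=\; \deg_A L \;\le\; r.
\]

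\emph{Step 2 (from multiplicity to klt).} I would then invoke the standard lower bound $\mathrm{lct}_x(X, L) \ge 1/\mathrm{mult}_x L$, valid on smooth varieties (see e.g.\ Koll\'ar, \emph{Singularities of pairs}, Lemma 8.10, or derive it from Skoda's theorem on multiplier ideals). Combined with Step~1, one gets $\mathrm{lct}(X, L) \ge 1/r$ globally, so setting $t(d, r) := 1/(r+1)$ ensures $(X, tL)$ is klt.

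The only delicate point is the lct bound in Step~2, where the smoothness hypothesis is essential (on singular $X$ the ratio $a(E,X,0)/\mathrm{mult}_E(\phi^*L)$ can be arbitrarily small). For a more self-contained argument in the spirit of Proposition~\ref{4.2}, one can argue by contradiction: assume $(X, tL)$ is not klt for $t = 1/(r+1)$, pick a log resolution $\phi \colon Y \to X$ and a prime divisor $E \subset Y$ witnessing the failure, so $a(E, X, 0) \le t \cdot \mathrm{mult}_E(\phi^*L)$. Smoothness of $X$ forces $a(E, X, 0) \ge 1$, and a standard blow-up/valuation comparison on smooth varieties yields $\mathrm{mult}_E(\phi^*L) \le a(E, X, 0) \cdot \mathrm{mult}_x L$ for some $x$ in the centre of $E$; combining with Step~1 gives $1 \le t \cdot \mathrm{mult}_x L \le tr < 1$, a contradiction.
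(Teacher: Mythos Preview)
Your proof is correct. Both your argument and the paper's cut by general members of $|A|$, but you package the reduction differently. The paper runs a straight induction on $d$: restrict to a general $H\in|A|$, apply the inductive hypothesis to $(H,L|_H)$, and invoke inversion of adjunction to propagate klt-ness back to $(X,tL)$ near $H$; varying $H$ covers $X$. You instead cut all the way down to a curve in one shot to bound $\mathrm{mult}_x L\le r$, and then finish with the smooth-variety bound $\mathrm{lct}_x(L)\ge 1/\mathrm{mult}_x L$. Your route avoids inversion of adjunction entirely and yields the explicit constant $t=1/(r+1)$; the paper's route is more in keeping with the inductive style used elsewhere in the article. One small remark on your ``self-contained'' alternative: the inequality $\mathrm{ord}_E(L)\le a(E,X,0)\cdot\mathrm{mult}_x L$ is logically equivalent to $\mathrm{lct}\ge 1/\mathrm{mult}$, so it is not really an independent argument unless you supply a direct proof (which does exist: write a local equation $f$ for $L$, pick a monomial $x^{\alpha}$ of degree $m=\mathrm{mult}_x L$ appearing in $f$, and use that each coordinate hyperplane $\{x_i=0\}$ is log smooth hence lc, giving $\mathrm{ord}_E(x_i)\le a(E,X,0)$).
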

\begin{proof}
	The outline of the proof: cut by general hyperplane section of $|A|$ and apply induction.	
\end{proof}
\begin{prop}[\cite{fa}, Theorem 1.6]\label{volumn_bnd}
	Assume Theorem \ref{BAB} hold in dimension $d-1$, then there exists $v>0$ such that if $X$ is weak $\epsilon$-lc weak fano of dimension d then $vol(-K_X)\leq v$.
\end{prop}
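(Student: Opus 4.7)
The approach is proof by contradiction, using BAB in dimension $d-1$ together with the effective birationality theorem. Assume the conclusion fails: there is a sequence $X_i$ of $\epsilon$-lc weak Fanos of dimension $d$ with $\operatorname{vol}(-K_{X_i}) \to \infty$. I would first reduce to the case that each $X_i$ is $\Q$-factorial $\epsilon$-lc Fano by passing through a small $\Q$-factorialization followed by the anticanonical model; both operations are crepant and so preserve $\operatorname{vol}(-K_{X_i})$ as well as the $\epsilon$-lc hypothesis. Theorem \ref{eff_bir} then supplies a uniform $m = m(d,\epsilon)$ such that $|-mK_{X_i}|$ defines a birational map, and I fix a general $M_i \in |-mK_{X_i}|$.

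Since $\operatorname{vol}(-K_{X_i}) > (2d)^d$ for $i \gg 0$, the standard covering-family-of-non-klt-centres construction (cf.\ \cite{fa}, 2.32, and Step~1 of the proof of Lemma \ref{4.7}) produces subvarieties $G_i \subset X_i$ together with boundaries $0 \leq \Delta_i \simR -n_i' K_{X_i}$, for a uniformly bounded $n_i'$, exhibiting $G_i$ as a unique non-klt centre of $(X_i, \Delta_i)$. The iterative refinement from Lemma \ref{4.7}---replacing $n_i'$ by the smallest integer with $\operatorname{vol}(-n_i' K_{X_i}|_{G_i}) > d^{\dim G_i}$---forces either the potential birationality of some small multiple of $-K_{X_i}$, which already contradicts the volume assumption via the Lemma \ref{4.4.3} bounded-family machinery, or a positive-dimensional $G_i$ with $-K_{X_i}|_{G_i}$ nef and big and $\operatorname{vol}(-m K_{X_i}|_{G_i})$ uniformly bounded. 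For such $G_i$ taken generic in its family, \ref{non_klt}(4) yields $(F_i, B_{F_i})$ with $F_i = G_i^{\mathrm{nor}}$, $K_{F_i}+B_{F_i} = K_{X_i}|_{F_i}$, the pair being $\epsilon$-lc, and $-(K_{F_i}+B_{F_i})$ nef and big. The BAB hypothesis in dimension $d-1 \geq \dim F_i$ then places these pairs into a single bounded family.

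The contradiction is derived by essentially transcribing Step~3 of the proof of Lemma \ref{4.7}: apply Lemma \ref{4.4.3} on $F_i$ with boundary $\theta_{F_i}$ and nef divisor $M_{F_i} := M_i|_{F_i}$ (whose volume is bounded by the previous step), obtaining a log smooth log bounded family $(\bar F_i, \Sigma_{\bar F_i}) \in \mathcal{P}$ in which the coefficients of $M_{\bar F_i}$ are uniformly bounded above (Remark \ref{4.4.2}). Transferring the $\epsilon$-lc singularities of $X_i$ to $\bar F_i$ via the sub-$\epsilon$-lc control from \ref{non_klt}(4), and using that $n_i'/m \to 0$ so the added boundary $\tfrac{n_i'}{m} M_{\bar F_i}$ shrinks to zero, one obtains on $\bar F_i$ a pair that fails to be klt despite its offending coefficients tending to $0$, violating Proposition \ref{4.2} on the boundedness of singularities within a bounded family.

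The main obstacle is the iterative descent in the second paragraph: one must arrange that the eventual $G_i$ is simultaneously generic in a covering family (so that \ref{non_klt}(4) applies and produces an $\epsilon$-lc pair on $F_i$) and has $-K_{X_i}|_{G_i}$ nef and big (so that BAB in dimension $d-1$ can be invoked to bound $F_i$). Once this descent is properly set up, the remainder of the argument is a direct adaptation of the volume-exhaustion strategy already developed in Section~4 for effective birationality.
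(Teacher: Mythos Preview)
The paper does not actually prove this proposition; it is merely cited as \cite{fa}, Theorem~1.6, so there is no proof in the paper to compare against directly. I will therefore evaluate your argument on its own merits.

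There is a genuine gap in the final contradiction step. You write that ``$n_i'/m \to 0$ so the added boundary $\tfrac{n_i'}{m} M_{\bar F_i}$ shrinks to zero'', but this is false: $m = m(d,\epsilon)$ is the fixed constant supplied by effective birationality, and $n_i'$ is, by your own setup, a \emph{uniformly bounded} positive integer (indeed $n_i' = 1$ once $\operatorname{vol}(-K_{X_i}) > (2d)^d$). So $n_i'/m \geq 1/m$ is bounded away from zero, and the Proposition~\ref{4.2} mechanism cannot fire. The analogy with Lemma~\ref{4.7} breaks down precisely here: in that lemma the contradiction hypothesis is $m_i/n_i \to \infty$, which forces the relevant coefficients to zero; in your situation neither $m$ nor $n_i'$ escapes to infinity, so the machinery of Steps~3--4 of \ref{4.7} does not transplant.

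There is also an unjustified step earlier: you claim that potential birationality of a bounded multiple of $-K_{X_i}$ ``already contradicts the volume assumption via the Lemma~\ref{4.4.3} bounded-family machinery''. But Lemma~\ref{4.4.3} (and Lemma~\ref{4.4.1} underneath it) takes \emph{bounded volume} as a hypothesis, not a conclusion; potential birationality alone does not bound $\operatorname{vol}(-K_{X_i})$, so this branch of your dichotomy is circular.

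The missing idea is that the hypothesis $\operatorname{vol}(-K_{X_i}) \to \infty$ must itself produce the vanishing ratio. One creates the non-klt centres using $\Delta_i \simR -a_i K_{X_i}$ with $a_i \to 0$ chosen so that $\operatorname{vol}(-a_i K_{X_i})$ stays just above the threshold $(2d)^d$; it is then $a_i$ (not $n_i'/m$) that tends to zero, and BAB in dimension $d-1$ on the resulting $(F_i, B_{F_i})$ is used to control the restricted volumes so that the descent terminates uniformly. Without identifying this correct small parameter, the argument as written does not close.
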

\newpage

\section{Proof of BAB and Theorem \ref{1.6}}
Now we are ready to start proving theorem \ref{1.6}. Firstly it is easy to see that \ref{1.6} can be easily reduced to a statement regarding boundedness of coefficients in blowup as in the next proposition. 
\begin{prop}\label{5.7}
	Let $d,r,\epsilon$ as in theorem \ref{1.6} and let $n\in \N$. Assume theorem \ref{1.6} in dimension $d-1$, then $\exists \; q(d,r,\epsilon,n)>0$ such that if $(X,B)$ is $\epsilon$-lc, $A$ very ample with $A^d\leq r$, $\Lambda \geq 0$ with $n\Lambda$ integral, $L\geq 0$ an $\R$-divisor, $A-B,A-\Lambda,A-L$ ample, $(X,\Lambda)$ lc near $x$, $T$ is a lc place of $(X,\Lambda)$ with centre the closure of $x$, $a(T,X,B)\leq 1$, then for any $\phi: W\rightarrow X$ resolution such that $T$ is a divisor on $W$, we have $\mu_T \phi^* T\leq q$.
\end{prop}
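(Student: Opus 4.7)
I read the printed conclusion $\mu_T \phi^* T \le q$ as $\mu_T \phi^* L \le q$, since $T$ already lives on $W$ and $\phi^* T$ is not defined. This is also the bound one needs to deduce Theorem~\ref{1.6} in dimension $d$: a lct-computing valuation $T$ of some $L \in |A|_{\R}$ against $(X,B)$ satisfies $\mathrm{lct} = a(T,X,B)/\mu_T \phi^* L \ge \epsilon/q$.

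The plan is to place the entire configuration in a bounded family and then measure $T$ inside it. By criterion (1) of Subsection~\ref{bnd_fam}, $A$ very ample with $A^d \le r$ puts $X$ in a bounded family, and $A-B$, $A-\Lambda$, $A-L$ ample gives $A^{d-1}\cdot B,\ A^{d-1}\cdot \Lambda,\ A^{d-1}\cdot L \le r$. Because $n\Lambda$ is integral, $(X, \mathrm{Supp}\,\Lambda)$ is log bounded; by part (2) of \ref{bnd_fam} I can take a uniform log resolution $\psi: Y \to X$ of $(X, \mathrm{Supp}(\Lambda+B+L))$ such that $(Y,\Gamma_Y)$ — with $\Gamma_Y$ the sum of strict transforms and reduced exceptional divisors — ranges in a log smooth bounded family $\mathcal{P}'$. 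Replacing $\phi:W \to X$ by a higher model I may factor $\phi = \psi \circ \phi'$ for some $\phi': W \to Y$ with $T$ still a divisor on $W$.

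Since $T$ is an lc place of $(X,\Lambda)$ with centre $\bar x$, it is an lc place of $(Y,\Gamma_Y)$, so by Subsection~\ref{blowup} $T$ arises as the exceptional divisor $E_p$ of a centred blowup sequence $Y_p \to \cdots \to Y_0 = Y$ of some length $p$, toroidal with respect to $(Y,\Gamma_Y)$, whose centres all lie over $x$. That subsection also records $\mu_T (\phi')^* \Gamma_Y \ge p+1$. On the $X$-side, $\epsilon$-lc of $(X,B)$ together with $a(T,X,B)\le 1$ yields $\mu_T B_W = 1-a(T,X,B) \in [0,1-\epsilon]$, where $K_W+B_W = \phi^*(K_X+B)$; feeding this through the crepant formula for $\psi$ — whose boundary $\psi^* B$ has bounded coefficients in $\mathcal{P}'$ — produces an upper bound on $\mu_T K_{W/Y}$, equivalently on the log discrepancy $a(T,Y,0)$, depending only on $d,r,\epsilon,n$.

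The crux, and main obstacle, is to convert these two pieces into a uniform bound $p \le p_0(d,r,\epsilon,n)$. Each blowup $Y_{i+1}\to Y_i$ along a smooth centre $C_i$ of codimension $c_i \ge 2$ contributes $(c_i-1)E_{i+1}$ to $K_{Y_{i+1}/Y_i}$, and hence at least one unit to $\mu_T K_{W/Y}$ (because $T = E_p$ specializes to every $E_{i+1}$); so the upper bound on $\mu_T K_{W/Y}$ caps $p$ directly. With $p$ in hand, $\mu_T \phi^* L$ is controlled inductively along the tower: the centre $C_i$ is a stratum of the log smooth bounded pair $(Y_i,\Gamma_{Y_i})$, and the multiplicity of the intermediate pullback of $L$ along $C_i$ is uniformly bounded because $(Y_i,\Gamma_{Y_i},\text{pullback of }L)$ stays in a log bounded family (the degree of $L$ against the bounded very ample divisor on $Y_i$ is controlled by $A^{d-1}\cdot L \le r$). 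Accumulating at most $p$ bounded increments yields $\mu_T \phi^* L \le q$ depending only on $d,r,\epsilon,n$. The bound on $p$ is the technical heart, and is where all of the hypotheses — $n\Lambda$ integral, $(X,\Lambda)$ lc near $x$, $a(T,X,B) \le 1$, $\epsilon$-lc of $(X,B)$, and the ampleness of $A-B$, $A-\Lambda$, $A-L$ — must be used together.
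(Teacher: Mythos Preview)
Your reduction to a log-smooth bounded model $(Y,\Gamma_Y)$ and the observation that a centred toroidal sequence for $T$ satisfies $\mu_T K_{W/Y}\ge p$ are both correct, but the argument breaks at the step where you claim an upper bound on $\mu_T K_{W/Y}$ (equivalently on $a(T,Y,0)$). You write that ``feeding $\mu_T B_W\in[0,1-\epsilon]$ through the crepant formula for $\psi$, whose boundary has bounded coefficients, produces an upper bound on $\mu_T K_{W/Y}$.'' Unpacking this: $a(T,Y,0)=a(T,X,B)+\mu_T(\phi')^*B_Y\le 1+\mu_T(\phi')^*B_Y$, so you need $\mu_T(\phi')^*B_Y$ bounded. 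But bounded \emph{coefficients} of $B_Y$ do not bound $\mu_T(\phi')^*B_Y$: the latter is the order of $B_Y$ along the valuation $T$, and can be as large as $p$ times the multiplicity of $B_Y$ along the successive centres. Since $B$ (hence $\operatorname{Supp}B_Y$) has no integrality hypothesis, you cannot place $\operatorname{Supp}B$ in a bounded family, and $B_Y$ can be arbitrarily singular along the centre of $T$ subject only to $\epsilon$-lc and the degree bound. In effect the inequality $a(T,Y,0)\le 1+\mu_T(\phi')^*B_Y$ is circular: the right side depends on the tower length $p$ you are trying to bound. (Concretely, on $\mathbb{P}^2$ with $\Gamma_Y$ two lines, for $T=v_{1,N}$ one has $a(T,Y,0)=N+1$, and a boundary like $B_Y=\tfrac12(\{y=x^N\}+\{y=-x^N\})$ achieves $a(T,Y,B_Y)=1$ with $(Y,B_Y)$ klt; only the degree bound $A-B$ ample obstructs this, and it is not clear how to leverage that directly.) A related slip: you assert a uniform log resolution of $(X,\operatorname{Supp}(\Lambda+B+L))$ in a bounded family, but only $\operatorname{Supp}\Lambda$ is log bounded, not $\operatorname{Supp}(B+L)$.

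The paper's route is genuinely different and supplies exactly the missing ingredient. After reducing to $(X,\Lambda)$ log smooth with $\Lambda$ reduced (as you do), it first arranges --- via an MMP argument that uses Theorem~\ref{1.6} in dimension $d-1$ --- that $\operatorname{Supp}B$ meets no positive-dimensional stratum of $(X,\Lambda)$. It then bounds $p$ by Lemma~\ref{5.5}: a Noether-normalisation-type finite map $\pi:X\to\mathbb{P}^d$ (Lemma~\ref{5.2}) transports the question to the toric pair $(\mathbb{P}^d,\sum H_i)$, where one extracts $T$, runs a toric MMP to an $\epsilon$-lc toric weak Fano, and invokes the (known) boundedness of such varieties to obtain a klt $n$-complement; Proposition~\ref{4.2} then bounds $\mu_T$ of the pullback of $\sum H_i$, hence $p$. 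Finally $\mu_T\phi^*L$ is bounded not by summing multiplicities up the tower, but by performing one blowup, replacing $B$ by a convex combination $\beta B'+(1-\beta)\Lambda'$ to restore all hypotheses, and recursing on $p$. The toric boundedness step is the substantive input your outline lacks.
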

The following lemma guarantees such bounded $n$ and $\Lambda$ exists.
\begin{prop}\label{5.9}
	Let $d,r\in\N$ and $\epsilon>0$ and assume theorem \ref{1.6} hold in dimension $d-1$, then there exists $0<\epsilon'<\epsilon$ and $m,n$ depending only on $d,r,\epsilon$ such that if $(X,B)$ a $\Q$-factorial projective $\epsilon$-lc pair of dimension $d$, $A$ very ample divisors with $A^d\leq r$, $L\geq 0$ $\R$-divisor on $X$ such that $(X,B+tL)$ is $\epsilon'$ lc for some $t<r$, $a(T,X,B+tL)=\epsilon'$ for some $T$ over $X$ where centre of $T$ is a closed point $\{x\}$ on $X$ and $A-B,A-L$ ample, then there exists $\Lambda>0$ such that $n\Lambda$ is integral, $ mA-\Gamma$ is ample, $(X,\Lambda)$ is lc near $\{x\}$ and $T$ is lc place of $(X,\Lambda)$.
\end{prop}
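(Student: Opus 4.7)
The overall strategy is tie-breaking. Choose $\epsilon' < \epsilon$ small (for instance $\epsilon/2$); since by hypothesis $T$ realises the minimal log discrepancy $\epsilon'$ in $(X, B+tL)$, a sufficiently generic perturbation by a divisor in $|m_0 A|$ should push $T$ into becoming an lc place while keeping $(X, \Lambda)$ lc elsewhere near $x$. The plan is to build $\Lambda$ as a rational multiple of a suitable $H \in |m_0 A|$ passing through $x$ with controlled multiplicity; the bounded denominator $n$ will emerge from the induction hypothesis (Theorem \ref{1.6} in dimension $d-1$, applied on a general hyperplane section through $x$) together with the volume bound $A^d \leq r$.

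First I would extract $T$ via $\pi \colon Y \to X$ as in \ref{Q_fac}, which is possible since $X$ is $\Q$-factorial, $(X, 0)$ is klt, and $a(T, X, 0) \geq a(T, X, B) \geq \epsilon > 0$. The crepant pullback gives $\mu_T \pi^*(B + tL) = a(T, X, 0) - \epsilon'$, and combined with $a(T, X, B) \leq 1$, $t < r$, $A - B$ and $A - L$ ample, and $A^d \leq r$, this bounds $a(T, X, 0)$ from above in terms of $d, r, \epsilon$. Next, by asymptotic Riemann--Roch for $A$ very ample with $A^d \leq r$, the linear system $|m_0 A|$ has dimension growing like $m_0^d$, so for some bounded $m_0 = m_0(d, r)$ one can find $H \in |m_0 A|$ passing through $x$ with any prescribed bounded multiplicity at $x$; this yields $\mu_T \pi^* H \geq \operatorname{mult}_x H$. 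Set $c := a(T, X, 0)/\mu_T \pi^* H$ and $\Lambda := cH$. Then $a(T, X, \Lambda) = a(T, X, 0) - c \cdot \mu_T \pi^* H = 0$ by construction, so $T$ is an lc place. Taking $H$ sufficiently generic in $|m_0 A|$, a standard tie-breaking argument ensures $(X, \Lambda)$ is lc near $x$ with $T$ as the unique lc place over $x$; the key input is that for any valuation $E \neq T$ centred near $x$, $a(E, X, B+tL) \geq \epsilon'$ by the $\epsilon'$-lc hypothesis, while $\mu_E \pi^* H$ stays bounded for generic $H$. The ampleness of $mA - \Lambda$ is immediate for any $m > c\, m_0$, which is bounded.

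The hard part is guaranteeing that $n\Lambda$ is integral for some $n$ depending only on $d, r, \epsilon$, which reduces to bounding the denominator of $c = a(T, X, 0)/\mu_T \pi^* H$. To handle this, one exploits the freedom in choosing $H$ within $|m_0 A|$, varying $H$ so that $\mu_T \pi^* H$ takes a rational value making $c$ have bounded denominator. This is where the induction hypothesis enters: restricting to a general hyperplane section $S \in |A|$ through $x$, the pair $(S, B|_S)$ is $\epsilon$-lc with $A|_S$ very ample, $(A|_S)^{d-1} = A^d \leq r$, and $A|_S - B|_S$ ample, so Theorem \ref{1.6} in dimension $d-1$ gives a uniform lower bound on $\operatorname{lct}(S, B|_S, |A|_S|_{\R})$. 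This bound propagates back to $X$ via adjunction and ultimately controls the Cartier indices of the divisors on $Y$ relevant to $T$, yielding the desired bound on the denominator of $c$ and hence on $n$. This last step is the technical heart of the argument.
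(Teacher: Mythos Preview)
Your approach has several genuine gaps, and it diverges fundamentally from the paper's method.

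\textbf{The bound on $a(T,X,0)$ is not justified.} You invoke ``$a(T,X,B)\leq 1$'' as if it were a hypothesis, but it is not part of the statement of Proposition~\ref{5.9}; that condition appears in Proposition~\ref{5.7}, not here. Even if it were available, you would only get $a(T,X,0)=a(T,X,B)+\mu_T\pi^*B$, and there is no a priori bound on $\mu_T\pi^*B$: for a divisor $T$ centred at a closed point, the multiplicity $\mu_T\pi^*B$ can be arbitrarily large (think of $T$ obtained by many iterated blowups). Bounding such multiplicities is precisely the content of Proposition~\ref{5.7}, which uses \ref{5.9} as input, so invoking it here would be circular.

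\textbf{The tie-breaking with a generic $H\in|m_0A|$ does not work as stated.} Choosing $c$ so that $a(T,X,cH)=0$ does not by itself force $T$ to compute the log canonical threshold of $H$. For a generic $H$ vanishing to high order at $x$, there is no mechanism ensuring that $a(E,X,cH)\geq 0$ for every other valuation $E$ centred at $x$; the comparison between $\mu_T\pi^*H$ and $\mu_E\pi^*H$ is uncontrolled. Classical tie-breaking perturbs the \emph{given} boundary $B+tL$ by a small generic ample, which preserves the minimizer $T$, but your $\Lambda=cH$ discards $B+tL$ entirely.

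\textbf{The denominator bound is the fatal gap.} You acknowledge this is the heart of the matter, but the argument that Theorem~\ref{1.6} in dimension $d-1$ ``controls Cartier indices'' is not a real argument: Theorem~\ref{1.6} gives uniform lower bounds on log canonical thresholds, not Cartier index bounds, and no mechanism is given to convert one into the other.

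\textbf{What the paper does instead.} The paper's proof is entirely different and does not go through tie-breaking. The constant $\epsilon'$ is chosen via ACC for log canonical thresholds so that on the extraction $\psi:Y\to X$ of $T$ one has $(Y,T)$ lc. The induction hypothesis (Theorem~\ref{1.6} in dimension $d-1$) is used, via cutting by general hyperplanes in $|A|$, to show that $(X,(1+v)(B+tL))$ is $\frac{\epsilon}{2}$-lc away from finitely many closed points for some bounded $v$; an auxiliary MMP then yields that $lA_Y-(K_Y+T)$ is ample for a bounded $l$. The divisor $\Lambda$ is then produced by applying the complement theorem (Proposition~\ref{1.7}) on $Y$: one gets $0\leq P_Y\sim (n+1)lA_Y-n(K_Y+T)$ with $(Y,T+\tfrac{1}{n}P_Y)$ lc near $T$, and $\Lambda$ is the pushforward of $T+\tfrac{1}{n}P_Y$. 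The bounded integer $n$ comes for free from the complements machinery, which is exactly why $n\Lambda$ is integral.
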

Now we are ready to prove theorem \ref{1.6}.
\begin{proof}[Proof of Theorem \ref{1.6}]
	We use the notation in the statement of \ref{1.6}. It is clear that it suffices to show $lct(X,B,|A|_{\R})\geq t$ for some $t$ bounded away from 0. Firstly by taking a small $\Q$-factorization of $X$, we can assume $X$ is $\Q$-factorial; Indeed we can do this since $X$ is bounded, hence we can choose bounded resolution and hence get a bounded family of small $\Q$-factorisation by \ref{Q_fac} and then replace $A,B$ accordingly. \\
	Step 1 : Let $0\leq L\simR \frac{1}{2} A  $. Let $\epsilon'$ as in Prop \ref{5.9} and let $s$ be the maximum number such that $(X,B+sL) $ is $\epsilon'$-lc. Note it suffices to prove $s$ is bounded away from 0. Choose $T$ divisor over $X$ such that $a(T,X,B+sL)=\epsilon'$. Let $x$ be the generic point of centre of $T$ on $X$. If $x$ is not a closed point, then we are done; Indeed, we can cut by general elements of $|A|$ and apply induction and inversion of adjunction, we get $\exists v$ bounded away from 0, such that $(X,B+vL)$ is lc near $x$, hence we can deduce $s\geq (1-\frac{\epsilon'}{\epsilon} )v $ from $(X,B)$ is $\epsilon$-lc. Hence it remains to consider the case when $x$ is a closed point.\\
	Step 1: Now we can apply \ref{5.9} and by replacing $A,C,L$ by $2mA,2mC,2mL$, we are ready to apply \ref{5.7} to get a bounded $q(d,r,\epsilon)$ such that $\mu_T \phi^* L\leq q$ for some resolution $\phi: W\rightarrow X$ where $T$ is a divisor on $W$. But similarly, we have $\mu_T \phi^* L \geq \frac{\epsilon'-\epsilon}{s}$ from definition of $s$ and $(X,B)$ is $\epsilon$-lc. Hence again we get $s$ is bounded away from 0, hence completing the proof. 	
\end{proof}

We will also note that theorem \ref{BAB} follows easily from \ref{1.4} and \ref{eff_bir}.
\begin{proof}[Proof of BAB]
	We firstly note that the set of $\epsilon$-lc weak Fano variety forms a bounded family; Indeed this follows from \ref{7.13}. All the criteria of \ref{7.13} is satisfied; using theorem \ref{1.4}, theorem \ref{eff_bir}, theorem \ref{com_gen} and \ref{volumn_bnd}. Now when $-(K_X+B)$ is nef and big, we can run MMP on $-K_X$ as in 2.1.6, where we end with $X'$ an $\epsilon$-lc weak Fano, which lies in a bounded family. Hence $X'$ has a klt $n$-complement for some bounded $n$, which implies $X$ has klt $n$-complement by \ref{6.1.3} . Hence such $X$ form a bounded family by \ref{19.1.3}.
\end{proof}

\subsection{Reduction to Toric Case and Proof of \ref{5.7}}
Now we will focus on proving \ref{5.7}, which has some important new ideas. Now we are ready to give the proof of the main theorem 5.7. The main idea here is to show that $T$ can be obtained from $X$ by a bounded number of centred blowup. Then we apply induction on the number of blowup to reduce the problem when $T$ is obtained from a single blowup, and now the question is more or less trivial by elementary methods. 
\\Hence firstly we need a lemma to bound the number of blowup to achieve $T$. This is done via the help of the following lemma. The following lemma is similar to Noether Normalisation lemma that helps to reduce certain problems to projective space.\\
\begin{lemma}\label{5.2}
	Let $(X,\Gamma := \sum_1^d S_i)$ be a projective lg smooth pair of dimension $d$ where $\Gamma$ is reduced. Let $B := \sum b_j B_j \geq 0$ be an $\R$-divisor. Assume $x\in \cap S_i$, supp$B$ contains no stratum of $(X,\Gamma)$ except at $x$, $A$ a very ample divisor such that $A-S_i$ very ample for each $i$, then there is a finite morphism $\pi : X\rightarrow \mathbb{P}^n:= Proj k[t_0,t_1,..,t_d] $ such that $\pi(x)=z := (1:0:0:..:0)$, $\pi(S_i)=H_i := Z(t_i)$, $\pi$ is etale over a neighbourhood of $z$, Supp$B$ contains no points of $\pi^{-1}\{z\}$ except possibly $x$ and deg$\pi=A^d$ and deg$_{H_i} C \leq deg_A B$ where $C := \sum b_j \pi(B_j)$
\end{lemma}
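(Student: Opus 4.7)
I would construct $\pi$ as the morphism associated to a suitably generic $(d+1)$-tuple of sections of $A$. For each $i = 1,\ldots,d$, since $A - S_i$ is very ample (hence effective), set $s_i = \sigma_{S_i}\cdot u_i \in H^0(X,A)$, where $\sigma_{S_i}$ cuts out $S_i$ and $u_i \in H^0(X,A - S_i)$ is generic; thus $\mathrm{div}(s_i) = S_i + D_i$ with $D_i \in |A - S_i|$ generic. Pick $s_0 \in H^0(X,A)$ general with $s_0(x)\neq 0$ (possible as $A$ is very ample, so $x$ is not a base point of $|A|$), giving $\mathrm{div}(s_0) = D_0 \in |A|$. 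The map would be $\pi = [s_0:s_1:\cdots:s_d]$. Its base locus lies in $D_0 \cap \bigcap_i (S_i \cup D_i)$; expanding via the $S_i$--$D_i$ split and using log smoothness (so each $\bigcap_{i\in I}S_i$ has pure codimension $|I|$) together with Bertini on the very ample linear systems $|A-S_i|$ and $|A|$ forces this intersection to be empty, so $\pi$ is a morphism. Since $\pi^*\mathcal{O}_{\mathbb{P}^d}(1)=A$ is ample, $\pi$ contracts no curve, so $\pi$ is proper and quasi-finite, hence finite; from $A^d=(\pi^*H)^d=\deg(\pi)\cdot H^d=\deg\pi$ we read off $\deg\pi=A^d$. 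By construction $\pi(x)=z$, and $s_i|_{S_i}\equiv 0$ forces $\pi(S_i)\subseteq H_i$, with equality by dimension.

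\textbf{\'Etaleness over $z$ and disjointness from $\mathrm{Supp}\,B$.} Flatness of $\pi$ is miracle flatness ($X$ smooth by log smoothness, $\mathbb{P}^d$ regular, $\pi$ finite of equal dimension), so I would only need unramifiedness near $z$. At $x$, log smoothness yields local coordinates $(y_1,\ldots,y_d)$ with $S_i=\{y_i=0\}$; then $s_i/s_0$ is a unit times $y_i$ near $x$, so the differentials $d(s_i/s_0)(x)$ span $\Omega^1_{X,x}$ and $\pi$ is \'etale at $x$. The remaining points of $\pi^{-1}(z)$ form a finite set in $\bigcap_i(S_i\cup D_i)\setminus(\{x\}\cup D_0)$, and unramifiedness at each is a generic condition on the $D_i$. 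Since the $\pi$-image of the ramification locus is closed and misses $z$, $\pi$ is \'etale over a Zariski open neighborhood of $z$. The hypothesis that $\mathrm{Supp}\,B$ contains no stratum of $(X,\Gamma)$ except at $x$ lets a parallel genericity argument push the extra fiber points off $\mathrm{Supp}\,B$: each stratum $\bigcap_{i\in I}S_i$ with $I\subsetneq\{1,\ldots,d\}$ is positive-dimensional and not contained in $\mathrm{Supp}\,B$, so generic $D_j$'s for $j\notin I$ meet it off $\mathrm{Supp}\,B$; the $0$-dimensional stratum $\bigcap_i S_i$ meets $\mathrm{Supp}\,B$ only at $x$ by the hypothesis.

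\textbf{Degree inequality and main obstacle.} For $\deg_{H_i}C\leq\deg_A B$, apply the projection formula: $\deg_A B=A^{d-1}\cdot B=(\pi^*H)^{d-1}\cdot B=H^{d-1}\cdot\pi_*B$. Writing each $\pi_*B_j=e_j\pi(B_j)$ with $e_j=[k(B_j):k(\pi(B_j))]\geq 1$, this yields $\deg_A B=\sum_j b_j e_j\deg\pi(B_j)\geq\sum_j b_j\deg\pi(B_j)=\deg C=\deg_{H_i}C$, since all hyperplanes of $\mathbb{P}^d$ have equal degree. The most delicate point is the simultaneous Bertini argument of the preceding paragraph: choosing the $s_i$ (confined to the subspace $H^0(A-S_i)\subset H^0(A)$) generically enough to make $\pi$ a morphism, finite, \'etale at \emph{every} preimage of $z$, and to have $\pi^{-1}(z)\cap\mathrm{Supp}\,B\subseteq\{x\}$. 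Very ampleness of $A-S_i$---as opposed to mere effectiveness---is precisely what provides the mobility of $D_i$ needed to avoid any prescribed proper subvariety of $X$.
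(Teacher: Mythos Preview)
Your construction is essentially the same as the paper's: the paper chooses general $D_i\in|A-S_i|$, sets $R_i:=S_i+D_i\sim A$ and a general $R_0\sim A$ so that $(X,\sum_0^d R_i)$ is log smooth with $\bigcap_0^d R_i=\emptyset$, and lets $\pi$ be the map defined by the corresponding sections of $A$---exactly your $[s_0:s_1:\cdots:s_d]$ with $\operatorname{div}(s_i)=S_i+D_i$. Your write-up in fact supplies more detail than the paper's sketch, in particular the projection-formula computation for $\deg_{H_i}C\leq\deg_A B$ and the local-coordinate verification of \'etaleness at $x$.
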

\begin{proof}[Sketch of Proof]
	Taking $D_i\in |A-S_i|$ general and letting $R_i := S_i+D_i\sim A$, we can assume $(X,\sum R_i)$ is log smooth and we may replace $S_i$ by $R_i$ since all condition is satisfied. Furthermore we can choose $R_0 \sim A$ such that $(X,\sum_0^d R_i)$ is still log smooth. $\cap_0^d R_i=\emptyset$. Now $\pi$ is nothing but the map defined by $(f_0,f_1,..,f_d)$, where $div(f_i)=R_i -A$. All of the remaining properties are easy computation using elementary algebraic geometry and definition of etale. 
\end{proof}
Now we can show a lemma that helps to bound the number of blowups.
\begin{lemma}\label{5.5}[Bounds on number of blowups] Let $r,d\in \N,\epsilon>0$ and let $(X,B)$ be $\epsilon$-lc dimension $d$ projective pair with $A$ very ample divisor on $X$ such that $A^d\leq r$. Suppose there is $\Lambda$ such that $(X,\Lambda)$ is log smooth and let $\{x\}$ be a zero dimensional stratum of $(X,\Lambda)$ such that Supp$B$ contain no other stratum of $(X,\Lambda)$. Let $T$ be a lc place of $(X,\Lambda)$ with centre $\{x\}$ and $a(T,X,B)\leq 1$. Suppose further that $deg_A B\leq r$ and $deg_A \Lambda \leq r$. Then there exists $p(d,\epsilon)$ such that $T$ can be obtained by a sequence of centre blowup toroidal with respect to $(X,\Lambda)$ of length at most $p$.
	
\end{lemma}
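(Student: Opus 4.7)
The plan is to use Lemma \ref{5.2} to reduce to the toric situation on $\mathbb{P}^d$, translate $T$ into a primitive lattice vector $v$, extract $T$ by a canonical toric blowup sequence whose length is at most $\sum_i v_i - 1$, and finally bound $\sum_i v_i$ via a Newton-polytope analysis that exploits the $\epsilon$-lc hypothesis in a crucial way.

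Since $\{x\}$ is a zero-dimensional stratum of the log smooth pair $(X,\Lambda)$, there are exactly $d$ components $S_1,\dots,S_d$ of $\lfloor\Lambda\rfloor$ meeting transversely at $x$; set $\Gamma=\sum_{i=1}^d S_i$. The hypothesis that $\mathrm{Supp}\,B$ contains no other stratum of $(X,\Lambda)$ implies the same for $(X,\Gamma)$. After replacing $A$ by a bounded multiple we may assume $A-S_i$ is very ample, so Lemma \ref{5.2} yields a finite morphism $\pi\colon X\to\mathbb{P}^d$ of degree $A^d\leq r$, étale near $z=(1{:}0{:}\cdots{:}0)$, with $\pi(x)=z$, $\pi(S_i)=H_i$, and with the divisor $C$ constructed in Lemma \ref{5.2} satisfying $\deg_{H_i}C\leq r$ and having no $H_i$ as a component (since $\mathrm{Supp}\,B$ avoids the codimension-one strata of $\Lambda$).

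Because $\pi$ is étale near $z$, divisorial valuations of $X$ centred at $x$ correspond bijectively to divisorial valuations of $\mathbb{P}^d$ centred at $z$, with log discrepancies preserved. Hence $T$, an lc place of $(X,\Lambda)$ centred at $\{x\}$, corresponds to an lc place of the toric pair $(\mathbb{P}^d,\sum_i H_i)$ centred at $z$, i.e.\ to a primitive $v=(v_1,\dots,v_d)\in\mathbb{Z}_{>0}^d$. By standard toric geometry $T_v$ can be extracted from the standard fan by a sequence of star subdivisions (blowups of toric strata of codimension $\geq 2$), each of which is toroidal with respect to $\sum_i H_i$; this sequence lifts via the étale morphism $\pi$ to a toroidal centred blowup sequence for $T$ with respect to $\Lambda$ on $X$. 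By the estimate in Section \ref{blowup}, any such sequence of length $p$ satisfies $p+1\leq\mu_T\phi^*\Gamma=v(\Gamma)=\sum_i v_i$, so it suffices to bound $\sum_i v_i$ in terms of the given data.

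The main obstacle is this final bound. The toric dictionary gives $a(T_v,\mathbb{P}^d,C)=\sum_i v_i-v(C)\in[\epsilon,1]$, where the upper bound is $a(T,X,B)\leq 1$ and the lower bound comes from $(X,B)$ being $\epsilon$-lc; crucially the same $\epsilon$-lc hypothesis yields $a(T_w,\mathbb{P}^d,C)\geq\epsilon$ for \emph{every} primitive $w\in\mathbb{Z}_{>0}^d$, not only for $v$. For each component $D_j$ of $C$ with local equation $g_j$ at $z$, the condition $D_j\neq H_i$ forces $g_j$ to contain a monomial avoiding $y_i$ for every $i$; this yields a bound of the form $v(g_j)\leq\max_{k\neq i}v_k\cdot\deg g_j$, and summing over $j$ with $\deg_{H_i}C\leq r$ bounds $v(C)$ in terms of $r$ and the non-maximal coordinates of $v$. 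Combining this with the $\epsilon$-lc inequality applied to several small perturbations $w$ of $v$ in the lattice forces each coordinate $v_i$ to be bounded in terms of $d$, $r$, $\epsilon$, thereby bounding $\sum_i v_i$ as required. This combinatorial Newton-polytope argument, in which one uses several test valuations simultaneously rather than relying on $v$ alone, is the technical heart of the proof; every other step is a direct application of toric geometry and the estimate from Section \ref{blowup}.
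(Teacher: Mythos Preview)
Your reduction to $\mathbb{P}^d$ via Lemma \ref{5.2} and the translation of $T$ into a primitive lattice vector $v\in\mathbb{Z}_{>0}^d$ is correct and coincides with the paper's Steps 1--2. The identification of the problem with bounding $\sum_i v_i$ (equivalently $\mu_T\phi^*\theta$) is also the same reduction the paper makes.

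The genuine gap is your final step. The sentence ``Combining this with the $\epsilon$-lc inequality applied to several small perturbations $w$ of $v$ in the lattice forces each coordinate $v_i$ to be bounded'' is not a proof; you have identified what needs to be shown but not shown it. The Newton-polytope inequality you write down, $v(g_j)\leq\max_{k\neq i}v_k\cdot\deg g_j$, bounds $v(C)$ from above by roughly $r\cdot(\text{second-largest }v_k)$; combined with $v(C)\geq\sum v_i-1$ this controls the largest coordinate in terms of the others, but gives no bound on the smaller coordinates themselves, and the unspecified ``perturbation'' argument does not close this loop. In effect you are trying to reprove the Borisov--Borisov boundedness of $\epsilon$-lc toric Fanos by hand, and that argument is genuinely non-trivial. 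Note also that your sketch produces a bound depending on $r$, whereas the statement asks for $p(d,\epsilon)$ only.

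The paper closes this gap differently and cleanly: after extracting $T$ on a toric model $W'\to\mathbb{P}^d$ and observing that the pulled-back pair is still $\epsilon$-lc, one runs a toric MMP on $-K_{W'}$ to land on an $\epsilon$-lc weak toric Fano $W''$. Boundedness of such varieties (Borisov--Borisov) gives a bounded $n$ with $|{-}nK_{W''}|$ base-point-free, hence a klt $n$-complement which pulls back to a klt $n$-complement $\Omega$ of $K_{\mathbb{P}^d}$. Since $\deg\Omega=d+1$ is fixed and $(\mathbb{P}^d,\mathrm{Supp}(\theta+\Omega))$ is bounded, Proposition \ref{4.2} yields $u=u(d,\epsilon)>0$ with $(\mathbb{P}^d,\Omega+u\theta)$ klt, whence $\mu_T\phi^*\theta\leq 1/u$. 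This is the step you are missing; the rest of your argument is fine.
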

\begin{proof}
	We essentially apply \ref{5.2} to reduce the problem to toric cases where is the result is clear. Firstly we note it suffices to prove for the case $k=\mathbb{C}$\\
	Step 0: We prove the claim assuming $(X,\Lambda)= (Z := \mathbb{P}^d=Proj(\mathbb{C}[t_0,t_1,..,t_d]), \theta := \sum_1^d H_i)$, where $H_i=Z(t^i)$ and $\{x\}=(1:0:0:..:0)$. By \cite{km} we know we can obtain $T$ by a finite sequence of centred blowup toroidal with respect to $(Z,\theta)$. Let $p$ be the length of the blowup sequence and say we finally achieve $T$ on $\phi: W\rightarrow Z$. Note $W$ is a toric variety. It suffices to show $p$ is bounded from above. Let $E_1,..,E_p$ be exceptional divisors for each blowup and by construction we have $T=E_p$. Hence by \ref{blowup}, it is suffices to bound $\mu_T \phi^* (\theta)$ from above since $\mu_T \phi^* \theta \geq p+1$. Now let $E := \sum_1^{p-1} E_i$ and run toric MMP on $E$ over Z which terminates on $\psi: W'\rightarrow Z$ where the only exception divisor is $T$ since all components of $E$ are contracted by negativity. Let $K_{W'}+B' = \psi^* (K_Z+B)$, where $\mu_T B'>0$ by $a(T,Z,B)\leq 1$ and hence $(W',B')$ is $\epsilon$-lc. Now run a MMP on $-K_{W'}$ ends with a $\epsilon$-lc weak toric fano variety $W''$ ($-K_{W'}$ is big since $-K_W$ is big). Now it is well known that $\epsilon$-lc weak toric fano varieties of dimension $d$ forms a bounded family hence there is a bounded $n$ such that $|-nK_{W''}|$ is base point free by base point freeness (as the Cartier index of $K_{W''}$ is bounded). Hence $K_{W''}$ has klt $n$ complement $\Omega_{W''}$, and by \ref{6.1.3}, $K_{W'}$ has a klt $n$ complement $\Omega_{W'}$ and pushing down to $Z$ gives a klt $n$ complement $\Omega$ for $K_Z$. Hence as $n\Omega$ is integral and $Z$ is smooth, we have $(Z,\Omega)$ is $\frac{1}{n}$-lc and $deg_{H_i} \Omega = -deg_{H_i} K_Z =d+1$ is bounded and $(Z,Supp(\theta+\Omega))$ is bounded. Hence by \ref{4.2}, there is a $u>0$ bounded away from zero such that $(Z,\Omega+u\theta)$ is klt. In particular since $K_{W'}+\Omega_{W'}=\psi^* (K_Z+\Omega)$ and $\Omega_{W'}\geq 0$, we have $\mu_T \phi^* \theta =\mu_T \psi^* \theta\leq \frac{1}{u}$, which shows $p$ is bounded.\\
	Step 1: Now we will reduce the general case to the case above. Firstly since the claim only depends locally near $\{x\}$, we remove any components of $\Lambda$ that doesn't contain $\{x\}$ and assume $\Lambda =\sum_1^d S_i$ and $\cap_i S_i =\{x\}$. Now after replacing $A$ by bounded multiples, we can apply \ref{5.2} and get a $\pi : X\rightarrow Z=\mathbb{P}^d$ etale over $z=(1:0:0:..0)$ with $\pi(x)=z$. Also we can define as in \ref{5.2} $C:=\pi(B)$ and $\theta := \sum_1^d H_i$. Note that $(X,B),(Z,C)$ are analytically isomorphic near $x,z$ and so are $(X,\Lambda),(Z,\theta)$. Hence $(Z,C)$ is $\epsilon$-lc  near $z$ and say $R$ is a lc place of $(Z,\theta)$ corresponding to $T$ with centre $z$. It is easy to check all condition are satisfied by $X,B,\Lambda$ are also satisfied by $Z,C,\theta$ except we only know $(Z,C)$ is $\epsilon$-lc near $z$. \\
	Step 2: Hence we will now modify $C$ to make $(Z,C)$ has good singularities everywhere. We claim there exists $t>0$ such that $(Z,tC+\theta)$ is lc away from $z$. Notice it is clear that we are done given the claim; Indeed, we can let $D=(1-\frac{t}{2})\theta+\frac{t}{2}C$ and we have $(Z,D)$ is $\frac{t}{2}\epsilon$-lc and $-(K_Z+D)$ is ample for $t$ small by looking at degree. Therefore we can construct $\Delta\geq 0$ such that $(Z,\Delta)$ is $\frac{t\epsilon}{2}$-lc and $K_Z+\Delta\simR 0$, and $a(R,Z,\Delta)\leq 1$, which is the case of step 0 by replacing $X,B,\Lambda$ with $Z,\Delta,\theta$. Now we will show the claim. Let $y\in Z\backslash\{z\}$. If $y\not\in Supp\theta$, we can just apply \ref{5.1}. If $y\in Supp\theta$, let $G$ be the stratum of $(Z,\theta)$ that contains $y$ of the smallest dimension, note $\dim G\geq 1$ since $y\neq z$. Then by adjunction, we get $K_G+tC|_G = (K_Z+\theta+tC)|_G$ near $y$. Therefore we are done again by applying inversion of adjunction and induction. This proves the claim and finishes the proof of the proposition.
\end{proof}
Now we will use \ref{5.5} to show \ref{5.7}.
\begin{proof}[Proof of \ref{5.7}]
	Step 0: We firstly reduced to problem to something that we can almost apply \ref{5.5}. First by induction and cutting by hyperplane sections of general members of $|A|$, we can assume $\{x\}$ is a closed point. Also since $n\Lambda$ is integral, we have $(X,supp(A+\Lambda))$ is bounded. Let $W\rightarrow X$ be a log resolution of $(X,\Lambda)$ such that there exists $A_W$ very ample on $W$ and $\theta_W := Supp(\Lambda)^\sim +exc/X$ such that $(W,A_W+\theta_W)$ form a bounded family. Since $(X,\Lambda)$ is $lc$ near $x$, we have $a(T,W,\theta_W)=0$ hence $T$ is a lc place of $(W,\theta_W)$ and let $G$ be its centre on $W$. Let $K_W+B_W = (K_X+B)|_W$. We have coefficients of $B_W$ are bounded from below since $A-B$ is ample and $X$ is bounded family (note here we are using our specific choice of bounded $W$). Hence there exists $t>0$ sufficiently small depending only on $d,r,\epsilon$, such that $\Delta_W := tB_W+(1-t)\theta_W \geq 0$ and $a(T,W,\Delta_W)=ta(T,W,B_W)\leq t\leq 1$. Let $L_W := L|_W$, we can now replace $(X,B,\Lambda,A,x,L)$ by $(W,\Delta_W,\theta_W,A_W,G,L_W)$. Hence from now on we can assume $(X,\Lambda)$ is log smooth and $\Lambda$ is reduced and by applying above again assume $\{x\}$ is a closed point. Note that we are ready to apply \ref{5.5} but need supp$B$ not contain any stratum of $(X,\Lambda)$ other than $x$.
	\\\\
	Step 1: Here we reduce to the case when Supp$B$ only contain a bounded number of zero dimensional stratum of $(X,\Lambda)$. By induction and cutting by $H\in |A|$ general hyperplane and apply inversion of adjunction,  we can assume there is a $t(d,r,\epsilon)>0$ such that $(X,B+tB)$ is $\epsilon/2$-lc except at finitely and bounded many closed points. Let $\psi: V\rightarrow X$ be a log resolution of $(X,B)$ where $T$ is a divisor on $V$. Let $\Gamma_V =(1+t)B^\sim+(1-\frac{\epsilon}{4})\sum_i E_i +(1-a)T$ where $E_i$ are all the exceptional divisors other than $T$ and $a=a(T,X,B)$. Similar to proof of \ref{5.9}, we have $K_V+\Gamma_V = \psi^* (K_X+B) +tB^\sim+F = \psi^* (K_X+B+tB)+G$, where $F:= \sum_i (a(E_i,X,B)-\epsilon/4)E_i\geq 0$ and $T\notin Supp F$ and $G:= \sum_i (a(E_i,X,(1+t)B)-\epsilon/4 )E_i +(a'-a)T$ where $a':= a(T,X,(1+t)B)$. Notice that if $dim \psi(E_i)>0$, then $\mu_{E_i} G\geq 0$ by construction of $t$ and $F,G$ both exceptional over $X$. Now we can run MMP on $K_V+\Gamma_V$ over $X$ and we end up with minimal model $Y$ since $K_V+\Gamma_V\simR tB^\sim+ F/X \geq 0$. Notice that this is also a MMP on $G$ hence by negativity, we see that this MMP contracts all terms of $G$ with positive coefficients, hence $\pi : Y\rightarrow X$ is an isomorphism over $X\backslash\{ \text{finitely many closed points} \}$. Also $T$ is not contracted since $T\notin Supp(tB^\sim+F)$. Let $A_Y := A|_Y$, by boundedness of extremal rays and base point freeness, we have $K_Y+\Gamma_Y+3dA_Y$ is nef and big and semi ample globally. Pick $0\leq D_Y\simR \frac{1}{t}(K_Y+\Gamma_Y+3dA_Y)$ with coefficients of $D_Y\leq 1-\epsilon$ and $D_Y$ general. Also let $D:=\pi_* D_Y$. We see that Supp$D$ contain no positive dimensional stratum of $(X,\Lambda)$ since $\pi$ is isomorphism over X except over finitely closed points. It can be easily verified that $D \simR B+\frac{1}{t}(K_X+B+3dA)$ and $a(T,X,D)=a(T,X,B)\leq 1$. Also there is a bounded $m\in\N$ such that $mA-D$ is ample since $D\simR B+\frac{1}{t}(K_X+B+3dA)$. Hence we can replace $B$ and $D$ and assume Supp$B$ contain no positive dimensional stratum of $(X,\Lambda)$ except at possibly bounded finitely many closed points.
	\\\\
	Step 2: Now we show that we can always reduce to the case when Supp$B$ contain no stratum of $(X,\Lambda)$ other than $x$. Say $y\neq x$ is such a stratum. Let $X'\rightarrow X$ be the blowup at $y$ and $E'$ is the exceptional divisor and let $K_{X'}+B'$, $K_{X'}+\Lambda',L'$ be the pullback to $X'$. Then we have $\mu_{E'} B' \geq -d+1 $ by the blowup formula and $\mu_{E'} \Lambda'=1$, hence by choosing $\beta := \frac{1}{2d}$ say, we have $B'' := \beta B'+(1-\beta)\Lambda' \geq 0$ and $(X,',B'')$ is $\beta\epsilon$-lc. Now replace $X,B,\Lambda,L,\epsilon$ by $X',B'',\Lambda',L',\beta\epsilon$, we can remove one of zero dimensional stratum of $(X,\Lambda)$, which is contained in Supp$B$. Now repeating this process a bounded number of times and applying step 1, we can assume Supp$B$ contain no stratum of $(X,\Lambda)$ except $\{x\}$. This puts allows us to apply step 3 to finish the proof. \\\\
	Step 3: Firstly we settle the case when Supp$B$ contains no stratum of $(X,\Lambda)$ except possibly at $x$. Here we can apply \ref{5.5}, and get there exists $p(d,r,\epsilon)$ such that $v: V =X_l\rightarrow ...\rightarrow X_1\rightarrow X_0=X$ a sequence of centred blowup such that $l\leq p$ and $T$ is a divisor on $X_l$. In particular $X_1\rightarrow X$ is a blowup at $x$. We can define as in step 2, $B_1' := \beta B_1+(1-\beta)\Lambda_1\geq 0$ and $(X_1,B_1')$ is $\beta\epsilon$-lc and $l$ is reduced by 1. If centre of $T$ not a closed point, we apply step 0 and we are done. If centre of $T$ is still a closed point we apply Step 2 and Step 3 and apply induction on length of blowup. Since the length of blowup is bounded above by $p$, we know for at most repeating this whole process $p$ times, we will get to $x$ is not a closed point, hence we are done by Step 0. Hence eventually, we can find a $q(d,r,\epsilon,p)>0$ such that $\mu_T v^*L \leq q$. This completes the proof.
	
\end{proof}

\newpage
\subsection{Construction of $\Lambda$ from complements }
\begin{proof}[Proof of \ref{5.9}]
	We first follow the same idea as in proof of \ref{5.7}. We construct some birational model $Y$ where we can apply the theorem of complements as in \ref{1.7}.\\
	Step 0: Let $\epsilon'$ be given as in ACC of log canonical threshold in the sense that if $(Y,(1-\epsilon')S)$ is $\Q$-factorial projective klt of dimension d, with $S$ reduced then $(Y,S)$ is lc. Applying same arguments as in proof of \ref{5.7}, we choose a $v>0$ depending only on $d,r,\epsilon$ such that $(X,(1+v)(B+tL)$ is $\frac{\epsilon}{2}$-lc except possibly at finitely many points. Let $W\xrightarrow{\phi} X$ be log resolution of $(X,B+L)$ such that $T$ is a divisor on $W$. Define $\Gamma_W := (1+v)(B^\sim +t L^\sim)+\sum (1-\frac{\epsilon'}{4}) E_i+(1-\epsilon') T$, where $E_i$ are all expception divisor over $X$ on $W$. It is easy to see that $K_W+\Gamma_W \equiv \phi^*(K_X+B+tL)+v(B^\sim+tL^\sim)+F \equiv \phi^*(K_X+(1+v)(B+tL)) +G $, where $F\geq 0$ exceptional over $X$, $T$ not a component of $F$, $G$ exceptional over $X$, and all $E_i$ components of $G$, which has positive dimension centre on $X$ have positive coefficients. Now run MMP on $K_W+\Gamma_W$, we end with a model $Y'\rightarrow X$ where all such $E_i$ with positive dimension centre are contracted by negativity, hence $Y'\rightarrow X$ is isomorphism over the complements of finitely many points. Also note since $T$ not a component of $F$, $T$ is still a divisor on $Y'$.Let $A_{Y'}:= A|_{Y'}$\\
	Step 1: Let $\psi: Y\rightarrow X$ be the extremal extraction of $T$ and let $A_Y :=A|_Y$. Notice that by construction $(Y,(1-\epsilon
	')T)$ is klt hence we have $(Y,T)$ lc. In order to apply \ref{1.7}, we will now show $\exists \;l$ bounded such that $lA_Y-(K_Y+T)$ is ample. Let $K_Y+\Gamma_Y$ and $A_Y$ be pushdown of $K_{Y'}+\Gamma_{Y'} $ and $A_{Y'} $. We claim that $K_Y+\Gamma_Y+3dA_Y$ is in fact ample; Indeed, if $C$ is a curve on $Y$ that is contracted over $X$, then $(K_Y+\Gamma_Y+3dA_Y)\cdot C =v(B^\sim +tL^\sim)\cdot C\geq vtL^\sim \cdot C>0$ (since $F_Y=0$, $\mu_T \psi^* L>0$ by construction, $0=\psi^* L \cdot C = L^\sim \cdot C + \mu_T \psi^* L (T\cdot C)$ and $T\cdot C<0$ since the contraction is extremal). Now if $C$ is not contracted over $X$, then $A_Y\cdot C>0$ and $(K_Y+\Gamma_Y+2dA_Y)\cdot C = (K_{Y'}+\Gamma_{Y'}+2dA_{Y'})\cdot C \geq  0$ since $Y'\dashrightarrow Y$ is isomorphism over $\eta_C$ and $K_{Y'}+\Gamma_{Y'}+2dA_{Y'} $ is globally semi ample by base point free theorem and bounds on length of extremal rays. \\
	Observe we have from construction $K_Y+\Gamma_Y = \psi^* (K_X+B+tL)+v(B^\sim+tL^\sim)$ and $K_Y+B^\sim+tL^\sim+(1-\epsilon')T = \psi^*(K_X+B+tL)$. Also noting that $A-B,A-L$ both ample implies that there is a bounded $l'$ such that $l' A_Y-(K_Y+(1-\epsilon')T)$ is ample since $$l'A_Y-(K_Y+(1-\epsilon')T) = \psi^*[(l'-\frac{3d}{v})A-(1+\frac{1}{v})(K_X+B+tL)]+\frac{1}{v}(K_Y+\Gamma_Y+3dA_Y)$$ Hence there is a $l$ such that $3lA_Y -(K_Y+T)$ is ample since $$3lA_Y-(K_Y+T) = (lA_Y-(K_Y+(1-\epsilon')T)) +(lA_Y-\alpha\psi^*(B+tL))+(lA_Y+\alpha(B^\sim+tL^\sim)) $$ where $\alpha$ is such that $\mu_T \alpha \psi^*(B+tL) =\epsilon'$ (note $\alpha$ is bounded from above). The first summand is ample by above, the second is clearly nef and the last is also ample since $lA_Y+\alpha(B^\sim+tL^\sim) = [(l-\frac{3d}{v})A_Y-\frac{1}{v}(\psi^*(K_X+B+tL))]+\frac{\alpha}{v}(K_Y+\Gamma_Y+3dA_Y)$.  \\
	Step 2: We can now apply \ref{1.7} and get a $0\leq P_Y\sim (n+1)lA_Y-n(K_Y+T)$ such that $(Y,T+\frac{1}{n}P_Y)$ is lc near $T$. Let $\Lambda $ by pushdown of $\Lambda_Y := T+\frac{1}{n}P_Y$. Finally it can be easily verified that $\Lambda$ satisfies all the claim we want.
\end{proof}
\newpage



\bibliography{Fano_1}

\begin{thebibliography}{1}

\bibitem{fa}
C.~Birkar.
\newblock Anti-pluricanonical systems on fano varieties.
\newblock {\em arXiv:1603.05765v3}, 2016.

\bibitem{BAB}
C.~Birkar.
\newblock Singularities of linear systems and boundedness of fano varieties.
\newblock {\em arXiv:1609.05543}, 2016.

\bibitem{GACC}
C.~Birkar and D-Q. Zhang.
\newblock Effectivity of iitaka fibrations and pluricanonical systems of
  polarized pairs.
\newblock {\em Pub. Math. IHES., 123 (2016)}, 2016.

\bibitem{BB}
J.~McKernan C.~D.~Hacon and C.~Xu.
\newblock On the birational automorphisms of varieties of general type.
\newblock {\em Ann. of Math. (2) 177 (2013), no. 3}, 2013.

\bibitem{ACC}
J.~McKernan C.~D.~Hacon and C.~Xu.
\newblock Acc for log canonical thresholds.
\newblock {\em Ann. of Math. (2) 180 (2014)}, 2014.

\bibitem{BCL}
C.~D. Hacon and C.~Xu.
\newblock Boundedness of log calabi-yau pairs of fano type.
\newblock {\em Math. Res. Lett, 22 (2015)}, 2015.

\bibitem{km}
Mori Kollar.
\newblock Birational geometry of algebraic varieties.
\newblock 1998.

\end{thebibliography}
\bibliographystyle{plain}
\end{document}